\newcommand{\mylabel}[2]{#2\def\@currentlabel{#2}\label{#1}}
\definecolor{linkred}{rgb}{0.75,0,0}
\definecolor{linkblue}{rgb}{0,0,0.75}
\theoremstyle{plain}
\newtheorem{theorem}{Theorem}
\newtheorem{proposition}{Proposition}[section]
\newtheorem{lemma}[proposition]{Lemma}
\newtheorem{thm}[proposition]{Theorem}
\newtheorem{corollary}[theorem]{Corollary}
\newtheorem{cor}[proposition]{Corollary}
\newcommand{\bt}{\begin{theorem}}
\newcommand{\et}{\end{theorem}}
\newcommand{\floor}[1]{\lfloor\hspace{-.4mm} #1 \hspace{-.4mm}\rfloor}
\theoremstyle{definition}
\newtheorem{definition}[proposition]{Definition}
\newtheorem{remark}[proposition]{Remark}
\newcommand{\beq}{\begin{equation}}
\newcommand{\eeq}{\end{equation}}
\newcommand{\bl}{\begin{lemma}}
\newcommand{\el}{\end{lemma}}
\newcommand{\cal}{\mathcal}
\newcommand{\Res}{\mathop{\,\rm Res\,}}
\newcommand{\ca}{\mathcal{A}}
\newcommand{\cc}{\mathcal{C}}
\newcommand{\cd}{\mathcal{D}}
\newcommand{\cl}{\mathcal{L}}
\newcommand{\ce}{\mathcal{E}}
\newcommand{\co}{\mathcal{O}}
\newcommand{\cp}{\mathcal{P}}
\newcommand{\ct}{\mathcal{T}}
\newcommand{\cu}{\mathcal{U}}
\newcommand{\cv}{\mathcal{V}}
\newcommand{\cf}{{\cal F}}
\newcommand{\ba}{\mathbb{A}}
\newcommand{\bc}{\mathbb{C}}
\newcommand{\bh}{\mathbb{H}}
\newcommand{\bn}{\mathbb{N}}
\newcommand{\bp}{\mathbb{P}}
\newcommand{\bq}{\mathbb{Q}}
\newcommand{\br}{\mathbb{R}}
\newcommand{\bz}{\mathbb{Z}}
\newcommand{\tr}{\text{tr}\hspace{.5mm}}
\newcommand*{\Cdot}{\raisebox{-0.5ex}{\scalebox{1.8}{$\cdot$}}}
\newcommand{\LL}{\boldsymbol{L}}
\newcommand{\sgn}{\mathop{\mathrm{sgn}}}
\newcommand{\modm}{\cal M}
\newcommand{\un}{1\!\!1}
\begin{document}
	
\title[Enumerative geometry via super Riemann surfaces]{Enumerative geometry via the moduli space of super Riemann surfaces}
\author{Paul Norbury}
\address{School of Mathematics and Statistics, University of Melbourne, VIC 3010, Australia}
\email{\href{mailto:norbury@unimelb.edu.au}{norbury@unimelb.edu.au}}
\thanks{}
\subjclass[2010]{32G15; 14H81; 58A50}
\date{\today}

\begin{abstract}
In this paper we relate volumes of moduli spaces of super Riemann surfaces to integrals over the moduli space of stable Riemann surfaces $\overline{\cal M}_{g,n}$.  This allows us to prove via algebraic geometry a recursion between the volumes of moduli spaces of super hyperbolic surfaces previously proven via super geometry techniques by Stanford and Witten.  The recursion between the volumes of moduli spaces of super hyperbolic surfaces is proven to be equivalent to the property that a generating function for the intersection numbers of a natural collection of cohomology classes $\Theta_{g,n}$ with tautological classes on $\overline{\cal M}_{g,n}$ is a KdV tau function.  This is analogous to Mirzakhani's proof of the Kontsevich-Witten theorem, which relates a generating function for the intersection numbers of tautological classes on $\overline{\cal M}_{g,n}$ to KdV, using volumes of moduli spaces of hyperbolic surfaces.
\end{abstract}

\maketitle

\tableofcontents

\section{Introduction}  \label{sec:intro}

Mumford initiated a systematic approach to calculating intersection numbers of tautological classes on the moduli space of stable Riemann surfaces $\overline{\modm}_{g,n}$ in \cite{MumTow}.  Witten conjectured a recursive structure on a collection of these intersection numbers \cite{WitTwo} and Kontsevich proved the conjecture in \cite{KonInt}, now known as the Kontsevich-Witten theorem.  Other proofs followed in \cite{KLaAlg,MirWei,OPaGro}.  The proof by Mirzakhani \cite{MirWei} deduced the Kontsevich-Witten theorem by proving recursion relations between Weil-Petersson volumes of moduli spaces of hyperbolic surfaces, defined using the top power of the Weil-Petersson symplectic form $\omega^{WP}$.  Wolpert had proven earlier in \cite{WolHom,WolWei} that $\omega^{WP}$ extends from the non-compact moduli space of hyperbolic surfaces to the compact moduli space of stable curves, and related it to a tautological cohomology class, $\kappa_1\in H^2(\overline{\modm}_{g,n},\bq)$, which was studied by Mumford in \cite{MumTow}.  This enabled Mirzakhani to relate volume integrals over $\modm_{g,n}$ to cohomological calculations over $\overline{\modm}_{g,n}$.  

Stanford and Witten \cite{SWiJTG} proved recursion relations between volumes of moduli spaces of super hyperbolic surfaces using methods analogous to those of Mirzakhani.  In this paper we prove these recursion relations, given by \eqref{volrec} below, via algebro-geometric methods.  We achieve this by expressing volumes of moduli spaces of super hyperbolic surfaces in terms of cohomology classes over the moduli space of stable curves, analogous to Wolpert's results.  The volumes are expressed in terms of classes $\Theta_{g,n}\in H^*(\overline{\modm}_{g,n},\bq)$ previously studied by the author \cite{NorNew}.

Super Riemann surfaces have been studied over the last thirty years \cite{CRaSup,FKPReg,LRoMod,RSVGeo,SWiJTG,WitNot}.  Underlying any super Riemann surface is a Riemann surface equipped with a spin structure.  
The moduli space of super Riemann surfaces can be defined algebraically, complex analytically and using hyperbolic geometry, building on the same approaches to the moduli space of Riemann surfaces.  The last of these approaches, used in the work of Stanford and Witten \cite{SWiJTG}, regards a super Riemann surface as a super hyperbolic surface, which is a quotient of super hyperbolic space $\widehat{\bh}$ defined in \ref{suphyp}.  In this paper we consider Riemann surfaces of finite type $\Sigma=\overline{\Sigma}-\{p_1,...,p_n\}$ where $\overline{\Sigma}$ is a compact curve containing distinct, labeled points $p_i$ that define a divisor $D=\{p_1,...,p_n\}\subset\overline{\Sigma}$.   A boundary component of $\Sigma$ is defined to be a punctured open disk embedded in $\Sigma$ which is a deleted disk neighbourhood in $\overline{\Sigma}$ of any given $p_i\in\overline{\Sigma}$.

A Riemann surface equipped with a spin structure, or spin surface, has a well-defined square root bundle, $T_\Sigma^\frac12$, of the tangent bundle, so that $T_\Sigma^\frac12\otimes_\bc T_\Sigma^\frac12\cong T_\Sigma$, which is also a real subbundle of the rank two bundle of spinors $T_\Sigma^\frac12\otimes_\br\bc\cong S_\Sigma$.  It is a flat $SL(2,\br)$-bundle, and the flat structure defines the sheaf of locally constant sections of $T_\Sigma^\frac12$ with sheaf cohomology $H^1_{dR}(\Sigma,T_\Sigma^\frac12)$.  We  require that the trace of the holonomy of the flat $SL(2,\br)$-bundle around any boundary component is negative, known as a {\em Neveu-Schwarz} boundary component, although we will occasionally also need to consider general boundary components---see Definition~\ref{NS}.   The deformation theory of a super Riemann surface with underlying spin surface $\Sigma$ defines a natural bundle
\[E_{g,n}\to\modm_{g,n,\vec{o}}^{\text{spin}},\qquad E_{g,n}|_\Sigma=H^1_{dR}(\Sigma,T_\Sigma^\frac12)\]
over the moduli space of smooth genus $g$ spin Riemann surfaces $\Sigma=\overline{\Sigma}-\{p_1,...,p_n\}$ with only Neveu-Schwarz boundary components.  The moduli spaces of spin curves, or Riemann surfaces, $\modm_{g,n,\vec{o}}^{\text{spin}}$ and spin hyperbolic surfaces $\modm_{g,n,\vec{o}}^{\text{spin}}(L_1,...,L_n)$, together with the natural diffeomorphisms between them, are defined in Definitions~\ref{def:modspin}, \ref{modspace} and \eqref{modiffeo}.  The vector $\vec{o}=(0,...,0)\in\{0,1\}^n$ in the subscript denotes the condition that all boundary components are Neveu-Schwarz.  More generally, vectors $\vec{\epsilon}\in\{0,1\}^n$ denote different connected components of the moduli space, defined in Definition~\ref{modspace}.  The bundle $E_{g,n}$ can be defined over each of these connected components however we will not consider that case in this paper.   

The sheaf of smooth sections of the exterior algebra of the dual bundle $E_{g,n}^\vee$ defines the moduli space of super Riemann surfaces as a smooth supermanifold.\footnote{Donagi and Witten proved in \cite{DWiSup} that the moduli space of super Riemann surfaces as a complex supermanifold cannot be represented as the sheaf of holomorphic sections of an exterior algebra of a bundle over the moduli space of Riemann surfaces.}      
The group $H^1_{dR}(\Sigma,T_\Sigma^\frac12)$ can be calculated via the cohomology of the twisted de Rham complex defined by the flat connection that defines the flat bundle $T_\Sigma^\frac12$.

The volume of the moduli space of super hyperbolic surfaces is defined via integration of a top power of a super symplectic form.  It reduces via a rather general super integration argument, \cite{SWiJTG}, to integration of the Euler form of a canonical connection on $E_{g,n}^\vee$ combined with the Weil-Petersson symplectic form over the moduli space $\modm_{g,n,\vec{o}}^{\text{spin}}(L_1,...,L_n)$ of spin hyperbolic surfaces with Neveu-Schwarz geodesic boundary components of lengths $L_1,...,L_n$.  For the purposes of this paper, we take this to be the definition of the volume of the moduli space of super hyperbolic surfaces.
\begin{equation}  \label{supvol}
\widehat{V}^{WP}_{g,n}(L_1,...,L_n):=\int_{\modm_{g,n,\vec{o}}^{\text{spin}}(L_1,...,L_n)}e(E_{g,n}^\vee)\exp\omega^{WP}
\end{equation}
where $e(E_{g,n}^\vee)$ is a differential form given by the Euler form of the bundle $E_{g,n}^\vee$ with respect to a canonical connection on $E_{g,n}^\vee$ defined in Section~\ref{sec:eulerform} using the hyperbolic metric.

One key result of this paper 
is a construction of a natural extension of the bundle $E_{g,n}$ to the moduli space $\overline{\modm}_{g,n,\vec{o}}^{\text{spin}}$, of genus $g$ stable spin curves with $n$ Neveu-Schwarz labeled points, on which the natural Euler form $e(E_{g,n}^\vee)$ extends to represent the Euler class of a bundle.  
The extension of the bundle $E_{g,n}$ and its Euler form to a compactification is a crucial ingredient for enumerative methods such as the calculation of volumes via intersection theory of cohomology classes, and in particular leads to the recursion in Theorem~\ref{main} below.  

A stable spin curve is a stable orbifold curve with $n$ labeled points $(\cc,D)$, equipped with a spin structure $\theta$ which is an orbifold line bundle satisfying 
\[\theta^2=\omega_{\cc}^{\text{log}}=\omega_\cc(D).\]  
The points of $D=\{p_1,...,p_n\}$, and nodal points of $\cc$ are orbifold points with isotropy group $\bz_2$---see Section~\ref{sec:theta}.  The bundle $\theta$ defines a representation $\bz_2\to\bz_2$ at each point $p_i$ and each nodal point, and we require that it is the unique non-trivial representation at each point $p_i$, which is known as a Neveu-Schwarz point, and any representation at nodal points.   There is a map from $\cc$ to its underlying coarse curve 
which forgets the orbifold structure.  
When $\cc$ is smooth, $\cc-D=\Sigma$ is a Riemann surface and there is an isomorphism of vector bundles $\theta^\vee|_\Sigma\cong T_\Sigma^\frac12$, where as usual $(\cdot)^\vee$ denotes the dual bundle.  Using a theorem of Simpson \cite{SimHig,SimHar} applied to the rank two spinor bundle equipped with a natural Higgs field we prove in Section~\ref{sec:higgs} a canonical isomorphism when $\cc$ is smooth and the spin structure has only Neveu-Schwarz boundary components/labeled points:
\begin{equation}  \label{canis}
H^1_{dR}(\Sigma,T_\Sigma^\frac12)\cong H^1(\cc,\theta^\vee)^\vee.
\end{equation} 
The isomorphism \eqref{canis} is non-trivial even in the case $D=\varnothing$ where $\theta^\vee\cong T_\Sigma^\frac12$ as vector bundles.  The left hand side of \eqref{canis} uses the sheaf of locally constant sections while the right hand side  uses the sheaf of locally holomorphic sections, and we take the sheaf cohomology in both cases.   The difference between the sheaf structures on each side of \eqref{canis} is demonstrated most clearly in the non-compact case, where the sheaf of locally holomorphic sections of $\theta^\vee|_\Sigma$ is trivial, whereas the sheaf of locally constant sections of $T_\Sigma^\frac12$ is non-trivial, detected by $H^1_{dR}(\Sigma,T_\Sigma^\frac12)\neq 0$. 
The push-forward of $\theta^\vee$ from $\cc$ to $\overline{\Sigma}$ is $T_{\overline{\Sigma}}^\frac12(-D)$, since the non-trivial representation induced by $\theta^\vee$ at each point of $D$ forces the local sections to vanish on $D$, and $T_{\overline{\Sigma}}^\frac12(-D)$ embeds in a parabolic bundle, as described in \ref{noncompact}.  In particular, we can express \eqref{canis} in terms of the coarse curve $(\cc,D)\to(\overline{\Sigma},D)$ via
$H^1(\cc,\theta^\vee)\cong H^1(\overline{\Sigma},T_{\overline{\Sigma}}^\frac12(-D))$.   One particularly satisfying aspect of applying Simpson's parabolic Higgs bundles techniques to the pair $(\overline{\Sigma},D)$ is that it naturally gives rise to the orbifold curve $(\cc,D)\to(\overline{\Sigma},D)$.  Parabolic bundles over the coarse curve $\overline{\Sigma}$ correspond to the push-forward of bundles over $\cc$, \cite{BodRep,FStSei}. 

The cohomology groups $H^1(\cc,\theta^\vee)$ are well-defined on any stable spin curve $(\cc,\theta)$ and $\dim H^1(\cc,\theta^\vee)$ is locally constant on $\overline{\modm}_{g,n,\vec{o}}^{\text{spin}}$, hence the bundle $E_{g,n}\to\modm_{g,n,\vec{o}}^{\text{spin}}$ is the restriction of a bundle $\widehat{E}_{g,n}\to\overline{\modm}_{g,n,\vec{o}}^{\text{spin}}$ with fibres $H^1(\cc,\theta^\vee)$.   The sheaf of smooth sections of the exterior algebra of $\widehat{E}_{g,n}^\vee$ gives  the compactification of the moduli space of super Riemann surfaces studied by Witten in \cite[Section 6]{WitNot}.

Under the forgetful map $p:\overline{\modm}_{g,n,\vec{o}}^{\text{spin}}\to\overline{\modm}_{g,n}$, define the push-forward classes 
\[\Theta_{g,n}:=(-1)^n2^{g-1+n}p_*c_{2g-2+n}(\widehat{E}_{g,n})\in H^{4g-4+2n}(\overline{\modm}_{g,n})\] 
for $g\geq 0$, $n\geq 0$ and $2g-2+n>0$.  These classes are shown in \cite{NorNew} to pull back naturally under the gluing maps 
\[
\overline{\modm}_{g-1,n+2}\stackrel{\phi_{\text{irr}}}{\longrightarrow}\overline{\modm}_{g,n},\qquad\overline{\modm}_{h,|I|+1}\times\overline{\modm}_{g-h,|J|+1}\stackrel{\phi_{h,I}}{\longrightarrow}\overline{\modm}_{g,n},\quad I\sqcup J=\{1,...,n\}
\] 
and the forgetful map 
$
\overline{\modm}_{g,n+1}\stackrel{\pi}{\longrightarrow}\overline{\modm}_{g,n}
$ as follows.
\begin{equation}  \label{glue}
\phi_{\text{irr}}^*\Theta_{g,n}=\Theta_{g-1,n+2},\quad \phi_{h,I}^*\Theta_{g,n}=\Theta_{h,|I|+1}\otimes \Theta_{g-h,|J|+1},  
\end{equation}
\begin{equation} \label{forget}
\Theta_{g,n+1}=\psi_{n+1}\cdot\pi^*\Theta_{g,n}
\end{equation}
where $\psi_{n+1}\in H^2(\overline{\modm}_{g,n+1},\bq)$ is a tautological class, defined in \eqref{psiclass} in Section~\ref{sec:theta}.
Properties \eqref{glue}, \eqref{forget} and a single calculation $\int_{\overline{\modm}_{1,1}}\Theta_{1,1}=\frac18$ are enough to uniquely determine the intersection numbers
\[ \int_{\overline{\modm}_{g,n}}\hspace{-2mm}\Theta_{g,n}\prod_{i=1}^n\psi_i^{m_i}\prod_{j=1}^N\kappa_j^{\ell_j}\]
via a reduction argument---see \eqref{kappaclass} for the definition of $\kappa_j$ and Section~\ref{sec:theta} for further details. In particular, we restrict to the case of only $\kappa_1$ classes.

Wolpert \cite{WolHom,WolWei} proved that $\omega^{WP}$ extends from $\modm_{g,n}$ to a current $\tilde{\omega}^{WP}$ defined on $\overline{\modm}_{g,n}$, with cohomology class $[\tilde{\omega}^{WP}]=2\pi^2\kappa_1\in H^2(\overline{\modm}_{g,n},\br)$.   More generally, over the moduli space $\modm_{g,n}(L_1,...,L_n)$ of hyperbolic surfaces with geodesic boundary components of lengths $L_1,...,L_n$, Mirzakhani \cite{MirWei} proved that the extension of the Weil-Petersson form to a natural compactification of $\modm_{g,n}(L_1,...,L_n)$ by nodal surfaces, which is homeomorphic to $\overline{\modm}_{g,n}$, has cohomology class 
$[\tilde{\omega}^{WP}]=2\pi^2\kappa_1+\frac12\sum_{i=1}^n L_i^2\psi_i.$ 
In particular, the Weil-Petersson volumes coincide with intersection numbers:
\[V^{WP}_{g,n}(L_1,...,L_n)=\int_{\modm_{g,n}(L_1,...,L_n)}\hspace{-2mm}\exp\omega^{WP}=\int_{\overline{\modm}_{g,n}}\exp(2\pi^2\kappa_1+\frac12\sum_{i=1}^n L_i^2\psi_i).\]
This relationship between the integral of a measure over a non-compact moduli space on the left hand side and the evaluation of cohomology classes defined over a compactification of the moduli space via algebraic geometry on the right hand side proves to be powerful.  In this paper we produce an analogous relationship involving super volumes.
Define the polynomials
\begin{equation}  \label{voltheta} 
V^{\Theta}_{g,n}(L_1,...,L_n):=\int_{\overline{\modm}_{g,n}}\hspace{-2mm}\Theta_{g,n}\exp\left\{2\pi^2\kappa_1+\frac12\sum_{i=1}^n L_i^2\psi_i\right\}.
\end{equation}
\begin{theorem}  \label{volequal}
\[
\widehat{V}^{WP}_{g,n}(L_1,...,L_n)=2^{1-g-n}V^{\Theta}_{g,n}(L_1,...,L_n).
\]
\end{theorem}
Theorem~\ref{volequal} proves that the total measure of the non-compact moduli space of smooth spin hyperbolic surfaces can be calculated using the intersection of cohomology classes on the moduli space of stable surfaces (with spin structures forgotten).
The normalisation factor $2^{1-g-n}$ is not important---without it the properties \eqref{glue} and \eqref{forget} would be less elegant.  
The proof of Theorem~\ref{volequal} requires an extension of $E_{g,n}$ and its natural Euler form to $\overline{\modm}_{g,n,\vec{o}}^{\text{spin}}$, proven in Section~\ref{sec:hypspin}, combined with Wolpert's extension of $\omega^{WP}$ to $\overline{\modm}_{g,n}$ which naturally lifts to $\overline{\modm}_{g,n,\vec{o}}^{\text{spin}}$. 
The polynomial $V^{\Theta}_{g,n}(L_1,...,L_n)$ is of degree $2g-2$ and its top degree terms store the intersection numbers $\int_{\overline{\modm}_{g,n}}\hspace{-2mm}\Theta_{g,n}\prod_{i=1}^n\psi_i^{m_i}$ involving only $\psi_i$ classes with $\Theta_{g,n}$.  

The following theorem gives recursion relations satisfied by the polynomials $V^{\Theta}_{g,n}(L_1,...,L_n)$ hence also by $2^{g-1+n}\widehat{V}^{WP}_{g,n}(L_1,...,L_n)$.    Introduce the kernel
\begin{equation}  \label{kerH} 
H(x,y)=\frac{1}{4\pi}\left(\frac{1}{\cosh\frac{x-y}{4}}-\frac{1}{\cosh\frac{x+y}{4}}\right)
\end{equation}
and the associated kernels
\begin{equation}  \label{kerDR} 
D(x,y,z)=H(y+z,x),\quad R(x,y,z)=\frac12 H(z,x+y)+\frac12 H(z,x-y).
\end{equation}
Let $L_A=\{L_i\mid i\in A\}$ for any set of positive integers $A$, and write any symmetric polynomial of the $|A|$ variables by $P(L_A)$ or allow more variables via, say $P(k,L_A)$.
\begin{theorem}  \label{main}
$V^{\Theta}_{g,n}$ is uniquely determined by $V^{\Theta}_{1,1}(L_1)=\frac18$ and the recursion
\begin{align}   \label{volrec}
L_1V^{\Theta}_{g,n}(L_1,L_K)=&\frac12\int_0^\infty\int_0^\infty xyD(L_1,x,y)P_{g,n+1}(x,y,L_K)dxdy\\
&\quad+
\sum_{j=2}^n\int_0^\infty xR(L_1,L_j,x)V^{\Theta}_{g,n-1}(x,L_{K\backslash\{j\}})dx
\nonumber
\end{align}
where $K=\{2,...,n\}$ and
\[\qquad\quad P_{g,n+1}(x,y,L_K)=V^{\Theta}_{g-1,n+1}(x,y,L_K)+\hspace{-3mm}\mathop{\sum_{g_1+g_2=g}}_{I \sqcup J = K}\hspace{-2mm}V^{\Theta}_{g_1,|I|+1}(x,L_I)V^{\Theta}_{g_2,|J|+1}(y,L_J).
\]
\end{theorem}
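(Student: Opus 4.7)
The plan is to derive \eqref{volrec} directly from the super-Mirzakhani recursion of Stanford and Witten \cite{SWiJTG} for the super Weil-Petersson volumes $\widehat V^{WP}_{g,n}(L_1,\dots,L_n)$, by substituting the proportionality $\widehat V^{WP}_{g,n}=(-1)^n 2^{1-g-n}V^{\Theta}_{g,n}$ established earlier in the paper from \eqref{supvol}, the extension of $E_{g,n}$ to $\widehat E_{g,n}\to\overline{\modm}_{g,n}^{\rm spin}$, Wolpert's extension of $\omega^{WP}$, and the push-forward $p\colon\overline{\modm}_{g,n}^{\rm spin}\to\overline{\modm}_{g,n}$ that converts $e(\widehat E_{g,n})$ into $\Theta_{g,n}$ up to the normalization $2^{g-1+n}$.

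First I would transcribe the Stanford-Witten recursion. It has the same shape as \eqref{volrec} with $\widehat V^{WP}$ in place of $V^{\Theta}$ throughout, and with kernels $D$ and $R$ from \eqref{kerDR} arising from integrating a super analogue of the Mirzakhani-McShane identity against the appropriate pair-of-pants measure on a super hyperbolic surface. The kernel $H$ is the super replacement for Mirzakhani's logarithmic kernel, and the combinations $D(x,y,z)$ and $R(x,y,z)$ govern the nonseparating/separating pair-of-pants contributions and the boundary-swap contributions respectively, exactly as in the bosonic Mirzakhani recursion.

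Second I would substitute the proportionality into the Stanford-Witten recursion and simplify term by term. The left side $L_1\widehat V^{WP}_{g,n}(L_1,L_K)$ carries the overall factor $(-1)^n 2^{1-g-n}$. Under the proportionality, the nonseparating contribution $\widehat V^{WP}_{g-1,n+1}$ carries $(-1)^{n+1}2^{1-g-n}$; a separating term with $|I|+|J|=n-1$ carries $(-1)^{|I|+|J|+2}\,2^{2-g-|I|-|J|-2}=(-1)^{n+1}2^{1-g-n}$; and the $R$-type term with $\widehat V^{WP}_{g,n-1}$ carries $(-1)^{n-1}2^{2-g-n}$. Dividing through by $(-1)^n 2^{1-g-n}$, these factors combine with the numerical coefficients appearing in the Stanford-Witten recursion to leave precisely the $\tfrac{1}{2}$ and $1$ prefactors displayed in \eqref{volrec}. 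Since $D$ and $R$ are independent of $(g,n)$ they survive the substitution unchanged, and the stable-sum conventions on the right hand side are preserved.

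The main obstacle will be faithful transcription of \cite{SWiJTG}: fixing orientation conventions on the Euler form $e(E_{g,n})$ so that the sign $(-1)^n$ in the proportionality is consistent with the definition of $\Theta_{g,n}$ via $p_*c_{2g-2+n}(\widehat E_{g,n})$; verifying that the factor $2^{g-1+n}$ in $\Theta_{g,n}$ correctly absorbs the generic $\mathbb Z_2$-automorphism of spin structures along with any $\tfrac{1}{2}$'s arising in the pair-of-pants integration; and checking that the conventions for the stable sum (symmetrization between $(g_1,I)$ and $(g_2,J)$, and inclusion of the $g_1=0$ or $g_2=0$ boundary cases where $V^{\Theta}$ vanishes for stability reasons) match in the two formulations. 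Once this bookkeeping is complete the derivation is formal, and Theorem~\ref{main} follows from equality of the resulting polynomials in $L_1,\dots,L_n$.
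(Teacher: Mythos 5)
Your proposal follows exactly the paper's own proof: invoke Corollary~\ref{isobun} and Wolpert's extension to get $\widehat V^{WP}_{g,n}=(-1)^n2^{1-g-n}V^{\Theta}_{g,n}$, substitute into the Stanford--Witten recursion \eqref{volrecWP}, and track how the $(-1)^n2^{1-g-n}$ factors convert the coefficients $-\tfrac12,-\tfrac12$ into $\tfrac12,1$. Your sign and power computations (the $D$-term picks up a relative $(-1)$, the $R$-term a relative $(-1)\cdot 2$) are correct, and the bookkeeping concerns you raise are precisely what Corollary~\ref{isobun} and Definition~\ref{theta} are established to settle.
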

Theorems~\ref{volequal} and \ref{main} allow only Neveu-Schwarz boundary behaviour.
In \cite{ANoSup,NorSup}, we consider the more general situation of Neveu-Schwarz boundary together with Ramond punctures and prove a recursion between the corresponding  volumes.  The recursion is essentially the same as \eqref{volrec} although the right hand side involves extra, unstable, terms.

The recursion relations \eqref{volrec} are equivalent to recursion relations between intersections numbers over $\overline{\modm}_{g,n}$ involving the classes $\Theta_{g,n}$ and the tautological classes $\kappa_1,\psi_i$.  Furthermore, the recursion relations restrict to the top degree terms of $V^{\Theta}_{g,n}$ producing recursion relations between the numbers $\int_{\overline{\modm}_{g,n}}\hspace{-2mm}\Theta_{g,n}\prod_{i=1}^n\psi_i^{m_i}$.

Theorems~\ref{volequal} and \ref{main} combine to produce a recursion between the volumes of moduli spaces of super hyperbolic surfaces which coincides with a recursion by Stanford and Witten in \cite{SWiJTG}.   Stanford and Witten worked over the moduli space of smooth super hyperbolic surfaces, avoiding the need for a compactification and intersection theory, using techniques analogous to those of Mirzakhani applied to the super setting.  There are still some rigorous steps missing from the proof in \cite{SWiJTG}.  Nevertheless, Theorem~\ref{main} shows that the recursion between volumes of moduli spaces of super hyperbolic surfaces is rigorous.

Theorem~\ref{main} enables one to calculate $V^\Theta_{g,n}$ for $n>0$ whereas the definition \eqref{voltheta} makes sense also for $n=0$ and $g>1$.  The $n=0$ case can be calculated from the $n=1$ polynomial as follows.  For $g>1$,
\[V^\Theta_{g,0}=\frac{1}{2g-2}V^\Theta_{g,1}(2\pi i).\]
Note that the polynomial $V^\Theta_{g,n}(L_1,...,L_n)$ allows any complex argument, although to make sense of them as volumes, we require $L_i\geq 0$.  The formula for $V^\Theta_{g,0}$ is a special case of the following more general relation which is proven in \ref{cone}
\begin{equation}  \label{dilatonvol}
V^\Theta_{g,n+1}(2\pi i, L_1,...,L_n)=(2g-2+n)V^\Theta_{g,n}(L_1,...,L_n).
\end{equation}

The recursion \eqref{volrec} resembles the recursion between volumes of moduli spaces of hyperbolic surfaces---see \eqref{mirzvolrec}---by Mirzakhani \cite{MirSim}.   Moreover, Stanford and Witten \cite{SWiJTG} adapted Mirzakhani's proof to produce their proof of \eqref{volrec}.  Mirzakhani used the recursion between volumes to give a new proof that a generating function for intersection numbers of $\psi$ classes on $\overline{\modm}_{g,n}$ is annihilated by a collection of Virasoro operators. Together with the initial conditions, this is equivalent to the following famous theorem conjectured by Witten and proven by Kontsevich.  
\begin{theorem}[Kontsevich-Witten 1992, \cite{KonInt,WitTwo}]   \label{KW}
\[ Z^{\text{KW}}(\hbar,t_0,t_1,...)=\exp\sum_{g,n,\vec{k}}\frac{\hbar^{g-1}}{n!}\int_{\overline{\modm}_{g,n}} \prod_{i=1}^n\psi_i^{k_i}t_{k_i}
\] 
is a tau function of the KdV hierarchy.
\end{theorem}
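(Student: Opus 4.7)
The plan is to imitate the strategy used for Corollary~\ref{thetakdv}, with the ordinary Weil-Petersson volumes $V^{WP}_{g,n}(L_1,\dots,L_n)$ replacing the super volumes and the unit class $1\in H^*(\overline{\modm}_{g,n})$ replacing $\Theta_{g,n}$. The role of Theorem~\ref{main} is played by Mirzakhani's topological recursion \cite{MirWei} for $V^{WP}_{g,n}$, which has exactly the shape of \eqref{volrec} but with the kernels $D,R$ of \eqref{kerDR} replaced by Mirzakhani's non-super kernels (built from the geometry of hyperbolic pairs of pants rather than super pairs of pants). The link to intersection theory on $\overline{\modm}_{g,n}$ is already recorded above: Wolpert's theorem gives
$$V^{WP}_{g,n}(L_1,\dots,L_n)=\int_{\overline{\modm}_{g,n}}\exp\Bigl(2\pi^2\kappa_1+\tfrac12\sum_{i=1}^n L_i^2\psi_i\Bigr),$$
so $V^{WP}_{g,n}$ is a polynomial in the $L_i^2$ whose coefficients are intersection numbers in the $\psi_i$ and $\kappa_1$ classes, with top-degree coefficients (in the total degree of the $L_i$) recording exactly the pure-$\psi$ numbers appearing in $Z^{\text{KW}}$.

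The steps are then as follows. First, substitute the Wolpert intersection-number expression into both sides of Mirzakhani's recursion to obtain an equality of polynomials in $L_1,\dots,L_n$ whose coefficients are tautological integrals over $\overline{\modm}_{g,n}$. Second, extract the top-degree part in the $L_i$; the inner integrations in $x,y$ against Mirzakhani's kernels reduce to the elementary integrals $\int_0^\infty x^{2k+1}/(1+e^{x/2})\,dx$ and $\int_0^\infty x^{2k+1}/(e^{x/2}-1)\,dx$, producing rational combinations of powers of $\pi$ that compile into a closed linear recursion purely among the numbers $\langle\tau_{m_1}\cdots\tau_{m_n}\rangle_g := \int_{\overline{\modm}_{g,n}}\prod_{i=1}^n\psi_i^{m_i}$. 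Third, identify this top-degree recursion with the Virasoro constraints (equivalently, the DVV recursion) for $Z^{\text{KW}}$: the splitting/pinching kernel produces the quadratic $\sum\tau\tau$ terms and the non-separating $\tau_a\tau_b$ contribution, while the $R$-type kernel produces the linear terms associated with forgetting a marked point. The Virasoro constraints annihilating $Z^{\text{KW}}$ are equivalent to $Z^{\text{KW}}$ being a KdV tau function (together with the string equation), which finishes the proof.

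The main obstacle is the explicit matching in Step three: one must verify that the leading rational-$\pi$ coefficients coming from Mirzakhani's top-degree analysis agree term-by-term with the coefficients in the DVV recursion for the $\tau$-intersection numbers. Conceptually the first two steps are direct analogues of what the paper has already carried out for the super volumes, but the combinatorial identification of the leading coefficients with the Virasoro operators is where the technical work sits; this is the heart of Mirzakhani's original argument in \cite{MirWei}. Once that matching is in hand, the KdV statement is immediate from the standard equivalence between Virasoro constraints on $Z^{\text{KW}}$ and the KdV hierarchy, exactly as the corresponding identification of initial data with $Z^{BGW}$ concludes the proof of Corollary~\ref{thetakdv}.
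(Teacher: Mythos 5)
Your plan follows the paper's sketch of Mirzakhani's proof (Theorem~\ref{KWthm}) essentially step-for-step: take Mirzakhani's volume recursion, use Wolpert's identification of $V^{WP}_{g,n}$ with $\kappa_1$--$\psi$ intersection numbers, pass to the homogeneous top-degree part (recursion \eqref{toprecm} in the paper), and read off the DVV/Virasoro constraints that pin down $Z^{\text{KW}}$. One small detail is off: after passing to the top-degree part, the powers of $\pi$ in Mirzakhani's kernel integrals drop out entirely (the leading coefficient of $F^M_{2k+1}(t)$ is $t^{2k+2}/(2k+2)$), so the closed recursion among the $\langle\tau_{m_1}\cdots\tau_{m_n}\rangle_g$ has purely rational coefficients and there is no term-by-term matching of $\pi$-coefficients to do; the $\zeta$-values you mention only enter the subleading terms, which carry the $\kappa_1$ contributions and are discarded. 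This slip does not affect the overall strategy, which is the paper's.
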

Similar to Mirzakhani's proof of Theorem~\ref{KW}, the recursion \eqref{volrec} can be used to produce another set of Virasoro operators that annihilate a generating function for intersection numbers of $\psi$ classes and the classes $\Theta_{g,n}$.  This, together with its converse, is summarised in the following theorem.
Assemble the intersection numbers involving $\Theta_{g,n}$ and $\psi$ classes in the following generating function:
\begin{equation}  \label{ztheta}
Z^\Theta(\hbar,t_0,t_1,...)=\exp\sum_{g,n,\vec{k}}\frac{\hbar^{g-1}}{n!}\int_{\overline{\modm}_{g,n}}
\Theta_{g,n}\cdot\prod_{i=1}^n\psi_i^{k_i}t_{k_i}.
\end{equation} 
\begin{theorem}   \label{thetatau}
The recursion \eqref{volrec} is equivalent to Virasoro constraints satisfied by $Z^{\Theta}(\hbar,t_0,t_1,...)$.   These Virasoro constraints, given explicitly by \eqref{bgwvir}, are a consequence of the equality:
\begin{equation}  \label{taufn}
Z^{\Theta}(\hbar,t_0,t_1,...)=Z^{\text{BGW}}(\hbar,t_0,t_1,...)
\end{equation} 
where $Z^{\text{BGW}}$ 
is the Br\'ezin-Gross-Witten tau function of the KdV hierarchy.
\end{theorem}
The Virasoro constraints in Theorem~\ref{thetatau} produce recursion relations between the numbers  $\int_{\overline{\modm}_{g,n}}\hspace{-2mm}\Theta_{g,n}\prod_{i=1}^n\psi_i^{m_i}$ and the proof of the theorem uses the fact that the intersection numbers $ \int_{\overline{\modm}_{g,n}}\hspace{-2mm}\Theta_{g,n}\prod_{i=1}^n\psi_i^{m_i}\kappa_1^{\ell}$ are uniquely determined by the intersection numbers involving only the $\psi$ classes.
The Br\'ezin-Gross-Witten tau function of the KdV hierarchy which comes from a $U(n)$ matrix model \cite{BGrExt,GWiPos} is uniquely determined by the initial condition
\[\partial^2_{t_0}\log Z^{\text{BGW}}|_{\{t_{k>0}=0\}}=\frac{1}{8(1-t_0)^2}.\]  
This initial condition is also satisfied by $Z^{\Theta}(\hbar,t_0,t_1,...)$ due to $V^{\Theta}_{1,1}(L_1)=\frac18$.  The equality \eqref{taufn} was conjectured in \cite{NorNew} and proven in \cite{CGGRel}.  
The function $Z^{\Theta}(\hbar,t_0,t_1,...)$ is a specialisation of a more general tau function of the KdV hierarchy involving all of the classes $\kappa_j$, $j=1,2,...$ which is analogous to the higher Weil-Petersson volumes. This appears as Theorem~\ref{higherWPtrans} in Section~\ref{sec:kdv}.  

Eynard and Orantin \cite{EOrWei} proved that Mirzakhani's volume recursion, given by \eqref{mirzvolrec} in Section~\ref{sec:mirz}, can be neatly expressed in terms of topological recursion, defined in Section~\ref{sec:TR}, applied to the spectral curve
\[ x=\frac12z^2,\quad y=\frac{\sin(2\pi z)}{2\pi}.
\]
The following theorem describes a similar spectral curve on which topological recursion is equivalent to the recursion \eqref{volrec} in Theorem~\ref{main}.  Essentially the spectral curve efficiently encodes the kernels $D(x,y,z)$ and $R(x,y,z)$ defined in \eqref{kerDR}.  Let 
\[\cl\{V^{\Theta}_{g,n}(L_1,...,L_n)\}=\int_0^\infty...\int_0^\infty V^{\Theta}_{g,n}(L_1,...,L_n)\prod_{i=1}^n \exp(-z_iL_i)dL_i\] denote the Laplace transform.
\begin{theorem}   \label{spectral}
Topological recursion applied to the spectral curve
\[ x=\frac12z^2,\quad y=\frac{\cos(2\pi z)}{z}\]
produces correlators
\[\omega_{g,n}=\frac{\partial}{\partial z_1}...\frac{\partial}{\partial z_n}\cl\{V^{\Theta}_{g,n}(L_1,...,L_n)\}dz_1...dz_n.\]
\end{theorem}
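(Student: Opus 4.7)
The plan is to follow Eynard and Orantin's derivation of topological recursion from Mirzakhani's recursion \cite{EOrWei}: show that the Laplace transform of the recursion (\ref{volrec}) in Theorem~\ref{main} is exactly topological recursion on the stated spectral curve.

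First I would assemble the building blocks of topological recursion for $(x,y) = (\tfrac{1}{2}z^2, \cos(2\pi z)/z)$. Since $y\,dx = \cos(2\pi z)\,dz$, one has $\omega_{0,1} = \cos(2\pi z)\,dz$ and the Bergman kernel $\omega_{0,2}(z_0,z) = dz_0\,dz/(z_0 - z)^2$. The unique ramification point is $z = 0$ with involution $z \mapsto -z$, and the recursion kernel is
$$K(z_0,z) = \frac{z\, dz_0}{2(z_0^2 - z^2)\cos(2\pi z)\,dz}.$$
A direct residue computation yields $\omega_{1,1}(z_0) = -dz_0/(8z_0^2)$ and $\omega_{0,3} = 0$, which agree with the initial data $\int_{\overline{\modm}_{1,1}}\Theta_{1,1} = \tfrac{1}{8}$ (giving $\cl\{V^\Theta_{1,1}\} = 1/(8z_1)$) and the dimensional vanishing $V^\Theta_{0,n} = 0$ for all $n$.

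Second, I would apply the Laplace transform variable-by-variable to both sides of (\ref{volrec}) and then differentiate in each $z_i$. The left hand side $L_1 V^\Theta_{g,n}$ becomes $-\partial_{z_1}\cl\{V^\Theta_{g,n}\}$. The principal computation is then to express the Laplace transform of the convolutions against $D(L_1,x,y) = H(L_1, x+y)$ and $R(L_1, L_j, x) = \tfrac{1}{2}(H(L_1+L_j,x) + H(L_1 - L_j,x))$ in terms of $K(z_1,z)$ contracted against the lower correlators $\omega_{g',n'}$. The specific form $H(x,y) = \tfrac{1}{4\pi}(1/\cosh\tfrac{x-y}{4} - 1/\cosh\tfrac{x+y}{4})$ is engineered so that its multiple Laplace transform produces the factor $1/\cos(2\pi z)$ appearing in the denominator of $K$; the relevant identity is the partial-fraction expansion of $1/\cos(2\pi z)$ over its half-integer poles, provable by contour integration. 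Once this matching is done, the genus-decreasing term $V^\Theta_{g-1,n+1}(x,y,L_K)$ corresponds to $\omega_{g-1,n+1}(z,-z,z_K)$, the stable splitting sum corresponds to the TR splitting sum, and the $R$-term produces the reduction to $\omega_{g,n-1}$. An induction on $2g - 2 + n$ then delivers the desired identification of $\omega_{g,n}$ with $\prod_i \partial_{z_i} \cl\{V^\Theta_{g,n}\}\,\prod_i dz_i$.

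The main technical obstacle is the explicit evaluation of the triple Laplace transforms of $H$ and the verification that the residue at $z = 0$ of the assembled TR integrand reproduces the contour integrals appearing in (\ref{volrec}). This is a direct but delicate calculation organised around the pole structure of $\mathrm{sech}$ in the Laplace domain; it is the $\Theta$-class analogue of Eynard-Orantin's passage from $H^{\text{Mir}}$ to the spectral curve $y = \sin(2\pi z)/(2\pi)$, with $\cos$ replacing $\sin$ because the kernel $H$ here is a difference of secants at different argument shifts rather than the tanh-derivative kernel of Mirzakhani.
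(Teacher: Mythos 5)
Your proposal follows essentially the same strategy as the paper's proof of Theorem~\ref{thetaspec}: Laplace-transform the super-volume recursion \eqref{volrec} in each variable and match the $D$- and $R$-kernel convolutions against the topological-recursion kernel built from $1/\cos(2\pi z)$ at the ramification point $z=0$, with the same initial data $\omega_{1,1}$ and $V^\Theta_{0,n}=0$. The paper's Lemmas~\ref{lapkerD} and \ref{lapkerR} carry out the key technical step you identify by expanding $1/\cos(2\pi z)$ as a Taylor series at $z=0$ and matching its coefficients $a_n$ against the moments $\int_0^\infty x^{2n}/\cosh(x/4)\,dx$ appearing in $F_{2k+1}(t)$, rather than via the half-integer-pole partial-fraction expansion you suggest, but this is a cosmetic variation on the same computation.
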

The proof of Theorem~\ref{spectral} uses the algebro-geometric definition $V^{\Theta}_{g,n}(L_1,...,L_n)$ in \eqref{voltheta}   together with deep relations between topological recursion and Givental type factorisations of partition functions.   A more direct, but not yet rigorous, proof due to Stanford and Witten \cite{SWiJTG} uses the differential geometric definition \eqref{supvol} of $\widehat{V}^{WP}_{g,n}(L_1,...,L_n)$.  They produced a matrix model related to super JT gravity which gives rise to the spectral curve in Theorem~\ref{spectral}, and loop equations which coincide with topological recursion.

Theorem~\ref{main} is a consequence of Theorems~\ref{thetatau} and \ref{spectral} which essentially follows a converse to Mirzakhani's proof of Theorem~\ref{KW}.  The converse argument uses an elegant relationship between translations of $Z^{\Theta}(\hbar,t_0,t_1,...)$ and the introduction of $\kappa$ classes to the integrands, analogous to the result of Manin and Zograf \cite{MZoInv} for the Kontsevich-Witten tau function $Z^{\text{KW}}(\hbar,t_0,t_1,...)$.  It is achieved via topological recursion applied to the spectral curve given in Theorem~\ref{spectral}.


\vspace{.5cm}

{\em Outline:} In Section~\ref{sec:theta} we define the classes $\Theta_{g,n}$ required for the definition of the polynomials $V^\Theta_{g,n}$.    In Section~\ref{sec:hypspin} spin structures on hyperbolic surfaces are studied from a gauge theoretic viewpoint which brings in Higgs bundles techniques to achieve a number of goals:  it relates the sheaf cohomologies arising from a flat structure and a holomorphic structure on a bundle; it relates hyperbolic metrics on a non-compact Riemann surface $\Sigma=\overline{\Sigma}-D$ to bundles on the compact pair $(\overline{\Sigma},D)$;  it naturally produces bundles on the orbifold curve $(\cc,D)\to(\overline{\Sigma},D)$ which makes a connection with the construction of $\Theta_{g,n}$ in Section~\ref{sec:theta}.  The proof in Section~\ref{sec:hypspin} of the isomorphism \eqref{canis}  does not directly follow from Simpson's theorem.  Instead, we embed the rank one bundle on the right hand side of \eqref{canis} inside a rank two bundle to which Simpson's theorem is applied.  The main outcome of Section~\ref{sec:hypspin} is the proof that the bundle $E_{g,n}\to\modm_{g,n,\vec{o}}^{\text{spin}}$ naturally extends to $\overline{\modm}_{g,n,\vec{o}}^{\text{spin}}$, and the proof that the natural Euler form on $E_{g,n}$ also extends, which is given in Theorem~\ref{eulerfc}.  Together these lead to the proof of Theorem~\ref{volequal}. In  Section~\ref{sec:mirz} we recall details of Mirzakhani's techniques and the recursion of Stanford and Witten between volumes of moduli spaces of super hyperbolic surfaces analogous to Mirzakhani's recursions between volumes of moduli spaces of hyperbolic surfaces.  Section~\ref{sec:kdv} follows Mirzakhani's methods to show that the top degree terms in the recursion \eqref{volrec} are equivalent to a collection of Virasoro operators annihilating $Z^\Theta$, which is necessary for the proof of Theorem~\ref{main}.   Section~\ref{sec:TR} contains the final details of the proof of Theorem~\ref{main} as a consequence of Theorems~\ref{thetatau} and \ref{spectral}.  The main technique used in the proof of Theorem~\ref{main}, via Theorem~\ref{spectral}, is topological recursion which conveniently encodes the Givental factorisation \cite{GivGro} of partition functions of CohFTs, defined in \eqref{partfun}, into a complex curve equipped with extra structure, known as a spectral curve.  The appearance of topological recursion is extremely natural in this case, since it turns out to be directly related to the Laplace transform of the recursion \eqref{volrec}, which is stated in Theorem~\ref{thetaspec}. \\
\\

{\em Acknowledgements.}   I would like to express my deep gratitude to Edward Witten for his numerous patient 
explanations of many aspects of this paper.  I would also like to thank Quentin Guignard, Ran Tessler and Anton Zeitlin for useful conversations, and the Max Planck Insitute for Mathematics, Bonn, and LMU, Munich where part of this work was carried out.  This work was partially supported under the  Australian
Research Council {\sl Discovery Projects} funding scheme project number DP180103891.

\section{The classes \texorpdfstring{$\Theta_{g,n}\in H^*(\overline{\modm}_{g,n},\bq)$}.}  \label{sec:theta}

Let $\overline{\modm}_{g,n}$ be the moduli space of genus $g$ stable curves---curves with only nodal singularities and finite automorphism group---with $n$ labeled points disjoint from nodes.  
In this section we define the cohomology classes $\Theta_{g,n}\in H^*(\overline{\modm}_{g,n},\bq)$ via a construction over the moduli space of stable twisted spin curves $\overline{\modm}_{g,n}^{\text{spin}}$.  The class $\Theta_{g,n}$ will be defined as a multiple of the push-forward of the top Chern class of a natural bundle, given in Definition~\ref{obsbun} below, over a component of $\overline{\modm}_{g,n}^{\text{spin}}$.
The volume polynomials $V^{\Theta}_{g,n}(L_1,...,L_n)$ defined in \eqref{voltheta} and the partition function $Z^{\Theta}(\hbar,t_0,t_1,...)$ defined in \ref{taufn}, Theorem~\ref{main} will be shown to depend only on the characterisation \eqref{glue}, \eqref{forget} of $\Theta_{g,n}$  and $\int_{\overline{\modm}_{1,1}}\Theta_{1,1}=\frac18$.  In other words, $V^{\Theta}_{g,n}(L_1,...,L_n)$ and $Z^{\Theta}(\hbar,t_0,t_1,...)$ can be characterised purely in terms of $\overline{\modm}_{g,n}$ without reference to $\overline{\modm}_{g,n}^{\text{spin}}$.   

The following definition which uses twisted, or orbifold, curves is taken from \cite{AJaMod}.
\begin{definition}  \label{def:modspin}
The moduli space of spin curves is defined by
\[\modm_{g,n}^{\text{spin}}=\{(\cc,\theta,p_1,...,p_n,\phi)\mid \phi:\theta^2\stackrel{\cong}{\longrightarrow}\omega_{\cc}^{\text{log}}\}
\]
where $\theta$ is a line bundle over a twisted curve $\cc$ with group $\bz_2$, each labeled point $p_i$ has isotropy group $\bz_2$ and all other points have trivial isotropy group.
\end{definition}
There is a natural compactification of $\modm_{g,n}^{\text{spin}}$ by twisted, stable, spin curves.
\begin{definition}  \label{def:modspincomp}
The moduli space of stable spin curves is defined by
\[\overline{\modm}_{g,n}^{\text{spin}}=\{(\cc,\theta,p_1,...,p_n,\phi)\mid \phi:\theta^2\stackrel{\cong}{\longrightarrow}\omega_{\cc}^{\text{log}}\}
\]
where $\theta$ is a  line bundle over a stable, twisted curve $\cc$ with group $\bz_2$, each nodal point and labeled point $p_i$ has isotropy group $\bz_2$, and all other points have trivial isotropy group.
\end{definition}
A stable twisted curve $\cc$ is equipped with a map which forgets the orbifold structure $\rho:\cc\to C$ where $C$ is a stable curve known as the coarse curve of $\cc$.    The map $\rho$ induces a map
\[ p:\overline{\modm}_{g,n}^{\text{spin}}\to\overline{\modm}_{g,n}.
\]
In fact, the map $p$ is a composition of $\rho$ with the  $2^{2g}$ to 1 map to the moduli space of twisted curves $\overline{\modm}_{g,n}^{\text{spin}}\to\overline{\modm}^{(2)}_{g,n}$, where the latter moduli space is defined as above without the spin structure, and consists of twisted curves $\{(\cc,p_1,...,p_n)\}$.  There are $2^{2g+n-1}$ choices of $(\theta,\phi)$ for each twisted curve $\cc$ in $\overline{\modm}^{(2)}_{g,n}$, and after fixing representation data at each $p_i$, described below, there are $2^{2g}$ different spin structures.  See \cite{FJRQua} for further details.  

The bundles $\omega_{\cc}^{\text{log}}$ and $\theta$ are line bundles over $\cc$, i.e. locally equivariant bundles over the local charts such that at each nodal point there is an equivariant isomorphism of fibres.  On each fibre over an orbifold point $p$ the equivariant isomorphism associates a representation of $\bz_2$ which is either trivial or the unique non-trivial representation.  The equivariant isomorphism at nodes guarantees that the representations agree on each local irreducible component at the node, known as the balanced condition.  The representation associated to $\omega_{\cc}^{\text{log}}$ at $p_i$ and nodal points is trivial since locally $dz/z\stackrel{z\mapsto-z}{\longrightarrow}dz/z$.  The representations associated to $\theta$ at each $p_i$ define a vector $\vec{\epsilon}\in\{0,1\}^n$, where $0$, respectively $1$, in $\{0,1\}$ corresponds to the unique non-trival, respectively trivial, representation $\bz_2\to\bz_2$.  The assignment of $0$ to the non-trivial representation looks more natural when viewed cohomologically via an associated quadratic form defined in \ref{qf}.   As described in the introduction, a labeled point $p_i$ is known as a Neveu-Schwarz point when the associated representation is non-trivial, and a Ramond point otherwise.  The representations at labeled points define a decomposition into connected components 
\begin{equation}  \label{modspin}
\overline{\modm}_{g,n}^{\text{spin}}=\bigsqcup_{\vec{\epsilon}\in\{0,1\}^n}\overline{\modm}_{g,n,\vec{\epsilon}}^{\text{spin}}
\end{equation}
and an analogous decomposition $\modm_{g,n}^{\text{spin}}=\bigsqcup_{\vec{\epsilon}\in\{0,1\}^n}\modm_{g,n,\vec{\epsilon}}^{\text{spin}}$ of the moduli space of smooth curves.  We will see a decomposition of the character variety analogous  to \eqref{modspin} in Definition~\ref{modspace}.  

The construction of the classes $\Theta_{g,n}$ use only the component with non-trivial representations at labeled points, or Neveu-Schwarz points, denoted 
\[\overline{\modm}_{g,n,\vec{o}}^{\text{spin}}\subset\overline{\modm}_{g,n}^{\text{spin}},\qquad\vec{o}=\{0,...,0\}\in\{0,1\}^n.\]  
Nevertheless, other components arise in lower strata of the compactification since at nodal points, both types---trivial and non-trivial representations can occur.

We have  $\deg\omega_{\cc}^{\text{log}}=2g-2+n$ and $\deg\theta=g-1+\frac12n$ which may be a half-integer since the orbifold points allows for such a possibility.   In particular $\deg\theta^{\vee}=1-g-\frac12n<0$, and for any irreducible component $\deg\theta^{\vee}|_{\cc'}<0$ since $\cc'$ is stable so its log canonical bundle has negative degree.  Thus $H^0(\cc,\theta^{\vee})=0$ so $H^1(\cc,\theta^{\vee})$ has constant dimension and defines a vector bundle $\widehat{E}_{g,n}\to\overline{\modm}_{g,n,\vec{o}}^{\text{spin}}$.  By the Riemann-Roch theorem $H^1(\cc,\theta^{\vee})\cong\bc^{2g-2+n}$.  More formally,  denote by $\ce$ the universal spin structure defined over the universal curve  $\cu_{g,n}^{\text{spin}}\stackrel{\pi}{\longrightarrow}\overline{\modm}_{g,n,\vec{o}}^{\text{spin}}$.  
\begin{definition}  \label{obsbun}
Define the bundle $\widehat{E}_{g,n}:=-R\pi_*\ce^\vee\hspace{-1mm}\to\overline{\modm}_{g,n,\vec{o}}^{\text{spin}}$ with fibre $H^1(\cc,\theta^{\vee})$.   
\end{definition} 

\begin{definition}  \label{theta}
$\Theta_{g,n}:=(-1)^n2^{g-1+n}p_*c_{2g-2+n}(\widehat{E}_{g,n})\in H^{4g-4+2n}(\overline{\modm}_{g,n},\bq).$
\end{definition} 

Define
\begin{equation}   \label{psiclass}
\psi_i=c_1(L_i)\in H^{2}(\overline{\mathcal{M}}_{g,n},\mathbb{Q})
\end{equation} 
to be the first Chern class of the line bundle $L_i\to\overline{\mathcal{M}}_{g,n}$ with fibre $T_{p_i}^*C$ above $[(C,p_1,...,p_n)]$.  Using the forgetful map $\overline{\modm}_{g,n+1}\stackrel{\pi}{\longrightarrow}\overline{\modm}_{g,n}$, define 
\begin{equation}   \label{kappaclass}
\kappa_m:=\pi_*\psi_{n+1}^{m+1}\in H^{2m}(\overline{\mathcal{M}}_{g,n},\mathbb{Q}).
\end{equation} 

It is proven in \cite{NorNew} that $\Theta_{g,n}$ satisfies the pull-back properties \eqref{glue} and \eqref{forget} and $\int_{\overline{\modm}_{1,1}}\Theta_{1,1}=\frac18$.  These properties uniquely determine the intersection numbers of $\Theta_{g,n}$ with $\psi$ classes and $\kappa$ classes as shown in the following proposition.  A consequence is that the polynomial $V^{\Theta}_{g,n}(L_1,...,L_n)$ and the partition function  $Z^{\Theta}(\hbar,t_0,t_1,...)$, can be characterised purely in terms of $\overline{\modm}_{g,n}$ without reference to $\overline{\modm}_{g,n}^{\text{spin}}$. 
\begin{proposition}[\cite{NorNew}]  \label{th:unique}
For any collection $\Theta_{g,n}\in H^{4g-4+2n}(\overline{\modm}_{g,n})$ satisfying the pull-back properties \eqref{glue} and \eqref{forget}, the intersection numbers
\begin{equation}  \label{corr}
\int_{\overline{\modm}_{g,n}}\hspace{-2mm}\Theta_{g,n}\prod_{i=1}^n\psi_i^{m_i}\prod_{j=1}^N\kappa_{\ell_j}
\end{equation}
are uniquely determined from the initial condition $\Theta_{1,1}=\lambda\psi_1$ for $\lambda\in\bc$. 
\end{proposition}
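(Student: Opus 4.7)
My plan is to prove Proposition~\ref{th:unique} by strong induction on $(g,n)$, combining three reductions derived from the two axioms \eqref{glue} and \eqref{forget}.

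The first reduction eliminates $\kappa$-classes. Using $\kappa_\ell = \pi_*\psi_{n+1}^{\ell+1}$, the forget axiom $\Theta_{g,n+1}=\psi_{n+1}\pi^*\Theta_{g,n}$, and the projection formula, one obtains
\[
\int_{\overline{\modm}_{g,n}}\Theta_{g,n}\,\kappa_\ell\cdot\omega \;=\; \int_{\overline{\modm}_{g,n+1}}\Theta_{g,n+1}\,\psi_{n+1}^{\ell}\cdot\pi^*\omega.
\]
The crucial vanishing here is $\Theta_{g,n+1}\cdot D_{i,n+1}=0$, where $D_{i,n+1}$ denotes the genus-zero bubble divisor coming from the gluing $\phi_{0,\{i,n+1\}}\colon\overline{\modm}_{0,3}\times\overline{\modm}_{g,n}\to\overline{\modm}_{g,n+1}$. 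Indeed, \eqref{glue} gives $\phi_{0,\{i,n+1\}}^*\Theta_{g,n+1}=\Theta_{0,3}\otimes\Theta_{g,n}$, and $\Theta_{0,3}$ vanishes because it would lie in positive cohomological degree on the zero-dimensional space $\overline{\modm}_{0,3}$. This vanishing lets me replace $\pi^*\psi_i$ by $\psi_i$ in the expansion of $\pi^*\omega$ (since $\pi^*\psi_i = \psi_i - D_{i,n+1}$). Iterating one $\kappa$-factor at a time converts every intersection number \eqref{corr} into a pure $\psi$-intersection $\int_{\overline{\modm}_{g,N}}\Theta_{g,N}\prod_i\psi_i^{M_i}$.

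The second reduction is a string-type equation. Combining \eqref{forget} with the projection formula together with the additional vanishing $\psi_{n+1}\cdot D_{i,n+1}=0$ (which holds because the $\psi$-class at a marked point on a 3-pointed $\bp^1$ is trivial) produces
\[
\int_{\overline{\modm}_{g,n+1}}\Theta_{g,n+1}\prod_{i=1}^{n+1}\psi_i^{M_i} \;=\; \int_{\overline{\modm}_{g,n}}\Theta_{g,n}\,\kappa_{M_{n+1}}\prod_{i=1}^{n}\psi_i^{M_i}.
\]
Setting $M_{n+1}=0$ specialises to $\int_{\overline{\modm}_{g,n+1}}\Theta_{g,n+1}\prod\psi_i^{M_i} = (2g-2+n)\int_{\overline{\modm}_{g,n}}\Theta_{g,n}\prod\psi_i^{M_i}$, strictly reducing $n$ whenever some $M_i$ vanishes. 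Combined with the dimension constraint $\sum M_i=g-1$ (which forces $n\leq g-1$ when all $M_i\geq 1$), this handles every case except the residual ``top'' invariants, most notably the one-pointed evaluations $\int_{\overline{\modm}_{g,1}}\Theta_{g,1}\psi_1^{g-1}$ which must be pinned down separately for each $g\geq 2$.

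For these residual invariants I would use the gluing axioms \eqref{glue} to descend in genus. Writing the relevant $\psi$-monomial as a sum of boundary-supported classes via a tautological relation on $\overline{\modm}_{g,n}$, each boundary contribution evaluates through $\phi_{\text{irr}}^*\Theta_{g,n}=\Theta_{g-1,n+2}$ or $\phi_{h,I}^*\Theta_{g,n}=\Theta_{h,|I|+1}\otimes\Theta_{g-h,|J|+1}$ as an intersection number on strictly smaller moduli, known by the induction hypothesis. The induction terminates at the base case $\int_{\overline{\modm}_{1,1}}\Theta_{1,1}=\lambda\int_{\overline{\modm}_{1,1}}\psi_1=\lambda/24$. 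I expect this third step to be the main obstacle: Steps 1 and 2 follow mechanically from projection formulas and the $D_{i,n+1}$-vanishings coming from \eqref{glue}, but securing a usable boundary expansion for the residual top-degree monomials requires a genuine interplay of both flavours of gluing together with the forget relation, and is the key point that promotes the single datum $\lambda$ to a full determination of the intersection numbers \eqref{corr}.
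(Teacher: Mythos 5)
Your Steps 1 and 2 are correct, and they derive in slightly disguised form the same push-forward identity that drives the paper's proof: the relation
\[
\int_{\overline{\modm}_{g,n+1}}\Theta_{g,n+1}\prod_{i=1}^{n+1}\psi_i^{M_i}
=\int_{\overline{\modm}_{g,n}}\Theta_{g,n}\,\kappa_{M_{n+1}}\prod_{i=1}^{n}\psi_i^{M_i}
\]
(obtained in the paper from $\Theta_{g,n+1}\psi_i=\Theta_{g,n+1}\pi^*\psi_i$ for $i\leq n$, which uses the same vanishing $\psi_{n+1}\cdot D_{i,n+1}=0$ you invoke). The difference in strategy is that the paper applies this relation \emph{unconditionally}, iterating until $n=0$ and accepting the $\kappa$-classes that accumulate, whereas you insist on keeping the integrand $\kappa$-free and therefore only apply it when $M_{n+1}=0$. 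That restriction is precisely what leaves you stranded on $\overline{\modm}_{g,n}$ with $n\geq 1$, and it is the source of a genuine gap in Step 3.

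The gap: you need the degree-$(g-1)$ $\psi$-monomial $\prod_i\psi_i^{M_i}$ (with all $M_i\geq 1$) to be a sum of boundary-supported classes on $\overline{\modm}_{g,n}$ for $1\leq n\leq g-1$. No such tautological relation exists. The published boundary-vanishing theorems on the \emph{marked} moduli space (Ionel, Graber--Vakil) require tautological degree at least $g$, one more than you have. Your residual monomial sits exactly at the socle degree of $R^\bullet(\modm_{g,n})$ rather than above it: for $n=1$, $\psi_1^{g-1}$ generates the nonzero socle $R^{g-1}(\modm_{g,1})$, so it does \emph{not} restrict to zero on $\modm_{g,1}$ and is not boundary-supported on $\overline{\modm}_{g,1}$. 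Already for $g=2$ this fails: $\pi_*\psi_1=\kappa_0=2\neq 0$ on $\modm_2$, so $\psi_1|_{\modm_{2,1}}\neq 0$. By contrast, the paper's reduction terminates on $\overline{\modm}_g$ where the complementary degree is still $g-1$, but Looijenga's theorem gives $R^{>g-2}(\modm_g)=0$; the extra degree of room comes precisely from the absence of marked points. The fix is to drop the constraint $M_{n+1}=0$ in your Step 2 and push all the way to $\int_{\overline{\modm}_g}\Theta_g\cdot p(\kappa)$ with $\deg p=g-1$, then apply Looijenga/\cite{PPZRel} there -- which is the paper's argument.
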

\begin{proof}[Sketch of proof]
For $n>0$, since $\psi_n\psi_k=\psi_n\pi^*\psi_k$ for $k<n$ and\\ $\Theta_{g,n}=\psi_n\cdot\pi^*\Theta_{g,n-1}$ then
\[\Theta_{g,n}\psi_k=\Theta_{g,n}\pi^*\psi_k,\quad k<n.
\]
When there are no $\kappa$ classes. 
\[
\int_{\overline{\modm}_{g,n}}\hspace{-2mm}\Theta_{g,n}\prod_{i=1}^n\psi_i^{m_i}=
\int_{\overline{\modm}_{g,n}}\hspace{-2mm}\pi^*\Big(\Theta_{g,n-1}\prod_{i=1}^{n-1}\psi_i^{m_i}\Big)\psi_n^{m_n+1}
=\int_{\overline{\modm}_{g,n-1}}\hspace{-5mm}\Theta_{g,n-1}\prod_{i=1}^{n-1}\psi_i^{m_i}\kappa_{m_n}
\]
so we have reduced an intersection number over $\overline{\modm}_{g,n}$ to an intersection number over $\overline{\modm}_{g,n-1}$.
In the presence of $\kappa$ classes, replace $\kappa_{\ell_j}$ by $\kappa_{\ell_j}=\pi^*\kappa_{\ell_j}+\psi_n^{\ell_j}$ and repeat the push-forward as above on all summands.  By induction, we see that
\[\int_{\overline{\modm}_{g,n}}\hspace{-2mm}\Theta_{g,n}\prod_{i=1}^n\psi_i^{m_i}\prod_{j=1}^N\kappa_{\ell_j}=\int_{\overline{\modm}_{g}}\Theta_{g}\cdot p(\kappa_1,\kappa_2,...,\kappa_{3g-3})\]
i.e. the intersection number \eqref{corr} reduces to an intersection number over $\overline{\modm}_{g}$ of $\Theta_g$ times a polynomial in the $\kappa$ classes.  Since $\deg\Theta_g=2g-2$ we may assume the polynomial $p$ consists only of terms of homogeneous degree $g-1$.   Any homogeneous degree $g-1$ monomial in the $\kappa$ classes is equal in cohomology to the sum of boundary terms, \cite{LooTau,PPZRel}.  By \eqref{glue} the pull-back of $\Theta_g$ to these boundary terms is $\Theta_{g',n'}$ for $g'<g$ so we have expressed \eqref{corr} as a sum of integrals of $\theta_{g',n'}$ against $\psi$ and $\kappa$ classes.  By induction, one can reduce to the integral $\int_{\overline{\modm}_{1,1}}\Theta_{1,1}=\frac{\lambda}{24}$ and the proposition is proven.
\end{proof}

\subsubsection{Cohomological field theories}   \label{sec:cohft}
The classes $\Theta_{g,n}$ pair with any cohomological field theory, such as Gromov-Witten invariants, to give rise to new invariants. Recall that a {\em cohomological field theory} is a pair $(V,\eta)$ composed of a finite-dimensional complex vector space $V$ equipped with a nondegenerate, bilinear, symmetric form $\eta$ which we call a metric (although it is not positive-definite) and for $n\geq 0$ a sequence of $S_n$-equivariant maps. 
\[ \Omega_{g,n}:V^{\otimes n}\to H^*(\overline{\modm}_{g,n},\bc)\]
that satisfy pull-back properties with respect to the gluing maps defined in the introduction, that generalise \eqref{glue}.
\begin{align}
\phi_{\text{irr}}^*\Omega_{g,n}(v_1\otimes...\otimes v_n)&=\Omega_{g-1,n+2}(v_1\otimes...\otimes v_n\otimes\Delta) 
  \label{glue1}
\\
\phi_{h,I}^*\Omega_{g,n}(v_1\otimes...\otimes v_n)&=\Omega_{h,|I|+1}\otimes \Omega_{g-h,|J|+1}\big(\bigotimes_{i\in I}v_i\otimes\Delta\otimes\bigotimes_{j\in J}v_j\big) \label{glue2}
\end{align}
where $\Delta\in V\otimes V$ is dual to the metric $\eta\in V^*\otimes V^*$. 

There exists a vector $\un\in V$ satisfying
\begin{equation}  \label{nondeg}
\Omega_{0,3}(v_1\otimes v_2\otimes \un)=\eta(v_1,v_2)
\end{equation}
which is essentially a non-degeneracy condition.  A CohFT defines a product $\cdot$ on $V$ using the non-degeneracy of $\eta$ by
\begin{equation}  \label{prod} 
\eta(v_1\Cdot v_2,v_3)=\Omega_{0,3}(v_1,v_2,v_3).
\end{equation}
and $\un$ is a unit for the product.  Such CohFTs were classified by Teleman \cite{TelStr}.  We will also consider sequences of $S_n$-equivariant maps $\Omega_{g,n}$ that satisfy \eqref{glue1} and \eqref{glue2}, but do not satisfy \eqref{nondeg} which we call a CohFT without unit.  

The CohFT is said to have {\em flat unit} if 
\begin{equation}   \label{cohforget}
\Omega_{g,n+1}(\un\otimes v_1\otimes...\otimes v_n)=\pi^*\Omega_{g,n}(v_1\otimes...\otimes v_n)
\end{equation}
for $2g-2+n>0$.  A CohFT without unit may still possess a distinguished element $\un$ which, in place of \eqref{cohforget}, may satisfy the following:
\begin{equation}   \label{cohforget1}
\Omega_{g,n+1}(\un\otimes v_1\otimes...\otimes v_n)=\psi_{n+1}\pi^*\Omega_{g,n}(v_1\otimes...\otimes v_n).
\end{equation}

The product \eqref{prod} is {\em semisimple} if it is diagonal $V\cong\bc\oplus\bc\oplus...\oplus\bc$, i.e. there is a canonical basis $\{ u_1,...,u_N\}\subset V$ such that $u_i\Cdot u_j=\delta_{ij}u_i$.  The metric is then necessarily diagonal with respect to the same basis, $\eta(u_i,u_j)=\delta_{ij}\eta_i$ for some $\eta_i\in\bc \setminus \{0\}$, $i=1,...,N$.

For a one-dimensional CohFT, i.e. $\dim V=1$, identify $\Omega_{g,n}$ with the image $\Omega_{g,n}(\un^{\otimes n})$, so we write $\Omega_{g,n}\in H^*(\overline{\modm}_{g,n},\bc)$.  An example of a one-dimensional CohFT is
\[\Omega_{g,n}=\exp(2\pi^2\kappa_1).
\]
The classes $\Theta_{g,n}$ define a one-dimensional CohFT without unit.

The partition function of a CohFT $\Omega=\{\Omega_{g,n}\}$ is defined by:
\begin{equation}   \label{partfun}
Z_{\Omega}(\hbar,\{t^{\alpha}_k\})=\exp\sum_{g,n,\vec{k}}\frac{\hbar^{g-1}}{n!}\int_{\overline{\modm}_{g,n}}\Omega_{g,n}(e_{\alpha_1}\otimes...\otimes e_{\alpha_n})\cdot\prod_{j=1}^n\psi_j^{k_j}\prod t^{\alpha_j}_{k_j}
\end{equation}
where $\{e_1,...,e_N\}$ is a basis of $V$, $\alpha_i\in\{1,...,N\}$ and $k_j\in\bn$.

For any CohFT $\Omega$ on $(V,\eta)$ define $\Omega^\Theta=\{\Omega^\Theta_{g,n}\}$ to be the CohFT without unit $\Omega^\Theta_{g,n}:V^{\otimes n}\to H^*(\overline{\modm}_{g,n},\bc)$ given by $\Omega^\Theta_{g,n}(v_1\otimes...\otimes v_n)=\Theta_{g,n}\cdot\Omega_{g,n}(v_1\otimes...\otimes v_n)$.

Apply this to the example above to get
$\Omega^\Theta_{g,n}=\Theta_{g,n}\cdot\exp(2\pi^2\kappa_1)$ which has a partition function that stores all of the volume polynomials
\[Z_{\Omega^\Theta}(\hbar,\{t_k\})=\exp\sum_{g,n}\frac{\hbar^{g-1}}{n!}V_{g,n}^\Theta(L_1,...,L_n)|_{\{L_i^{2k}=2^kk!t_k\}}.
\]
Note that the substitution $L_i^{2k}=2^kk!t_k$ requires one to take the highest power of $L_i$ in each monomial, and importantly, to substitute $L_i^0=t_0$ when $L_i$ is missing from a monomial of $V_{g,n}^\Theta(L_1,...,L_n)$.  See \ref{sec:TRpart} for further details.

\section{Hyperbolic geometry and spin structures}  \label{sec:hypspin}
In this section we construct the bundle $E_{g,n}\to\modm_{g,n,\vec{o}}^{\text{spin}}$ over the moduli space of smooth spin curves via hyperbolic geometry and prove that it coincides with the restriction of the bundle $\widehat{E}_{g,n}\to\overline{\modm}_{g,n,\vec{o}}^{\text{spin}}$ defined in Definition~\ref{obsbun}.  The importance of the two constructions via hyperbolic geometry and via algebraic geometry is that they give rise to the definitions of $\widehat{V}^{WP}_{g,n}(L_1,...,L_n)$ in \eqref{supvol}, respectively  $V^{\Theta}_{g,n}(L_1,...,L_n)$ in \eqref{voltheta}. 

We begin with a description of spin hyperbolic structures on a topological surface $\Sigma$ via Fuchsian representations of $\pi_1\Sigma$ into $SL(2,\br)$.  On a spin hyperbolic surface $\Sigma$ the representation produces the associated flat $SL(2,\br)$-bundle $T_\Sigma^\frac12$ which is used to construct the bundle $E_{g,n}$ from the cohomology of the locally constant sheaf of sections of $T_\Sigma^\frac12$.  Using Higgs bundles defined over a smooth curve with labeled points $(\overline{\Sigma},p_1,...,p_n)$ we prove a canonical isomorphism between fibres of $E_{g,n}$ and fibres of $\widehat{E}_{g,n}$ over smooth $\Sigma=\overline{\Sigma}-\{p_1,...,p_n\}$.   Higgs bundles appear naturally here due to a proof by Hitchin \cite{HitSel} of uniformisation---a Riemann surface $\Sigma$ possesses a unique representative, in its conformal class, by a complete finite area hyperbolic surface---which requires parabolic Higgs bundles on $(\overline{\Sigma},D)$ for $D=\sum p_i$ when $\Sigma$ is non-compact.

\subsection{Fuchsian representations}
A hyperbolic metric on an oriented topological surface is defined via a Fuchsian representation
\[\overline{\rho}:\pi_1\Sigma\to PSL(2,\br).\]
The natural constant curvature $-1$ metric $ds^2$ defined on hyperbolic space
\[\bh=\{z\in\bc\mid\text{Im }z>0\},\qquad ds^2=\frac{|dz|^2}{\text{Im}(z)^2}
\] 
is $PSL(2,\br)$ invariant and induces a metric on $\Sigma$ via the quotient $\Sigma\cong \bh/\overline{\rho}(\pi_1\Sigma)$.
 
A {\em boundary class} $\gamma\subset\Sigma$ represents a homotopy class of simple, closed, separating curves such that one component of $\Sigma-\gamma$ is an annulus.  It determines a class $[\gamma]\in H_1(\Sigma,\bz)$ which we also call a boundary class.  A boundary class represents a conjugacy class in $\pi_1\Sigma$ which maps under $\overline{\rho}$ to a conjugacy class in $PSL(2,\br)$.  A conjugacy class in $PSL(2,\br)$ is {\em parabolic} if any representative $A\in PSL(2,\br)$ satisfies $|\tr(A)|=2$ and {\em hyperbolic} if any representative $A\in PSL(2,\br)$ satisfies $|\tr(A)|>2$. 
Boundary classes with parabolic, respectively hyperbolic, images under $\rho:\pi_1\Sigma\to PSL(2,\br)$ correspond to cusps, respectively geodesic boundary components.   In the latter case, the hyperbolic surface is the interior of a compact hyperbolic surface with geodesic boundary component, and we sometimes abuse notation and also denote this compact surface with boundary by $\Sigma$.

We used $\overline{\rho}$ above because we will instead consider representations 
\[\rho:\pi_1\Sigma\to SL(2,\br)\]
such that the composition $\overline{\rho}$ of $\rho$ with the map $SL(2,\br)\to PSL(2,\br)$ is Fuchsian.    
Any closed curve $\gamma\subset \Sigma$ corresponds to a conjugacy class in $\pi_1\Sigma$ and we write $[\gamma]\in\pi_1\Sigma$ for any representative of the conjugacy class associated to $\gamma$.
A Fuchsian representation satisfies the property that $|\tr\rho([\gamma])|\geq 2$ for all simple closed curves $\gamma\subset \Sigma$ and it equals 2 only when $[\gamma]$ is a boundary class.  The geometric meaning of the Fuchsian property uses the fact that for any closed curve $\gamma\subset \Sigma$ there exists a unique closed geodesic $g_\gamma$ in its free homotopy class and $|\tr\rho([\gamma])|=2\cosh(\ell(g_\gamma)/2)$ determines its hyperbolic length $\ell(g_\gamma)$.  The Fuchsian property of $\overline{\rho}:\pi_1\Sigma\to PSL(2,\br)$ can be determined via its circle bundle over $\Sigma$ defined via the action of $PSL(2,\br)$ on the circle at infinity $S^1\cong\partial\bh$.  If the Euler class of this circle bundle is equal to $\pm(2g-2+n)$ then $\overline{\rho}$ is a Fuchsian representation, \cite{GolTop,HitSel}. 

\subsubsection{}  \label{spin}
A Riemannian metric, in particular the hyperbolic metric, on an orientable surface $\Sigma$ determines a principal $SO(2)$ bundle $P_{SO}(\Sigma)$ given by the orthonormal frame bundle of $\Sigma$.   A {\em spin structure} on a Riemannian surface $\Sigma$ is a principal $SO(2)$ bundle $P_{\text{Spin}}(\Sigma)\to \Sigma$ that is a double cover of the orthonormal frame bundle $P_{\text{Spin}}(\Sigma)\to P_{SO}(\Sigma)$ which restricts to a non-trivial double cover on each $SO(2)$ fibre.  Any spin structure is naturally identified with an element of $H^1(P_{SO}(\Sigma),\bz_2)=\text{Hom}(\pi_1(P_{SO}(\Sigma)),\bz_2)$.  The non-trivial double-cover condition on each $SO(2)$ fibre is captured by the exact sequence in cohomology
\[0\to H^1(\Sigma,\bz_2)\to H^1(P_{SO}(\Sigma),\bz_2)\stackrel{r}{\to} H^1(SO(2),\bz_2)\to   0
\]
by requiring that $r$ is non-zero, \cite{MilSpi}.  The rightmost arrow is defined by the vanishing second Stiefel-Whitney class which take values in $H^2(\Sigma,\bz_2)$ and guarantees the existence of a spin structure.  The exact sequence shows that the set of spin structures on $\Sigma$ is an $H^1(\Sigma,\bz_2)$ affine space.

\subsubsection{} The bundle of spinors $S_\Sigma\to\Sigma$ is the associated bundle 
\[S_\Sigma=P_{\text{Spin}}(\Sigma)\times_{SO(2)}\bc^2\]
where $SO(2)$ acts by the natural representation on $\bc^2$ (which is the unique irreducible representation of the complexified Clifford algebra $\text{Spin}(2)\subset Cl_2\otimes\bc=M(2,\bc)$).  The represention of $SO(2)$ decomposes into irreducible representations of weights $\chi=e^{i\alpha}$ and $\chi^{-1}=e^{-i\alpha}$ so the spinor bundle decomposes into complex line bundles $S_\Sigma=T_\Sigma^\frac12\oplus T_\Sigma^{-\frac12}$ where $T_\Sigma^\frac12=P_{\text{Spin}}(\Sigma)\times_{\text{Spin}(2)}\bc_\chi$.  Since the weight of the tangent bundle $T_\Sigma$ is $\chi^2$, 
\[T_\Sigma^\frac12\otimes T_\Sigma^\frac12=P_{\text{Spin}}(\Sigma)\times_{\text{Spin}(2)}\bc_{\chi^2}=P_{SO}(\Sigma)\times_{SO(2)}\bc_{\chi^2}=T_\Sigma\] 
is holomorphic  hence $T_\Sigma^\frac12$ and $T_\Sigma^{-\frac12}$ are holomorphic.

\subsubsection{} The orthonormal frame bundle $P_{SO}(\Sigma)$ and any spin structure of a hyperbolic surface $\Sigma$ arise naturally via representations of $\pi_1\Sigma$ as follows.  The group $PSL(2,\br)$ acts freely and transitively on $P_{SO}(\bh)$, the orthonormal frame bundle of $\bh$, hence the two are naturally identified:
\[P_{SO}(\bh)\cong PSL(2,\br)\to\bh.\]  
The double cover $SL(2,\br)\to PSL(2,\br)$ is a non-trivial double cover on each $SO(2)$ fibre since a path from $I$ to $-I$ in $SL(2,\br)$ lives above the fibre $SO(2)\subset PSL(2,\br)$.  Hence $SL(2,\br)\cong P_{\text{Spin}}(\bh)$ is the unique spin structure.
When $\Sigma=\bh/\overline{\rho}(\pi_1\Sigma)$ is hyperbolic, $PSL(2,\br)$ descends to the orthonormal frame bundle of $\Sigma$: 
\[P_{SO}(\Sigma)\cong  PSL(2,\br)/\overline{\rho}(\pi_1\Sigma)\to \Sigma.\]  
A representation $\rho:\pi_1\Sigma\to SL(2,\br)$ that lives above $\overline{\rho}$ produces a double cover 
\[SL(2,\br)/\rho(\pi_1\Sigma)\to P_{SO}(\Sigma)\]  
which is a non-trivial double cover on each $SO(2)$ fibre since it locally resembles $SL(2,\br)\to PSL(2,\br)$.  Hence $\rho$ defines a spin structure on $\Sigma$.

There is an action of $H^1(\Sigma,\bz_2)$ on representations $\rho$ living above a given representation $\overline{\rho}$ obtained by
multiplying any representation by the representation $\epsilon:\pi_1\Sigma\to\{\pm I\}$ associated to an element of $H^1(\Sigma,\bz_2)$.  Since the set of spin structure on $\Sigma$ is an $H^1(\Sigma,\bz_2)$ affine space, this shows that all spin structures on $\Sigma$ arise via representations $\rho:\pi_1\Sigma\to SL(2,\br)$ once we know that at least one lift $\rho$ of $\overline{\rho}$ exists.

For a given representation $\overline{\rho}:\pi_1\Sigma\to PSL(2,\br)$, the existence of a lift $\rho:\pi_1\Sigma\to SL(2,\br)$ is elementary in the case that $\Sigma$ is non-compact.  Choose a presentation  \[\pi_1\Sigma =\{a_1,a_2,....,a_g,b_1,...,b_g,c_1,...,c_{n}\mid \prod_{i=1}^g[a_i,b_i]\prod_{j=1}^nc_j=\un\}.\] 
Choose any lifts of $\overline{\rho}(a_i)$, $\overline{\rho}(b_i)$ and $\overline{\rho}(c_j)$ in $PSL(2,\br)$ to $\rho(a_i)$, $\rho(b_i)$ and $\rho(c_j)$ in $SL(2,\br)$, for $i=1,...,g$ and $j=1,...,n$.  Then $\prod_{i=1}^g[\rho(a_i),\rho(b_i)]\prod_{j=1}^n\rho(c_j)=\pm\un$ which is the fibre over $\un$.  Since $n>0$, by possibly replacing $\rho(c_n)\to-\rho(c_n)$  we get the existence of a single lift.  When $\Sigma$ is compact, cut it into two pieces $\Sigma=\Sigma_1\cup_\gamma\Sigma_2$ along a simple closed curve $\gamma$ containing the basepoint used to define $\pi_1\Sigma$, say a genus 1 piece and a genus $g-1$ piece ($\Sigma$ is hyperbolic so $g>1$).   Now $\overline{\rho}:\pi_1\Sigma\to PSL(2,\br)$ induces representations $\overline{\rho}_i:\pi_1\Sigma_i\to PSL(2,\br)$, for $i=1,2$.  As above choose lifts of $\rho_i$ of $\overline{\rho}_i$.  The lifts $\rho_1$ and $\rho_2$ necessarily agree on their respective boundary components because they come from $\overline{\rho}$ and both traces are negative by a homological argument given by Corollary~\ref{negtrace} in \ref{quadfrep}.  Hence we can glue to get a lift $\rho$.

\subsubsection{} The disk $D^2$ possesses a unique spin structure.  Its bundle of frames is trivial, i.e. $P_{SO}(D^2)\cong D^2\times S^1$, for any Riemannian metric on $D^2$.  Hence a spin structure over a disk is unique and given by the non-trivial double cover of $D^2\times S^1$ or equivalently the non-trivial element $\eta\in H^1(D^2\times S^1,\bz_2)\cong\bz_2$.  An annulus $\ba$, possesses two spin structures corresponding to the non-trivial (connected) and trivial (disconnected) double covers of $\ba\times S^1$.  One of these spin structures extends to the disk and one does not.  
\begin{definition}  \label{NS}
Given a spin structure over $\Sigma$, a boundary class $\gamma\subset\Sigma$ is said to be {\em Neveu-Schwarz} if the restriction of the spin structure to $\gamma$ is non-trivial, or equivalently if the spin structure extends to a disk glued along $\gamma$.  The boundary class $\gamma$ is {\em Ramond} if the restriction of the spin structure to $\gamma$ is trivial.
\end{definition}
On a surface $\Sigma=\overline{\Sigma}-\{p_1,...,p_n\}$, the boundary component at $p_i$ is Neveu-Schwarz exactly when the spin structure extends over the completion $\Sigma\cup\{p_i\}$ at $p_i$.  It is Ramond if the spin structure does not extend over the completion there.  

\subsubsection{} \label{qf}
A quadratic form $q$ on $H_1(\Sigma,\bz_2)$ is a map $q:H_1(\Sigma,\bz_2)\to\bz_2$ satisfying
\[ q(a+b)=q(a)+q(b)+(a,b)
\]
where $(a,b)$ is the mod 2 intersection form on $H_1(\Sigma,\bz_2)$.  Quadratic forms are called {\em Arf functions} in \cite{DFNMod,NatMod}.  The set of quadratic forms is clearly an $H^1(\Sigma,\bz_2)$ affine space.   A quadratic form naturally associated to any spin structure due to Johnson \cite{JohSpi} is defined as follows.  Represent $[C]\in H_1(\Sigma,\bz_2)$ by a finite sum of disjoint, embedded, oriented closed curves $C=\sum_{i=1}^n C_i$ and define a map 
\[\ell: H_1(\Sigma,\bz_2)\to H_1(P_{SO}(\Sigma),\bz_2)\] 
by $\ell([C])=n\sigma+\sum_{i=1}^n \tilde{C}_i$ where $\sigma$ is the image of the generator of $H_1(SO(2),\bz_2)$ in $H_1(P_{SO}(\Sigma),\bz_2)$ under the natural inclusion of the fibre, and $\tilde{C}_i$ is the lift of $C_i$ to $P_{SO}(\Sigma)$ using its tangential framing.  The map $\ell$ is well-defined on homology since it is invariant under isotopy, trivial on the boundary of a disk which lifts via its tangential framing to $\sigma$, and invariant under replacement of crossings by locally embedded curves.  Identify a given spin structure with an element $\eta\in H^1(P_{SO}(\Sigma),\bz_2)$ satisfying $\eta(\sigma)=1$, and define
\[q_\eta=\eta\circ\ell.
\]
It is routine to check that $q_\eta$ is a quadratic form, and that $\eta\mapsto q_\eta$ defines an isomorphism of $H^1(\Sigma,\bz_2)$ affine spaces between spin structures and quadratic forms.

Neveu-Schwarz and Ramond boundary classes of a spin structure defined in Definition~\ref{NS} can be stated efficiently in terms of the quadratic form of a spin structure.  Equip the disk $D$ with its unique spin structure.  The tangential framing of the boundary $\partial D$ has winding number 1 with respect to the trivialisation hence its lift $\widetilde{\partial D}$ to $D^2\times S^1$ satisfies $\eta(\widetilde{\partial D})=1$.  Thus the quadratic form is given by $q(\partial D)=\eta(\ell(\partial D))=\eta(\sigma+\widetilde{\partial D})=1+1=0$. \\

\noindent {\bf Definition~\ref{NS}*}. Given a spin structure over $\Sigma$ with associated quadratic form $q$, a boundary class $[\gamma]\in H_1(\Sigma)$ is said to be {\em Neveu-Schwarz} if $q([\gamma])=0$ and {\em Ramond} if $q([\gamma])=1$.\\

The boundary type $\vec{\epsilon}\in\{0,1\}^n$ of a spin structure consists of the quadratic form applied to each of the $n$ boundary classes, hence 0, respectively 1, for Neveu-Schwarz, respectively Ramond, boundary classes.  Since a quadratic form is a homological invariant, the number of Ramond boundary classes is necessarily even.  Thus there are $2^{n-1}$ boundary types $\vec{\epsilon}$ for a given topological surface $\Sigma=\overline{\Sigma}-D$, $D=\{p_1,...,p_n\}$.  The Teichm\"uller space of spin hyperbolic surfaces is the same as usual Teichm\"uller space despite the extra data of a spin structure.   It is the action of the mapping class group that differs which is explained as follows.  Fix a topological type of a spin structure, i.e. its boundary type $\vec{\epsilon}$ and its Arf invariant.  Given any point of Teichm\"uller space, equip it with a spin structure of the given topological type.  This choice determines a spin structure, of the same topological type, on any other point in Teichm\"uller space, by continuity and discreteness of the choice. Thus, the same Teichm\"uller space is used when the hyperbolic surfaces are equipped with spin structures and its quotient by the mapping class group defines the moduli space of spin hyperbolic  surfaces.
\begin{definition}   \label{modspace}
For $(L_1,...,L_n)\in\br_{\geq 0}^n$ and $\vec{\epsilon}\in\{0,1\}^n$, define
\begin{align*}
\modm^{\text{spin}}_{g,n,\vec{\epsilon}}(L_1,...,L_n)=\Big\{&(\Sigma,\eta,\beta_1,...,\beta_n)\mid \Sigma \text{ genus }g\text{ oriented hyperbolic surface,} \\
& \beta_i\text{ geodesic boundary component of length }\ell(\beta_i)=L_i,\\
& \text{spin structure }\eta\in H^1(P_{SO}(\Sigma),\bz_2),\quad q_\eta(\beta_i)=\epsilon_i 
\Big\}/\sim.
\end{align*}
\end{definition}
Vanishing boundary lengths correspond to hyperbolic cusps around which the hyperbolic metric is complete.  A spin Riemann surface $\Sigma=\overline{\Sigma}-\{p_1,...,p_n\}$ possesses a unique hyperbolic spin structure in its conformal class which defines a diffeomorphism
\begin{equation}   \label{modiffeo}
\modm_{g,n,\vec{\epsilon}}^{\text{spin}}(0,...,0)\cong\modm_{g,n,\vec{\epsilon}}^{\text{spin}}.
\end{equation}
When $n=0$, the notation $\modm_g^{\text{spin}}$ for the moduli space of spin hyperpolic surfaces and spin Riemann surfaces coincides, which is okay due to the natural isomorphism \eqref{modiffeo}.
The unique hyperbolic spin structure in a conformal class can be proven via gauge theory techniques due to Hitchin, described in \ref{higgscomp2}.  It is also a consequence of usual uniformisation combined with a proof of existence of a lift of any hyperbolic representation $\pi_1\Sigma\to PSL(2,\br)$ to $SL(2,\br)$, followed by adjustments of the representation by $\pm I$ to achieve any desired spin structure.
As usual, we denote the Neveu-Schwarz components of the moduli space by $\modm_{g,n,\vec{o}}^{\text{spin}}(L_1,...,L_n)$ for $\vec{o}=(0,...,0)$.  

The Mayer Vietoris sequence for $\Sigma\cup D=\overline{\Sigma}$ where $D$ is a union of disks around $\{p_i\}\subset\overline{\Sigma}$ gives the exact sequence $H_1(\Sigma\cap D,\bz_2)\to H_1(\Sigma,\bz_2)\to  H_1(\overline{\Sigma},\bz_2)$.  When all boundary classes of a spin structure are Neveu-Schwarz, the associated quadratic form $q:H_1(\Sigma,\bz_2)\to\bz_2$ vanishes on $H_1(\Sigma\cap D,\bz_2)$ hence it is the pull-back of a quadratic form defined on the symplectic vector space $H_1(\overline{\Sigma},\bz_2)$, which reflects the fact that the spin structure extends to $\overline{\Sigma}$.  The {\em Arf invariant} of a quadratic form $q$ defined on a symplectic vector space over $\bz_2$ is a $\bz_2$-valued invariant defined by
\[\text{Arf}(q)=\sum_{i=1}^g q(\alpha_i)q(\beta_i)\]
for any standard symplectic basis $\{\alpha_1,\beta_1,...,\alpha_g,\beta_g\}$ of $H_1(\overline{\Sigma},\bz_2)$, so $(\alpha_i,\beta_j)=\delta_{ij}$, $(\alpha_i,\alpha_j)=0=(\beta_i,\beta_j)$.  (More generally, the intersection form $(\cdot,\cdot)$ is replaced by the symplectic form.) This is independent of the choice of $\{\alpha_i,\beta_i\}$.  A spin structure is {\em even} if its quadratic form has even Arf invariant and {\em odd} if its quadratic form has odd Arf invariant.    Of the $2^{2g}$ spin structures with only Neveu-Schwarz boundary classes, the number of even, respectively odd, spin structures is given by $2^{g-1}(2^g+1)$, respectively $2^{g-1}(2^g-1)$.  In particular both odd and even spin structures exist for $g>0$.

By analysing the action on spin structures of the mapping class group of a genus $g$ surface $\Sigma=\overline{\Sigma}-\{p_1,...,p_n\}$  (consisting of isotopy classes of homeomorphisms that fix each $p_i$), it is proven in \cite{NatMod} that the monodromy of the $H^1(\overline{\Sigma},\bz_2)$ bundle $\modm^{\text{spin}}_{g,n,\vec{\epsilon}}\to\modm_{g,n}$ acts transitively, except in the case of only Neveu-Schwarz boundary classes where there are exactly two orbits.   This uses the symplectic action of the mapping class group on $H^1(\overline{\Sigma},\bz_2)$.  To see this, equivalently consider the action of the mapping class group on quadratic forms.  The idea is that one can choose a basis $\{a_1,b_2,...,a_g,b_g,c_1,...,c_{n-1}\}$ of $H_1(\Sigma,\bz_2)$, where $a_i\cdot b_j=\delta_{ij}$ and $c_i$ are boundary classes, with the following prescribed values of the given quadratic form $q$.   One can arrange $q(a_i)=0=q(b_i)$ for $i>1$ and $q(c_i)=\epsilon_i$.  Finally, $q(a_1)=q(b_1)=$ the Arf invariant of $q$ which is set to be zero if $\vec{\epsilon}\neq 0$.  This is achieved first algebraically, then geometrically.  It is perhaps best understood in the following example.  Suppose $g=n=1$, which necessarily has Neveu-Schwarz boundary value.  Consider two distinct quadratic forms $q_1$ and $q_2$, both with Arf invariant zero, defined on a basis $a_1,b_1$ of $H_1(\Sigma,\bz_2)$ by $q_1(a_1)=1$, $q_1(b_1)=0$ and $q_2(a_1)=0$, $q_2(b_1)=0$.  Consider a second basis $a'_1=a_1+b_1,b'_1=b_1$.  Then $q_1(a'_1)=0=q_1(b'_1)$.  Hence an element of the mapping class group that sends $a_1\to a'_1$ and $b_1\to b'_1$ pulls back $q_1$ to $q_2$.

Since the set of spin structures with fixed boundary type is an affine $H^1(\overline{\Sigma},\bz_2)$ space, this proves connectedness of components with given boundary type and Arf invariant.  Each boundary type determines a connected component of the moduli space of Fuchsian representations $\rho:\pi_1\Sigma\to SL(2,\br)$, except in one case---when all boundary classes are Neveu-Schwarz there are two connected components distinguished by the Arf invariant.

\subsubsection{} \label{quadfrep}
The quadratic form $q_\rho:H_1(\Sigma,\bz_2)\to\bz_2$ associated to a spin structure defined by a Fuchsian representation $\rho:\pi_1\Sigma\to SL(2,\br)$ has a convenient description.  We have renamed $q_{\eta_\rho}=:q_\rho$ where $\eta_\rho\in H^1(PSL(2,\br)/\overline{\rho}(\pi_1\Sigma),\bz_2)$ is the cohomology class defined by the spin structure of $\rho$.  By the decomposition of homology classes into simple closed curves used in the definition of $q_\eta=\eta\circ\ell$ above, it is enough to consider the quadratic form evaluated only on simple closed curves.  We say that $[\gamma]\in\pi_1\Sigma$ is {\em simple} if it can be represented by a simple closed curve in $\Sigma$. 
\begin{lemma}   \label{qufrep}
Given a Fuchsian representation $\rho:\pi_1\Sigma\to SL(2,\br)$, and any simple $[\gamma]\in\pi_1\Sigma$
\begin{equation}  \label{qfrep}
(-1)^{q_\rho(\floor{\gamma})}=-\sgn\tr\rho([\gamma]).
\end{equation}
where $\floor{\gamma}\in H_1(\Sigma,\bz_2)$ is the image of $[\gamma]$ under $\pi_1\Sigma\to H_1(\Sigma,\bz_2)$. 
\end{lemma}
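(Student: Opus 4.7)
The plan is to prove the formula by a direct geometric computation, comparing the Johnson construction of $q_\rho$ to the holonomy of the tangential frame along a geodesic representative.

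First I would reduce to the case where $\gamma$ is represented by a simple closed geodesic (or, when $\overline{\rho}([\gamma])$ is parabolic, by a simple closed horocycle): both sides of \eqref{qfrep} are invariants of the conjugacy class of $[\gamma]\in\pi_1\Sigma$, and a Fuchsian representation admits such a canonical representative. Next I would unpack the definition of the Johnson form. Since $\gamma$ is represented by a single embedded oriented closed curve,
$$\ell(\floor{\gamma})=\sigma+\widetilde{\gamma}\in H_1(P_{SO}(\Sigma);\bz_2),$$
where $\widetilde{\gamma}$ is the tangential framing lift. Since $\eta_\rho$ satisfies $\eta_\rho(\sigma)=1$ by the spin condition,
$$q_\rho(\floor{\gamma})=\eta_\rho(\sigma)+\eta_\rho(\widetilde{\gamma})=1+\eta_\rho(\widetilde{\gamma})\pmod 2.$$
Thus it suffices to prove $(-1)^{\eta_\rho(\widetilde{\gamma})}=\sgn\tr\rho([\gamma])$.

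For the key geometric step, I would work upstairs in the universal cover. Under the identifications $P_{SO}(\bh)\cong PSL(2,\br)$ and $P_{SO}(\Sigma)\cong PSL(2,\br)/\overline{\rho}(\pi_1\Sigma)$, the cohomology class $\eta_\rho$ is the one classifying the double cover $SL(2,\br)/\rho(\pi_1\Sigma)\to P_{SO}(\Sigma)$, so that $\eta_\rho(\widetilde{\gamma})=0$ precisely when $\widetilde{\gamma}$ lifts to a closed loop in this cover. Lift $\gamma$ to $\bh$: it becomes a geodesic arc from some basepoint $z_0$ to $\overline{\rho}([\gamma])\cdot z_0$, and the tangential frame along this arc traces a path in $PSL(2,\br)$ from the identity to a conjugate of $\overline{\rho}([\gamma])$. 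After conjugating so that the geodesic is the imaginary axis (in the hyperbolic case), this path is the one-parameter subgroup $t\mapsto\bigl(\begin{smallmatrix} e^{t/2} & 0 \\ 0 & e^{-t/2}\end{smallmatrix}\bigr)$ for $t\in[0,\ell(\gamma)]$, which lies already in $SL(2,\br)$ and ends at the element with \emph{positive} trace $2\cosh(\ell(\gamma)/2)$. Consequently the lift to $SL(2,\br)/\rho(\pi_1\Sigma)$ closes iff $\rho([\gamma])$ is itself the positive-trace lift of $\overline{\rho}([\gamma])$, giving $(-1)^{\eta_\rho(\widetilde{\gamma})}=\sgn\tr\rho([\gamma])$. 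Combining with the identity $q_\rho(\floor{\gamma})=1+\eta_\rho(\widetilde{\gamma})$ yields the claimed formula.

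The main obstacle is verifying that the tangential framing path really is the one-parameter subgroup of hyperbolic translations along the geodesic, rather than some twist of it (the right choice of basepoint frame and normalisation is what makes this clean). The parabolic case, relevant for cusp boundary classes where $|\tr|=2$, requires a parallel argument with the horocyclic one-parameter subgroup $t\mapsto\bigl(\begin{smallmatrix} 1 & t \\ 0 & 1\end{smallmatrix}\bigr)$ in place of the hyperbolic translation subgroup; the same positive-trace lift structure gives the same conclusion and produces the distinction between Neveu-Schwarz and Ramond cusps in accordance with Definition~\ref{NS}*.
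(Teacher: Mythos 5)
Your proof is correct, and it essentially reproduces the direct argument the paper gives at the end of its own proof (after ``one can also see the relationship to the sign of the trace directly as follows''): localise to an annular neighbourhood of the geodesic, recognise the spin structure as the double cover $SL(2,\br)/\bz\to PSL(2,\br)/\bz$, and observe that the tangential lift closes upstairs exactly when $\rho([\gamma])$ is the positive-trace lift of $\overline{\rho}([\gamma])$. The paper's \emph{leading} argument is different: it first reduces to a single computation by invoking locality and deformation invariance of both sides of \eqref{qfrep}, then verifies the null-homologous case by showing $\tr\rho([\gamma])=\tr(ABA^{-1}B^{-1})<0$ for the boundary of a one-holed torus, and finally transfers between the $\pm$-trace cases via the affine $H^1(\Sigma;\bz_2)$-action on spin structures. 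Your route makes explicit which one-parameter subgroup (hyperbolic translations, or unipotent translations at a cusp) the tangential framing traces, and reads off the sign of the endpoint in $SL(2,\br)$; the paper's lead argument hides that computation inside a trace identity verified on a small example. One caveat worth making explicit when you write this up: for a geodesic the tangential frame \emph{is} the parallel frame, so the tangential lift really is the horizontal lift and coincides with the hyperbolic one-parameter subgroup; for a horocycle the tangential frame is not parallel (the horocycle has geodesic curvature $1$), but since $\overline{\rho}([\gamma])$ preserves the horocycle and its orientation it still carries initial tangential frame to final tangential frame, so after trivialising by the initial frame the path is again the unipotent one-parameter subgroup based at the identity, and the positive-trace endpoint conclusion goes through unchanged.
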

\begin{proof}
Note that the right hand side of \eqref{qfrep} depends only on the homology class $\floor{\gamma}\in H_1(\Sigma,\bz_2)$ since $\floor{\gamma}$ uniquely determines $[\gamma]$ up to conjugation and trace is conjugation invariant.  

Evaluation of the quadratic form $q_\rho$ depends only on a neighbourhood of a simple loop in $\Sigma$ representing $[\gamma]$ since it uses only the tangential lift.  By continuity, the discrete-valued quadratic form does not change in a continuous family.  The sign of the trace separates the hyperbolic elements of $SL(2,\br)$ into two components hence it does not change in a continuous family.   To prove \eqref{qfrep}, we may first deform the representation $\rho:\pi_1\Sigma\to SL(2,\br)$ to any Fuchsian representation in the same connected component.  Moreover, we can use deformations of the representation defined only in a neighbourhood of a simple closed geodesic, that do not necessarily extend to $\Sigma$.

The dependence on a neighbourhood of a simple closed geodesic and deformation invariance of both sides of \eqref{qfrep} reduces the lemma to a single calculation.  We can take any simple closed geodesic in any hyperbolic surface.  The geodesic boundary of a one-holed torus $\Sigma$ is a well-studied example.  Given a Fuchsian representation $\overline{\rho}:\pi_1\Sigma\to PSL(2,\br)$ and $A, B\in PSL(2,\br)$ the image of the generators of $\pi_1\Sigma$, the trace of the commutator $ABA^{-1}B^{-1}$ is well-defined independently of the lift of $\overline{\rho}$ to $\rho$.  The following explicit calculation shows that $\tr(ABA^{-1}B^{-1})<0$.  Conjugate $A$ and $B$ so that $A$ is diagonal:
\[ A=\left(\begin{array}{cc}\lambda&0\\0&\lambda^{-1}\end{array}\right),\quad B=\left(\begin{array}{cc}a&b\\c&d\end{array}\right).
\]
The invariant geodesic of $A$ is given by $x=0$ in $\bh=\{x+iy\mid y>0\}$.  The invariant geodesics of $A$ and $B$ must meet since they lift from generators of $\pi_1$ of the torus.  The two fixed points of $B$ are the roots $z_1$ and $z_2$ of $cz^2+(d-a)z-b=0$, hence $z_1z_2=-b/c$.  They must lie on either side of $0$ on the real axis, hence their product is negative so $bc>0$.   
By direct calculation, $\tr(ABA^{-1}B^{-1})=1-(\lambda^2+\lambda^{-2}-1)bc<1$ since $bc>0$.  By assumption, $\Sigma$ is hyperbolic, so  $|\tr(ABA^{-1}B^{-1})|\geq 2$, hence we must have $\tr(ABA^{-1}B^{-1})\leq -2<0$.

The homology class $\floor{\gamma}$ represented by $\rho([\gamma])=ABA^{-1}B^{-1}$ is trivial hence $q(\floor{\gamma})=0$ and we have just shown $\tr(\rho([\gamma]))<0$ which agrees with \eqref{qfrep}.  Actually it proves \eqref{qfrep} since an element $\eta\in H^1(\Sigma,\bz_2)$ that is non-trivial on a homology class, say $\eta([C])=1$, sends $q(C)\mapsto q(C)+1$ and $\rho(C)\mapsto -\rho(C)\in SL(2,\br)$ which flips the sign of the trace, proving the equivalence of the negative and positive trace cases of \eqref{qfrep}.  Although a general element of a fundamental group is not a commutator, the neighbourhood of any simple closed geodesic is canonical hence behaves as in the calculated example and the lemma is proven.\\

The reduction of \eqref{qfrep} to the single calculation above is convenient, but one can also see the relationship to the sign of the trace directly as follows.  Since $q_\rho$ depends only on a neighbourhood of a simple loop we may assume that $\pi_1\Sigma=\bz$ and $\Sigma=\bh/\bz$ is a hyperbolic annulus with a unique simple closed geodesic $C\subset \Sigma$.  The spin structure is the double cover $ SL(2,\br)/\bz\to  PSL(2,\br)/\bz$.  We may deform the generator $g\in SL(2,\br)$ of $\bz\cong\langle g\rangle$ to any given element, for example a diagonal element, with trace of the same sign.  The tangential lift $\tilde{C}$ of the simple closed geodesic $C$ defines an element of $\pi_1(PSL(2,\br)/\bz)$.  If we start upstairs at $I\in SL(2,\br)/\bz$ and move around the loop downstairs, then the lift of the loop is again a loop in $SL(2,\br)/\bz$ precisely when $\sgn\tr(g)>0$ because $g$ can be deformed to $I$.  In other words $\eta_\rho(\tilde{C})=0$.  The holonomy is non-trivial when $\sgn\tr(g)<0$, or $\eta_\rho(\tilde{C})=1$.  Since $\ell(\floor{\gamma})=\sigma+[\tilde{C}]$ then we have $q_{\eta_\rho}(\floor{\gamma})=\eta_\rho\circ\ell(\floor{\gamma})=\eta_\rho(\sigma)+ \eta_\rho([\tilde{C}])=1$ when $\sgn\tr(g)>0$ and $q_{\eta_\rho}(\floor{\gamma})=0$ when $\sgn\tr(g)<0$ as required.  
\end{proof}

The set of hyperbolic and parabolic elements of $SL(2,\br)$ satisfy $|\tr\rho([\gamma])|\geq 2$, hence it has two components determined by the sign of the trace. 
Given a Fuchsian representation $\rho:\pi_1\Sigma\to SL(2,\br)$, Definition~\ref{NS} and Lemma~\ref{qufrep} show that a boundary class $[\gamma]$ is Neveu-Schwarz if $\tr\rho([\gamma])<0$ and Ramond if $\tr\rho([\gamma])>0$.

A consequence of Lemma~\ref{qufrep} and the homological nature of the quadratic form is the following property. 
\begin{cor}  \label{negtrace} 
Let $\Sigma$ be a surface with boundary classes $\gamma_1,...\gamma_n$.  Any Fuchsian representation $\rho:\pi_1\Sigma\to SL(2,\br)$ satisfies
\[(-1)^n\prod_{i=1}^n\tr(\rho([\gamma_i]))>0.
\]
\end{cor}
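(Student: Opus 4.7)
The plan is to combine Lemma~\ref{qufrep} with the fact that the sum of boundary classes vanishes in $H_1(\Sigma;\bz_2)$, together with the quadratic form axiom applied to pairwise disjoint representatives.

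First, I would apply Lemma~\ref{qufrep} to each boundary class $\gamma_i$, which is simple since it can be realized by a small embedded loop around $p_i$. This gives
\[
\sgn\tr\rho([\gamma_i]) \;=\; -(-1)^{q_\rho(\floor{\gamma_i})},
\]
so taking the product over $i$,
\[
(-1)^n\prod_{i=1}^n \sgn\tr\rho([\gamma_i]) \;=\; (-1)^{\sum_{i=1}^n q_\rho(\floor{\gamma_i})}.
\]
It therefore suffices to prove that $\sum_i q_\rho(\floor{\gamma_i})$ is even.

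Next, I would observe that $\sum_{i=1}^n \floor{\gamma_i}=0$ in $H_1(\Sigma;\bz_2)$. This is immediate from the standard presentation of $\pi_1\Sigma$ used in the construction of lifts earlier in the paper: the relation $\prod_i[a_i,b_i]\prod_j c_j=\un$ abelianizes to $\sum_j\floor{c_j}=0$, and the $c_j$ are precisely representatives of the boundary classes $\floor{\gamma_j}$. Now I would apply the quadratic form property $q_\rho(a+b)=q_\rho(a)+q_\rho(b)+(a,b)$ iteratively: choosing the $\gamma_i$ as pairwise disjoint embedded loops around distinct punctures, the $\bz_2$ intersection numbers $(\floor{\gamma_i},\floor{\gamma_j})$ all vanish, so
\[
q_\rho\!\left(\sum_{i=1}^n\floor{\gamma_i}\right) \;=\; \sum_{i=1}^n q_\rho(\floor{\gamma_i}) \pmod 2.
\]
Since the left-hand side equals $q_\rho(0)=0$, the sum on the right is even, and hence $(-1)^n\prod_i\sgn\tr\rho([\gamma_i])=1$, which is the desired conclusion.

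There is no serious obstacle here; the only points requiring care are the vanishing of $\sum\floor{\gamma_i}$ in $H_1(\Sigma;\bz_2)$ and the vanishing of pairwise intersection numbers, both of which follow from choosing the boundary classes to be disjointly embedded small loops around the punctures and from the standard surface group relation.
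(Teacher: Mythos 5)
Your proof is correct and makes explicit exactly what the paper leaves implicit: the paper justifies the corollary with the one-line remark that it is "a consequence of Lemma~\ref{qufrep} and the homological nature of the quadratic form," which unwinds to precisely the observation that $\sum_i\floor{\gamma_i}=0$ in $H_1(\Sigma;\bz_2)$ together with additivity of $q_\rho$ on pairwise disjoint classes forces the number of Ramond boundary classes to be even (as the paper also notes earlier in \S\ref{qf}). Your argument is the same one, spelled out.
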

This property of the product of traces of Fuchsian representations into $SL(2,\br)$ has been studied particularly in the 2-generator free group case---as the negative trace theorem in \cite{MasMat}---proving that for the pair of pants and the once-punctured torus, the product of the traces of the boundary classes is negative.

\subsection{Flat bundles}

In this section we realise the spinor bundle $S_\Sigma\to\Sigma$ of a hyperbolic surface equipped with a spin structure as a flat bundle.  Equivalently, there exists a flat connection on $S_\Sigma$, which must differ from the lift of the Levi-Civita connection by cohomological considerations---see Remark~\ref{flateuler}.  The flat structure is visible via representations of $\pi_1\Sigma$ into $SL(2,\br)$.
\subsubsection{}  \label{sec:spinor} 
The right action of $\text{Spin}(2)=SO(2)$ on $P_{\text{Spin}}(\Sigma)\cong SL(2,\br)/\rho(\pi_1\Sigma)$ (where $\rho(\pi_1\Sigma)$ acts on the left of $SL(2,\br)$) is used to define the associated spinor bundle 
\begin{equation}  \label{spinor}
S_\Sigma=P_{\text{Spin}}(\Sigma)\times_{SO(2)}\bc^2\cong\left(\bh\times\bc^2\right)/\rho(\pi_1\Sigma).
\end{equation}
The flat real bundle $T_\Sigma^\frac12$ is obtained by replacing $\bc^2$ with $\br^2$ in \eqref{spinor}.  The right hand side of \eqref{spinor} defines a flat bundle over $\Sigma$ associated to the representation $\rho:\pi_1\Sigma\to SL(2,\br)$ where the action is given by $g\cdot(z,v)=(g\cdot z,g\cdot v)$.  The map $SL(2,\br)\times\bc^2\ni(g,u)\mapsto (g\cdot i,gu)\in\bh\times\bc^2$ defines the isomorphism in \eqref{spinor}.   It is well-defined on orbits $(gk^{-1},ku)$, $k\in SO(2)$  and descends to the quotient by $\rho(\pi_1\Sigma)$ on both sides.   

The spinor bundle $S_\Sigma$ is flat hence holomorphic.  We show below that $T_\Sigma^\frac12$ is a subbundle of $S_\Sigma$ in two different ways, compatible with the flat, respectively holomorphic, structure of $S_\Sigma$.   It is the underlying flat real bundle $T_\Sigma^\frac12\stackrel{r}{\to} S_\Sigma$ which is the fixed point set of the real involution on $S_\Sigma$.  It is also a holomorphic subbundle $T_\Sigma^\frac12\stackrel{h}{\to}  S_\Sigma$ which is an eigenspace of the action of $SO(2)$.  The images of $r$ and $h$ intersect trivially.

The weights $\chi^{\pm 1}$, defined in \ref{spin}, of the $SO(2)$ representation of $\bc^2=\bc_\chi\oplus\bc_{\chi^{-1}}$ defines a decomposition of $S_\Sigma$ into holomorphic line bundles $S_\Sigma=T_\Sigma^\frac12\oplus T_\Sigma^{-\frac12}$.  With respect to this decomposition, $SL(2,\br)$ acts via $SU(1,1)$, i.e. the matrix of any $g\in SL(2,\br)$, with respect to a basis of eigenvectors of $\chi^{\pm 1}$, lives in $SU(1,1)$.  With respect to the decomposition $\bc^2=\bc_\chi\oplus\bc_{\chi^{-1}}$, the real structure $\sigma$ on $\bc^2$ (which is complex conjugation with respect to a complex structure different to that on $\bc_\chi\oplus\bc_{\chi^{-1}}$) is given by $(u,v)\mapsto(\overline{v},\overline{u})$.  The real structure commutes with the actions of the structure groups of the bundle, $SO(2)$ on the left hand side of \eqref{spinor} and $SL(2,\br)$ on the right hand side of \eqref{spinor}.  (Note that $SL(2,\br)$ commutes with complex conjugation and $SU(1,1)$ commutes with $\sigma(u,v)=(\overline{v},\overline{u})$ which is the same group action and real structure with respect to different bases.)  Hence the bundle $S_\Sigma$ is equipped with a real structure $\sigma$ with fixed point set the underlying flat real bundle $T_\Sigma^\frac12$, obtained by replacing $\bc^2$ with $\br^2$ on both sides of \eqref{spinor}.  In \ref{realstr} the real structure on $S_\Sigma$ will involve the Hermitian metric used to reduce the structure group to $SO(2)$.   

\begin{remark}  \label{flateuler}
Note that the flat bundle $T_\Sigma^\frac12$ has non-zero Euler class.  The Euler class can be obtained via a metric connection on $T_\Sigma^\frac12$ as described in Section~\ref{sec:eulerform}, so in particular if the metric connection were flat, the Euler class would vanish.   There is no contradiction here because $\br^2$ admits no metric invariant under $SL(2,\br)$, so we cannot find a metric on $T_\Sigma^\frac12$ which is preserved by its flat connection.  This example is discussed by Milnor and Stasheff in \cite[p.312]{MStCha}.
\end{remark}

\subsubsection{}   \label{conjiso}
A Hermitian metric $h$ on a line bundle $L\to\Sigma$ defines an isomorphism 
$\overline{L}\stackrel{\cong}{\to}L^\vee$ 
by $\ell\mapsto h(\overline{\ell},\cdot)$, where $\overline{L}$ is the conjugate bundle, defined via conjugation of transition functions.  For example, a metric on a Riemann surface compatible with its conformal structure is equivalent to a Hermitian metric $h^2$ on $T_\Sigma$, and moreover it is equivalent to a Hermitian metric on any power $K_{\Sigma}^{\otimes n}$ such as a choice of spin structure $K_{\Sigma}^{1/2}$.  Hence 
\[\overline{K_{\Sigma}^{\otimes n}}\ \stackrel{h^*}{\cong}\left(K_{\Sigma}^{-1}\right)^{\otimes n}\]  
where the isomorphism $h^*$ depends on the Hermitian metric on $K_{\Sigma}^{\otimes n}$ via $\ell\mapsto h(\overline{\ell},\cdot)^{2n}$.

\subsubsection{} \label{realstr}
The real structure $\sigma$ defined on the spinor bundle $S_\Sigma=T_\Sigma^\frac12\oplus T_\Sigma^{-\frac12}$ in \ref{sec:spinor} is induced by the isomorphism $\overline{T}_\Sigma^{\frac12}\stackrel{h^*}{\cong}T_\Sigma^{-\frac12}$, from the Hermitian metric $h$ on $T_\Sigma^\frac12$ which is the square root of the hyperbolic metric on $\Sigma$.  It is defined on local sections by
\[\sigma(u,v)=(h^{-1}\overline{v},h\overline{u}).\]  
The underlying real bundle $T_\Sigma^\frac12$ is the subbundle of fixed points of $\sigma$ which is locally given by $(u,h\overline{u})$.  In particular $u\mapsto(u,h\overline{u})$ defines a natural isomorphism between the flat real subbundle and the holomorphic subbundle given by an eigenspace of the action of $SO(2)$, both isomorphic to $T_\Sigma^\frac12$.

\subsubsection{} \label{flatcoh}
A flat bundle $E$ over a surface $\Sigma$ defines a locally constant sheaf given by its sheaf of locally flat sections which we also denote by $E$.  We denote its sheaf cohomology by $H^i_{dR}(\Sigma,E)$.   We will apply this to the spinor bundle $E=S_\Sigma$ and its underlying real bundle $E=T_\Sigma^\frac12$.  The sheaf cohomology can be calculated in different ways, and the label $dR$ for de Rham, following Simpson \cite{SimHig}, refers to its calculation via the following complex which uses the covariant derivative $d_A$ defined by the flat connection on $E$:
\begin{equation}  \label{covcomplex}
A^0_\Sigma(E)\stackrel{d_A}{\longrightarrow}A^1_\Sigma(E)\stackrel{d_A}{\longrightarrow}A^2_\Sigma(E).
\end{equation}
Here $A^k_\Sigma(E):=\Gamma(\Sigma,\Lambda^k(T^*\Sigma)\otimes E)$ denotes global $C^\infty$ differential $k$-forms with coefficients in $E$.  It defines a complex because $d_A\circ d_A=F^A\in\Omega^2(\text{End}E)$ is given by the curvature which vanishes in this case.  Define $H^i_{dR}(\Sigma,E)$ for $i=0,1,2$ to be the cohomology of the complex.  We rarely use the complex \eqref{covcomplex} directly and instead mainly use \v{C}ech cohomology to calculate $H^i_{dR}(\Sigma,E)$.

\subsubsection{} \label{cech}
The sheaf cohomology $H^i_{dR}(\Sigma,E)$ can be calculated using \v{C}ech cohomology applied to an open cover of $\Sigma$ obtained from a triangulation.  A triangulation of $\Sigma$ is a simplicial complex  $\cc=\displaystyle\mathop{\cup}_{k=0}^2\cc_k$ where $\cc_k$ denotes $k$-simplices $\sigma:\Delta_k\to\Sigma$, and we further require the regularity condition that each 2-simplex is a homeomorphism onto its image.  The regularity condition ensures that 2-simplices incident at an edge or vertex are distinct.  We identify simplices with their images in $\Sigma$ and refer to them as faces, edge and vertices of the triangulation.   To each simplex $\sigma$ of the triangulation associate the open set $U_\sigma\subset\Sigma$ given by the union of the interiors of all simplices whose closure contains $\sigma$.  Hence, to each vertex of the triangulation $v\in\cc_0$, associate the open set $U_v\subset\Sigma$ given by the union of the interiors of all simplices whose closure meets $v$, as in Figure~\ref{covtri}, so it includes the vertex $v$, no other vertices, and the interiors of all incident edges and faces.  
\begin{center}
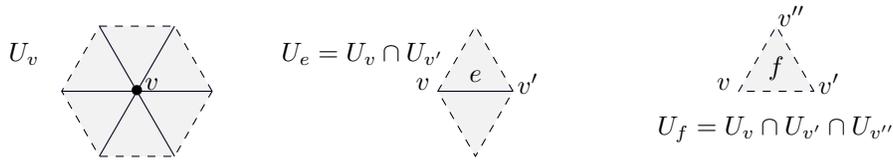

\begin{tikzpicture}[scale=1]
\filldraw[fill=gray!10!white, dashed, draw=blue!10!black]
(1,0)--(.5,.866)--(-.5,.866)--(-1,0)--(-.5,-.866)--(.5,-.866)--cycle;
\draw[ draw=blue!10!black]
(-1,0)--(1,0);
\draw[ draw=blue!10!black]
(-.5,-.866)--(.5,.866);
\draw[ draw=blue!10!black]
(-.5,.866)--(.5,-.866);
\draw (.2,.1) node {$v$};
\draw (0,0) node {$\bullet$};
\draw (-1.5,.5) node {$U_v$};

\filldraw[fill=gray!10!white, dashed, draw=blue!10!black]
(5,0)--(4.5,.866)--(4,0)--(4.5,-.866)--cycle;
\draw[ draw=blue!10!black]
(4,0)--(5,0);
\draw (3.8,.1) node {$v$};
\draw (5.2,.1) node {$v'$};
\draw (4.5,.2) node {$e$};
\draw (3,.5) node {$U_e=U_v\cap U_{v'}$};

\filldraw[fill=gray!10!white, dashed, draw=blue!10!black]
(9,0)--(8.5,.866)--(8,0)--cycle;
\draw (7.8,.1) node {$v$};
\draw (9.2,.1) node {$v'$};
\draw (8.7,1) node {$v''$};
\draw (8.5,.3) node {$f$};
\draw (8.5,-.5) node {$U_f=U_v\cap U_{v'}\cap U_{v''}$};
\end{tikzpicture}
\captionof{figure}{Open cover associated to triangulation}\label{covtri}
\end{center}

This produces an open cover:
\begin{equation}  \label{tricov} 
\Sigma=\bigcup_{\sigma\in\cc}U_\sigma.
\end{equation} 
We allow more general cell decompositions where faces of the triangulation can be polygons, not only triangles.  For $v$ and $v'$ vertices of an edge $e$, and the vertices of a face $f$ we have
\[U_e=U_v\cap U_{v'},\quad U_f=\bigcup_{v\in f}U_v.\]  
Note that $U_v\cap U_{v'}$ or $U_v\cap U_{v'}\cap U_{v''}$ is empty if there is no edge containing $v$ and $v'$, or face containing $v$, $v'$ and $v''$.  For example, given a triangulation, where faces are indeed triangles, for more than three distinct vertices $\{v_i\}$ the intersection is empty $\displaystyle\bigcap_i U_{v_i}=\varnothing$.
On a compact surface, one can define the open cover using only the vertices $\Sigma=\bigcup_{v\in\cc_0}U_v$ so that the sets associated to edges and faces are not part of the cover, and instead arise as intersections.  This results in fewer coboundary maps in the construction of \ref{cechcoh}.

We allow a generalisation of triangulations, where some of the vertices are missing (from both $\Sigma$ and the triangulation) which is particularly useful for non-compact $\Sigma$.  In this case, the regularity condition on a face is required only in its domain which is a 2-simplex with some vertices removed.  Hence $U_e$ and $U_f$ may not arise as intersections of $U_v$ for $v\in\cc_0$ justifying the open cover \eqref{tricov}.   The set of vertices may be empty, as is the case for ideal triangulations, in which case there are no open sets $U_v$.

\subsubsection{}  \label{cechcoh}
The \v{C}ech cohomology of the sheaf of locally constant sections can be calculated from a subspace of the \v{C}ech cochains with respect to the open cover \eqref{tricov} of the sheaf of locally constant sections of $E$, defined by 
\[\displaystyle C^k(\Sigma,E)=\bigoplus_{\sigma\in\cc_k} \Gamma(U_\sigma,E),\quad k=0,1,2.\]  
The coboundary map $\delta$ is given by restriction and the \v{C}ech cohomology $H^\bullet_{dR}(\Sigma,E)$ is equal to the cohomology of the complex
\begin{equation} \label{cechseq}
0\to \bigoplus_{v\in\cc_0} \Gamma(U_v,E)\stackrel{\delta}{\to}\bigoplus_{e\in\cc_1} \Gamma(U_e,E)\stackrel{\delta}{\to}\bigoplus_{f\in\cc_2} \Gamma(U_f,E)\to 0.
\end{equation}
Note that $C^k(\Sigma,E)=0$ for $k>2$ since we have discarded such intersections which contribute trivially to the cohomology.  If we allow more general cell decompositions where faces of the triangulation can be polygons, not only triangles, then we include non-trivial $C^k(\Sigma,E)$ for $k>2$, but still $H^k_{dR}(\Sigma,E)=0$ for $k>2$.

Since the cohomology of \eqref{cechseq} defines the sheaf cohomology $H_{dR}^k(\Sigma,E)$ it is independent of the choice of cell decomposition of $\Sigma$.  It follows that duality of triangulations gives duality of cohomology groups.

\subsubsection{}   \label{cechhom}

\v{C}ech cohomology was calculated in \ref{cechcoh} using a {\em good open cover}, meaning that intersections of open sets in the cover are contractible, which is achieved from the regularity condition on triangulations.  

If we relax the regularity condition in \ref{cech} on a triangulation $\cc=\displaystyle\mathop{\cup}_{k=0}^2\cc_k$ of $\Sigma$ so that a 2-simplex is not necessarily one-to-one onto its image, we describe a construction, used in \cite{SWiJTG}, of the sheaf cohomology of $E$ as follows.  It coincides with the dual of the construction in \ref{cechcoh} when the triangulation satisfies the regularity condition.

For $\sigma\in\cc$, let $\cv_\sigma=H^0(\sigma,E)$ denote the covariant constant sections $s|_{\sigma}$ of $E$ over $\sigma$.  Here we identify $\sigma$ with its image.
Define 
\[C_k(\Sigma,E)=\displaystyle\bigoplus_{\sigma\in\cc_k}\cv_\sigma\]
and boundary maps 
\[\begin{array}{rcl}C_{k+1}(\Sigma,E)&\stackrel{\partial}{\to}& C_k(\Sigma,E)\\
s|_{\sigma}&\mapsto& s|_{\partial\sigma}=\bigoplus (-1)^{\epsilon_i }s|_{\sigma_i}
\end{array}
\]
where $\displaystyle\partial\sigma=\bigcup_i (-1)^{\epsilon_i}\sigma_i$ as oriented simplices.  
A section $s|_{\sigma}$ is well-defined on the pull-back of $E$ to the cell, but possible multiply-defined on the boundary of $\sigma$, and we use the extension from the interior in the definition of $\partial$.  This ambiguity arises precisely due to the relaxation of the regularity condition in \ref{cech}.  

It is clear that $\partial^2=0$ since the contribution at any vertex of a 2-cell essentially gives the covariant constant section extended to the vertex, appearing with opposite sign due to orientations, or vanishing of the square of the usual boundary map on simplices.  The same argument applies to higher dimensional simplices and their codimension two cells.  One can approach the vertex along two edges, and the vanishing then reflects the trivial local holonomy of the flat connection.    

Denote by $H_k(\Sigma,E)$ the homology of the complex
\[ C_2(\Sigma,E)\stackrel{\partial}{\to} C_1(\Sigma,E)\stackrel{\partial}{\to} C_0(\Sigma,E).
\]
\subsubsection{}
There is a natural symplectic structure on $S_\Sigma$ and $T_\Sigma^\frac12$ arising from the symplectic form on $\bc^2$ and $\br^2$ preserved by the $SL(2,\br)$ action.  Hence there is a natural isomorphism $C_k(\Sigma,S_\Sigma)\cong C_k(\Sigma,S_\Sigma^\vee)\cong C_k(\Sigma,S_\Sigma)^\vee$ which gives a natural isomorphism
\[C_k(\Sigma,S_\Sigma)^\vee\cong C^k(\Sigma,S_\Sigma).
\]
Moreover, $(\partial\eta,f)=(\eta,\delta f)$ since both sides use the symplectic form applied to the extension of $\eta$ and $f$ or $\eta$ and the restriction of $f$ which is the same.  Thus we see that
\[H_k(\Sigma,S_\Sigma)^\vee\cong H_{dR}^k(\Sigma,S_\Sigma)\]  
and the same isomorphism holds for $T_\Sigma^\frac12$.

When the triangulation is regular, the isomorphism between cohomology and homology is visible via the cochains in \ref{cechcoh} and the chains in \ref{cechhom} coinciding,  $C^k(\Sigma,S_\Sigma)=C_k(\Sigma,S_\Sigma)$, while the maps $\delta$ and $\partial$ go in opposite directions.  In terms of the open sets $U_\sigma$ defined in \ref{cech}, $\delta$ are restriction maps while $\partial$ are extension maps.  

\subsubsection{} \label{fatgraph} 
An ideal triangulation of a non-compact surface $\Sigma$ is a triangulation with no vertices, and all faces triangles.  The number of faces and edges is $4g-4+2n$, respectively $6g-6+3n$ for $\Sigma=\overline{\Sigma}-\{p_1,...,p_n\}$ of genus $g$.  Dual to an ideal triangulation is a trivalent fatgraph $\Gamma=V(\Gamma)\cup E(\Gamma)$ which is an embedded graph that is a deformation retract (a spine) of $\Sigma$, and consists of only vertices $V(\Gamma)$ and edges $E(\Gamma)$, and no faces.  The fatgraph $\Gamma$ has {\em type} $(g,n)$.  A trivalent fatgraph determines an ideal triangulation uniquely, and hence the two notions are equivalent.  

With respect to an ideal triangulation, $H_{dR}^k(\Sigma,T_\Sigma^\frac12)$ is conveniently calculated using the dual fatgraph.  
The complex is rather simple since there are only 2-cochains and 1-cochains.   Or dually,  using the fatgraph $\Gamma$ there are only 0-chains and 1-chains.   We can equally work with the restriction of the flat bundle $T_\Sigma^\frac12|_\Gamma$ which we also denote by $T_\Sigma^\frac12$.  Following \ref{cechhom}, for $e\in E(\Gamma)$, let $\cv_e$ denote the covariant constant sections $s|_e$ of $T_\Sigma^\frac12$ over $e$, and for $v\in V(\Gamma)$, let $\cv_v$ denote the covariant constant sections $s|_v$ of $T_\Sigma^\frac12$ over $v$.
Define 
\[C_0(\Gamma,T_\Sigma^\frac12)=\displaystyle\bigoplus_{v\in V(\Gamma)}\cv_v,\quad C_1(\Gamma,T_\Sigma^\frac12)=\displaystyle\bigoplus_{e\in E(\Gamma)}\cv_e\]
and boundary maps 
\[\begin{array}{rcl}C_{1}(\Gamma,T_\Sigma^\frac12)&\stackrel{\partial}{\to}& C_0(\Gamma,T_\Sigma^\frac12)\\
s|_e&\mapsto& s|_{\partial e}=s|_{e_+}-s|_{e_-}
\end{array}
\]
where $e_\pm\in V(\Gamma)$ are the vertices bounding the oriented edge $e$.

The sheaf cohomology $H_{dR}^k(\Sigma,T_\Sigma^\frac12)$ is given by the homology of the complex
\begin{equation}  \label{GammaHom}
C_1(\Gamma,T_\Sigma^\frac12)\stackrel{\partial}{\to} C_0(\Gamma,T_\Sigma^\frac12).
\end{equation}
We have $H_{dR}^1(\Sigma,T_\Sigma^\frac12)\cong H_1(\Gamma,T_\Sigma^\frac12)=\ker\partial$ and 
$H_{dR}^0(\Sigma,T_\Sigma^\frac12)\cong H_0(\Gamma,T_\Sigma^\frac12)=0$.  The vanishing of $H_{dR}^0(\Sigma,T_\Sigma^\frac12)$ uses the ideal triangulation so in particular there are no $0$-cochains.

\begin{thm}  \label{cohbun}
For any hyperbolic spin surface $\Sigma$ with Neveu-Schwarz geodesic boundary components of lengths $(L_1,...,L_n)\in\br_{\geq 0}^n$
\[H^1_{dR}(\Sigma,T_\Sigma^\frac12)\cong\br^{4g-4+2n}
\]
and this defines a vector bundle 
\[ E_{g,n}\to\modm^{\text{spin}}_{g,n,\vec{o}}(L_1,...,L_n)
\]
with fibres $H^1_{dR}(\Sigma,T_\Sigma^\frac12)$.
\end{thm}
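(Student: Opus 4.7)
The plan is to compute the cohomology via the fatgraph chain complex of Section~\ref{fatgraph}. Fix any ideal triangulation of $\Sigma=\overline{\Sigma}-\{p_1,\ldots,p_n\}$ and let $\Gamma$ be its dual trivalent fatgraph, which is a deformation retract of $\Sigma$. A standard Euler characteristic count for an ideal triangulation of a genus $g$ surface with $n$ punctures gives $|V(\Gamma)| = 4g-4+2n$ and $|E(\Gamma)| = 6g-6+3n$. Since $T_\Sigma^\frac12$ is a real rank two flat bundle, each space $\cv_v$, $\cv_e$ of covariant constant sections over the contractible sets $v$, $e$ has real dimension two, so
$$\dim_\br C_0(\Gamma, T_\Sigma^\frac12) = 2(4g-4+2n), \qquad \dim_\br C_1(\Gamma, T_\Sigma^\frac12) = 2(6g-6+3n).$$

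The next step is to verify $H^0_{dR}(\Sigma, T_\Sigma^\frac12)=0$. By the description in Section~\ref{fatgraph} this is equivalent to the Fuchsian representation $\rho:\pi_1\Sigma\to SL(2,\br)$ having no non-zero invariant vector in $\br^2$: if $v\neq 0$ were fixed by every $\rho(\gamma)$, then after conjugating $v$ to $(1,0)^t$ every element of the image would be upper triangular with unit diagonal, hence either trivial or parabolic. This contradicts the presence of hyperbolic elements, which are automatic in any Fuchsian representation of the non-abelian group $\pi_1\Sigma$.

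The alternating sum of dimensions in the fatgraph complex now yields
$$\dim_\br H^1_{dR}(\Sigma, T_\Sigma^\frac12) = \dim_\br C_1 - \dim_\br C_0 = 4g-4+2n,$$
proving the first assertion. For the bundle assertion, I would argue that in any local chart on $\modm_{g,n}^{\text{spin}}$ both the Fuchsian representation $\rho$ and a compatible ideal triangulation depend smoothly on the moduli parameters; the resulting continuously varying family of chain complexes, together with constancy of rank from the step above, produces local trivialisations of $E_{g,n}\to\modm_{g,n}^{\text{spin}}$. Patching these into a global smooth vector bundle, including the orbifold structure, is then standard via the equivariant construction.

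The main obstacle I anticipate is not the dimension count itself, which is transparent once the fatgraph description is invoked, but the bundle step: verifying that a continuously varying ideal triangulation can indeed be chosen on local charts and that the trivialisations are compatible with the orbifold automorphisms. The Neveu-Schwarz hypothesis enters here by restricting us to the connected component $\vec\epsilon = \vec 0$ of the spin moduli, so that the monodromy of $T_\Sigma^\frac12$ around every puncture is negative-trace parabolic by Lemma~\ref{qufrep}; consequently no puncture carries an extra fixed direction for the local system that would require separate treatment beyond the ideal triangulation argument above.
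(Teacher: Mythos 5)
Your argument takes a genuinely different route from the paper. You compute $\dim H^1_{dR}(\Sigma,T_\Sigma^{\frac12})$ by an Euler characteristic count on the two-term fatgraph complex $C_1(\Gamma)\to C_0(\Gamma)$, using only $H^0_{dR}=0$ (no $\rho$-invariant vector, since the Fuchsian group is non-elementary); the paper instead proves the dimension formula by constructing an explicit basis. It chooses an ideal triangulation whose dual fatgraph admits a \emph{dimer covering} $D\subset E(\Gamma)$ of $2g-2+n$ edges, shows the vectors $v_e$ on $D$ determine all others (this is where $g_\gamma-I$ must be invertible on every loop, using $|\tr g_\gamma|>2$ on non-boundary loops and $\tr g_\gamma\leq-2$ on boundary loops by the NS hypothesis and Lemma~\ref{qufrep}), and so obtains a canonical isomorphism $H^1_{dR}\cong(\br^2)^D$. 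Your approach is slicker and, notably, does not actually use the Neveu--Schwarz hypothesis at any point — the Euler characteristic of $C_\bullet$ and the vanishing of $H^0$ are insensitive to the boundary type — so your closing rationalisation of the NS hypothesis is not accurate; the hypothesis is really only scoping ($\modm^{\text{spin}}_{g,n}=\modm^{\text{spin}}_{g,n,\vec0}$), and it is essential to the paper's \emph{method} rather than to the dimension formula itself. The trade-off is that the paper's explicit identification $H^1_{dR}\cong(\br^2)^D$ hands you the local trivialisation of $E_{g,n}$ immediately, whereas your bundle step rests on the softer observation that a smoothly varying differential with constant corank has kernel forming a subbundle; this is correct in principle but you should flag that the chain groups $C_0$, $C_1$ are \emph{fixed} once the topological ideal triangulation is chosen on a chart of Teichm\"uller space (only $\partial$ varies), which is what makes the continuity argument straightforward, and that the mapping class group acts equivariantly on the total family.

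The concrete gap is the compact case $n=0$: a closed hyperbolic surface has no ideal triangulation, so your entire argument via ideal triangulations and $C_1\to C_0$ does not apply. The theorem claims $H^1_{dR}(\Sigma,T_\Sigma^{\frac12})\cong\br^{4g-4}$ and the bundle statement over $\modm^{\text{spin}}_{g}$ for $n=0$ as well. The paper handles this by a Mayer--Vietoris decomposition $\Sigma=\Sigma_1\cup_\gamma\Sigma_2$ along an annular neighbourhood of a simple closed curve; the overlap has vanishing cohomology because the holonomy $g_\gamma$ is hyperbolic so $g_\gamma-I$ is invertible, and the pieces reduce to the already-proved non-compact case. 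You would need either to reproduce that decomposition argument (and observe, as the paper does, that it only makes sense locally over Teichm\"uller space since the mapping class group does not preserve the decomposition) or else replace it with a three-term Euler characteristic count $C_2\to C_1\to C_0$ for an ordinary triangulation together with $H^0=0$ and $H^2=0$ (the latter by Poincar\'e duality plus the symplectic self-duality of $T_\Sigma^{\frac12}$). Either repair is available, but as written the compact case is missing.
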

\begin{proof}
First consider the case when $\Sigma$ is non-compact hence admits an ideal triangulation.  
A hyperbolic spin surface is equivalent to a flat $SL(2,\br)$ connection over the dual fatgraph $\Gamma$ of the (truncated) ideal triangulation of $\Sigma$.  Arbitrarily orient each edge of $\Gamma$.  The flat connection is equivalent to associating an element $g_e\in SL(2,\br)$ to each oriented edge $e$ of $\Gamma$.   The holonomy around any oriented loop $\gamma\subset\Gamma$ is the product $g_\gamma=\prod g_e^{\pm1}$ of the elements along edges of the loop with $\pm1$ determined by whether the orientation of the edge agrees with the orientation of the loop.  The holonomy around any oriented loop satisfies $|\tr g_\gamma|\geq 2$.

An element of $H^1_{dR}(\Sigma,T_\Sigma^\frac12)\cong\ker\partial$ in \eqref{GammaHom} is a collection of vectors $v_e\in\br^2$ assigned to each oriented edge, satisfying a condition at each vertex.  We choose the convention that the trivialisation of $T_\Sigma^\frac12$ over an oriented edge $e$ is induced from the trivialisation of $T_\Sigma^\frac12$ over its source vertex $e_-$.  Hence 
\[\partial v_e|_{e_+}=g_ev_e,\quad\partial v_e|_{e_-}=-v_e.\]
The condition at a vertex is the vanishing of the sum of contributions from the three oriented edges adjacent to the given vertex, such as $\sum g_ev_e=0$ for a vertex with only incoming edges, or more generally each summand is $g_ev_e$ or $-v_e$.

Choose an ideal triangulation of $\Sigma$ with dual fatgraph $\Gamma$ that admits a {\em dimer} covering $D\subset E(\Gamma)$ which is a collection of $2g-2+n$ edges such that each vertex of $\Gamma$ is the boundary of a unique edge in the dimer.  The existence of such an  ideal triangulation, or equivalently such a fatgraph, together with a dimer covering, is proven by construction as follows.  Any trivalent fatgraph (without dimer) can be constructed by repeated application of the following two modifications of a trivalent fatgraph.

1. Introduce two new vertices on (the interiors of) edges (possibly the same edge) and attach a new edge to these vertices as in the left figure.

2.  Introduce one new vertex on (the interior of) an edge and attach a lollipop = graph consisting of a loop attached to an edge as in the right figure.
\begin{center}

\begin{tikzpicture}[
  edge/.style={thick},
  dashededge/.style={thick, dashed}
]

\def\r{2}

\draw[dashededge]
  (\r,20) arc[start angle=20, end angle=-20, radius=\r];

\draw[dashededge]
  (\r+2,20) arc[start angle=160, end angle=200, radius=\r];
  
  \draw[edge] (2.1,19.3)--(3.9,19.3);

  \draw[dashededge]
  (\r+6,20) arc[start angle=160, end angle=200, radius=\r];

\draw[edge] (6.5,19.3) circle (.6);
\draw[edge] (7.1,19.3)--(7.9,19.3);

\end{tikzpicture}
\end{center}
Such a modification adds two vertices and three edges  $(|V|,|E|)\mapsto(|V|+2,|E|+3)$ and the Euler characteristic changes by $\chi\mapsto\chi-1$.  Apply this initially to a circle to produce any connected trivalent fatgraph.  The proof of this is seen by the reverse procedure of removing edges so that the graph remains connected (or allow disconnected fatgraphs).
To produce a fatgraph equipped with a dimer, use the above construction with the condition that one must attach only to non-dimer edges, and label the new edge, or stem of the lollipop, a dimer edge.

Given a fatgraph $\Gamma$ with dimer $D$, we will prove that for all edges $e$ of $D$ the vectors $v_e\in\br^2$ can be arbitrarily and independently assigned, and they uniquely determine the vectors on all other edges, hence they produce a basis of $2(2g-2+n)$ vectors for $H_1(\Gamma,T_\Sigma^\frac12)$.  In Remark~\ref{whitehead} below we show how to produce a basis of $2(2g-2+n)$ vectors for $H_1(\Gamma,T_\Sigma^\frac12)$ for any dual fatgraph $\Gamma$, not necessarily admitting a dimer covering.

Given $e_0\in D$, choose an arbitrary non-zero $v_{e_0}\in\br^2$ and set $v_{e}=0$ for all other dimer edges $e\in D\backslash\{e_0\}$.  Since $\Gamma$ is trivalent, $\Gamma\backslash D$ is a collection of embedded loops.  Along an oriented loop $\gamma\subset\Gamma\backslash D$, the vertex condition on elements of $\ker\delta$ uniquely determines each vector $v_e$ on an edge $e\in\gamma$ from the preceding edge.  For example, if the orientation on each edge agrees with the orientation on $\gamma$, then $ge_i=e_{i+1}$ where $e_i$ and $e_{i+1}$ are consecutive oriented edges in $\gamma$.  

If a loop $\gamma\subset\Gamma\backslash D$ avoids $e_0$, then we must have $v_e=g_\gamma v_e$ where $e$ is an edge of $\gamma$ and $g_\gamma$ is the holonomy around the loop starting from $e$.  But $g_\gamma-I$ is invertible, or equivalently $g_\gamma$ does not have eigenvalue 1, since non-boundary loops satisfy $|\tr g_\gamma|> 2$ and boundary loops satisfy $\tr g_\gamma\leq - 2$ by the Neveu-Schwarz requirement.  Hence $v_e=0$ for all edges $e\in\gamma$.

If a loop $\gamma\subset\Gamma\backslash D$ meets $e_0$, then we now have 
\[(g_\gamma-I)v_e-v_{e_0}=0\]
(or  $(g_\gamma-I)v_e+g_{e_0}v_{e_0}=0)$ and since $g_\gamma-I$ is invertible this uniquely determines $v_e\in\br^2$ and all vectors along $\gamma$.  

Hence a choice of non-zero $v_{e_0}\in\br^2$ uniquely determines a vector in $\ker\delta$.  Clearly elements of $\ker\delta$ associated to different dimer edges are linearly independent because each vanishes on the other dimer edges.  We also see that if an element of $\ker\delta$ vanishes on all dimer edges then it vanishes identically.  Hence each edge $e\in D$ determines two independent vectors in $H_1(\Gamma,T_\Sigma^\frac12)$, and the union over the $2g-2+n$ edges in $D$ produces a basis of $2(2g-2+n)$ vectors for $H_1(\Gamma,T_\Sigma^\frac12)$.

We have proved $H^1_{dR}(\Sigma,T_\Sigma^\frac12)\cong\br^{4g-4+2n}$ which is the first part of the Theorem.  In fact we have a canonical isomorphism between $H^1_{dR}(\Sigma,T_\Sigma^\frac12)$ and $(\br^2)^D$, for $D\subset E(\Gamma)$ a dimer covering.  But this gives a local trivialisation over the moduli space $\modm^{\text{spin}}_{g,n,\vec{o}}(L_1,...,L_n)$ since a choice of ideal triangulation defines the Teichm\"uller space of the moduli space.  A choice of $D\subset E(\Gamma)$ is well-defined on the Teichm\"uller space producing a trivial bundle $(\br^2)^D$, from which we get a local trivialisation over the moduli space.\\

When $\Sigma$ is compact it has genus $g>1$, and we choose a decomposition $\Sigma=\Sigma_1\cup\Sigma_2$ into genus $g-1$ and genus 1 surfaces glued along boundary annuli.  We have $H^k_{dR}(\Sigma_1\cap\Sigma_2,T_\Sigma^\frac12)=0$ for $k=0,1$ by hyperbolicity of the holonomy as follows.  For $U\cup V =\Sigma_1\cap\Sigma_2$, the sequence \eqref{cechseq} becomes
\[0\to \Gamma(U,T_\Sigma^\frac12)\stackrel{\delta}{\to}\Gamma(V,T_\Sigma^\frac12)\to 0
\]
with boundary map $\delta=g_\gamma-I$ where $g_\gamma$ is the holonomy around a loop $\gamma\subset\Sigma_1\cap\Sigma_2$.  But $g_\gamma$ is hyperbolic so it satisfies $|\tr g_\gamma|>2$ and in particular $g_\gamma-I$ is invertible, and the cohomology groups $H^k_{dR}(\Sigma_1\cap\Sigma_2,T_\Sigma^\frac12)=0$ vanish.

Hence the Mayer-Vietoris sequence gives
\[
0\to H^1_{dR}(\Sigma,T_\Sigma^\frac12)\to H^1_{dR}(\Sigma_1,T_\Sigma^\frac12)\oplus H^1_{dR}(\Sigma_2,T_\Sigma^\frac12)\to 0.
\]
We have shown above that $H^1_{dR}(\Sigma_1,T_\Sigma^\frac12)\cong\br^{4g-6}$ and $H^1_{dR}(\Sigma_2,T_\Sigma^\frac12)\cong\br^2$ and they define local trivialisations over the respective moduli spaces of bundles $E_{g-1,1}$ and $E_{1,1}$.  This gives a local decomposition $E_g\cong E_{g-1,1}\oplus E_{1,1}$ proving that $E_g$ is indeed a vector bundle.   The decomposition $\Sigma=\Sigma_1\cup\Sigma_2$ does not make sense over the moduli space since the mapping class group does not preserve the decomposition, and is only well-defined over Teichm\"uller space.  Nevertheless, it does make sense locally which is enough to prove that $E_g$ is a rank $4g-4$ vector bundle.

\end{proof}

\begin{remark}  \label{whitehead}
In Theorem~\ref{cohbun}, one can drop the assumption that the dual fatgraph $\Gamma$ of the ideal triangulation of $\Sigma$ must admit a dimer covering.  On any dual fatgraph $\Gamma$, there exists a collection $C\subset E(\Gamma)$ of $2g-2+n$ edges of $\Gamma$ on which the vectors $v_e\in\br^2$ can be independently assigned, and which uniquely determine the vectors on all other edges.  We call such a collection $C$ a {\em base} of edges of $\Gamma$.  Each edge $e\in C$ determines two independent vectors in $H_1(\Gamma,T_\Sigma^\frac12)$, and the union over the $2g-2+n$ edges in $C$ produces a basis of $2(2g-2+n)$ vectors for $H_1(\Gamma,T_\Sigma^\frac12)$.

To prove the existence of a base of edges, begin with a fatgraph with a dimer covering as constructed in the proof of Theorem~\ref{cohbun}.  Any ideal triangulation of $\Sigma$ can be obtained from any another ideal triangulation by Whitehead moves.
\begin{center}
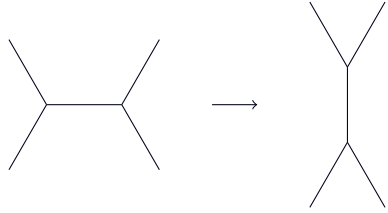

\begin{tikzpicture}[scale=1]
\draw[ draw=blue!10!black]
(-.5,-.866)--(0,0);
\draw[ draw=blue!10!black]
(-.5,.866)--(0,0);
\draw[ draw=blue!10!black]
(0,0)--(1,0);
\draw[ draw=blue!10!black]
(1,0)--(1.5,.866);
\draw[ draw=blue!10!black]
(1,0)--(1.5,-.866);

\draw[ draw=blue!10!black]
(3.5,-1.366)--(4,-0.5);
\draw[ draw=blue!10!black]
(3.5,1.366)--(4,0.5);
\draw[ draw=blue!10!black]
(4,0.5)--(4,-0.5);
\draw[ draw=blue!10!black]
(4,0.5)--(4.5,1.366);
\draw[ draw=blue!10!black]
(4,-0.5)--(4.5,-1.366);

\draw [->, draw=blue!10!black] 
(2.2,0)--(2.8,0);

\end{tikzpicture}
\captionof{figure}{Whitehead move}\label{flip}
\end{center}
Under a Whitehead move, the existence of a dimer covering is only preserved if the contracted edge is a dimer edge.  Nevertheless, we will consider Whitehead moves that destroy the dimer covering.  There is a natural bijection of edges under Whitehead moves, and a base of edges is sent to a base of edges under this bijection.   Since we compute cohomology of $\Sigma$, which is independent of the choice of $\Gamma$, there is a natural isomorphism $H_1(\Gamma,T_\Sigma^\frac12)\cong H_1(\Gamma',T_\Sigma^\frac12)$ when $\Gamma$ and $\Gamma'$ are related by a Whitehead move.  In particular, the image $C'\subset E(\Gamma')$ of a base of edges $C\subset E(\Gamma)$ under the Whitehead move inherits the following two properties of $C$---for $e\in C$ the vectors $v_e\in\br^2$ can be independently assigned, and uniquely determine the vectors on all edges in $\Gamma\backslash C$---and thus is also a base of edges.  Thus, beginning with a fatgraph equipped with a dimer, which is chosen to be the base of edges, via Whitehead moves, we can find a base of edges on any fatgraph.  The figure below gives an example of a graph that does not admit a dimer covering.  In place of a dimer, the thickened edges form a base of edges.
\end{remark}

\begin{center}
\begin{tikzpicture}[scale=.7,
  edge/.style={thick}
]

\def\r{0.5}

\coordinate (v1) at (0,0);
\coordinate (v2) at (3,0);
\coordinate (v3) at (1.5,2.5);
\coordinate (v4) at (1.5,1);

\draw[edge] (v1) circle (\r);
\draw[edge] (v2) circle (\r);
\draw[edge] (v3) circle (\r);

\draw[edge,line width=3pt] 
  (v4) -- ($(v1)! \r-.21 ! (v4)$);

\draw[edge,line width=3pt] 
  (v4) -- ($(v2)! \r-.21 ! (v4)$);

\draw[edge] 
  (v4) -- ($(v3)! \r-.18 ! (v4)$);

\node[circle, fill=black, inner sep=1.6pt] at (v4) {};

\end{tikzpicture}
\end{center}

\subsection{Higgs bundles}  \label{sec:higgs}
In this section we will prove that the restriction of the bundle $\widehat{E}_{g,n}\to\overline{\modm}_{g,n,\vec{o}}^{\text{spin}}$ defined in Definition~\ref{obsbun} to the smooth moduli space gives the bundle $E_{g,n}\to \modm^{\text{spin}}_{g,n,\vec{o}}$ defined by Theorem~\ref{cohbun} combined with the isomorphism $\modm_{g,n,\vec{o}}^{\text{spin}}\cong\modm_{g,n,\vec{o}}^{\text{spin}}(0,...,0)$.  
The constructions of the bundles $\widehat{E}_{g,n}$ and $E_{g,n}$ over the moduli spaces of stable and smooth spin curves respectively use the cohomology of different sheaves.  We will prove that over smooth spin curves $\Sigma=\overline{\Sigma}-D$ the following sheaf cohomology groups are isomorphic
\begin{equation}  \label{flatholiso}
H^1_{dR}(\Sigma,T_\Sigma^\frac12)\cong H^1(\overline{\Sigma},T_{\overline{\Sigma}}^\frac12(-D))
\end{equation}  
when the spin structure has Neveu-Schwarz boundary components.
The natural way to prove the isomorphism \eqref{flatholiso} relating flat and holomorphic structures on bundles over $\Sigma$ uses Higgs bundles.   More precisely, there is a natural identification of any flat structure on a bundle $E\to\Sigma$, with an extension of $E$ to $\overline{\Sigma}$ equipped with a holomorphic structure, Higgs field and parabolic structure.    Applied to the spinor bundle $E=S_\Sigma$, this gives a natural way to realise uniformisation of $\Sigma$ which naturally associates a unique hyperbolic metric on $\Sigma$ in the conformal class defined by $\Sigma$.  Furthermore, it gives an isomorphism between the respective moduli spaces.  We will see that the sheaves on both sides of \eqref{flatholiso} arise naturally from this proof of uniformisation.    

The use of Higgs bundles achieves two goals.  It relates the sheaf cohomologies arising from a flat structure and a holomorphic structure on a bundle.  It also relates cohomological constructions on a non-compact Riemann surface $\Sigma=\overline{\Sigma}-D$ and on the compact pair $(\overline{\Sigma},D)$.  We will start with the case when $\Sigma$ is compact, i.e. $D=\varnothing$.  This will simplify the exposition and focus only on the first goal.  Then we will consider the general case, which requires parabolic structures on bundles over $(\overline{\Sigma},D)$.  The general proof essentially follows the proof in the compact case with some technical adjustments.

\subsubsection{}  \label{higgscomp1}

Higgs bundles over a compact Riemann surface $\Sigma$ with canonical bundle $K_\Sigma$ were defined by Hitchin in \cite{HitSel} as follows.
\begin{definition}  \label{defHiggs}
A Higgs bundle over a compact Riemann surface $\Sigma$ is a pair $(E,\phi)$ where $E$ is a holomorphic vector bundle over $\Sigma$ and $\phi\in H^0(\text{End}(E)\otimes K_\Sigma)$.
\end{definition}  
The pair $(E,\phi)$ is {\em stable} if for any $\phi$-invariant subbundle $F\subset E$, i.e. $\phi(F)\subset F\otimes K_\Sigma$, we have $\frac{c_1(F)}{\text{rank\ }F}<\frac{c_1(E)}{\text{rank\ }E}$.
When $\phi=0$, every subbundle is $\phi$-invariant and the definition of stable reduces to the usual definition of stable for a holomorphic bundle $E$.

A Hermitian structure on $E$ is a Hermitian metric $H$ defined on $E$ with respect to its complex structure.  It defines a reduction of the structure group of $E$ from $GL(n,\bc)$ to $U(n)$.  The holomorphic structure and Hermitian metric $H$ on $E$ together define a unitary connection $A$ on $E$ via $d_A=\overline{\partial}+\overline{\partial}^*$, where $\overline{\partial}_A=\overline{\partial}$ is the natural operator on $E$ and $\partial_A$ is the adjoint of $\overline{\partial}_A$ with respect to $H$.  The curvature of a unitary connection $A$ on $E$ is a unitary endomorphism valued two-form $F_A$.  Since $[\phi,\phi^*]$ is also a unitary endomorphism valued two-form, they can be compared.  The connection $A$ (or equivalently the Hermitian metric $H$) is said to satisfy the Higgs bundle equations if
\begin{equation}  \label{HiggsEq}
F_A+[\phi,\phi^*]=0
\end{equation}
Importantly, \eqref{HiggsEq} is equivalent to the connection $A+\phi+\phi^*$ being a flat $SL(2,\bc)$ connection.  This relation between holomorphic and flat structures will be used to relate those structures on $T_\Sigma^\frac12$.

One can consider a broader class of sections $\phi$, allowing them to be smooth endomorphism valued one-forms and add to \eqref{HiggsEq} the equation 
\[ 
\overline{\partial}_A\phi=0
\] 
which is the condition that $\phi$ is holomorphic.  This makes the invariance of the equations under the unitary gauge group clear but now $\overline{\partial}\mapsto \overline{\partial}_A$.   Note that constant unitary gauge transformations are both holomorphic gauge transformations and smooth gauge transformations, and in particular they preserve $\overline{\partial}$.

\begin{thm}[Hitchin \cite{HitSel}]  \label{hitchin}
A stable Higgs bundle $(E,\phi)$ of degree zero admits a unique unitary connection $A$ satisfying \eqref{HiggsEq}.  Conversely a Higgs bundle $(E,\phi)$ which admits a connection $A$ satisfying \eqref{HiggsEq} is of degree zero and stable.
\end{thm}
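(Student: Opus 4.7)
The plan is to handle the two directions separately: the converse is essentially algebraic and follows from Chern--Weil theory, while the existence direction is a substantial nonlinear PDE problem requiring variational methods.

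For the converse, suppose $A$ satisfies $F_A+[\phi,\phi^*]=0$. Taking the trace and using $\tr[\phi,\phi^*]=0$ yields $\tr F_A=0$ pointwise, so Chern--Weil gives $\deg E = \frac{i}{2\pi}\int_\Sigma \tr F_A = 0$. For stability, let $F\subset E$ be a $\phi$-invariant holomorphic subbundle and let $\pi\colon E\to F$ be orthogonal projection with respect to the Hermitian metric $H$ underlying $A$. The induced connection on $F$ has curvature $\pi F_A \pi - \beta\wedge\beta^*$ where $\beta=\overline{\partial}\pi$ is the second fundamental form, so
\[\deg F = \frac{i}{2\pi}\int_\Sigma \tr_F(\pi F_A\pi) - \frac{1}{2\pi}\int_\Sigma |\beta|^2.\]
Substituting the Higgs equation and using $\phi$-invariance of $F$ (which implies $\tr_F(\pi[\phi,\phi^*]\pi) = |\pi\phi(1-\pi)|^2 - |(1-\pi)\phi\pi|^2 \leq 0$ after integration by parts), one obtains $\deg F \leq -\frac{1}{2\pi}\int_\Sigma(|\beta|^2 + |(1-\pi)\phi\pi|^2) \leq 0$, with strict inequality unless $F$ is both holomorphic and $\phi$-commuting subbundle of a flat bundle. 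In the degenerate case an elementary argument contradicts the assumption that $F$ is a proper subbundle of a stable $(E,\phi)$, so $\deg F/\mathrm{rank}\,F < 0$.

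For existence, fix the holomorphic structure $\overline{\partial}_E$ and Higgs field $\phi$ and seek the Hermitian metric $H$; the Higgs equation becomes a quasilinear elliptic PDE for $H$. The approach is variational: introduce Donaldson's functional $M(H_0,H)$ on the space of Hermitian metrics, whose Euler--Lagrange equation is precisely \eqref{HiggsEq}. One shows that (i) $M$ is convex along geodesic rays $H_t = H_0 e^{ts}$ in the symmetric space of metrics, with strict convexity unless $s$ is parallel and commutes with $\phi,\phi^*$; (ii) a minimizing sequence has subconvergent curvature via Uhlenbeck compactness, and failure to converge produces a weakly holomorphic $\phi$-invariant subsheaf. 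Stability rules out this alternative via the converse computation above applied in reverse (a destabilizing subsheaf would yield $\deg F/\mathrm{rank}\,F \geq 0$), so a minimizer exists and solves \eqref{HiggsEq}.

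Uniqueness follows from the convexity in (i): given two solutions $H_1, H_2$, write $H_2 = H_1 e^s$ with $s$ self-adjoint with respect to $H_1$. Subtracting the two Higgs equations and pairing with $s$ yields $\Delta \tr(s^2) \geq c |\nabla s|^2 + c|[\phi,s]|^2 + c|[\phi^*,s]|^2$ for some positive constant, so by the maximum principle on compact $\Sigma$, $s$ is parallel and commutes with $\phi$ and $\phi^*$. Stability of $(E,\phi)$ forces $s$ to be a constant scalar, which is the usual overall-scale ambiguity.

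The main obstacle is the existence step, specifically controlling the minimizing sequence and identifying the weak limiting subsheaf when a solution fails to exist. This is where Hitchin's technical work is concentrated and where stability is essential; the degree-zero assumption enters through the vanishing trace of the Higgs equation, ensuring no topological obstruction to solvability.
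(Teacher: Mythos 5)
The paper never proves Theorem~\ref{hitchin}: it is quoted verbatim from Hitchin \cite{HitSel} as a black box and then applied to the particular Higgs bundle $(S_\Sigma,\phi)$ in \ref{higgscomp2}--\ref{higgscomp3}. There is therefore no ``paper's own proof'' against which to compare your sketch. What you have written is a reasonable outline of the classical argument (Chern--Weil plus the second-fundamental-form inequality for the converse; a Donaldson functional, geodesic convexity, and Uhlenbeck compactness argument for existence and uniqueness) along the general lines of Hitchin's cited work and the subsequent Uhlenbeck--Yau/Simpson machinery.

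One logical slip worth flagging in your converse direction: after showing $\deg F\le 0$ with equality only when the second fundamental form $\beta$ and the off-diagonal block of $\phi$ vanish, you write that ``in the degenerate case an elementary argument contradicts the assumption that $F$ is a proper subbundle of a stable $(E,\phi)$.'' But stability of $(E,\phi)$ is precisely what you are trying to establish in that direction, so invoking it is circular. The honest conclusion from the equality case is that $E$ splits as a direct sum of $\phi$-invariant subbundles of degree zero, i.e.\ $(E,\phi)$ is only \emph{polystable}. Strict stability in the converse requires an additional irreducibility hypothesis on $A$ (which is how Hitchin phrases it), or one must state the converse as producing polystability. For the application in this paper this is harmless since the spinor Higgs bundle $(S_\Sigma,\phi)$ with $\phi$ as in \eqref{higgs} is shown directly to be stable, but as a proof of the general theorem your converse as written has a gap.
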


\subsubsection{}  \label{higgscomp2}
Apply Theorem~\ref{hitchin} to the spinor bundle $E=S_\Sigma=T_\Sigma^\frac12\oplus T_\Sigma^{-\frac12}$ with Higgs field
\begin{equation}  \label{higgs}
\phi=\frac12\left(\begin{array}{cc}0&1\\0&0\end{array}\right)\in H^0(\text{End}(E)\otimes K_\Sigma)
\end{equation}
 where $1$ is the natural section of $\co_\Sigma\cong T_\Sigma^\frac12\otimes T_\Sigma^\frac12\otimes K_\Sigma$ which gives a linear map $T_\Sigma^{-\frac12}\to T_\Sigma^\frac12\otimes K_\Sigma$.  The only $\phi$-invariant subbundle of $S_\Sigma$ is $T_\Sigma^\frac12$ and for $g>1$ we have $1-g=c_1(T_\Sigma^\frac12)<\frac12c_1(S_\Sigma)=0$, so the pair $(S_\Sigma,\phi)$ is stable.  (More generally, one can choose $(S_\Sigma,\phi)$ for $\phi=\frac12\left(\begin{array}{cc}0&1\\q&0\end{array}\right)$ for $q\in H^0(K_\Sigma^2)$, a quadratic differential.  We will not consider this here.)  

Hitchin \cite{HitSel}, showed that the two sides of Theorem~\ref{hitchin} applied to $(S_\Sigma,\phi)$ naturally correspond to a hyperbolic metric and a conformal structure, leading to a proof of uniformisation as follows.  The key idea is to show that $A$ is reducible so the associated Hermitian metric on $S_\Sigma$ is also reducible and defines a Hermitian metric on $T_\Sigma^\frac12$.  Theorem~\ref{hitchin} produces a  unique unitary connection $A$ on $S_\Sigma$.  For a constant $\alpha\in\br$, $(A,e^{i\alpha}\phi)$ also satisfies \eqref{HiggsEq}.  We can act by a constant unitary gauge transformation, 
which preserves \eqref{HiggsEq} and holomorphicity of $\phi$, to get 
\[u_\alpha\cdot(A,e^{i\alpha}\phi)=(u_\alpha\cdot A,e^{i\alpha}u_\alpha\cdot\phi)=(u_\alpha\cdot A,\phi),\quad u_\alpha=\left(\begin{array}{cc}e^{-i\alpha/2}&0\\0&e^{i\alpha/2}\end{array}\right).\]  
Since $(A,\phi)$ and $(u_\alpha\cdot A,\phi)$ satisfy \eqref{HiggsEq}, by the uniqueness of $A$ we must have $u_\alpha\cdot A=A$ for each $\alpha\in\br$ so the connection $A$ is reducible.     

Corresponding to the reducible connection $A$ is a reducible Hermitian metric $H=h\oplus h^{-1}$ on $S_\Sigma$ where $h$ is defined on $T_\Sigma^\frac12$ so $h^2$ defines a Hermitian metric on $\Sigma$ with real part a Riemannian metric.  Write $h^2=h_0^2dz\otimes d\bar{z}$ where $h_0=h_0(z,\bar{z})$ is a locally defined real-valued function.  The curvature of the connection on $T_\Sigma^\frac12$, is given by $(\partial_{\bar{z}}\partial_z\log h_0 )d\bar{z}\wedge dz$ and satisfies 
\eqref{HiggsEq}.  This yields 
\[\partial_{\bar{z}}\partial_z\log h_0 d\bar{z}\wedge dz +\frac14h_0^2 dz\wedge d\bar{z}=0\] 
or $\partial_{\bar{z}}\partial_z\log h_0 =\frac14h_0^2$.  Hence the Gaussian curvature of the associated Riemannian metric is
\[K=-\frac{2}{h^2}\frac{\partial^2}{\partial z \partial\bar{z}}\log h_0^2=-1\]
which proves uniformisation for a compact Riemann surface $\Sigma$---it possesses a hyperbolic metric in its conformal class.  The $SL(2,\br)$ holonomy of the flat connection $A+\phi+\phi^*$ lives above the $PSL(2,\br)$ holonomy of the developing map of the hyperbolic metric on $\Sigma$.

\subsubsection{}  \label{higgscomp3}
We are now in a position to compare $H^k(\Sigma,T_\Sigma^\frac12)$ and $H^k_{dR}(\Sigma,T_\Sigma^\frac12)$.  The flat connection $A^\phi=A+\phi+\phi^*$ on $S_\Sigma$ coming out of Theorem~\ref{hitchin} is given in terms of its $(1,0)$ and $(0,1)$ parts by
\[\partial_{A^\phi}=\left(\begin{array}{cc}\partial+h^{-1}\partial h&\frac12\\0&\partial-h^{-1}\partial h\end{array}\right),\quad 
\overline{\partial}_{A^\phi}=\left(\begin{array}{cc}\overline{\partial}&0\\ \frac12 h^2&\overline{\partial}\end{array}\right)
\]
where, as above, the upper right term is a linear map $T_\Sigma^{-\frac12}\to T_\Sigma^\frac12\otimes K_\Sigma$ and the lower left term is its adjoint $T_\Sigma^\frac12\to T_\Sigma^{-\frac12}\otimes \overline{K}_\Sigma$.  Note that $\phi^*$ is an $\text{End}(S_\Sigma)$-valued $(0,1)$ form, so a Hermitian metric $\frac12h^2=\frac12h_0^2dz\otimes d\bar{z}$ naturally lives in the lower left position, rather than a quadratic differential which would yield an $\text{End}(S_\Sigma)$-valued $(1,0)$ form.

The connection $A^\phi$ is compatible with the real structure $\sigma$
\[d_{A^\phi}\circ\sigma=\sigma\circ d_{A^\phi}\]
and it is enough to prove $\partial_{A^\phi}\circ\sigma=\sigma\circ\overline{\partial}_{A^\phi}$:
\begin{align*}
\partial_{A^\phi}\circ\sigma\left(\begin{array}{c}u\\v\end{array}\right)
&=\partial_{A^\phi}\left(\begin{array}{c}h^{-1}\overline{v}\\h\overline{u}\end{array}\right)
=\left(\begin{array}{c}\frac12h\overline{u}+h^{-1}\partial\overline{v}\\h\partial\overline{u}\end{array}\right)\\
&=\sigma\left(\begin{array}{c}\overline{\partial}u\\\frac12h^2u+\overline{\partial}v\end{array}\right)
=\sigma\circ\overline{\partial}_{A^\phi}\left(\begin{array}{c}u\\v\end{array}\right).
\end{align*}
Hence it defines a flat $SU(1,1)\cong SL(2,\br)$ connection on the bundle $S_\Sigma$.

\subsubsection{}  \label{higgscomp4}
The Higgs field defines a complex
\[
0\to \Omega^0_\Sigma(S_\Sigma)\stackrel{\phi\cdot}{\to}\Omega^1_\Sigma(S_\Sigma)\to 0.
\]
Simpson \cite{SimHig} defined the Dolbeault cohomology of $S_\Sigma$ to be the hypercohomology of this complex $H^k_{\text{Dol}}(\Sigma,S_\Sigma):=\bh^k([\Omega^0_\Sigma(S_\Sigma)\to\Omega^1_\Sigma(S_\Sigma)])$ and proved the following relation with the sheaf cohomology of the flat bundle $S_\Sigma$.
\begin{thm}[Simpson \cite{SimHig}]   \label{simthm}
When $\Sigma$ is compact, there is a canonical isomorphism
\[H^k_{\text{dR}}(\Sigma,S_\Sigma)\cong H^k_{\text{Dol}}(\Sigma,S_\Sigma),\quad k=0,1,2.
\]
\end{thm}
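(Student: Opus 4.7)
The plan is to prove Simpson's theorem in this (rank two, compact) setting by the nonabelian Hodge approach: represent both sides as the harmonic forms of naturally associated Laplacians and show that these Laplacians coincide thanks to the Higgs bundle equation.

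First I would recast the two sides in parallel, using the Hermitian metric $H=h\oplus h^{-1}$ produced by Theorem~\ref{hitchin} and the splitting from \ref{higgscomp3} of the flat connection $A^\phi$ into its $(1,0)$ and $(0,1)$ parts. Set
\begin{equation*}
D'':=\overline{\partial}_{A^\phi}=\overline{\partial}_A+\phi,\qquad D':=\partial_{A^\phi}=\partial_A+\phi^*,\qquad D:=D'+D''=d_{A^\phi}.
\end{equation*}
Then $D$ computes $H^k_{dR}(\Sigma,S_\Sigma)$ via the flat de~Rham complex introduced in \ref{flatcoh}. On the Dolbeault side, take the Dolbeault resolution of each term in $[\Omega^0_\Sigma(S_\Sigma)\xrightarrow{\phi\cdot}\Omega^1_\Sigma(S_\Sigma)]$ and pass to the total complex; the resulting complex is $(A^\bullet_\Sigma(S_\Sigma),D'')$, so $H^k_{\text{Dol}}(\Sigma,S_\Sigma)$ is computed by $D''$ acting on the same space of smooth $S_\Sigma$-valued forms on which $D$ acts.

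Next I would verify the Kähler identities in the presence of the Higgs field. Using $H$ to define adjoints, let $\Lambda$ be contraction with the Kähler form of the hyperbolic metric $h^2$. A direct local computation in the Hodge decomposition on $\Sigma$, combined with the standard Kähler identities for $(\partial_A,\overline{\partial}_A)$ and the algebraic identity $[\Lambda,\phi\cdot]=i(\phi^*)^*$ of zeroth order operators, yields
\begin{equation*}
(D'')^*=i[\Lambda,D'],\qquad (D')^*=-i[\Lambda,D''].
\end{equation*}
The Higgs bundle equation $F_A+[\phi,\phi^*]=0$ then forces $(D')^2=0$, $(D'')^2=0$, and the mixed anticommutator to vanish after taking adjoints, so an application of the usual Bochner-type rearrangement gives
\begin{equation*}
\Delta_D=DD^*+D^*D=2\Delta_{D'}=2\Delta_{D''}.
\end{equation*}

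Finally I would invoke Hodge theory on the compact surface $\Sigma$: since $\Delta_D$ and $\Delta_{D''}$ are elliptic self-adjoint operators on the finite-rank smooth bundle $\Lambda^\bullet T^*\Sigma\otimes S_\Sigma$ over a compact manifold, each cohomology group is canonically represented by its space of harmonic forms. The identity $\Delta_D=2\Delta_{D''}$ gives $\ker\Delta_D=\ker\Delta_{D''}$ in each degree, and the resulting canonical isomorphism $H^k_{dR}(\Sigma,S_\Sigma)\cong H^k_{\text{Dol}}(\Sigma,S_\Sigma)$ is precisely the assertion of the theorem. The main obstacle is the second step: establishing the Kähler identities requires honest bookkeeping because $\phi$ is endomorphism-valued and intertwines the two summands of $S_\Sigma$, and it is exactly here that the Higgs bundle equation \eqref{HiggsEq} enters nontrivially, replacing the flatness of the Chern connection that suffices in the classical Hodge case.
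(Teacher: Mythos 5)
The paper does not give its own proof of this statement; it simply cites Simpson's nonabelian Hodge theorem and uses it as a black box in the proof of Theorem~\ref{caniso}, so your proposal is filling a gap the paper deliberately leaves to the literature. Your overall strategy --- realize both cohomologies as kernels of Laplacians, prove K\"ahler identities in the presence of the Higgs field, and conclude $\Delta_D = 2\Delta_{D''}$ --- is indeed Simpson's route. However, there is a genuine error in the very first step that would derail the argument.

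You set $D'':=\overline{\partial}_{A^\phi}=\overline{\partial}_A+\phi$, identifying Simpson's Dolbeault operator with the $(0,1)$ part of the flat connection. These are different operators. From \ref{higgscomp3} one sees that $\phi$ is an $\mathrm{End}(S_\Sigma)$-valued $(1,0)$-form and $\phi^*$ an $\mathrm{End}(S_\Sigma)$-valued $(0,1)$-form, so the $(0,1)$ part of $d_{A^\phi}=d_A+\phi+\phi^*$ is $\overline{\partial}_{A^\phi}=\overline{\partial}_A+\phi^*$, \emph{not} $\overline{\partial}_A+\phi$. The Dolbeault hypercohomology $H^k_{\mathrm{Dol}}$ of the Higgs bundle is computed by the operator $\overline{\partial}_A+\phi$ acting on $A^\bullet_\Sigma(S_\Sigma)$, and this operator does \emph{not} preserve bidegree: $\overline{\partial}_A$ raises $q$ while $\phi$ raises $p$. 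The whole subtlety of nonabelian Hodge theory is that the decomposition $D=D'+D''$ with $D''=\overline{\partial}_A+\phi$ and $D'=\partial_A+\phi^*$ is a \emph{different} decomposition of the flat connection from its type decomposition into $\partial_{A^\phi}$ and $\overline{\partial}_{A^\phi}$. If you naively run your K\"ahler-identity argument with $D''=\overline{\partial}_{A^\phi}$ you do get $\Delta_D=2\Delta_{D''}$ (that is the classical identity for any flat connection on a K\"ahler manifold), but the resulting harmonic space computes the cohomology of $\overline{\partial}_A+\phi^*$, which is not $H^k_{\mathrm{Dol}}(\Sigma,S_\Sigma)$. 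To repair the argument you must define $D''=\overline{\partial}_A+\phi$, note that the Higgs bundle equation \eqref{HiggsEq} together with $\overline{\partial}_A\phi=0$ gives $(D'')^2=0$, $(D')^2=0$, and $D'D''+D''D'=0$ (so $D=D'+D''$ is flat), and then establish the K\"ahler identities $(D'')^*=-i[\Lambda,D']$ and $(D')^*=i[\Lambda,D'']$ for \emph{these} mixed-type operators. Once that is in place, your remaining steps (equality of Laplacians, Hodge theory on a compact surface, identification of harmonic spaces) go through.
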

An application of this theorem is the following crucial canonical isomorphism.

\begin{thm}   \label{caniso}
When $\Sigma$ is compact, there is a canonical isomorphism
\begin{equation}   \label{cohiso}
H^k(\Sigma,T_\Sigma^\frac12)^\vee\stackrel{\cong}{\to} H^k_{dR}(\Sigma,T_\Sigma^\frac12),\quad k=0,1,2
\end{equation}
where $T_\Sigma^\frac12$ represents the sheaf of locally holomorphic sections on the left hand side, and the sheaf of locally constant sections on the right hand side.
\end{thm}
\begin{proof}
The first step is to evaluate the hypercohomology in Simpson's theorem.  Hypercohomology is an invariant of the quasi-isomorphism class of a complex of sheaves.  For $\phi$ given by \eqref{higgs}, the map
$T_\Sigma^\frac12\oplus T_\Sigma^{-\frac12}\stackrel{\phi\cdot}{\to}(T_\Sigma^\frac12\oplus T_\Sigma^{-\frac12})\otimes K_\Sigma$ has kernel $T_\Sigma^\frac12$ and cokernel $T_\Sigma^{-\frac12}\otimes K_\Sigma$ and defines an isomorphism $T_\Sigma^{-\frac12}\stackrel{\cong}{\to}T_\Sigma^\frac12\otimes K_\Sigma$.  Hence the natural inclusions given by the vertical arrows below define a quasi-isomorphism:
\[
\begin{array}{ccc} \Omega^0_\Sigma(T_\Sigma^\frac12)&\stackrel{0\cdot}{\to}&\Omega^1_\Sigma(T_\Sigma^{-\frac12})\\
\downarrow&&\downarrow\\
\Omega^0_\Sigma(T_\Sigma^\frac12\oplus T_\Sigma^{-\frac12})&\stackrel{\phi\cdot}{\to}&\Omega^1_\Sigma(T_\Sigma^\frac12\oplus T_\Sigma^{-\frac12}).
\end{array}
\]
Thus $H^k_{\text{Dol}}(\Sigma,S_\Sigma)=\bh^k(C^\bullet)$ where $C^\bullet=[ \Omega^0_\Sigma(T_\Sigma^\frac12)\to\Omega^1_\Sigma(T_\Sigma^{-\frac12})]$ and the arrow is the zero map.  The hypercohomology can be calculated from a long exact sequence
\[..\to H^{k-1}(\Sigma,\Omega^1_\Sigma(T_\Sigma^{-\frac12}))\hspace{-.5mm}\to\bh^k(C^\bullet)\hspace{-.5mm}\to
H^k(\Sigma,T_\Sigma^\frac12)\hspace{-.5mm}\to H^k(\Sigma,\Omega^1_\Sigma(T_\Sigma^{-\frac12}))\to...
\]

Thus 
\[\bh^0(C^\bullet)\cong H^0(\Sigma,T_\Sigma^\frac12)=0\] 
for $g>1$ since $\deg T_\Sigma^\frac12=1-g<0$, and 
\[\bh^2(C^\bullet)\cong H^2(\Sigma,T_\Sigma^\frac12)=0\] 
for $g>1$ since $H^1(\Sigma,\Omega^1_\Sigma(T_\Sigma^{-\frac12}))\cong H^0(\Sigma,T_\Sigma^\frac12)^\vee=0$.   We see that \eqref{cohiso} is proven for $k=0$ and 2 by Theorem~\ref{simthm} and the injection $H^k_{dR}(\Sigma,T_\Sigma^\frac12)\hookrightarrow H^k_{dR}(\Sigma,S_\Sigma)=0$.

It remains to prove the $k=1$ case.  The sequence
\[0\to H^{0}(\Sigma,\Omega^1_\Sigma(T_\Sigma^{-\frac12}))\to\bh^1(C^\bullet)\to
H^1(\Sigma,T_\Sigma^\frac12)\to 0
\]
splits giving 
\[\bh^1(C^\bullet)\cong H^1(\Sigma,T_\Sigma^\frac12)\oplus H^1(\Sigma,T_\Sigma^\frac12)^\vee
\]
which uses the isomorphism $H^1(\Sigma,T_\Sigma^\frac12)^\vee\cong H^0(\Sigma,K_\Sigma\otimes T_\Sigma^{-\frac12})$.
The complex vector space $H^1(\Sigma,T_\Sigma^\frac12)$ is equipped with a Hermitian metric induced from the Hermitian metric on $T_\Sigma^\frac12$---see Section~~\ref{sec:eulerform}.   Hence its dual vector space is isomorphic to its complex conjugate.  Equivalently
\[\bh^1(C^\bullet)\cong H^1(\Sigma,T_\Sigma^\frac12)\otimes_\br\bc
\]
which completes the calculation of the hypercohomology.

We have $H^1_{dR}(\Sigma,S_\Sigma)=H^1_{dR}(\Sigma,T_\Sigma^\frac12)\otimes\bc$ by construction.  So Simpson's theorem proves that there is a canonical isomorphism
\[H^1(\Sigma,T_\Sigma^\frac12)\otimes_\br\bc\cong H^1_{dR}(\Sigma,T_\Sigma^\frac12)\otimes_\br\bc.
\]
To see the real structure of the isomorphism, we need to understand the proof of the canonical isomorphism in \cite{SimHig} which uses a quasi-isomorphism between the complexes 
\[A^0_\Sigma(S_\Sigma)\stackrel{D_i}{\to}A^1_\Sigma(S_\Sigma)\stackrel{D_i}{\to}A^2_\Sigma(S_\Sigma)\]
for $D_1=\overline{\partial}_A$ and $D_2=d_A+\phi+\phi^*$ and the identity map on $A^k_\Sigma(S_\Sigma)$.
The kernel of $D_1$ naturally produces representatives in $H^1(\Sigma,T_\Sigma^\frac12)\oplus H^{0}(\Sigma,\Omega^1_\Sigma(T_\Sigma^{-\frac12}))$ since $A$ is diagonal and when $H^0(\Sigma,T_\Sigma^{-\frac12})\neq0$, the sequence is
\[H^0(\Sigma,T_\Sigma^{-\frac12})\to H^0(\Sigma,T_\Sigma^{-\frac12})\oplus H^1(\Sigma,K_\Sigma\otimes T_\Sigma^\frac12)\to H^1(\Sigma,K_\Sigma\otimes T_\Sigma^\frac12)\]
which has vanishing cohomology.  The map to the kernel of $D_2$  is described as follows.  Given a $T_\Sigma^{-\frac12}$-valued holomorphic 1-form $\eta\in H^0(\Sigma,K_\Sigma\otimes T_\Sigma^{-\frac12})\subset A^1_\Sigma(T_\Sigma^{-\frac12})$ 
then $(h^{-1}\overline{\eta},\eta)\in A^1_\Sigma(T_\Sigma^\frac12\oplus T_\Sigma^{-\frac12})= A^1_\Sigma(S_\Sigma)$
and in fact takes its values in the real part $A^1_\Sigma(T_\Sigma^\frac12)$ (using the antidiagonal embedding $T_\Sigma^\frac12\to S_\Sigma$ which differs from the first factor embedding---see \ref{sec:spinor}).  

For $\eta\in  H^0(\Sigma,K_\Sigma\otimes T_\Sigma^{-\frac12})$,
\begin{align}  \label{covder}
d_{A^\phi}\left(\begin{array}{c}h^{-1}\overline{\eta}\\ \eta \end{array}\right)
&=\partial_{A^\phi}\left(\begin{array}{c}h^{-1}\overline{\eta}\\ 0\end{array}\right)+
\overline{\partial}_{A^\phi}\left(\begin{array}{c}0\\ \eta \end{array}\right)
\\
&=\left(\begin{array}{c}h^{-1}\partial\overline{\eta}\\ 0\end{array}\right)+\left(\begin{array}{c}0\\ \overline{\partial}\eta \end{array}\right)
=\left(\begin{array}{c}0\\ 0\end{array}\right) \nonumber
\end{align}
where the first equality uses the fact that $\eta$ is a $(1,0)$ form and the second equality uses $\overline{\partial}_A=\overline{\partial}$ and $\partial_A=\partial+h^{-1}\partial h$.  The final equality uses the holomorphicity of $\eta$.  Hence $(h^{-1}\overline{\eta},\eta)$ is a cocycle in $A^1_\Sigma(T_\Sigma^\frac12)$.

Thus we have defined a natural map
\begin{equation}  \label{sh2dr}
\begin{array}{ccc}H^1(\Sigma,T_\Sigma^\frac12)^\vee&\to& H^1_{dR}(\Sigma,T_\Sigma^\frac12)\\
\eta&\mapsto &(h^{-1}\overline{\eta},\eta)
\end{array}
\end{equation}
which indeed defines an isomorphism by the following lemma.\footnote{The author is grateful to Edward Witten for explaining the proof of this lemma.}
\begin{lemma}  \label{cohdir}
Given a cocycle $\alpha\in A^1_\Sigma(S_\Sigma)$ so $d_{A^\phi}\alpha=0$, there exists a unique $\beta\in A^0_\Sigma(S_\Sigma)$ such that 
\begin{equation}  \label{cocyc}
\alpha-d_{A^\phi}\beta=\left(\begin{array}{c}0\\ \ast\end{array}\right)dz+\left(\begin{array}{c}\ast\\0\end{array}\right)d\bar{z}.
\end{equation}
\end{lemma}
\begin{proof}
Let $\beta=\left(\begin{array}{c}w\\h\overline{w}\end{array}\right)$ and decompose $\alpha$ into its $(1,0)$ and $(0,1)$ parts.  
\[\alpha=\alpha'+\alpha''=\left(\begin{array}{c}u\\ v\end{array}\right)+\left(\begin{array}{c}h^{-1}\overline{v}\\ h\overline{u}\end{array}\right)\]
It is enough to solve $\alpha'-\partial_{A^\phi}\beta=\left(\begin{array}{c}0\\ \ast\end{array}\right)$ since $\overline{\partial}_{A^\phi}$ sends $\beta$ to a $(0,1)$-form.  Hence
\[Pw:=\partial w+(h^{-1}\partial h) w+\tfrac12h\overline{w}=u.
\]
Here $P$ is a real linear elliptic operator acting on a rank 2 real vector bundle.  It has trivial kernel because if $Pw=0$ then its complex conjugate equation is $\frac12h^2w+\overline{\partial}(h\overline{w})=0$ hence
\[Pw=0\ \Rightarrow\ 0=\overline{\partial}_A(Pw)=\overline{\partial}_A\partial_Aw+\tfrac12\overline{\partial}_A(h\overline{w})=(\overline{\partial}_A\partial_A-\tfrac14h^2)w\ \Rightarrow\ w=0
\]
where the second implication uses the fact that the operator $\overline{\partial}_A\partial_A-\tfrac14h^2$ is negative definite which follows from the following standard argument that the operator $\overline{\partial}_A\partial_A$ is negative semi-definite.
\begin{align*}
\int_\Sigma \langle\overline{\partial}_A\partial_As,s\rangle&=
-\int_\Sigma \langle\partial_As,\partial_As\rangle+\int_\Sigma\partial\langle\partial_As,s\rangle\\
&=-\int_\Sigma \langle\partial_As,\partial_As\rangle+\int_\Sigma d\langle\partial_As,s\rangle
=-\int_\Sigma \langle\partial_As,\partial_As\rangle\leq 0.
\end{align*}
The replacement of $\partial$ by $d$ in the second equality, which leads to vanishing of the integral, uses the three facts: $d=\partial+\overline{\partial}$, $\langle\partial_As,s\rangle$ is a $(0,1)$ form, and the space of $(0,2)$ forms is zero.  Hence $P$ is invertible, and we can solve $Pw=u$ uniquely.

By the reality condition, the vanishing of the first coefficient of $dz$ guarantees the vanishing of the second coefficient of $d\overline{z}$ as required.
\end{proof}
Lemma~\ref{cohdir} shows that we may assume any cocycle in $A^1_\Sigma(T_\Sigma^\frac12)$ is of the form in the right hand side of \eqref{cocyc} hence we can use \eqref{covder}, which only needs the given $(1,0)$ and $(0,1)$ decomposition of the right hand side of \eqref{cocyc}, to deduce that the $dz$ part is holomorphic, i.e. lives in $H^0(\Sigma,K_\Sigma\otimes T_\Sigma^{-\frac12})$.  By the reality condition the cocycle lives in the image of \eqref{sh2dr}.  Thus the map in \eqref{sh2dr} is surjective onto equivalence classes of cocycles representing classes in $H^1_{dR}(\Sigma,T_\Sigma^\frac12)$.   It is injective since if $(h^{-1}\overline{\eta},\eta)=d_{A^\phi}\beta$ is exact, by the invertibility of the elliptic operator $P$, i.e. the uniqueness statement in Lemma~\ref{cohdir}, $\beta=0$.

Hence we have proven
\[H^1(\Sigma,T_\Sigma^\frac12)^\vee\cong H^1_{dR}(\Sigma,T_\Sigma^\frac12).
\]
\end{proof}
We have proved that the fibres over a point represented by a smooth compact hyperbolic surface of the bundles $\widehat{E}_{g}\to\overline{\modm}_{g}^{\text{spin}}$ defined in Definition~\ref{obsbun} and $E_{g}\to \modm^{\text{spin}}_{g}$ defined in Theorem~\ref{cohbun}  are canonically isomorphic.  The importance of the canonical isomorphism is that the {\em bundles} are isomorphic over the moduli space of smooth spin curves.  An analogous canonical isomorphism exists for the usual moduli space using $H^1(\Sigma,T_\Sigma)$ and $H^1_{dR}(\Sigma,{\bf g}_\rho)$ where ${\bf g}_\rho$ is the flat ${\bf sl}(2,\br)$-bundle associated to a representation $\rho:\pi_1\Sigma\to SL(2,\br)$.

\subsubsection{}  \label{noncompact}
We now consider general $\Sigma=\overline{\Sigma}-D$, dropping the earlier assumption that $\Sigma$ is compact.  The arguments in \ref{higgscomp1}, \ref{higgscomp2}, \ref{higgscomp3} and \ref{higgscomp4} generalise.    When $\Sigma$ is not compact, the bundle $S_\Sigma$ can have different extensions to $\overline{\Sigma}$.  
We will use the extension of $S_\Sigma$ given by
\[E\cong T_{\overline{\Sigma}}^\frac12(-D)\oplus T_{\overline{\Sigma}}^{-\frac12}.
\]
The bundle $E$ naturally possesses a parabolic structure which we now define, following Mehta and Seshadri \cite{MSeMod}. 
\begin{definition}
Let $(\overline{\Sigma},D)$ be a compact surface containing $D=\sum p_i$ and $E$ a holomorphic vector bundle over $\overline{\Sigma}$.  A parabolic structure on $E$ is a flag at each point $p_i$, $E_{p_i}=F_1^i\supset F_2^i\supset ...\supset F_{r_i}^i$, with attached weights $0\leq\alpha_1^i< \alpha_2^i< ... <\alpha_{r_i}^i<1$.
\end{definition}  
Define the multiplicity of $\alpha_j^i$ to be $k_j^i=\dim F_j^i-\dim F_{j+1}^i$, $j=1,...,r_i-1$ and $k_{r_i}^i=\dim F_{r_i}^i$.  The parabolic degree of $E$ is defined to be
\[\text{pardeg}\hspace{.7mm}E=\deg E+\sum_{i,j}k_j^i\alpha_j^i.
\]
A parabolic Higgs bundle generalises Definition~\ref{defHiggs} where the Higgs field has poles on $D$ and preserves the flag structure.
\begin{definition}
A parabolic Higgs bundle over $(\overline{\Sigma},D)$ is a pair $(E,\phi)$ where $E$ is a holomorphic vector bundle over $(\overline{\Sigma},D)$ equipped with a parabolic structure $\{F_j^i,\alpha_j^i\}$ and $\phi\in H^0(\text{End}(E)\otimes K_{\overline{\Sigma}}(D))$ which satisfies $\Res_{p_i}\phi F_j^i\subset F_j^i$.
\end{definition}
Note that some authors also write $K_{\overline{\Sigma}}(\log D))=K_{\overline{\Sigma}}(D)$ where the two coincide over a curve $\overline{\Sigma}$ but differ on higher dimensional varieties.

The following pair is a parabolic Higgs bundle generalising the construction in \ref{higgscomp2}.
\[E\cong T_{\overline{\Sigma}}^\frac12(-D)\oplus T_{\overline{\Sigma}}^{-\frac12},\quad\phi=\frac12\left(\begin{array}{cc}0&1\\0&0\end{array}\right)\in H^0(\text{End}(E)\otimes K_{\overline{\Sigma}}(D)).
\]
Following \cite{BGGPar}, at each point $p_i$ of $D$, $E_{p_i}$ is equipped with the trivial flag $E_{p_i}$ of weight $1/2$.  
Note that $\phi$ does indeed have a pole at each point $p_i$ of $D$ and we take its residue to test for stability.  We see the pole in the upper right element of $\phi$ which gives a map $T_{\overline{\Sigma}}^{-\frac12}\to T_{\overline{\Sigma}}^\frac12(-D)\otimes K_{\overline{\Sigma}}(D)$, or an element of 
\[\co_{\overline{\Sigma}}\cong T_{\overline{\Sigma}}^\frac12\otimes T_{\overline{\Sigma}}^\frac12(-D)\otimes K_{\overline{\Sigma}}(D).\] Locally, the upper right element of $\phi$ produces $z/dz:T_{\overline{\Sigma}}^{-\frac12}\to T_{\overline{\Sigma}}^\frac12(-D)$ which is the residue of $1=z/dz\cdot dz/z$.  For the same reason as described in \ref{higgscomp2}, the pair $(E,\phi)$ is stable, which now means that for any $\phi$-invariant sub-parabolic bundle $F\subset E$, we have $\frac{\text{pardeg}\hspace{.7mm}(F)}{\text{rank\ }F}<\frac{\text{pardeg}\hspace{.7mm}(E)}{\text{rank\ }E}$.  Note that the weights $1/2$ at each point correspond to the Neveu-Schwarz boundary components which is necessary here.  In \cite{BGGPar}, the choice of a Neveu-Schwarz spin structure is not stated explicitly but it is implicit due to the choice of parabolic weights.  Such a choice is arbitrary since that paper is concerned only with the underlying hyperbolic surface, or equivalently the reduction of the representation from $SL(2,\br)$ to $PSL(2,\br)$.

\begin{thm}[Simpson \cite{SimHar}]  \label{simpsonhar}
A stable parabolic Higgs bundle $(E,\phi)$ of parabolic degree zero admits a unique unitary connection $A$ with regular singularities satisfying \eqref{HiggsEq}.  Conversely a parabolic Higgs bundle $(E,\phi)$ which admits a connection $A$ with regular singularities satisfying \eqref{HiggsEq} is of parabolic degree zero and stable.
\end{thm}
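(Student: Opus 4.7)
The plan is to adapt the proof of Hitchin's theorem (Theorem~\ref{hitchin}) from the compact case in \ref{higgscomp1} to the parabolic setting on $(\overline{\Sigma},D)$, treating parabolic weights as prescribed growth data of the Hermitian metric $H$ at the divisor $D$. The strategy is Donaldson--Uhlenbeck--Yau type: find $H$ as the minimum of a Donaldson-type functional on the space of Hermitian metrics with prescribed singular behavior at $D$, subject to a fixed determinant normalization (or, equivalently, solve the heat equation for $\log H$ and show long-time existence plus convergence).

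First I would set up the analytic framework. Choose a local holomorphic coordinate $z$ near each $p_i\in D$ and a background smooth Hermitian metric $H_0$ on the underlying smooth bundle $E|_{\overline{\Sigma}\setminus D}$ that is compatible with the parabolic flag $\{F_j^i\}$ and has prescribed diagonal singularity $H_0\sim\mathrm{diag}(|z|^{2\alpha_j^i})$ in a frame adapted to the flag. This fixes the class of admissible metrics $H=H_0 e^{s}$ for $s$ a self-adjoint endomorphism lying in a suitable weighted Sobolev space $L^2_{k,\delta}$ with decay of $s$ at $D$. One checks that $F_{H_0}+[\phi,\phi^*]$ has $L^2$-coefficients (away from a trace piece contributing the parabolic degree), so the Higgs equation makes sense distributionally; regular singularity of the resulting connection $A=\overline\partial_E+\overline\partial_E^{*,H}$ is automatic from the $|z|^{2\alpha_j^i}$ prescription.

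Next I would prove the forward direction. Parabolic stability guarantees that Donaldson's functional $M(H_0,H)$ is proper on the parabolic degree-zero slice, by the standard Uhlenbeck-Yau argument adapted by Simpson: if a minimizing sequence does not converge, one extracts a $\phi$-invariant weak holomorphic subsheaf whose saturation is a parabolic sub-Higgs bundle of parabolic slope $\ge 0$, contradicting stability. The Euler--Lagrange equation of $M$ is exactly \eqref{HiggsEq} on the trace-free part, while the trace part is fixed by the parabolic degree-zero condition. Uniqueness of $H$ follows from convexity of $M$ along geodesics $H_t=H_0 e^{ts}$ in the space of admissible metrics, combined with the maximum principle on the endomorphism $s$ relating two solutions; here the parabolic weights enter the boundary contributions but give no obstruction because both metrics have the same prescribed leading term.

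For the converse, assume $H$ is an admissible Hermitian metric with regular singularities solving \eqref{HiggsEq}. Integrating $\mathrm{tr}(F_H+[\phi,\phi^*])=0$ against constants gives $\mathrm{pardeg}(E)=0$ (the parabolic weights appear via the Chern--Weil boundary contribution of a metric with $|z|^{2\alpha_j^i}$ singularity, computed by a residue calculation at each $p_i$). For stability, given any $\phi$-invariant parabolic sub-Higgs bundle $F\subset E$, the orthogonal projection $\pi_F$ with respect to $H$ is a weak $L^2_1$ section of $\mathrm{End}(E)$, and the Chern--Weil identity applied to $\pi_F$ together with $F_H+[\phi,\phi^*]=0$ gives $\mathrm{pardeg}(F)\le \|\overline\partial_E\pi_F\|^2+\|[\phi,\pi_F]\|^2$, with equality only if $F$ is a holomorphic $\phi$-invariant orthogonal summand; strict inequality then yields $\mathrm{pardeg}(F)/\mathrm{rank}(F)<0=\mathrm{pardeg}(E)/\mathrm{rank}(E)$, proving stability. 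The main obstacle throughout is controlling the analysis at $D$: one must verify that the weighted Sobolev setup is preserved by the flow, that the subsheaf produced by Uhlenbeck--Yau inherits the correct parabolic structure (so that its parabolic degree, not just its usual degree, is what appears in the estimate), and that Chern--Weil boundary terms compute exactly the parabolic correction $\sum k_j^i\alpha_j^i$; this is precisely the technical content of Simpson's paper \cite{SimHar}, which I would cite for the details rather than reproduce.
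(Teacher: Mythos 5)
The paper does not prove Theorem~\ref{simpsonhar}; it is stated as a citation to Simpson~\cite{SimHar}, followed only by remarks interpreting the hypotheses (that the weight-space condition is automatic here because the flag is trivial, and a pointer to Simpson's definition of regular singularity). Your sketch is a correct high-level reconstruction of Simpson's own strategy---a weighted-Sobolev heat-flow/Donaldson-functional argument with $|z|^{2\alpha}$-model metrics, properness from an Uhlenbeck--Yau subsheaf extraction, Chern--Weil with boundary contributions at $D$ giving the parabolic degree, and the standard projection-endomorphism argument for stability in the converse---and you are right that essentially all of the nontrivial content is how the analysis at $D$ is controlled, which is the body of \cite{SimHar}. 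In substance, then, you take the same route as the paper: you defer to Simpson; your added value is to record the road map. One small caution: what Simpson (and the paper) mean by the connection having ``regular singularities'' is really a statement about the associated flat connection $A+\phi+\phi^*$ (a first-order pole of an algebraic connection, see \cite[p.724]{SimHar}), not just a growth prescription on the unitary piece $A$, so ``automatic from the $|z|^{2\alpha_j^i}$ prescription'' slightly understates what must be checked.
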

The connection must preserve the weight spaces of the parabolic structure on the bundle.  This condition is automatic for our application since the weight space is the entire fibre.  A regular singularity means a pole of order 1 of an algebraic connection---see \cite[p.724]{SimHar} for details.  Biswas, Gastesi and Govindarajan \cite{BGGPar} applied Theorem~\ref{simpsonhar} to the stable parabolic bundle $E\cong T_{\overline{\Sigma}}^\frac12(-D)\oplus T_{\overline{\Sigma}}^{-\frac12}$ to prove uniformisation of $\Sigma$ by a complete hyperbolic metric analogous to the argument of Hitchin presented in \ref{higgscomp2}.

Simpson proved in  \cite{SimHar} that there is a natural quasi-isomorphism between the de Rham complex of forms with coefficients in the flat bundle, and the Dolbeault complex with coefficients in the corresponding Higgs bundle.   A consequence is the equality of cohomology groups.
\begin{thm}[\cite{DPSDir,SimHar}]   \label{simthmpar}
For a spin structure with Neveu-Schwarz boundary components, there is a canonical isomorphism
\[H^k_{\text{dR}}(\Sigma,S_\Sigma)\cong H^k_{\text{Dol}}(\overline{\Sigma},T_{\overline{\Sigma}}^\frac12(-D)\oplus T_{\overline{\Sigma}}^{-\frac12})^\vee.
\]
\end{thm}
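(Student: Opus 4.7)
The plan is to generalise the argument of Theorem~\ref{simthm} and Theorem~\ref{caniso} from the compact case to the parabolic setting using the harmonic metric supplied by Theorem~\ref{simpsonhar}. First I would apply Theorem~\ref{simpsonhar} to the stable parabolic Higgs bundle
$$ E \cong T_{\overline{\Sigma}}^{\frac12}(-D)\oplus T_{\overline{\Sigma}}^{-\frac12},\qquad \phi=\frac12\left(\begin{array}{cc}0&1\\0&0\end{array}\right),$$
obtaining a harmonic Hermitian metric $H=h\oplus h^{-1}$ with regular singularities at $D$ and the associated flat $SL(2,\br)$-connection $A^\phi=A+\phi+\phi^*$ on the restriction of $E$ to $\Sigma$. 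As in Section~\ref{higgscomp2}, the monodromy of $A^\phi$ realises the Fuchsian representation uniformising $\Sigma$ by a complete hyperbolic metric, and the Neveu-Schwarz assumption, via Lemma~\ref{qufrep}, gives the expected negative-trace behaviour of the local monodromy near each $p_i$.

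Next I would set up two complexes of smooth forms on $\overline{\Sigma}$ whose growth at $D$ is controlled by the parabolic weights (all equal to $\frac12$ in our case). On the de Rham side this is the $L^2$-complex of forms with values in $S_\Sigma$ with differential $d_{A^\phi}$; on the Dolbeault side it is the hypercomplex
$$\ca^{0,0}(E)\stackrel{\overline{\partial}_A+\phi}{\longrightarrow}\ca^{0,1}(E)\oplus\ca^{1,0}(E)\stackrel{\overline{\partial}_A+\phi}{\longrightarrow}\ca^{1,1}(E).$$
Following the compact-case strategy used in Theorem~\ref{caniso}, the identity map on smooth sections, interpreted with the $L^2$-norm induced by $H$, should provide a quasi-isomorphism between these two complexes: the twisted K\"ahler identities for a harmonic bundle force a form to be $d_{A^\phi}$-harmonic iff each of its $(p,q)$-components is separately harmonic for $\overline{\partial}_A+\phi$ and $\partial_A+\phi^*$.

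The dual $(-)^\vee$ appearing in the statement I would account for by a parabolic Serre duality argument. The symplectic pairing on $S_\Sigma$ from Section~\ref{sec:spinor} identifies $S_\Sigma$ with $S_\Sigma^\vee$ so the de Rham side is already self-dual, while on the parabolic Dolbeault side the Hodge star relative to $H$ identifies $H^k_{\text{Dol}}$ with $H^{2-k}_{\text{Dol}}{}^\vee$. Matching these two identifications canonically produces the required isomorphism, playing the same role that the explicit Serre-duality isomorphism $H^1(\Sigma,T_\Sigma^{\frac12})^\vee\cong H^0(\Sigma,K_\Sigma\otimes T_\Sigma^{-\frac12})$ played in the $k=1$ step of the proof of Theorem~\ref{caniso}.

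The main obstacle will be the analytic step of checking that $L^2$-harmonic representatives with respect to $H$ correctly compute both cohomologies. Specifically, one must verify that the $L^2$ de Rham cohomology coincides with the sheaf cohomology $H^k_{\text{dR}}(\Sigma,S_\Sigma)$ defined in Section~\ref{flatcoh}, which uses locally constant sections with no growth condition, and that the weight-$\frac12$ parabolic Dolbeault hypercohomology is likewise captured by the same harmonic forms. The Neveu-Schwarz condition is exactly what forces the boundary contributions to vanish: invertibility of the holonomy operator $g_\gamma-I$ for a small loop $\gamma$ around $p_i$, already used in the proof of Theorem~\ref{cohbun}, is the cohomological shadow of the analytic vanishing near $D$ needed for the $L^2$ theory to converge. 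Carrying this through is the substance of Simpson's \cite{SimHar} and the refinement in \cite{DPSDir}, which I would invoke rather than re-derive.
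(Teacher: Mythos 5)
The paper does not actually prove Theorem~\ref{simthmpar}: it is stated as an imported result from \cite{SimHar,DPSDir} with only a one-sentence gloss about Simpson's quasi-isomorphism between the de Rham and Dolbeault complexes, so your proposal, which likewise outsources the analytic core to those references, takes the same route while supplying a more detailed (and accurate) sketch of the harmonic-metric, $L^2$, and twisted K\"ahler-identity machinery that underlies them. The one point I would flag is your account of the dual: composing a parabolic Serre/Hodge duality $H^k_{\text{Dol}}\cong H^{2-k}_{\text{Dol}}{}^\vee$ with the self-duality of the de Rham side and with the undualised Simpson isomorphism $H^k_{\text{dR}}\cong H^k_{\text{Dol}}$ would again land you on the undualised version, not the stated $H^k_{\text{dR}}\cong H^k_{\text{Dol}}{}^\vee$; in the paper's own application, the dual in fact re-emerges in Theorem~\ref{caniso1} from the explicit cocycle assignment $\eta\mapsto(h^{-1}\overline\eta,\eta)$ exactly as in the compact proof of Theorem~\ref{caniso}, rather than from an a priori duality statement. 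Since the theorem is being cited rather than proved, this is a point worth clarifying for your own understanding rather than a gap in the argument.
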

\begin{remark}  \label{parorb}
When the spin structure has Neveu-Schwarz boundary components, we have an isomorphism
\[
H^k_{\text{Dol}}(\overline{\Sigma},T_{\overline{\Sigma}}^\frac12(-D)\oplus T_{\overline{\Sigma}}^{-\frac12})\cong H^k(\cc,\theta^\vee\oplus\theta)
\]
where $\cc$ is an orbifold curve as described in Section~\ref{sec:theta} with non-trivial isotropy group $\bz_2$ at $D$, $\theta^2=\omega_\cc(D)$ and its coarse curve is $p:(\cc,D)\to(\overline{\Sigma},D)$.  The push-forward of a bundle over $\cc$ to the coarse curve $\overline{\Sigma}$ is a bundle on $\overline{\Sigma}$ equipped with a parabolic structure \cite{BodRep,FStSei}. 
We find that
\[ p_*(\theta^\vee\oplus\theta)=T_{\overline{\Sigma}}^\frac12(-D)\oplus T_{\overline{\Sigma}}^{-\frac12}\]
equipped with the trivial flag of weight $1/2$ at each point of $D$.  
\end{remark}
Theorem~\ref{simthmpar} allows us to drop the assumption that $\Sigma$ is compact in Theorem~\ref{caniso}.
\begin{thm}   \label{caniso1}
There is a canonical isomorphism
\begin{equation}   \label{cohiso1}
H^k(\overline{\Sigma},T_{\overline{\Sigma}}^\frac12(-D))^\vee\stackrel{\cong}{\to} H^k_{dR}(\Sigma,T_\Sigma^\frac12),\quad k=0,1,2
\end{equation}
for spin structures with Neveu-Schwarz boundary components.
\end{thm}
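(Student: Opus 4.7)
The plan is to mirror the compact-case argument of Theorem~\ref{caniso} step by step, substituting Theorem~\ref{simthmpar} for Theorem~\ref{simthm} and handling the analytic subtleties near $D$. The map is identical in form to the compact case: given $\eta\in H^0(\overline{\Sigma},K_{\overline{\Sigma}}(D)\otimes T_{\overline{\Sigma}}^{-\frac12})\cong H^1(\overline{\Sigma},T_{\overline{\Sigma}}^\frac12(-D))^\vee$ via Serre duality on $\overline{\Sigma}$, send it to the pair $(h^{-1}\overline{\eta},\eta)\in A^1_\Sigma(T_\Sigma^\frac12)$, where $h$ is the Hermitian metric produced by Theorem~\ref{simpsonhar} applied to the stable parabolic Higgs bundle of \ref{noncompact}.

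First I would apply Theorem~\ref{simthmpar} to rewrite $H^k_{dR}(\Sigma,S_\Sigma)$ as the dual of a parabolic Dolbeault hypercohomology group. Next I would reduce by quasi-isomorphism exactly as in \ref{higgscomp4}: the Higgs field $\phi$ kills the first summand of $E$ and its residue realises an isomorphism $T_{\overline{\Sigma}}^{-\frac12}\stackrel{\cong}{\to}T_{\overline{\Sigma}}^\frac12(-D)\otimes K_{\overline{\Sigma}}(D)$, so the parabolic Dolbeault complex is quasi-isomorphic to $[T_{\overline{\Sigma}}^\frac12(-D)\stackrel{0}{\to}T_{\overline{\Sigma}}^{-\frac12}\otimes K_{\overline{\Sigma}}(D)]$. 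Since $\deg T_{\overline{\Sigma}}^\frac12(-D)=1-g-n<0$ on any stable spin curve, its $H^0$ vanishes and Serre duality forces $H^1$ of the second term to vanish, settling the cases $k=0$ and $k=2$. For $k=1$ the hypercohomology splits as
$$H^1(\overline{\Sigma},T_{\overline{\Sigma}}^\frac12(-D))\oplus H^0(\overline{\Sigma},T_{\overline{\Sigma}}^{-\frac12}\otimes K_{\overline{\Sigma}}(D))\cong H^1(\overline{\Sigma},T_{\overline{\Sigma}}^\frac12(-D))\otimes_\br\bc,$$
using Serre duality together with the Hermitian identification $V^\vee\cong\overline{V}$ on the second summand. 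The real dimensions of both sides of \eqref{cohiso1} then agree, giving rank $2(2g-2+n)$ over $\br$.

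The cocycle identity \eqref{covder} showing $d_{A^\phi}(h^{-1}\overline{\eta},\eta)=0$ is purely local and goes through unchanged, and the real structure of \ref{realstr} places $(h^{-1}\overline{\eta},\eta)$ in $A^1_\Sigma(T_\Sigma^\frac12)$. Bijectivity requires a parabolic analogue of Lemma~\ref{cohdir}: every $d_{A^\phi}$-cocycle $\alpha\in A^1_\Sigma(T_\Sigma^\frac12)$ is cohomologous to one of the form $\left(\begin{array}{c}0\\ \ast\end{array}\right)dz+\left(\begin{array}{c}\ast\\0\end{array}\right)d\bar{z}$. Once this normalisation holds, applying $\overline{\partial}_{A^\phi}$ yields the holomorphicity of the upper-right entry, while the regular singularity of the harmonic metric at each $p_i$ of parabolic weight $\tfrac12$ forces the resulting holomorphic one-form to extend across $D$ as a section of $K_{\overline{\Sigma}}(D)\otimes T_{\overline{\Sigma}}^{-\frac12}$, that is, an element of $H^1(\overline{\Sigma},T_{\overline{\Sigma}}^\frac12(-D))^\vee$.

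The principal obstacle is the parabolic Lemma~\ref{cohdir}: one must invert the elliptic operator $Pw=\partial_Aw+\tfrac12 h\overline{w}$ on a \emph{non-compact} $\Sigma$, where both $h$ and $A$ have regular singularities at each $p_i$. The natural setup is weighted Sobolev spaces dictated by the parabolic weight $\tfrac12$; the compact-case argument that $\overline{\partial}_A\partial_A-\tfrac14 h^2$ is negative definite uses integration by parts, so one must verify that the Stokes boundary contributions decay at the cusps or geodesic boundary components. The Neveu--Schwarz condition, which by Lemma~\ref{qufrep} ensures negative holonomy trace at each boundary class, is exactly what produces the correct decay and makes $P$ invertible, after which the argument closes as in Theorem~\ref{caniso}.
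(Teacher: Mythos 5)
Your proposal follows the paper's strategy: apply Simpson's parabolic theorem (Theorem~\ref{simthmpar}) in place of the compact one, reduce by the same quasi-isomorphism to $[T_{\overline{\Sigma}}^{1/2}(-D)\stackrel{0}{\to}T_{\overline{\Sigma}}^{-1/2}\otimes K_{\overline{\Sigma}}(D)]$, compute the hypercohomology and match degrees, and then make the map $\eta\mapsto(h^{-1}\overline{\eta},\eta)$ a bijection via an analogue of Lemma~\ref{cohdir}. This is exactly what the paper does: its entire proof of Theorem~\ref{caniso1} is the one-line remark that one repeats Theorem~\ref{caniso}, adapting Lemma~\ref{cohdir}.

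Where you diverge is in how you propose to rescue Lemma~\ref{cohdir} on the non-compact surface. You correctly identify that the integration-by-parts step $\int_\Sigma\partial\langle\partial_A s,s\rangle = \int_\Sigma d\langle\partial_A s,s\rangle = 0$ generates boundary contributions at the cusps/geodesic boundary, and you propose to control them with a weighted Sobolev framework plus decay estimates, attributing the decay to the Neveu--Schwarz condition via Lemma~\ref{qufrep}. The paper's fix is simpler and cleaner: \emph{replace cohomology by cohomology with compact supports}, which makes the Stokes term vanish identically, sidestepping any growth estimate. Your weighted-Sobolev route is plausible (and would indeed need the parabolic weight $\tfrac12$, which encodes Neveu--Schwarz), but the precise assertion that Lemma~\ref{qufrep}'s negative trace ``is exactly what produces the correct decay and makes $P$ invertible'' is overstated as written: the negative-trace condition is what makes $g_\gamma - I$ invertible in the \v{C}ech/fatgraph computation of $H^*_{dR}$, whereas the analytic control of $P$ near the cusps is governed by the parabolic weight and the asymptotics of $h$, not directly by the sign of the holonomy trace. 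The paper's compact-supports device avoids having to untangle this. Aside from that substitution, your outline matches the paper's.
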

The proof is the same as the proof of Theorem~\ref{caniso}.  The direct argument of Lemma~\ref{cohdir} goes through when we replace cohomology with cohomology with compact supports.

\subsubsection{} \label{orb}
In \ref{noncompact} the sheaf cohomology of a flat bundle over non-compact $\Sigma$ was related to the sheaf cohomology of a bundle over a compactification $\overline{\Sigma}$ of $\Sigma$.  A conformal structure on a punctured surface can compactify in different ways and we show here that it naturally compactifies to an orbifold curve $\cc$ with $\bz/2$ orbifold structure at $D=\cc-\Sigma$.  This is important to relate to the bundle $E_{g,n}$ constructed in Section~\ref{sec:theta}

As in Remark~\ref{parorb}, we push forward bundles over $\cc$ using the map $p:(\cc,D)\to(\overline{\Sigma},D)$ that forgets the orbifold structure at $D$.  For Neveu-Schwarz divisor $D$, as explained in the introduction, the non-trivial representation induced by $\theta^\vee$ along $D$ makes the local sections vanish on $D$ hence:
\[ p_*\theta^\vee=T_{\overline{\Sigma}}^\frac12(-D)
\]
and in particular 
\[H^1(\theta^\vee)=H^1((\overline{\Sigma},T_{\overline{\Sigma}}^\frac12(-D)).
\]
Hence by Theorem~\ref{caniso1}, over a smooth spin complete hyperbolic surface $\Sigma$ with Neveu-Schwarz boundary components, there is a canonical isomorphism of cohomology groups $H^1(\theta^\vee)^\vee\cong H^1_{dR}(\Sigma,T_\Sigma^\frac12)$ which allows us to prove the following.
\begin{cor}  \label{isobun}
The bundles defined in Definition~\ref{obsbun} and  Theorem~\ref{cohbun} are isomorphic on the smooth part of the Neveu-Schwarz component of the moduli space:
\begin{equation}  
\widehat{E}_{g,n}|_{\modm_{g,n,\vec{o}}^{\text{spin}}}\cong E_{g,n}.
\end{equation}
\end{cor}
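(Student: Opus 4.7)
The plan is to combine the two ingredients developed just before the statement: the pushforward identity $p_*\theta^\vee = T_{\overline{\Sigma}}^{\frac12}(-D)$ relating bundles on the orbifold curve $\cc$ to parabolic bundles on the coarse pair $(\overline{\Sigma}, D)$, and Theorem~\ref{caniso1} relating the holomorphic cohomology on $(\overline{\Sigma},D)$ to the de Rham cohomology of the flat bundle $T_\Sigma^{\frac12}$ over the smooth surface $\Sigma$. Concatenating these two canonical isomorphisms gives, for each smooth spin curve $(\cc,\theta)$ with $\Sigma = \cc - D$,
\begin{equation*}
H^1(\cc,\theta^\vee)^\vee \;\cong\; H^1(\overline{\Sigma}, T_{\overline{\Sigma}}^{\frac12}(-D))^\vee \;\cong\; H^1_{dR}(\Sigma, T_\Sigma^{\frac12}),
\end{equation*}
which identifies the fibres of $\widehat{E}_{g,n}$ and $E_{g,n}$ pointwise on $\modm_{g,n}^{\text{spin}}$.

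The first step is therefore to assemble the fibrewise statement precisely: use Remark~\ref{parorb} and the affine-point relation in \ref{orb} for the first isomorphism, and Theorem~\ref{caniso1} for the second. Since $H^0(\cc,\theta^\vee)=0$ by the degree argument of Definition~\ref{obsbun}, the cohomology has locally constant rank $2g-2+n$, matching the rank of $E_{g,n}$ from Theorem~\ref{cohbun}. The second step, which is the substantive one, is to promote this fibrewise identification to an isomorphism of (smooth, or holomorphic where applicable) vector bundles over $\modm_{g,n}^{\text{spin}}$. For this I would observe that each of the three cohomology groups above is the fibre of a construction that varies naturally with the moduli parameter: $H^1(\cc,\theta^\vee)$ is the fibre of $-R\pi_*\ce^\vee$ of Definition~\ref{obsbun}; $H^1(\overline{\Sigma}, T_{\overline{\Sigma}}^{\frac12}(-D))$ is its parabolic pushforward, hence gives the same higher direct image; and $H^1_{dR}(\Sigma, T_\Sigma^{\frac12})$ is the fibre of the local system of flat sections of the family of flat bundles $T^{\frac12}$ defined over the universal curve over $\modm_{g,n}^{\text{spin}}$.

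The bridge between the holomorphic and flat-bundle descriptions is the Simpson correspondence in families: the quasi-isomorphism between the de Rham and Dolbeault complexes constructed in \cite{SimHar} and used in Theorem~\ref{simthmpar} is canonical and varies continuously with the parabolic Higgs bundle $(E,\phi)$, whose own dependence on the moduli parameter is holomorphic since the Higgs field $\phi$ in \eqref{higgs} is constructed from purely tautological data. Equivalently, the explicit realisation in \eqref{sh2dr} of the isomorphism sends a section $\eta$ of the relative $K\otimes T^{-\frac12}$ to the cocycle $(h^{-1}\overline\eta,\eta)$ using only the relative Hermitian metric $h$ produced by Hitchin's theorem, and both $\eta$ and $h$ deform smoothly over $\modm_{g,n}^{\text{spin}}$. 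Therefore the fibrewise maps glue to a global isomorphism of smooth vector bundles.

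The main obstacle is the family version of the Simpson correspondence rather than any single fibre computation: one needs the uniqueness in Theorem~\ref{simpsonhar} (which yields the Hermitian metric $h$ as a smooth function of moduli) together with continuous dependence of the elliptic inverse of the operator $P$ appearing in the proof of Lemma~\ref{cohdir}. Both are standard but must be invoked explicitly; once they are in place, the composite map on fibres is automatically a smooth bundle isomorphism, finishing the proof of Corollary~\ref{isobun}.
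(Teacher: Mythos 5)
Your proposal follows essentially the same route as the paper: identify the fibre of $\widehat{E}_{g,n}$ via the orbifold pushforward $p_*\theta^\vee = T_{\overline{\Sigma}}^{\frac12}(-D)$ (Remark~\ref{parorb} and \ref{orb}), then invoke the canonical isomorphism of Theorem~\ref{caniso1} to identify it with the fibre of $E_{g,n}$. The paper leaves the promotion from a canonical fibrewise isomorphism to a bundle isomorphism implicit (relying on canonicity to guarantee smooth variation), whereas you flag and sketch the family version of the Simpson correspondence — the smooth dependence of the Hermitian metric $h$ from Theorem~\ref{simpsonhar} and of the elliptic inverse of $P$ from Lemma~\ref{cohdir} — which is a legitimate and slightly more careful rendering of the same argument rather than a different one.
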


\subsection{Euler form of $E_{g,n}$}   \label{sec:eulerform}

A canonical Euler form of $E_{g,n}\to\modm_{g,n,\vec{o}}^{\text{spin}}$ is constructed by using the natural hyperbolic metric associated to each curve of the moduli space.  More precisely, an Euler form is constructed on the dual bundle $E_{g,n}^\vee$ which is equivalent to an Euler form on $E_{g,n}$ via $e(E_{g,n}^\vee)=(-1)^ne(E_{g,n})$.  It is used in the definition of the volume of the moduli space of super hyperbolic surfaces.

 Let $E\to M$ be a real oriented bundle of rank $N$.   An Euler form 
\[e(E)\in\Omega^N(M)\] 
is uniquely determined by a choice of Riemannian metric $\langle\cdot,\cdot\rangle$ on $E$ together with a metric connection $A$, meaning that $d\langle s_1,s_2\rangle=\langle\nabla^As_1,s_2\rangle+\langle s_1,\nabla^As_2\rangle$ for sections $s_1$ and $s_2$ of $E$.  The curvature of the connection is an endomorphism-valued 2-form $F_A\in\Omega^2(M,\text{End}(E))$.  The endomorphism preserves the metric $\langle\cdot,\cdot\rangle$ hence $F_A$ is locally $so(N)$-valued.  The Pfaffian defines a map $\text{pf}:so(N)\to\br$ rather like the determinant.  It vanishes for $N$ odd and for $N$ even is defined using (but independent of the choice of) an orthonormal basis $\{e_1,...,e_N\}$ by 
\[\frac{1}{(N/2)!}B\wedge B\wedge ... \wedge B=:\text{pf}(B)e_1\wedge ... \wedge e_N,\qquad B\in\wedge^2\br^N\cong so(N).\] 
It satisfies $\text{pf}(B)^2=\det(B)$.  It is invariant under conjugation by $O(N)$, i.e. $\text{pf}(gBg^{-1})=\text{pf}(B)$ for $g\in O(N)$, hence makes sense on the associated ${\bf so}(N)$ bundle, and in particular on $F_A$.  The Euler form is defined as a polynomial in the curvature $F_A$ using the Pfaffian \cite{OsbRep} 
\begin{equation}  \label{eulpf}
e(E):=\left(\frac{1}{4\pi}\right)^N\text{pf}(F_A).
\end{equation}
The Bianchi identity $\nabla^AF_A=0$ implies that $e(E)$ is closed, i.e. $de(E)=0$.  When $M$ is compact, the cohomology class of the Euler form is independent of the choice of metric and connection, and represents the {\em Euler class} of $E$ which is defined via the Thom class of $E$, \cite{MStCha}.

A complex bundle $E\to M$ equipped with a Hermitian metric is naturally a real oriented bundle of even rank with a Riemannian metric.  Furthermore, if $E$ is holomorphic then the Hermitian metric induces a unique natural Hermitian connection compatible with both the holomorphic structure and the Hermitian metric, known as the {\em Chern} connection, and this is a metric connection with respect to the underlying Riemannian metric on $E$.  In this case, since $\det(iu)=\text{pf}(u^{\br})$, where $u^{\br}$ is the image of $u\in{\bf u}(N/2)$ in ${\bf so}(N)$, then \eqref{eulpf} coincides with the Chern-Weil construction of the top Chern form of $E$ realising $e(E)=c_{N/2}(E)$.

Here we define a canonical Euler form $e(E^\vee_{g,n})$ for the bundle $E^\vee_{g,n}\to\modm_{g,n,\vec{o}}^{\text{spin}}$.  It uses a canonical Hermitian metric on $E^\vee_{g,n}$, defined similarly to the definition of the Weil-Petersson metric.  For a smooth, spin, complete hyperbolic surface $\Sigma=\overline{\Sigma}-D$ with Neveu-Schwarz divisor $D$,  via
Theorem~\ref{caniso1} and Serre duality we have 
\[H^1_{dR}(\Sigma,T_\Sigma^\frac12)\cong H^1(\overline{\Sigma},T_{\overline{\Sigma}}^\frac12(-D))^\vee \cong H^0(\overline{\Sigma},K_{\overline{\Sigma}}^{3/2}(D))\] 
The $3/2$ differentials give the analogue of holomorphic quadratic differentials used to define the Weil-Petersson metric.  Now
\[ \eta,\xi\in H^0(\overline{\Sigma},K_{\overline{\Sigma}}^{3/2}(D))
\]
define a Hermitian metric
\begin{equation}  \label{hermetric} 
\langle\eta,\xi\rangle:=\int_\Sigma\frac{\overline{\eta}\xi}{\sqrt{h}}
\end{equation}
where $h$ is the hyperbolic metric on $\Sigma$.   If $\Sigma$ is compact the integral clearly exists.  When  $\Sigma$ is non-compact, i.e. $D\neq\varnothing$, to see that the integral exists, consider a local coordinate $z$ with $z=0$ corresponding to a point of $D$ and a cusp of the metric.  Locally, the hyperbolic metric is given by $h=\frac{|dz|^2}{|z|^2(\log|z|)^2}$ and the $3/2$ differentials are given by $\eta=\frac{f(z)dz^{3/2}}{z}$ and $\xi=\frac{g(z)dz^{3/2}}{z}$ where $f(z)$ and $g(z)$ are holomorphic at $z=0$.  The local contribution to the metric $\int_{|z|<\epsilon}\frac{\overline{f}g\log|z||dz|^2}{|z|}$ exists since 
\begin{equation}  \label{locest}
\int_{|z|<\epsilon}\frac{|\log|z||}{|z|}|dz|^2=\int_0^\epsilon|\log r| drd\theta=2\pi|\epsilon\log\epsilon-\epsilon|<2\pi \quad\Leftarrow\quad\epsilon<1.
\end{equation}
For $h$ a hyperbolic metric, $\sqrt{h}$ is a metric on the spin bundle $T\Sigma^{1/2}$.  It is worth pointing out that the proof described in \ref{higgscomp2} of the existence of a complete hyperbolic metric in a conformal class due to Hitchin \cite{HitSel} (and more generally for cusped surfaces in \cite{BGGPar}), produces the Hermitian metric on the bundle $T\Sigma^{1/2}$ directly without requiring a square root.

The metric \eqref{hermetric} arises from the super generalisation of the Weil-Petersson Hermitian metric---see for example \cite[eq.(24)]{RabTei}.  The super Weil-Petersson Hermitian metric in local coordinates $(z|\theta)$ uses $(\text{Im}\hspace{.4mm} z+\frac12\theta\bar{\theta})^2$ in place of $(\text{Im}\hspace{.4mm} z)^2$ which appears in the usual Weil-Petersson Hermitian metric since $h=|dz|^2/(\text{Im}\hspace{.4mm} z)^2$ locally.  The expansion of $(\text{Im}\hspace{.4mm} z+\frac12\theta\bar{\theta})^2$ produces the term $\theta\bar{\theta}\text{Im}\hspace{.4mm} z$ which, after integrating out the fermionic directions, corresponds to the factor of $1/\sqrt{h}$ in \eqref{hermetric}, and the term $(\text{Im}\hspace{.4mm} z)^2$ which corresponds to the usual factor of $1/h$ in the Weil-Petersson Hermitian metric.  This appears in \cite{RabTei} in equation (25) in terms of $S=\theta S^0+ S^1$, a function locally representing a quadratic differential plus a  $3/2$ differential, as 
\[\langle S_1,S_2\rangle=\int_{\bh/\Gamma}|dz|^2\left[\overline{S}_1^0S_2^0(\text{Im}\hspace{.4mm} z)^2+\overline{S}_1^1S_2^1(\text{Im}\hspace{.4mm} z)\right]
\]
where the second summand locally represents the Hermitian metric \eqref{hermetric}.

The bundle $E_{g,n}^\vee$ is holomorphic and its complex structure, given by $\xi\mapsto i\xi$ for $\xi\in H^0(\overline{\Sigma},K_{\overline{\Sigma}}^{3/2}(D))$, is compatible with the Hermitian metric on $E_{g,n}^\vee$ constructed above.  This uniquely determines the Chern connection, a metric connection $A$ on $E_{g,n}$ satisfying $\overline{\partial}_A=\overline{\partial}$ the natural operator defining the holomorphic structure on $E_{g,n}^\vee$.  Then $e(E_{g,n}^\vee)$ is defined to be the Pfaffian of the curvature of $A$ via \eqref{eulpf}.

\begin{remark}  \label{pullbackeuler}
The Euler form $e(E_{g,n})$ is defined above for the bundle over the moduli space of complete hyperbolic metrics $E_{g,n}\to\modm_{g,n,\vec{o}}^{\text{spin}}(0,...,0)$.    Using the diffeomorphism $\modm_{g,n,\vec{o}}^{\text{spin}}(L_1,...,L_n)\stackrel{\cong}{\longrightarrow}\modm_{g,n,\vec{o}}^{\text{spin}}(0,...,0)$,  we define the Euler form of $E_{g,n}\to\modm_{g,n,\vec{o}}^{\text{spin}}(L_1,...,L_n)$ to be the pull back of the Euler form $e(E_{g,n})$.   In the formula for the volume $\widehat{V}^{WP}_{g,n}(L_1,...,L_n)=\int_{\modm_{g,n,\vec{o}}^{\text{spin}}(L_1,...,L_n)}e(E_{g,n}^\vee)\exp\omega^{WP}$ defined in \eqref{supvol}, we can consider the entire integral via its pull-back to $\modm_{g,n,\vec{o}}^{\text{spin}}(0,...,0)$, and we see that the Euler  form does not change while the pull-back of $\omega^{WP}$ depends explicitly on $L_i$ following Mirzakhani's symplectic reduction argument in \cite{MirWei}.
\end{remark}

\subsubsection{}\label{eulerform}
In the following theorem we prove that the Euler form $e(E_{g,n}^\vee)$ defined in Section~\ref{sec:eulerform} extends to the compactification $\overline{\modm}_{g,n,\vec{o}}^{\text{spin}}$ and  defines a cohomology class in $H^*(\overline{\modm}_{g,n,\vec{o}}^{\text{spin}},\br)$.  We do this by proving that the Hermitian metric that defines $e(E_{g,n}^\vee)$ extends smoothly from $E_{g,n}$ to its extension $\widehat{E}_{g,n}\to\overline{\modm}_{g,n,\vec{o}}^{\text{spin}}$.  This enables us to conclude that the cohomology class defined by the extension of $e(E_{g,n}^\vee)$ coincides with the Euler class of $\widehat{E}_{g,n}^\vee$.

\begin{theorem}  \label{eulerfc}
The extension of the Euler form $e(E_{g,n}^\vee)$ to $\overline{\modm}_{g,n,\vec{o}}^{\text{spin}}$ defines a cohomology class which coincides with the Euler class $e(\widehat{E}_{g,n}^\vee)\in H^*(\overline{\modm}_{g,n,\vec{o}}^{\text{spin}},\br)$ of the extension bundle $E_{g,n}^\vee$.
\end{theorem}
\begin{proof}
The Hermitian metric \eqref{hermetric} on $E_{g,n}^\vee$ extends to a Hermitian metric on the bundle $\widehat{E}_{g,n}^\vee\to\overline{\modm}_{g,n,\vec{o}}^{\text{spin}}$ due to behaviour of the poles of the $3/2$ differentials representing fibres of $E_{g,n}^\vee$ as follows.
An element of $\overline{\modm}_{g,n,\vec{o}}^{\text{spin}}$ is a pair $(\cc,\theta)$ consisting of a line bundle $\theta$ over a stable twisted curve $\cc$ and an isomorphism $\theta^2\cong\omega_{\cc}^{\text{log}}$.  Labeled points $p_j$ are orbifold points with isotropy subgroup $\bz_2$ and $\theta$ is an orbifold bundle which defines a representation $\bz_2\to\bz_2$ at each $p_i$.  When $\cc$ is a nodal curve, the nodes also have isotropy subgroup $\bz_2$ and again $\theta$ defines a representation $\bz_2\to\bz_2$ at each node.  The pull-back of $\theta$ to the normalisation of $\cc$ is an orbifold bundle on each component.  In particular, points in the fibre of $\widehat{E}_{g,n}^\vee$ given by elements of $H^0(\overline{\Sigma},K_{\overline{\Sigma}}^{3/2}(D))$ have the same simple pole behaviour at nodes and at labeled points.  The pole at a node is present if the behaviour at the node is Neveu-Schwarz and removable if the behaviour at the node is Ramond.  Thus the estimate \eqref{locest} applies also at nodes to prove that the Hermitian metric on $H^0(\overline{\Sigma},K_{\overline{\Sigma}}^{3/2}(D))$ is well-defined when $\Sigma$ is nodal.  
The conclusion is that the Hermitian metric on $E_{g,n}^\vee$ extends to a Hermitian metric on $\widehat{E}_{g,n}^\vee$.  Furthermore, it extends to a smooth Hermitian metric on $\widehat{E}_{g,n}^\vee$ because the hyperbolic metric $h$ varies smoothly outside of nodes and has a canonical form around nodes, and the Hermitian metric is defined via an integral over $1/\sqrt{h}$ times smooth sections.  

We conclude that the Euler form $e(E_{g,n}^\vee)$, constructed from the curvature of the natural metric connection $A$, which is determined uniquely from the Hermitian metric and the holomorphic structure on $E_{g,n}^\vee$, extends to $\overline{\modm}_{g,n,\vec{o}}^{\text{spin}}$.  The Euler class of $\widehat{E}_{g,n}^\vee$ is determined by a choice of any connection on $\widehat{E}_{g,n}^\vee$, so we choose the metric connection of the extension of the Hermitian metric on $\widehat{E}_{g,n}^\vee$, to conclude that the cohomology class defined by the extension of $e(E_{g,n}^\vee)$ coincides with the Euler class $e(\widehat{E}_{g,n}^\vee)\in H^*(\overline{\modm}_{g,n,\vec{o}}^{\text{spin}},\br)$.
\end{proof}
\begin{remark}   
The Weil-Petersson form is the imaginary part of the natural Hermitian metric on the (co)tangent bundle over $\modm_{g,n}$ defined by
\begin{equation} \label{WPmetric}
\langle\eta,\xi\rangle:=\int_\Sigma\frac{\overline{\eta}\xi}{h},\qquad\eta,\xi\in H^0(\overline{\Sigma},K_{\overline{\Sigma}}^2(D))\cong H^1(\overline{\Sigma},T_{\overline{\Sigma}}(-D))^\vee.
\end{equation}
for
\[ \eta,\xi\in H^0(\overline{\Sigma},K_{\overline{\Sigma}}^2(D))\cong H^1(\overline{\Sigma},T_{\overline{\Sigma}}(-D))^\vee.\]
This Hermitian metric does not extend to $\overline{\modm}_{g,n}$ since it blows up as a cusp forms in a family of hyperbolic metrics.  This contrasts with the behaviour of the Hermitian metric defined on $E_{g,n}^\vee$ which does extend to $\overline{\modm}_{g,n,\vec{o}}^{\text{spin}}$.

The explanation for the difference in behaviour lies in the singularities of a meromorphic quadratic differential $\eta\in H^0(\overline{\Sigma},K_{\overline{\Sigma}}^2(D))$---it has simple poles near labeled points and double poles near nodes.  This is explained as follows. Locally, a holomorphic quadratic differential is the tensor square of a holomorphic differential.  As a node forms in a family of curves, a holomorphic differential gains simple poles on each side, with residues summing to zero.  This can be seen by considering the relative dualising sheaf of a family that deforms a nodal curve.  Thus, as a node forms in a family of curves, a holomorphic quadratic differential gains double poles on each side, with equal biresidues.  The condition of simple poles at labeled points is a consequence of the local deformation theory of a curve containing a labeled point which leads to elements of $H^1(\overline{\Sigma},T_{\overline{\Sigma}}(-D))$.

In a local coordinate $z$ near a labeled point, the hyperbolic metric is given by $h=\frac{|dz|^2}{|z|^2(\log|z|)^2}$, the quadratic differentials are $\eta=\frac{f(z)dz^{2}}{z}$ and $\xi=\frac{g(z)dz^{2}}{z}$ for $f(z)$ and $g(z)$ holomorphic at $z=0$, and the analogue of \eqref{locest} giving the local contribution to the metric becomes
\[
\int_{|z|<\epsilon}(\log|z|)^2|dz|^2=\int_0^\epsilon(\log r)^2 rdrd\theta<\infty
\]
which prove that the Weil-Petersson metric is well-defined.
Whereas, near a node $\eta=\frac{f(z)dz^{2}}{z^2}$ and $\xi=\frac{g(z)dz^{2}}{z^2}$, so the local contribution to the metric diverges:
\[
\int_{|z|<\epsilon}\frac{(\log|z|)^2}{|z|^2}|dz|^2=\int_0^\epsilon\frac{(\log r)^2}{r}drd\theta=\infty
\]
showing that the Weil-Petersson metric does not extend to $\overline{\modm}_{g,n}$.
In contrast, the proof of Theorem~\ref{eulerfc} shows that the Hermitian metric on $E_{g,n}^\vee$ does extend to $\overline{\modm}_{g,n,\vec{o}}^{\text{spin}}$ which relies on the fact that the order of the pole of an element of $H^0(\overline{\Sigma},K_{\overline{\Sigma}}^{3/2}(D))$ is simple both at a labeled point and at a node. 

The different behaviour is reflected quite simply via the calculation of dimensions of $H^0(\overline{\Sigma},K_{\overline{\Sigma}}^{3/2}(D))$ and $H^0(\overline{\Sigma},K_{\overline{\Sigma}}^2(D))$ on a stable curve.  For simplicity, consider the case of an irreducible genus $g$ curve $\overline{\Sigma}$ with exactly one node:
\[\dim H^0(\overline{\Sigma},K_{\overline{\Sigma}}^{2}(D))=3g-3+n=3(g-1)-3+n+4-1
\]
where the right hand side is calculated on the normalisation of $\overline{\Sigma}$ using simple poles on labeled points and double poles at the two extra points minus the one condition of a common biresidue.  In contrast,
\[\dim H^0(\overline{\Sigma},K_{\overline{\Sigma}}^{3/2}(D))=2g-2+n=2(g-1)-2+n+2
\]
where the right hand side is calculated on the normalisation of $\overline{\Sigma}$ using simple poles on labeled points and at the two extra points.  (The calculation above shows the case of Neveu-Schwarz nodal points.  For Ramond nodal points, the section is holomorphic at the two extra points.) 
\end{remark}

\subsubsection{} 

\begin{proof}[{\bf Proof of Theorem~\ref{volequal}.}]
We must show that
\[\widehat{V}^{WP}_{g,n}(L_1,...,L_n)=2^{1-g-n}V^{\Theta}_{g,n}(L_1,...,L_n)
\]
where $V^{\Theta}_{g,n}(L_1,...,L_n)=\int_{\overline{\modm}_{g,n}}\hspace{-2mm}\Theta_{g,n}\exp\left\{2\pi^2\kappa_1\hspace{-.6mm}+\hspace{-.6mm}\frac12\sum_{i=1}^n L_i^2\psi_i\right\}$
and $\widehat{V}^{WP}_{g,n}(L_1,...,L_n)$ has the following equivalent expressions:
\begin{align*}
\widehat{V}^{WP}_{g,n}(L_1,...,L_n)&=\int_{\modm_{g,n,\vec{o}}^{\text{spin}}(L_1,...,L_n)}e(E_{g,n}^\vee)\exp(\omega^{\text{WP}}(L_1,...,L_n))\\
&=\int_{\modm_{g,n,\vec{o}}^{\text{spin}}(0,...,0)}e(E_{g,n}^\vee)\exp(f^*\omega^{\text{WP}}(L_1,...,L_n))
\\
&=\int_{\overline{\modm}_{g,n,\vec{o}}^{\text{spin}}}e(\widehat{E}_{g,n}^\vee)\exp(2\pi^2\kappa_1+\frac12\sum_{i=1}^n L_i^2\psi_i)
\end{align*}
where the first equality is the definition \eqref{supvol}.  The second equality uses the pull-back of the diffeomorphism $f:\modm_{g,n,\vec{o}}^{\text{spin}}(L_1,...,L_n)\longrightarrow\modm_{g,n,\vec{o}}^{\text{spin}}(0,...,0)$ where as discussed in Remark~\ref{pullbackeuler} the Euler form pulls back to the canonical Euler form.  The third equality uses the extension of $e(E_{g,n})$ to the compactification proven in Theorem~\ref{eulerfc} together with Mirzakhani's expression for the pull-back of the Weil-Petersson form, proven in \cite{MirWei} via symplectic reduction.  Thus $\widehat{V}^{WP}_{g,n}(L_1,...,L_n)$ can be calculated cohomologically over the moduli space of stable curves $\overline{\modm}_{g,n,\vec{o}}^{\text{spin}}$ using the Euler class $e(\widehat{E}_{g,n})\in H^*(\overline{\modm}_{g,n,\vec{o}}^{\text{spin}},\br)$.  The push-forward of this cohomological calculation under the forgetful map $\overline{\modm}_{g,n,\vec{o}}^{\text{spin}}\stackrel{p}{\longrightarrow}\overline{\modm}_{g,n}$ leads to the relation
\begin{align*}
\widehat{V}^{WP}_{g,n}(L_1,...,L_n)&=\int_{\overline{\modm}_{g,n}}\hspace{-2mm}p_*e(\widehat{E}_{g,n}^\vee)\exp\left\{2\pi^2\kappa_1+\frac12\sum_{i=1}^n L_i^2\psi_i\right\}\\
&=2^{1-g-n}\int_{\overline{\modm}_{g,n}}\hspace{-2mm}\Theta_{g,n}\exp\left\{2\pi^2\kappa_1+\frac12\sum_{i=1}^n L_i^2\psi_i\right\}
\end{align*}
where the first equality uses the fact the classes $\kappa_1$ and $\psi_i$ pull back from  $\overline{\modm}_{g,n}$ to $\overline{\modm}_{g,n,\vec{o}}^{\text{spin}}$ (reflecting the fact that the Weil-Petersson form pulls back from the smooth moduli space $\modm_{g,n}(L_1,...,L_n)$ to $\modm_{g,n,\vec{o}}^{\text{spin}}(L_1,...,L_n)$) and the second equality uses 
\[
\Theta_{g,n}=2^{g-1+n}p_*e(\widehat{E}_{g,n}^\vee)=(-1)^n2^{g-1+n}p_*e(\widehat{E}_{g,n})
\] 
from Definition~\ref{theta} in Section~\ref{sec:theta}.
\end{proof}

\section{Moduli space of super hyperbolic surfaces}   \label{sec:mirz}   
In this section we describe Mirzakhani's recursion relations between volumes of moduli spaces of hyperbolic surfaces \cite{MirSim} and the generalisation of Mirzakhani's argument by Stanford and Witten \cite{SWiJTG} who derive the recursion \eqref{volrecWP} via the volumes of moduli spaces of super hyperbolic surfaces.   We also describe Mirzakhani's proof of the Kontsevich-Witten theorem since the proof of Theorem~\ref{main} follows Mirzakhani's arguments closely.  

\subsection{Moduli space of hyperbolic surfaces}
Define the moduli space of complete oriented hyperbolic surfaces 
\[
\modm_{g,n}(\vec{0})=\{\Sigma\mid \Sigma= \text{genus }g \text{ oriented hyperbolic surface with } n\text{ labeled cusps}\}/\sim\] 
where the quotient is by isometries preserving each cusp.  Note that (generically) a hyperbolic surface appears twice in $\modm_{g,n}(\vec{0})$ equipped with each of its two orientations.  
Define the moduli space of oriented hyperbolic surfaces with fixed length $\vec{L}=(L_1,...,L_n)\in\br^n_{\geq 0}$ geodesic boundary components by
\begin{align*}
\modm_{g,n}(\vec{L})=\Big\{(\Sigma,\beta_1,...,\beta_n)\mid \Sigma& \text{ genus }g\text{ oriented hyperbolic surface},\\
&\partial \Sigma=\sqcup\beta_i \text{ are geodesic}, L_i=\ell(\beta_i)\Big\}/\sim
\end{align*} 
where again the quotient is by isometries preserving each $\beta_i$.  Any non-trivial isometry must rotate each $\beta_i$ non-trivially.  The moduli spaces are all diffeomorphic $\modm_{g,n}(\vec{0})\cong\modm_{g,n}(\vec{L})$ and we will see below that the varying parameters $\vec{L}\in\br^n_{\geq 0}$ give a family of deformations of a natural symplectic structure on $\modm_{g,n}(\vec{0})$.

\subsubsection{}
The hyperbolic metric on $\Sigma$ induces a Hermitian metric on the vector space of meromorphic quadratic differentials $H^0(\overline{\Sigma},K_{\overline{\Sigma}}^2(D))$ via \eqref{WPmetric}, hence a Hermitian metric on $T_{[\Sigma]}\modm_{g,n}(\vec{0})$ known as the Weil-Petersson metric.  The Weil-Petersson symplectic form $\omega^{WP}$ on $\modm_{g,n}(\vec{0})$ is the imaginary part of the Weil-Petersson metric.  It defines a volume form on $\modm_{g,n}(\vec{0})$ with finite integral known as the Weil-Petersson volume of $\modm_{g,n}(\vec{0})$: 
\[V_{g,n}^{WP}:=\int_{\modm_{g,n}(\vec{0})}\exp\left\{\omega^{WP}\right\}.\]

\subsubsection{}
Teichm\"uller space gives a way to realise $\omega^{WP}$ via local coordinates on $\modm_{g,n}(\vec{0})$.  Fix a smooth genus $g$ oriented surface $\Sigma_{g,n}=\overline{\Sigma}_{g,n}-\{q_1,...,q_n\}$.  A {\em marking} of a genus $g$ hyperbolic surface $\Sigma=\overline{\Sigma}-\{p_1,...,p_n\}$ is an orientation preserving homeomorphism $f:\Sigma_{g,n}\stackrel{\cong}\to \Sigma$.  Define the Teichm\"uller space of marked hyperbolic surfaces $(\Sigma,f)$ of type $(g,n)$ to be
\[\ct_{g,n}=\{(\Sigma,f)\}/\sim\]
where the equivalence is given by $(\Sigma,f)\sim(T,g)$ if $g\circ f^{-1}:\Sigma\to T$ is isotopic to an isometry.
The mapping class group $\text{Mod}_{g,n}$ of isotopy classes of orientation preserving diffeomorphisms of the surface that preserve boundary components acts on $\ct_{g,n}$ by its action on markings.  The quotient of Teichm\"uller space by this action produces the moduli space
\[\modm_{g,n}(\vec{0})=\ct_{g,n}/\text{Mod}_{g,n}.\]

\subsubsection{}
Global coordinates for Teichm\"uller space, known as Fenchel-Nielsen coordinates, are defined as follows.  Choose a maximal set of disjoint embedded isotopically inequivalent simple closed curves on the topological surface $\Sigma_{g,n}$.  The complement of this collection is a union of pairs of pants known as a pants decomposition of the surface $\Sigma_{g,n}$.  Each pair of pants contributes Euler characteristic  $-1$, so there are $2g-2+n = -\chi(\Sigma)$ pairs of pants in the decomposition, and hence $3g - 3 + n$  closed geodesics (not counting the boundary classes.) 
A marking $f : \Sigma_{g,n}\to\Sigma$ of a hyperbolic surface with $n$ cusps $\Sigma$ induces a pants decomposition on $\Sigma$ from $\Sigma_{g,n}$. The isotopy classes of embedded closed curves can be represented by a collection $\{\gamma_1, ..., \gamma_{3g-3+n}\}$ of disjoint embedded simple closed geodesics which cuts $\Sigma$ into hyperbolic pairs of pants with geodesic and cusp boundary components.  Their lengths $\ell_1, ..., \ell_{3g-3+n}$ give half the Fenchel-Nielsen coordinates, and the other half are the twist parameters $\theta_1, ..., \theta_{3g-3+n}$ which we now define.  Any hyperbolic pair of pants contains three geodesic arcs giving the shortest paths between boundary components, or horocycles around cusps.   The simple closed geodesic $\gamma_i$ intersects the geodesic arcs on the pair of pants on one side of $\gamma_i$ at a pair of (metrically opposite) points on $\gamma_i$, and similarly $\gamma_i$ intersects the geodesic arcs on the pair of pants on the other side of $\gamma_i$ at a pair of (metrically opposite) points on $\gamma_i$.  The oriented distance between these points lies in $[0,\ell_i/2]$ and after a choice that fixes the ambiguity arising from choosing one out of a pair of points the oriented distance lies in $[0,\ell_i]$ which defines $\theta_i(\text{mod\ }\ell_i)$.  A further lift $\theta_i\in\br$ is obtained by continuous paths in $\ct_{g,n}$ which amount to rotations around $\gamma_i$.  The coordinates $(\ell_j,\theta_j)$ for $j = 1, 2, ..., 3g -3+n$ give rise to an isomorphism 
\[\ct_{g,n}\cong(\br^+\times\br)^{3g-3+n}.\]

\subsubsection{}
The Fenchel-Nielsen decomposition induces an action of $S^1$ along each simple closed geodesic $\gamma_i$ by rotation.  In local coordinates $\theta_i\mapsto\theta_i+\phi$ for $\phi\in\br/\ell_i\bz\cong S^1$.  This action defines a vector field, given locally by $\partial/\partial\theta_i$.  Wolpert proved that $\partial/\partial\theta_i$ is a Hamiltonian vector field with respect to $\omega^{WP}$ with Hamiltonian given by $\ell_i$.  In other words $(\ell_1,...,\ell_{3g-3+n},\theta_1,...,\theta_{3g-3+n})$ are Darboux coordinates for $\omega^{WP}$.  This is summarised in the following theorem.
\begin{thm}[Wolpert \cite{WolWei}]  \label{woldar}
\begin{equation}  \label{WP}
\omega^{WP}=\sum d\ell_j\wedge d\theta_j.
\end{equation}
\end{thm}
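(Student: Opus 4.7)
The plan is in two steps. First, establish Wolpert's Hamiltonian duality
$$\iota_{\partial/\partial\theta_i}\omega^{WP}=d\ell_i$$
for each pants decomposition curve $\gamma_i$. Then deduce \eqref{WP} by expanding $\omega^{WP}$ in the Fenchel--Nielsen basis $\{d\ell_i,d\theta_i\}$.

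For the duality, I would represent the infinitesimal twist $t_\gamma=\partial/\partial\theta_\gamma$ along a simple closed geodesic $\gamma$ by an explicit harmonic Beltrami differential $\mu_\gamma$ supported in a collar of $\gamma$: uniformize a collar as a strip in $\bh$ modulo $z\mapsto e^{\ell_\gamma}z$, and differentiate at $\theta=0$ the one-parameter family of hyperbolic surfaces obtained by cutting along $\gamma$ and regluing after a hyperbolic shear by $\theta$. Separately, differentiating the trace of the holonomy around $\gamma$ expresses $d\ell_\gamma$ evaluated on any tangent vector $\nu$ (itself a harmonic Beltrami differential) as an integral over $\gamma$. A direct computation of $\omega^{WP}(\mu_\gamma,\nu)$ using the $L^2$ Weil--Petersson pairing, together with a Stokes-type argument that reduces the pairing to the support of $\mu_\gamma$ and then onto $\gamma$ itself, matches the same integral, yielding the duality.

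Applied to the pants curves $\gamma_i$, the duality gives $\omega^{WP}(\partial/\partial\theta_i,\partial/\partial\theta_j)=0$ and $\omega^{WP}(\partial/\partial\theta_i,\partial/\partial\ell_j)=\delta_{ij}$. Writing $\omega^{WP}=\sum_k d\ell_k\wedge d\theta_k+\eta$, these relations force $\iota_{\partial/\partial\theta_k}\eta=0$, so $\eta=\sum_{i<j}a_{ij}\,d\ell_i\wedge d\ell_j$; closedness $d\omega^{WP}=0$ then forces $\partial_{\theta_k}a_{ij}=0$, so the $a_{ij}$ are functions of the lengths alone. The main obstacle is ruling out these residual $d\ell_i\wedge d\ell_j$ terms. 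I would handle it by applying the same duality to an auxiliary simple closed geodesic $\alpha$ transversal to the pants decomposition, whose twist vector field $t_\alpha$, expanded in the Fenchel--Nielsen basis, has non-trivial $\partial/\partial\ell_k$ components (computable from the Wolpert cosine formula $\{\ell_\alpha,\ell_\beta\}=\sum_{p\in\alpha\cap\beta}\cos\theta_p$, itself a direct consequence of the duality). Writing out $\iota_{t_\alpha}\omega^{WP}=d\ell_\alpha$ in the expansion above yields linear relations forcing $a_{ij}\equiv 0$, which completes the proof of \eqref{WP}.
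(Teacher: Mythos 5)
The paper states this only as a citation of Wolpert and offers no proof of its own, so there is no internal argument to compare against; your outline has to be judged on its own terms. Steps (1)--(4) are sound and are exactly Wolpert's first moves: the twist--length duality $\iota_{t_\gamma}\omega^{WP}=d\ell_\gamma$ via harmonic Beltrami representatives, the resulting expansion $\omega^{WP}=\sum_k d\ell_k\wedge d\theta_k+\eta$ with $\eta=\sum_{i<j}a_{ij}\,d\ell_i\wedge d\ell_j$, and closedness forcing the $a_{ij}$ to depend on the lengths alone.

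The gap is in your last step: the auxiliary-curve argument does not actually produce the linear relations you assert. Write $t_\alpha=\sum_k c_k\,\partial/\partial\ell_k+\sum_k d_k\,\partial/\partial\theta_k$; the cosine/derivative formula gives only $c_k=t_\alpha\ell_{\gamma_k}$. Substituting into $\iota_{t_\alpha}\omega^{WP}=d\ell_\alpha$ and matching the $d\theta_k$-components merely reproduces the reciprocity $t_\alpha\ell_{\gamma_k}=t_{\gamma_k}\ell_\alpha$, a consequence of the duality you already used, and constrains nothing. Matching the $d\ell_k$-components gives $-d_k+\sum_i a_{ik}c_i=\partial\ell_\alpha/\partial\ell_k$, an equation in which both $d_k=t_\alpha\theta_k$ and $\partial\ell_\alpha/\partial\ell_k$ are quantities your scheme never computes---there is no analogue of the cosine formula for the twist derivative of a Fenchel--Nielsen \emph{twist} coordinate. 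Expanding $\omega^{WP}(t_\alpha,t_\beta)$ for pairs of auxiliary curves has the same defect, since it again involves the unknown $d_k$'s for both curves. So the $a_{ij}$ remain unconstrained and $\eta$ is not killed. Closing the argument needs input of a different kind: one can (as Wolpert does) extend the now $\theta$-independent form over the augmented Teichm\"uller space and force $\eta$ to vanish on a degenerate stratum; alternatively, choosing the pants decomposition invariant under an orientation-reversing involution of the reference surface gives an anti-holomorphic involution of $\ct_{g,n}$ acting as $(\ell,\theta)\mapsto(\ell,-\theta)$, which reverses the K\"ahler form $\omega^{WP}$ while fixing every $d\ell_i\wedge d\ell_j$, so that $\eta=-\eta$.
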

Since $\omega^{WP}$ is defined over $\modm_{g,n}(\vec{0})$ it follows that this expression for $\omega^{WP}$ is invariant under the action of the mapping class group $\text{Mod}_{g,n}$.  There are a finite number of pants decompositions up to the action of the mapping class group, each class consisting of infinitely many geometrically different types. Thus once a topological pants decomposition of the surface is chosen a given hyperbolic surface has infinitely many geometrically different pants decompositions equivalent under $\text{Mod}_{g,n}$. Each different decomposition associates different lengths and twist parameters, hence different coordinates, to the same hyperbolic surface.

Wolpert proved that the Weil-Petersson symplectic form $\omega^{WP}$ extends from $\modm_g$ to $\overline{\modm}_{g}$ and coincides with $2\pi^2\kappa_1$ defined in \eqref{kappaclass}.  His proof extends to $\modm_{g,n}(\vec{0})$ and importantly gives
\[V_{g,n}^{WP}=\int_{\modm_{g,n}}\exp\left\{\omega^{WP}\right\}
=\int_{\overline{\modm}_{g,n}}\exp\left\{2\pi^2\kappa_1\right\}.\]

\subsubsection{}
Wolpert's local formula \eqref{WP} generalises below in \eqref{WPLi} to define a  symplectic form $\omega^{WP}(\vec{L})$ on $\modm_{g,n}(\vec{L}))$ which pulls back under the isomorphism
\[\modm_{g,n}(\vec{0})\cong\modm_{g,n}(\vec{L})
\]
to define a family of deformations of the Weil-Petersson symplectic form, depending on the parameters $\vec{L}=(L_1,...,L_n)$.  The pairs of pants  decomposition of an oriented hyperbolic surface with cusps naturally generalises to an oriented hyperbolic surface with geodesic boundary components.  The lengths and twist parameters of the $3g-3+n$ interior geodesics gives rise to Fenchel-Nielsen coordinates $(\ell_1,...,\ell_{3g-3+n},\theta_1,...,\theta_{3g-3+n})$ on the Teichm\"uller space 
\[\ct_{g,n}(\vec{L})=\{(\Sigma,f)\}/\sim\]
of marked genus $g$ oriented hyperbolic surfaces with geodesic boundary components of lengths $\vec{L}=(L_1,...,L_n)\in\br_{\geq0}^n$ and an isomorphsim
$\ct_{g,n}(\vec{L})\cong(\br^+\times\br)^{3g-3+n}.$  
Wolpert's local formula \eqref{WP} can be used to define a symplectic form
\begin{equation} \label{WPLi}
\omega^{WP}(\vec{L})=\sum d\ell_j\wedge d\theta_j
\end{equation}
again known as the Weil-Petersson symplectic form, on $\ct_{g,n}(\vec{L})$.  It is invariant under the mapping class group and descends to the moduli space 
\[\modm_{g,n}((\vec{L})=\ct_{g,n}(\vec{L})/\text{Mod}_{g,n}.\]
Wolpert's result \cite{WolHom} generalises to show that $\omega^{WP}(\vec{L})$ extends to $\overline{\modm}_{g,n}$.

Mirzakhani \cite{MirWei} proved that $\modm_{g,n}(\vec{L})$ arises as a symplectic quotient of a symplectic manifold with $T^n$ action and moment map $(\frac12 L_1^2,...,\frac12 L_n^2)$.  Each level set of the moment map or equivalently each choice of $\vec{L}=(L_1,...,L_n)$ gives a symplectic quotient.  Quite generally, the symplectic form on the quotient is a deformation by first Chern classes of line bundles related to the $T^n$ action.  In this case it is $\omega^{WP}+\sum\frac12 L_i^2\psi_i$ where $\psi_i=c_1(\cl_i)\in H^2(\overline{\modm}_{g,n})$ are defined in \ref{psiclass} which produces:
\begin{equation}  \label{WPvolcomp}
V_{g,n}^{WP}(\vec{L})=\int_{\modm_{g,n}(\vec{L})}\exp\left\{\omega^{WP}(\vec{L})\right\}
=\int_{\overline{\modm}_{g,n}}\exp\left\{2\pi^2\kappa_1+\frac12\sum_{i=1}^n L_i^2\psi_i\right\}.
\end{equation}
The extension of $\omega^{WP}(\vec{L})$ to $\overline{\modm}_{g,n}$ uses Wolpert's theorem together with the extensions of the classes $\psi_i$ from  $\modm_{g,n}(\vec{L})$ to $\overline{\modm}_{g,n}$.
In particular the volumes depend non-trivially on $L_i$ proving that  $\omega^{WP}(\vec{L})$ is a non-trivial deformation of $\omega^{WP}$.

\subsection{Mirzakhani's volume recursion}  \label{sec:WPvolrec}
Mirzakhani proved the following recursion relations between the volumes $V_{g,n}^{WP}(L_1,...,L_n)$.
\begin{thm}[Mirzakhani \cite{MirSim}] \label{th:mirzvolrec}
\begin{align}  \label{mirzvolrec}
L_1V_{g,n}^{WP}(L_1,...,L_n)&=\frac12\int_0^\infty\int_0^\infty xyD^M(L_1,x,y)P_{g,n+1}(x,y,L_2,..,L_n)dxdy\\
&+\sum_{j=2}^n\int_0^\infty xR^M(L_1,L_j,x)V^{WP}_{g,n-1}(x,L_2,..,\hat{L}_j,..,L_n)dx
\nonumber
\end{align}
where $\displaystyle P_{g,n+1}(x,y,L_K)=V^{WP}_{g-1,n+1}(x,y,L_K)+\hspace{-3mm}\mathop{\sum_{g_1+g_2=g}}_{I \sqcup J = K}\hspace{-2mm} V^{WP}_{g_1,|I|+1}(x,L_I)V^{WP}_{g_2,|J|+1}(y,L_J)$\\
for $K=\{2,...,n\}$.
\end{thm}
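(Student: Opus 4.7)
The plan is to follow Mirzakhani's original argument, whose two key ingredients are the generalised McShane identity and a symplectic unfolding trick. The starting point is the pointwise identity, valid on any hyperbolic surface $\Sigma$ of type $(g,n)$ with geodesic boundary $\beta_1,\ldots,\beta_n$ of lengths $L_1,\ldots,L_n$:
\[
L_1 = \sum_{\{\alpha_1,\alpha_2\}} D^M(L_1,\ell(\alpha_1),\ell(\alpha_2)) + \sum_{j=2}^n \sum_\gamma R^M(L_1,L_j,\ell(\gamma)),
\]
where the first sum ranges over unordered pairs of disjoint simple closed geodesics such that $\beta_1\cup\alpha_1\cup\alpha_2$ bounds an embedded pair of pants, and the second over simple closed geodesics $\gamma$ such that $\beta_1\cup\beta_j\cup\gamma$ bounds one. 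Proving this identity is the hardest analytic step: it comes from parametrising complete horocyclic trajectories leaving $\beta_1$, classifying them according to the topological type of the embedded pair of pants they enter, and summing the Lebesgue contributions using explicit formulas for a hyperbolic pair of pants.

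Next, I would integrate the identity over $\modm_{g,n}^{\text{hyp}}(L)$ against the Weil-Petersson volume form, producing $L_1 V_{g,n}^{WP}(L)$ on the left. Each right-hand sum is $\text{Mod}_{g,n}$-invariant and, because the topological types of admissible multicurves are finite in number, decomposes into finitely many orbit sums. The standard unfolding lemma rewrites the integral of each orbit sum as a single integral over a finite cover of $\modm_{g,n}^{\text{hyp}}(L)$ on which a representative multicurve is globally defined; this cover fibres, via the cutting map, over the Weil-Petersson moduli space of the cut surface. In Fenchel-Nielsen coordinates adapted to a pants decomposition refining the cut multicurve, Wolpert's formula \eqref{WP} splits $\omega^{WP}(L)$ as $\sum d\ell_i\wedge d\theta_i + \omega^{WP}_{\text{cut}}$, where $\ell_i,\theta_i$ denote the length and twist of the cut geodesics.

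Since the integrands depend only on the cut lengths, the twist parameters $\theta_i\in[0,\ell_i)$ integrate to give factors $\ell_i$ (producing the $x$ and $xy$ in the stated formula), while the remaining integration yields the Weil-Petersson volume of the cut surface. The three topological cases -- cutting along $\gamma$ between $\beta_1$ and $\beta_j$, along a non-separating $\alpha_1\cup\alpha_2$, and along a separating $\alpha_1\cup\alpha_2$ -- contribute respectively $V_{g,n-1}^{WP}(x,L_{K\setminus\{j\}})$, $V_{g-1,n+1}^{WP}(x,y,L_K)$, and the products $V^{WP}_{g_1,|I|+1}(x,L_I)V^{WP}_{g_2,|J|+1}(y,L_J)$, which reassemble into $P_{g,n}$. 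The prefactor $\tfrac12$ on the $D^M$-term absorbs the $\bz/2$ exchanging $\alpha_1$ and $\alpha_2$. The main obstacle is thus the McShane-Mirzakhani identity itself; once it is in hand, the recursion reduces to unfolding plus Fubini, with care needed only to handle stabilisers correctly (notably an additional $\bz/2$ when a separating curve swaps two isomorphic subsurfaces).
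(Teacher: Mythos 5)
Your proposal follows essentially the same route as the paper's sketch (and Mirzakhani's original argument): the McShane--Mirzakhani identity, integration over moduli space, unfolding to an intermediate moduli space of pairs (surface, multicurve), and a Fubini argument in Fenchel--Nielsen coordinates using Wolpert's formula to produce the $x$, $xy$ factors and the volumes of the cut pieces. The only slip is in the derivation of the identity: one classifies geodesics orthogonal to $\beta_1$ (the boundary analogue of McShane's cusp construction), not ``horocyclic trajectories,'' exactly as the paper describes in Section~\ref{Rder}.
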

The kernels in \eqref{mirzvolrec} are defined by
\[H^M(x,y)=1-\tfrac{1}{2}\tanh\tfrac{x-y}{4}-\tfrac{1}{2}\tanh\tfrac{x+y}{4}
\]
which uniquely determine $D^M(x,y,z)$ and $R^M(x,y,z)$ via
\[\frac{\partial}{\partial x}D^M(x,y,z)=H^M(x,y+z),\ \frac{\partial}{\partial x}R^M(x,y,z)=\hspace{-.5mm}\frac12\hspace{-.5mm}\left(H^M(z,x+y)+H^M(z,x-y)\right)
\]
and the initial conditions $D^M(0,y,z)=0=R^M(0,y,z)$.  Explicitly
\begin{equation}  \label{Rlog}
R^M(x,y,z)=x-\log\left(\frac{\cosh\frac{y}{2}+\cosh\frac{x+z}{2}}{\cosh\frac{y}{2}+\cosh\frac{x-z}{2}}\right)
\end{equation}
and $D^M(x,y,z)$ is given by the relation
\begin{equation} \label{DRid}
D^M(x,y,z)=R^M(x,y,z)+R^M(x,z,y)-x
\end{equation}
which follows from
\begin{equation} \label{Hid}
2H^M(x,y+z)=H^M(z,x+y)+H^M(z,x-y)+H^M(y,x+z)+H^M(y,x-z)-2.
\end{equation}

The relations \eqref{mirzvolrec} uniquely determine $V_{g,n}^{WP}(L_1,...,L_n)$ from 
\[V^{WP}_{0,3}=1,\quad V^{WP}_{1,1}=\frac{1}{48}(4\pi^2+L^2).\]
The first two calculations are 
\[V^{WP}_{0,4}=\frac12(4\pi^2+\sum L_i^2),\qquad V^{WP}_{1,2}=\frac{1}{384}(4\pi^2+\sum L_i^2)(12\pi^2+\sum L_i^2).\]

Mirzakhani used the recursion \eqref{mirzvolrec} to prove that the top coefficients of the polynomial $V_{g,n}^{WP}(L_1,...,L_n)$ satisfy Virasoro constraints which proves Theorem~\ref{KW} of Witten-Kontsevich.  See the Proof of Theorem~\ref{KWthm} in Section~\ref{sec:kdv}.

 The proof of Theorem~\ref{th:mirzvolrec} uses an unfolding of the volume integral to an integral over associated moduli spaces.  This allows the integral to be related to volumes over simpler moduli spaces.  A non-trivial decomposition of the constant function on the moduli space is used to achieve the unfolding.  This is explained in this section, particularly because the same ideas are required in the super moduli space case.

\subsubsection{}   \label{Rder}
The functions $D^M(x,y,z)$, $R^M(x,y,z)$ and the identity \eqref{DRid} have the following geometric interpretation.  Given $x>0,y>0,z>0$ there exists a unique hyperbolic pair of pants with geodesic boundary components $\beta_1$, $\beta_2$ and $\beta_3$ of respective lengths $x$, $y$ and $z$.
\begin{center}
{\includegraphics[scale=0.1]{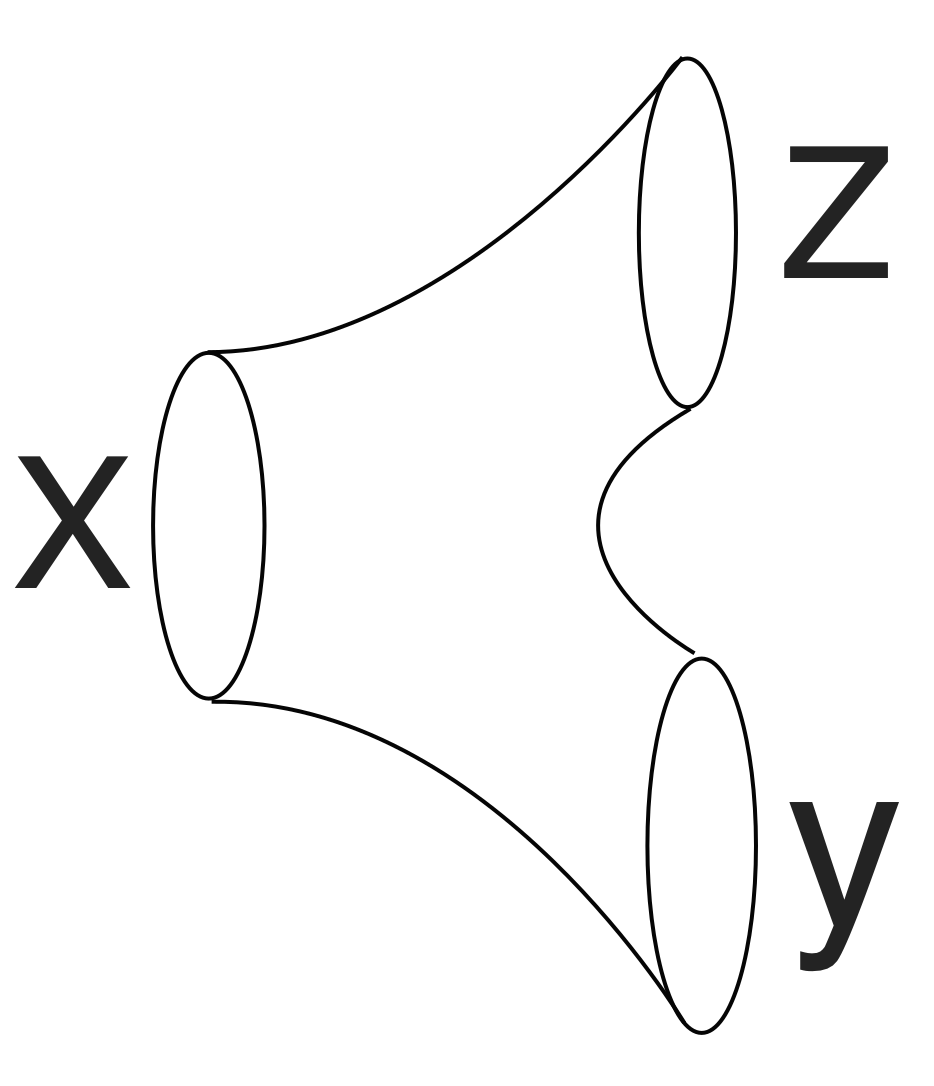}}
\end{center}
Consider geodesics orthogonal to the boundary component $\beta_1$.  Travel along any such geodesic beginning at $\beta_1$ and stop if the geodesic meets itself or a boundary component.  Such geodesics have four types of behaviour and their initial points partition $\beta_1=I_1\sqcup I_2\sqcup I_3\sqcup I_4$.

(i) The geodesic meets itself, or $\beta_1$ for a second time;

(ii) the geodesic meets $\beta_2$;

(iii) the geodesic meets $\beta_3$;

(iv) the geodesic remains embedded for all time.\\
The initial points of geodesics of types (i), (ii), (iii) and (iv) lie in $I_1\subset\beta_1$, respectively $I_2\subset\beta_1$,  respectively $I_3\subset\beta_1$, respectively $I_4\subset\beta_1$.  The subset $I_1$ is a disjoint union of two open intervals while each of $I_2$ and $I_3$ is a single open interval.  The subset $I_4$ given by initial points of geodesics of types (iv) consist of the four points given by the intersection of the closures of $I_1$, $I_2$ and $I_3$.

The kernels $D^M(x,y,z)$ and $R^M(x,y,z)$ arise from this partition of $\beta_1$.
We have $D^M(x,y,z)=\ell(I_1)$ where $\ell(I_1)$ is the length of  $I_1$ using the hyperbolic metric, and $R^M(x,y,z)=\ell(I_1\cup I_2)$.  Hence $R^M(x,z,y)=\ell(I_1\cup I_3)$ so in particular 
\[R^M(x,y,z)+R^M(x,z,y)=\ell(I_1)+\ell(I_2)+\ell(I_1)+\ell(I_3)=\ell(I_1)+x=D^M(x,y,z)+x\] 
which is \eqref{DRid}.

\subsubsection{}  \label{mcshaneid}
Mirzakhani \cite{MirSim} proved the following non-trivial sum of functions of lengths of geodesics on a hyperbolic surface, known as a McShane identity because it generalises an identity of McShane \cite{McSRem}.  Given a hyperbolic surface $\Sigma$ with $n$ geodesic boundary components $\beta_1,...,\beta_n$, define $\cp_{i}$, respectively $\cp_{ij}$, to be the set of isometric embeddings $P\to \Sigma$ of hyperbolic pairs of pants with geodesic boundary, which meet the boundary of $\Sigma$ precisely at $\beta_i$, respectively at $\beta_i$ and $\beta_j$.  Denote by $\ell_{\partial_iP}$ the length of the $i$th geodesic boundary component of $P$.  Define $R^M(P)=R^M(\ell_{\partial_1P}=L_1,\ell_{\partial_2P}=L_j,\ell_{\partial_3P})$ for $R^M$ defined in \eqref{Rlog}, and $D^M(P)=D^M(\ell_{\partial_1P}=L_1,\ell_{\partial_2P},\ell_{\partial_3P})$ for $D^M$ defined in \eqref{DRid}.
\begin{thm}[Mirzakhani \cite{MirSim}]  \label{mirmcsh}
Given a genus $g$ hyperbolic surface $\Sigma$ with $n$ geodesic boundary components $\beta_1$,..., $\beta_n$ of lengths $L_1,...,L_n$ we have:
\begin{equation}   \label{mcshane}
L_1 =\sum_{P\in\cp_1} D^M(P)+\sum_{j=2}^n \sum_{P\in\cp_{1j}} R^M(P).
\end{equation}
\end{thm}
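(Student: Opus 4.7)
The plan is to decompose $\beta_1$, modulo a null set, into a disjoint union of subsets indexed by embedded pairs of pants in $\cp_1 \cup \bigcup_{j\geq 2} \cp_{1j}$, with each subset's length equal to the corresponding kernel $D^M(P)$ or $R^M(P)$, following Mirzakhani's argument in \cite{MirSim}.

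For each $x \in \beta_1$, launch the geodesic $\gamma_x$ orthogonal to $\beta_1$ at $x$ into the interior of $\Sigma$, and follow it until either $\gamma_x$ first meets $\partial\Sigma$ or $\gamma_x$ first self-intersects. Classify $x$ according to the first such event: (a) $\gamma_x$ meets $\beta_j$ with $j \geq 2$; (b) $\gamma_x$ returns to $\beta_1$; (c) $\gamma_x$ self-intersects; or (d) no such event ever occurs. In cases (a), (b), (c), produce an embedded pair of pants $P(x) \subset \Sigma$ as a regular neighborhood of the appropriate graph --- namely $\beta_1 \cup \gamma_x \cup \beta_j$ in case (a), and $\beta_1 \cup \gamma_x$ (truncated at the first event) in cases (b) and (c) --- and then geodesically realize the interior boundary components. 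The resulting subsurface is a pair of pants, since the underlying graph is homotopy-equivalent to a $\theta$-graph (two loops joined by an arc). Thus $P(x) \in \cp_{1j}$ in case (a), and $P(x) \in \cp_1$ in cases (b) and (c).

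Let $E \subset \beta_1$ denote the set of $x$ of type (d). Every $x \in E$ lies on a complete simple geodesic of $\Sigma$, and by the Birman--Series theorem (which bounds the Hausdorff dimension of the union of all complete simple geodesics on a hyperbolic surface by one), together with a Fubini-type argument transverse to $\beta_1$, one obtains $\ell(E) = 0$.

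The quantitative heart of the argument is to show that for each embedded pair of pants $P$ in the indexing set, the length of the fiber of $x \mapsto P(x)$ over $P$ equals $D^M(P)$ when $P \in \cp_1$ and $R^M(P)$ when $P \in \cp_{1j}$. This follows by importing the local analysis of orthogonal geodesics inside a single hyperbolic pair of pants described in \ref{Rder}, together with careful bookkeeping matching the global cases (a)--(c) to the local intervals $I_1$ (in the case $P \in \cp_1$) or $I_1 \cup I_2$ (in the case $P \in \cp_{1j}$) of that analysis, consistently with the relation \eqref{DRid}. Summing over the countably many embedded pairs of pants and using $\ell(E) = 0$ yields the identity. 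The principal obstacle is the measure-zero estimate for $E$, which relies on the Birman--Series dimension bound; a secondary but delicate point is the precise fiber-length identification, which requires verifying that orthogonal geodesics returning to $\beta_1$ or self-intersecting within a specific embedded $P \in \cp_{1j}$ are correctly attributed to $P$ in the global count, so that the contribution $R^M(P) = \ell(I_1) + \ell(I_2)$ emerges from the global decomposition rather than being split across several pairs of pants.
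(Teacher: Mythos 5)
Your strategy is the right one and matches the one-paragraph sketch the paper gives after the theorem (partition $\beta_1$ by behavior of perpendicular geodesics, use Birman--Series for the measure-zero set, identify fiber lengths with the kernels from \ref{Rder}). However there is a genuine error in the middle of your argument, and it is the error you then implicitly worry about at the end.

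You write that classification yields ``$P(x)\in\cp_{1j}$ in case (a), and $P(x)\in\cp_1$ in cases (b) and (c).'' The second assertion is false. When $\gamma_x$ first self-intersects or first returns to $\beta_1$, the geodesic realization of the regular neighborhood of $\beta_1\cup\gamma_x$ is a pair of pants with one cuff $\beta_1$ and two further cuffs, and either of those two further cuffs can be isotopic (hence geodesically equal) to a boundary component $\beta_j$ of $\Sigma$. In that case $P(x)\in\cp_{1j}$, not $\cp_1$. Indeed this \emph{must} happen: if cases (b) and (c) only ever produced elements of $\cp_1$, then the fiber of $x\mapsto P(x)$ over a fixed $P\in\cp_{1j}$ would be exactly the set of $x$ of type (a), whose total length is the local quantity $\ell(I_2)$, \emph{not} $R^M(P)=\ell(I_1\cup I_2)$. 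Your closing remark about ``orthogonal geodesics returning to $\beta_1$ or self-intersecting within a specific embedded $P\in\cp_{1j}$ are correctly attributed to $P$'' is precisely the acknowledgment that such $x$ must be sent to $\cp_{1j}$-elements, which contradicts the classification you state earlier. The fix is straightforward (classify $P(x)$ in all cases by whether the non-$\beta_1$ cuffs of the geodesically realized regular neighborhood lie on $\partial\Sigma$ or in the interior, not by which event terminates $\gamma_x$), but as written the map $x\mapsto P(x)$ does not produce the fiber lengths $D^M(P)$ and $R^M(P)$, and the proof does not close.
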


The proof of Theorem~\ref{mirmcsh} partitions $\beta_1$ into a countable collection of disjoint interval associated to embedded pairs of pants $P\subset \Sigma$, together with a measure zero subset, using geodesics perpendicular to $\beta_1$.  The length of each interval is determined by a pair of pants, as in \ref{mcshaneid}.  The identity \eqref{mcshane} sums these lengths to get $L_1=\ell(\beta_1)$.

The sum over pairs of pants is topological, so it depends only on the topology of $\Sigma$, since an isometrically embedded pair of pants in $\Sigma$ is uniquely determined by a topological embedding of a pair of pants into $\Sigma$.  The left hand side of \eqref{mcshane} is independent of the hyperbolic metric on $\Sigma$, whereas each summand on the right hand side dependends on the hyperbolic metric of $\Sigma$.  The importance of \eqref{mcshane} is that it allows one to integrate the constant function $L_1$ over the moduli space.

\subsubsection{}

Mirzakhani used the identity \eqref{mcshane} to integrate functions of a particular form over the moduli space \cite{MirSim}.  Applied to the constant function, this yields the volume of the moduli space. Given a closed curve $\gamma_0\subset\Sigma_{g,n}$ in a topological surface surface $\Sigma_{g,n}$, its mapping class group orbit $\text{Mod}_{g,n}\cdot\gamma_0$ gives a well-defined collection of closed geodesics in any hyperbolic surface  $\Sigma\in\modm_{g,n}(\vec{L})$.  Define a function over $\modm_{g,n}(\vec{L})$ of the form 
\[F(\Sigma)=\hspace{-2mm}\sum_{\gamma\in \text{Mod}_{g,n}\cdot\gamma_0} \hspace{-2mm}f(l^\Sigma_{\gamma})\] 
where $f$ is an arbitrary function and the length of the geodesic $l^\Sigma_{\gamma}$ shows the dependence on the hyperbolic surface $\Sigma\in\modm_{g,n}(\vec{L})$.  When $f$ decays fast enough the sum is well-defined on the moduli space.  More generally,
one can consider an arbitrary (decaying) function on collections of geodesics and sum over orbits of the mapping class group acting on the collection.   Mirzakhani unfolded the integral of $F$ to an integral over a moduli space $\widetilde{\modm}_{g,n}(\vec{L})$ of pairs $(\Sigma,\gamma)$ consisting of a hyperbolic surface $\Sigma$ and a collection of geodesics $\gamma\subset \Sigma$.  
\[\begin{array}{c}
\ct_{g,n}(\vec{L})\\\downarrow\\\widetilde{\modm}_{g,n}(\vec{L})\\\downarrow\\\modm_{g,n}(\vec{L})
\end{array}\]
The unfolded integral 
\[\int_{\modm_{g,n}(\vec{L})}F\cdot d{\rm vol}=\int_{\widetilde{\modm}_{g,n}(\vec{L})}f(l_{\gamma})\cdot d{\rm vol}\]
can be expressed in terms of an integral over the simpler moduli space obtained by cutting $\Sigma$ along the geodesic $\gamma$.  

The identity \eqref{mcshane} is exactly of the right form for Mirzakhani's scheme since it expresses the constant function $F=L_1$ as a sum of functions of lengths over orbits of the mapping class group.  In this case,
\[ L_1V_{g,n}^{WP}(\vec{L})=\int_{\modm_{g,n}(\vec{L})}F\cdot d{\rm vol}=\int_{\widetilde{\modm}_{g,n}(\vec{L})}f(l_{\gamma_1},l_{\gamma_2})\cdot d{\rm vol}\]
expresses the volume $V_{g,n}^{WP}(\vec{L})$ recursively in terms of the simpler volumes $V_{g',n'}^{WP}(\vec{L}')$ where $2g'-2+n'<2g-2+n$ which gives Theorem~\ref{th:mirzvolrec}.  

The polynomiality of $V_{g,n}^{WP}(\vec{L})$ is immediate from its identification with intersection numbers on $\overline{\modm}_{g,n}$ via \eqref{WPvolcomp}.  Polynomiality also follows from the following property of the kernel proven in \cite{MirSim}.  Define
\[F^M_{2k+1}(t)=\int_0^\infty x^{2k+1}H^M(x,t)dx.\]
Then
\[\frac{F^M_{2k+1}(t)}{(2k+1)!}=\sum_{i=0}^{k+1}\zeta(2i)(2^{2i+1}-4)\frac{t^{2k+2-2i}}{(2k+2-2i)!}
\]
so $F^M_{2k+1}(t)$ is a degree $2k+2$ polynomial in $t$ with leading coefficient $t^{2k+2}/(2k+2)$.  We prove analogous properties in Section~\ref{sec:kerprop} for kernels arising out of super hyperbolic surfaces which we will need when proving the Virasoro constraints in Section~\ref{sec:kdv}.
Polynomiality of the double integrals uses the same result.  By the change of coordinates $x=u+v$, $y=u-v$ one can prove
\begin{equation}  \label{dint}
\int_0^\infty\int_0^\infty x^{2i+1}y^{2j+1}H^M(x+y,t)dxdy=\frac{(2i+1)!(2j+1)!}{(2i+2j+3)!}F^M_{2i+2j+3}(t).
\end{equation}

\subsection{Super hyperbolic surfaces}
A locally ringed space $(M,\cf)$ is a pair given by a sheaf of rings $\cf$ over a topological space $M$ such that all stalks of $\cf$ are local rings.  A fundamental example is given by the sheaf $C^\infty(\br^m)$ of locally smooth functions on open sets of $\br^m$.  The fundamental super commutative example is
\[ \br^{m|n}=(\br^m,\co_{\br^{m|n}}),\qquad \co_{\br^{m|n}}=C^\infty(\br^m)\otimes\Lambda^*(\br^n).\]
A supermanifold is a locally ringed space $\widehat{M}=(M,\co_{\widehat{M}})$ locally isomorphic to $\br^{m|n}$.  Similarly, we define $\bc^{m|n}=(\bc^m,\co_{\bc^{m|n}})$ for $\co_{\bc^{m|n}}=\co_{\bc^m}\otimes\Lambda^*(\bc^n)$ where $\co_{\bc^m}$ is the sheaf of locally holomorphic functions.  A complex supermanifold is a locally ringed space locally isomorphic to $\bc^{m|n}$.   A morphism between two supermanifolds $(M_1,\co_{\widehat{M}_1})\to(M_2,\co_{\widehat{M}_2})$ is a pair $(f,F)$ consisting of a continuous map $f:M_1\to M_2$ between the two underlying topological spaces and a graded sheaf homomorphism $F:\co_{\widehat{M}_2}\to f_*\co_{\widehat{M}_1}$.  A family of supermanifolds is realised via a supermanifold defined over a base supermanifold $\widehat{M}\to S$ which is a morphism between $\widehat{M}$ and $S$. 

\subsubsection{} 
A {\em super Riemann surface} is a complex supermanifold $\widehat{\Sigma}$ of dimension $(1|1)$ with a dimension $(0|1)$ subbundle $\cd\subset T_{\widehat{\Sigma}}$ that is everywhere non-integrable.  Equivalently, $\cd$ and $\{\cd,\cd\}=\cd^2$ are linearly independent or
$T_{\widehat{\Sigma}}/\cd\cong\cd^2$. 
The transition functions are superconformal transformations of $\bc^{(1|1)}$ locally given by:
\begin{equation}  \label{superconformal}
\hat{z}=u(z)+\theta\eta(z)\sqrt{u'(z)},\quad \hat{\theta}=\eta(z)+\theta\sqrt{u'(z)+\eta(z)\eta'(z)}.
\end{equation}  
The dimension $(0|1)$ subbundle $\cd\subset T_{\widehat{\Sigma}}$ is locally generated by the super vector field $D$ given locally in superconformal coordinates by
\[D=\theta\frac{\partial}{\partial z}+\frac{\partial}{\partial \theta}.
\]
A vector field $v$ generates a superconformal transformation if the Lie derivative with respect to $v$ of $D$ preserves $D$, i.e. $[v,D]=\lambda D$ where $[\cdot,\cdot]$ is the commutator on even elements and anti-commutator on odd elements.  For example,
\[v=z\frac{\partial}{\partial z}+\frac12\theta\frac{\partial}{\partial \theta}
\]
satisfies $[v,D]=-\frac12D$ and
generates the scaling $(z|\theta)\mapsto (\lambda z|\lambda^{1/2}\theta)$ for $\lambda\in\bc^*$.  

The restriction of the tangent bundle of a super Riemann surface $\widehat{\Sigma}$ to its underlying Riemann surface $\Sigma\to\widehat{\Sigma}$ can be identified with $T_\Sigma\oplus T_\Sigma^\frac12$ , where the second factor gives fermionic directions.  Analogous to the deformation theory of the moduli space of Riemann surfaces, the tangent space to the moduli space of super Riemann surfaces is given by the cohomology group of the log-tangent bundle
\[H^1(\overline{\Sigma},\left(T_{\overline{\Sigma}}\oplus T_{\overline{\Sigma}}^\frac12\right)\otimes \co(-D))=H^1(\Sigma,T_{\overline{\Sigma}}(-D))\oplus H^1(\Sigma, T_{\overline{\Sigma}}^\frac12(-D))\]
for $D=\overline{\Sigma}-\Sigma$.   The component $H^1(\Sigma,T_{\overline{\Sigma}}(-D))$ is tangent along the bosonic directions which is isomorphic to the tangent space of the usual moduli space and $H^1(\Sigma, T_{\overline{\Sigma}}^\frac12(-D))$ is tangent along the fermionic directions---see \cite{FKPReg,LRoMod,WitNot}.  More generally it is shown in \cite{RSVGeo} that for any holomorphic line bundle $L\to\overline{\Sigma}$,  $H^0(\overline{\Sigma},L)\oplus H^0(\overline{\Sigma},L\otimes T_\Sigma^{-\frac12})$ is naturally a superspace  with $H^0(\overline{\Sigma},L)$ its even part and $H^0(\overline{\Sigma},L\otimes T_{\overline{\Sigma}}^{-\frac12})$ its odd part, and similarly for $H^1$, which can be identified with the cohomology of a holomorphic line bundle over a super Riemann surface.  

\subsubsection{}
In order to make contact with the work of Stanford and Witten \cite{SWiJTG}, we consider the functor of points of a supermanifold $\widehat{M}$ defined to be the set of morphisms from any supermanifold $P$ to $\widehat{M}$:
\[ \widehat{M}(P):=\text{Hom}(P,\widehat{M}).
\]
This produces a rather concrete description of (the points of) a supermanifold as a set.  We mainly take $P=\ba^{0|L}_\br:=(\{\text{pt}\},\Lambda_L(\br))$ where $\Lambda_L(\br)$ is the Grassmann algebra, defined below.  
\subsubsection{} \label{grassmann}
Define $\Lambda_L=\Lambda_L(\br)$ to be the Grassmann algebra over $\br$ with generators $\{1,e_1,e_2,....,e_L\}$.   We can similarly define $\Lambda_L(\bc)$ by replacing the field $\br$ by $\bc$.  An element $a\in\Lambda$ is a sum of monomials
\[a=a^\#+\sum_ia_ie_i+\sum_{i<j}a_{ij}e_i\wedge e_j+\sum_{i<j<k}a_{ij}e_i\wedge e_j\wedge e_k+...
\]
in the $2^N$ dimensional vector space $\Lambda_N$.  The element $a^\#\in\br$ is the {\em body} of $a$.  Define $\displaystyle\Lambda(\br)=\lim_{N\to\infty}\Lambda_N(\br)$ and $\ba^{0|\bullet}_\br=(\{\text{pt}\},\Lambda(\br))$.  The Grassmann algebra decomposes into even polynomials $\Lambda^0(\br)$, and odd polynomials $\Lambda^1(\br)$:
\[\Lambda(\br)=\Lambda^0(\br)\oplus \Lambda^1(\br)
\]
also known as the {\em bosonic} (even) and {\em fermionic} (odd) parts.  

\subsubsection{} 
Denote by $\br^{(m|n)}_{\bullet} =\br^{(m| n)}(\ba^{0|\bullet}_\br)$ points of the supermanifold $\br^{(m| n)}$, which are represented by
\[\br^{(m| n)}_\bullet =\{(z_1,z_2,...,z_m|\theta_1,...,\theta_n)\mid z_i\in \Lambda^0(\br),\ \theta_j\in \Lambda^1(\br)\}.
\]
Define $\bc^{(m| n)}_\bullet$ similarly.
Linear maps on $\br^{(m| n)}_\bullet$ are given by $(m+n)\times (m+n)$ matrices
\[G=\left(\begin{array}{c|c}A & B \\\hline C & D\end{array}\right)
\]
with even $m\times m$ blocks and $n\times n$ blocks $A$ and $D$, and odd $m\times n$ and $n\times m$ blocks $B$ and $C$.  The super transpose $G^{st}$ is defined by:
\[\left(\begin{array}{c|c}A&B\\\hline C&D\end{array}\right)^{st}=\left(\begin{array}{c|c}A^t&C^t\\\hline-B^t&D^t\end{array}\right)
\]
and the {\em Berezinian}, a generalisation of the determinant is defined by:
\[\text{Ber}\left(\begin{array}{c|c}A&B\\\hline C&D\end{array}\right)=\frac{\det(A-BD^{-1}C)}{\det(D)}
\]
which is invariant under the super transpose due to oddness of $B$ and $C$ .
Define
\[
M(2|1)=\left\{\left(\begin{array}{c|c}\begin{array}{cc}a&b\\c&d\end{array}&\begin{array}{c}\alpha\\\beta\end{array}\\\hline\begin{array}{cc}\gamma&\delta\end{array}&e\end{array}\right)\ \vline\  a,b,c,d,e\in\Lambda_0,\  \alpha,\beta,\gamma,\delta\in \Lambda_1\right\}
\]
and define $\text{OSp}(1|2)\subset M(2|1)$ (the label $(2|1)$ has switched) to be those elements of Berezinian equal to one that preserve the following bilinear form $J$:
\[ \text{OSp}(1|2)=\{ G\in M(2|1)\mid G^{st}JG=J,\ \text{Ber}(G)=1\},\quad J=\left(\begin{array}{ccc}0&1&0\\-1&0&0\\0&0&-1\end{array}\right).\]
The conditions $G^{st}JG=J$ 
and $\text{Ber}(G)=1$ lead to the following form of any element $G\in\text{OSp}(1|2)$: 
\begin{equation}  \label{matrixosp}
G=\left(\begin{array}{c|c}\begin{array}{cc}a\quad&\quad b\\c\quad&\quad d\end{array}&\begin{array}{c}\alpha\\\beta\end{array}\\ \hline a\beta-c\alpha\quad b\beta-d\alpha&1-\alpha\beta\end{array}\right)
\in\text{OSp}(1|2)
\end{equation}
where $ad-bc=1+\alpha\beta$.

\subsubsection{} \label{suphyp} Super hyperbolic space $\widehat{\bh}=(\bh,\co_{\widehat{\bh}})$ is the complex supermanifold with sheaf $\co_{\widehat{\bh}}=\co_\bh\otimes\Lambda^*(\bc)$ where $\co_\bh$ is the sheaf of locally holomorphic functions.  The inclusion $\co_\bh\to C^\infty(\bh)$ defines a natural map from the real super hyperbolic space to the (complex) super hyperbolic space.  Denote by $\widehat{\bh}_\bullet$ the $\ba^{0|\bullet}_\bc$ points of the family $\widehat{\bh}\times \ba^{0|\bullet}_\bc\to\ba^{0|\bullet}_\bc$.  It is realised by:
\[
\widehat{\bh}_\bullet=\{(z|\theta)\in\bc^{(1|1)}_\bullet\mid \text{Im }z^\#>0\}.
\]
There is an action of $\text{OSp}(1|2)$ on $\widehat{\bh}_\bullet$ which extends the action of the group $PSL(2,\br)$ of conformal transformations $z\mapsto \frac{az+b}{cz+d}$ of $\bh$, given by:
\[(z|\theta)\mapsto\left(\frac{az+b}{cz+d}+\theta\frac{\gamma z+\delta}{(cz+d)^2}\right|\left.\frac{\gamma z+\delta}{cz+d}+\frac{\theta}{cz+d}\right)
\]
where $\gamma=a\beta-c\alpha$ and $\delta=b\beta-d\alpha$.  
A discrete subgroup of $\text{OSp}(1|2)$ is Fuchsian if its image is Fuchsian under the map $\text{OSp}(1|2)\to SL(2,\br)$ defined by 
\[g\mapsto f^\#\left(\begin{array}{cc}a^\#&b^\#\\c^\#&d^\#\end{array}\right).\]  
The quotient of $\widehat{\bh}_\bullet$ by a Fuchsian subgroup defines (the $\ba^{0|\bullet}_\bc$ points of) a super hyperbolic surface.  The action by $\text{OSp}(1|2)$ on $\widehat{\bh}_\bullet$ is of the form \eqref{superconformal} hence the quotient super hyperbolic surface defines a super Riemann surface.
\subsubsection{}   \label{supermod}
The Teichm\"uller space of super hyperbolic surfaces has analogous constructions to those of usual Teichm\"uller space.  Coordinates on the Teichm\"uller space of super hyperbolic surfaces are constructed via representations, see Crane-Rabin \cite{CRaSup} and Natanzon \cite{NatMod}, via ideal triangulations, see Penner and Zeitlin \cite{PZeDec}, and via pairs of pants decompositions, see Stanford and Witten \cite{SWiJTG}.   The bosonic part of the Teichm\"uller space is the same as usual Teichm\"uller space despite the extra data of a spin structure as explained in \ref{qf}.  The quotient of the Teichm\"uller space of super hyperbolic surfaces by the mapping class group of the underlying hyperbolic surface gives rise to a well-defined moduli space.


\subsection{Recursion for super volumes}  \label{sec:sup}

Stanford and Witten \cite{SWiJTG} proved a generalisation of Mirzakhani's volume recursion using a generalisation of the identity \eqref{mcshane} to super hyperbolic surfaces.  They used torsion of the complex associated to the local system of a representation $\pi_1\Sigma\to\text{OSp}(1|2)$ to define the super volume measure, and via a generalisation of arguments of Mirzakhani reduced the calculation of the volume to an analysis of super hyperbolic pairs of pants. 
 
Given a super hyperbolic surface $\Sigma$ with $n$ geodesic boundary components denoted $\beta_1,...,\beta_n$, define $\cp_{i}$, respectively $\cp_{ij}$, to be the set of isometric embeddings $P\to \Sigma$ of super hyperbolic pairs of pants with geodesic boundary, which meet the boundary of $\Sigma$ precisely at $\beta_i$, respectively at $\beta_i$ and $\beta_j$.  A pair of pants $P(x,y,z|\alpha,\beta)$ now depends on three boundary lengths $x,y,z$ and two odd moduli $\alpha$, $\beta$.  As before $\ell_{\partial_iP}$ is the length of the $i$th geodesic boundary component of $P$, and $\alpha_P,\beta_P$ are its odd moduli.  Using a similar argument to the derivation of $D$ and $R$ in \ref{Rder}, Stanford and Witten derived
\[
\widehat{R}(x,y,z|\alpha,\beta)=x-\log\left(\frac{\cosh\frac{y}{2}+\cosh\frac{x+z}{2}-\frac12\alpha\beta(e^{\frac{x+z}{2}}+1)}{\cosh\frac{y}{2}+\cosh\frac{x-z}{2}-\frac12\alpha\beta(e^{\frac{x}{2}}+e^\frac{z}{2})}\right)
\]
which restricts to \eqref{Rlog} when $\alpha=0=\beta$.  Using $\alpha^2=0=\beta^2$, we can expand to get:
\begin{equation}  \label{supR}
\widehat{R}(x,y,z|\alpha,\beta)=R^M(x,y,z)-\alpha\beta \frac{2\pi e^{\frac{x+z}{4}}}{\cosh(\frac{y}{4})}R(x,y,z)
\end{equation}
and
\[\int_{\widehat{\modm}_{0,3}(x,y,z)}\widehat{R}(x,y,z|\alpha,\beta)d\mu=R(x,y,z)
\]
where the moduli space $\widehat{\modm}_{0,3}(x,y,z)$ is the vector space spanned by the two odd moduli $\alpha$, $\beta$. Integration is over the measure $d\mu$ which includes the odd variables $\alpha,\beta$ and a factor $\frac{1}{2\pi}\cosh(\frac{y}{4}) e^{-\frac{x+z}{4}}$ from the torsion of the circle as described in \cite{SWiJTG}.  This gives a geometric meaning to the kernel 
\[R(x,y,z)=\frac12 H(z,x+y)+\frac12 H(z,x-y)\]
for $H(x,y)=\frac{1}{4\pi}\left(\frac{1}{\cosh((x-y)/4)}-\frac{1}{\cosh((x+y)/4)}\right)
$
defined in \eqref{kerDR}.  

If we instead write $H(x,y)$ as
\[ H(x,y)=\frac{1}{2\pi}\left(\frac{e^\frac{-x+y}{4}}{1+e^\frac{-x+y}{2}}+\frac{e^\frac{x+y}{4}}{1+e^\frac{x+y}{2}}\right)
\] 
then it emphasises its similarities with Mirzakhani's kernel:
\[H^M(x,y)=\frac{1}{1+\exp\frac{x+y}{2}}+\frac{1}{1+\exp\frac{x-y}{2}}\]
and hence the resemblance of  $D(x,y,z)$ and $R(x,y,z)$ with Mirzakhani's kernels $D^M(x,y,z)$ and $R^M(x,y,z)$.

Define $\widehat{D}(x,y,z|\alpha,\beta)=\widehat{R}(x,y,z|\alpha,\beta)+\widehat{R}(x,z,y|\alpha',\beta')-x$ where $(\alpha',\beta')$ is an unspecified transformation of $(\alpha,\beta)$ which is unimportant after integration over the odd variables:
\[\int_{\widehat{\modm}_{0,3}(x,y,z)}\widehat{D}(x,y,z|\alpha,\beta)d\mu=D(x,y,z).
\]
For $P$ a super pair of pants, define $\widehat{R}(P)=\widehat{R}(\ell_{\partial_1P}=L_1,\ell_{\partial_2P}=L_j,\ell_{\partial_3P}|\alpha_P,\beta_P)$ and $\widehat{D}(P)=\widehat{D}(\ell_{\partial_1P}=L_1,\ell_{\partial_2P},\ell_{\partial_3P}|\alpha_P,\beta_P)$.
\begin{thm}[\cite{SWiJTG}]   \label{supermc}
For any super hyperbolic surface $\Sigma$ with $n$ geodesic boundary components of lengths $L_1,...,L_n$
\[L_1=\sum_{P\in\cp_1}\widehat{D}(P)+\sum_{j=2}^n\sum_{P\in\cp_{1j}}\widehat{R}(P).
\]
\end{thm}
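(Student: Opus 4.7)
The plan is to follow Mirzakhani's geometric strategy outlined in \ref{mcshaneid}, adapted to super hyperbolic geometry along the lines laid out by Stanford and Witten \cite{SWiJTG}. As in the classical case, I would fix the boundary geodesic $\beta_1$ of length $L_1$ and consider the family of geodesics emanating perpendicularly from $\beta_1$. The bosonic body of any such geodesic is an ordinary perpendicular geodesic ray in the underlying hyperbolic surface, so Mirzakhani's classification at the classical level partitions $\beta_1$ into four sets as in \ref{Rder}: (i) rays that meet $\beta_1$ or themselves before meeting any other $\beta_j$; (ii)/(iii) rays that meet some $\beta_j$ with $j\neq 1$; (iv) rays that remain embedded for all time. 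Type (iv) initial points form a measure-zero subset of $\beta_1$, while types (i), (ii), (iii) partition the complement into a countable disjoint union of open intervals, each canonically associated to an embedded super hyperbolic pair of pants $P\hookrightarrow\Sigma$.

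First I would verify that every non-exceptional perpendicular geodesic determines a unique embedded super pair of pants: a type (i) ray singles out $P\in\cp_1$ meeting $\partial\Sigma$ only along $\beta_1$, while a type (ii) or (iii) ray meeting $\beta_j$ singles out $P\in\cp_{1j}$. The bosonic arc of initial points on $\beta_1$ associated to a given $P$ is exactly the interval on $\beta_1$ cut out by the common perpendiculars of $P$ as described in \ref{Rder}. That every embedded super pair of pants contributing to $\cp_1$ or $\cp_{1j}$ arises this way is purely topological and coincides with the classical accounting, since the moduli of $P$ are invisible to the body partition.

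Next I would compute the length of each interval inside the super pair of pants $P(x,y,z\mid\alpha,\beta)$ using super hyperbolic trigonometry. In the classical limit $\alpha=\beta=0$ one recovers Mirzakhani's formulas $R^M(x,y,z)$ and $D^M(x,y,z)=R^M(x,y,z)+R^M(x,z,y)-x$, so it suffices to track the $\alpha\beta$ correction coming from the two odd moduli. Parametrising $P$ via its $\mathrm{OSp}(1|2)$ holonomies and using $\alpha^2=0=\beta^2$, the feet of the common perpendiculars shift by nilpotent amounts, and a direct computation produces the kernel $\widehat{R}(x,y,z\mid\alpha,\beta)$ displayed above \eqref{supR}, together with the corresponding $\widehat{D}$. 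Summing these interval lengths over all embedded super pairs of pants in $\cp_1$ and over $\cp_{1j}$ for $j=2,\ldots,n$ reassembles $\ell(\beta_1)=L_1$, which is precisely the asserted identity.

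The main obstacle is the super hyperbolic trigonometry on the pair of pants: one must describe precisely how the odd moduli $\alpha,\beta$ perturb the feet of the common perpendiculars between the three boundary geodesics, retaining only the scalar term and the $\alpha\beta$ term. A secondary technical point is verifying that the type (iv) locus remains measure-zero in the super setting; because the odd moduli are nilpotent, all measure-theoretic content is carried by the body, so the classical Birman--Series argument used by Mirzakhani descends without change. Convergence of the sums follows from Mirzakhani's estimates on the number of pairs of pants with bounded boundary length, again applied to the body.
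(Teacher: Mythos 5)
The paper does not actually prove Theorem~\ref{supermc}; it is quoted verbatim from Stanford--Witten \cite{SWiJTG} after a brief description of the kernels $\widehat{R}$ and $\widehat{D}$, with the formula for $\widehat{R}(x,y,z\mid\alpha,\beta)$ simply stated. So there is no in-paper proof to compare against; the closest analogue is the sketch of Mirzakhani's classical argument in \ref{Rder}--\ref{mcshaneid}. Your outline reproduces that classical strategy faithfully and pushes the measure-theoretic content (the Birman--Series ``type (iv) is measure zero'' step) to the bosonic body, which is the correct way to handle the nilpotent directions and is consistent with how the paper treats the super setup throughout Section~\ref{sec:sup}.

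The substantive gap is that your proposal stops exactly at the point where all of the new content lives. In the classical identity \eqref{mcshane}, the partition argument and the computation of interval lengths together \emph{are} the proof; the kernels $D^M$ and $R^M$ are \emph{outputs} of the hyperbolic trigonometry, not inputs. In the super case the same is true, and the precise form of $\widehat{R}(x,y,z\mid\alpha,\beta)$ (displayed just above \eqref{supR}) is really the statement of the theorem. You say ``a direct computation produces the kernel,'' but that computation --- parametrizing the super pair of pants via $\mathrm{OSp}(1|2)$ holonomies, locating the feet of the common perpendiculars as even supernumbers on $\beta_1$, and extracting the $\alpha\beta$ correction term $-\tfrac12\alpha\beta\bigl(e^{(x+z)/2}+1\bigr)$ versus $-\tfrac12\alpha\beta\bigl(e^{x/2}+e^{z/2}\bigr)$ inside the $\log$ --- is the entire proof. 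Without it, what you've shown is that \emph{if} the interval lengths are $\widehat{R}(P)$ and $\widehat{D}(P)$, then they sum to $L_1$; the hypothesis is where the theorem lives. A secondary point worth flagging: in the super setting the geodesic boundary length $L_1$ is an even supernumber, not a real number, so ``interval on $\beta_1$'' and ``sum of interval lengths'' must be interpreted as even-supernumber identities. Your remark that the partition of $\beta_1$ is determined by the body is right, but it should be coupled with the observation that the identity is then to be read as holding order-by-order in the odd variables, with the classical McShane identity appearing at zeroth order and the $\alpha_P\beta_P$ corrections at second order; that structure is implicit in the definitions $\widehat D = \widehat R + \widehat R - x$ and the expansion \eqref{supR}, and making it explicit would tighten the argument.
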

In \cite{HPZSup} Huang, Penner and Zeitlin prove a super McShane identity in the case $(g,n)=(1,1)$ in a different way using a generalisation of Penner coordinates.

Following Mirzakhani's methods, Stanford and Witten applied Theorem~\ref{supermc} to produce the following recursion using the kernels $D(x,y,z)$ and $R(x,y,z)$ defined in \eqref{kerDR}.
\begin{thm}[\cite{SWiJTG}] 
\begin{align}  \label{volrecWP} 
L_1\widehat{V}^{WP}_{g,n}(L_1,L_K)=&\frac12\int_0^\infty\int_0^\infty xyD(L_1,x,y)P_{g,n+1}(x,y,L_K)dxdy\\
&\frac12\sum_{j=2}^n\int_0^\infty xR(L_1,L_j,x)\widehat{V}^{WP}_{g,n-1}(x,L_{K\backslash\{j\}})dx
\nonumber
\end{align}
where $K=\{2,...,n\}$ and
\[P_{g,n+1}(x,y,L_K)=\widehat{V}^{WP}_{g-1,n+1}(x,y,L_K)+\mathop{\sum_{g_1+g_2=g}}_{I \sqcup J = K}\widehat{V}^{WP}_{g_1,|I|+1}(x,L_I)\widehat{V}^{WP}_{g_2,|J|+1}(y,L_J).
\]
\end{thm}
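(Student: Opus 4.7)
The plan is to follow Mirzakhani's unfolding strategy adapted to the super setting, using the super McShane identity of Theorem~\ref{supermc} in place of its bosonic counterpart. First I would multiply both sides of Theorem~\ref{supermc} by the super Weil--Petersson volume form and integrate over $\widehat{\modm}_{g,n}(L_1,L_K)$. On the left this produces $L_1\widehat{V}^{WP}_{g,n}(L_1,L_K)$ since $L_1$ is constant on this moduli space, while on the right we obtain two types of terms: a sum over embedded super pairs of pants $P\in\cp_1$ whose removal either disconnects $\Sigma$ or produces a connected surface of lower genus, and a sum over $P\in\cp_{1j}$, $j\geq 2$, whose removal produces a surface of the same genus with $n-1$ boundary components.

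The heart of the argument is an unfolding. Each summand on the right is indexed by a mapping class group orbit of an embedded (super) pair of pants. For fixed topological type, the Teichm\"uller cover $\widehat{\ct}_{g,n}\to\widehat{\modm}_{g,n}$ breaks the sum into a single integral over the intermediate moduli space $\widetilde{\widehat\modm}_{g,n}(L_1,L_K)$ parametrising pairs $(\Sigma,P)$ of a super hyperbolic surface with a marked embedded pair of pants. Cutting along $\partial P\setminus\beta_1$ gives a fibration over $\br^+\times\br^+\times\widehat{\modm}_{0,3}(L_1,x,y)\times(\text{super moduli of the complement})$ in the non-separating / separating cases respectively, with coordinates the two interior geodesic lengths $x,y$ together with their twist parameters. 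The Fenchel--Nielsen form of $\omega^{WP}$ extends to the super setting (as used in \cite{SWiJTG}), so the symplectic volume factorises as $x\,dx\wedge d\theta_x\cdot y\,dy\wedge d\theta_y$ times the super volume of the pieces. The twist integrals yield factors of $x$ and $y$ and the length integrals run from $0$ to $\infty$, producing the double integral structure in the non-separating / $P_{g,n}$ term, and the single integral in the second sum.

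The contribution of the pair of pants itself is the integral of $\widehat{D}(P)$ or $\widehat{R}(P)$ over $\widehat{\modm}_{0,3}$, which includes the two odd moduli $\alpha,\beta$ of $P$ and the torsion factor $\frac{1}{2\pi}\cosh(y/4)e^{-(x+z)/4}$ recorded after \eqref{supR}. Expanding $\widehat{R}$ as in \eqref{supR} and integrating $\alpha\beta$ against the torsion weight converts $\widehat{R}(x,y,z|\alpha,\beta)\mapsto R(x,y,z)$, while the polynomial $R^M(x,y,z)$ contribution is killed by the Berezin integral. The analogous computation on $\widehat{D}=\widehat{R}+\widehat{R}-x$ produces $D(x,y,z)$. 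Assembling these three ingredients gives the two integrals on the right of \eqref{volrecWP}; the overall minus signs (in contrast to Mirzakhani's plus signs) come from the sign in \eqref{supR} and from the conventional normalisation $\widehat{V}^{WP}_{g,n}=(-1)^n 2^{1-g-n}V^\Theta_{g,n}$ used to pass from the smooth moduli space integral \eqref{supvol} to the intersection-theoretic volumes.

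The main technical obstacle, and the step requiring the most care, will be the unfolding itself: one must check that the Fenchel--Nielsen coordinates on $\widetilde{\widehat\modm}_{g,n}$ extend appropriately through the odd directions, that the super Weil--Petersson form factorises cleanly across the cut (a super analogue of Wolpert's theorem), and that the torsion weight arising from the $U(1)$ bundle over the cut geodesic is compatible with the decomposition into $D$ and $R$ kernels. One must also correctly account for the combinatorial factors in the separating case, including the symmetry factor $\tfrac12$ and the stability restriction in $P_{g,n}$, to avoid double-counting pairs of pants whose two non-$\beta_1$ boundary components are exchanged by an automorphism. Once these are in place the derivation of \eqref{volrecWP} parallels the proof of Theorem~\ref{th:mirzvolrec} line by line.
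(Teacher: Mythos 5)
Your plan — integrate the super McShane identity of Theorem~\ref{supermc} against the super Weil--Petersson measure, unfold over mapping class group orbits of embedded super pairs of pants, and convert $\widehat D,\widehat R$ to $D,R$ by Berezin integration against $d\mu$ — is indeed the route Stanford and Witten take, and the paper cites \eqref{volrecWP} directly from \cite{SWiJTG} rather than reproving it. Your list of technical points needing care (super Fenchel--Nielsen factorisation, torsion weight of the cut circle, symmetry and stability factors) is the right one.

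The concrete gap is the explanation of the minus signs. They cannot come from the sign in \eqref{supR}: the torsion-weighted measure $d\mu$ is chosen precisely so that $\int_{\widehat\modm_{0,3}}\widehat R\,d\mu = +R$ and $\int_{\widehat\modm_{0,3}}\widehat D\,d\mu = +D$, so every explicit sign in $\widehat R$, $\widehat D$ and in the Berezin convention is already absorbed into the definition of the positive kernels before the recursion is assembled. Nor can the normalisation $\widehat V^{WP}_{g,n}=(-1)^n2^{1-g-n}V^\Theta_{g,n}$ be the source: \eqref{volrecWP} is an identity purely among the $\widehat V^{WP}_{g,n}$, and the $(-1)^n$ only explains why the $-\frac12$'s become $+\frac12,+1$ once one passes to the $\Theta$-volumes in \eqref{volrec} (as the paper observes in the proof of Theorem~\ref{main}); it presupposes that the minus signs are already present on the super side and cannot create them. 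The actual source is the gluing/torsion factor attached to each interior geodesic along which one cuts: the factorisation of the torsion of $H^1_{dR}(\Sigma,T_\Sigma^{\tfrac12})$ across the cut contributes a negative factor from the circle, and keeping track of this is also what produces the $-\tfrac14$ coefficient on $D$ versus $-1$ on $R$ in Stanford--Witten's own normalisation that the paper records immediately after the Theorem. As your sketch stands — absorbing all of $d\mu$ into the conversion $\widehat R\mapsto R$ and then running the Mirzakhani-type twist and length integrals — you would land on a recursion with Mirzakhani-style positive coefficients, not the ones stated.
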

Note that Stanford and Witten use a different normalisation $V^{SW}_{g,n}$ of the volume in \cite{SWiJTG}:
\[ V^{SW}_{g,n}(L_1,...,L_n)=(-2)^n\widehat{V}^{WP}_{g,n}(L_1,...,L_n)   =(-1)^n2^{1-g} V^\Theta_{g,n}(L_1,...,L_n). 
\]
Multiply \eqref{volrecWP} by $(-2)^n$ and absorb this into each volume, which replaces the coefficients $\frac12$ and $\frac12$ of the $D$ and $R$ terms by $-\frac14$ and $-1$, so that \eqref{volrecWP} now agrees with \cite[(5.42)]{SWiJTG}.  One can substitute $\widehat{V}^{WP}_{g,n}(L_1,...,L_n)=2^{1-g-n}V^{\Theta}_{g,n}(L_1,...,L_n)$ into \eqref{volrecWP} to retrieve \eqref{volrec}. 
The proof of \eqref{volrecWP} by Stanford and Witten uses supergeometry and currently has some non-rigorous aspects, which when made rigorous would produce a new proof of \eqref{taufn} in the spirit of Mirzakhani's proof of Theorem~\ref{KW}.

\section{Virasoro constraints}   \label{sec:kdv}
In this section we will represent recursion relations between polynomials via Virasoro constraints satisfied by associated partition functions.  Corollary~\ref{kdvtheta} shows that the top degree part of the recursion \eqref{volrec} can be represented by explicit Virasoro constraints.  Moreover, the whole recursion \eqref{volrec} can be indirectly represented by Virasoro constraints, which we express via topological recursion in the next section.

\subsection{KdV tau functions} 
A tau function $Z(\hbar,t_0,t_1,...)$ of the KdV hierarchy (equivalently the KP hierarchy in odd times $p_{2k+1}=t_k/(2k+1)!!$) gives rise to a solution $U=\frac{\partial^2}{\partial t_0^2}\log Z$ of the KdV hierarchy 
\begin{equation}\label{kdv}
U_{t_1}=UU_{t_0}+\frac{\hbar}{12}U_{t_0t_0t_0},\quad U(t_0,0,0,...)=f(t_0).
\end{equation}
The first equation in the hierarchy is the KdV equation \eqref{kdv}, and later equations $U_{t_k}=P_k(U,U_{t_0},U_{t_0t_0},...)$ for $k>1$ determine $U$ uniquely from $U(t_0,0,0,...)$, \cite{MJDSol}.

\subsubsection{} The Br\'ezin-Gross-Witten solution $U^{\text{BGW}}=\hbar\partial^2_{t_0}\log Z^{\text{BGW}}$ of the KdV hierarchy arises out of a unitary matrix model studied in \cite{BGrExt,GWiPos}.  It is defined by the initial condition
\[
U^{\text{BGW}}(t_0,0,0,...)=\frac{\hbar}{8(1-t_0)^2}.
\]
The first few terms of $\log Z^{\text{BGW}}$ are
\begin{align}  \label{lowg}
\log Z^{\text{BGW}}&=-\frac{1}{8}\log(1-t_0)+\frac{3\hbar}{128}\frac{t_1}{(1-t_0)^3}+\frac{15\hbar^2}{1024}\frac{t_2}{(1-t_0)^5}+\frac{63\hbar^2}{1024}\frac{t_1^2}{(1-t_0)^6}+...\\
&=\frac{1}{8}t_0+\frac{1}{16}t_0^2+\frac{1}{24}t_0^3+\hbar\frac{3}{128}t_1+\hbar\frac{9}{128}t_0t_1+\hbar^2\frac{15}{1024}t_2+\hbar^2\frac{63}{1024}t_1^2+...\nonumber
\end{align}

\subsubsection{}  The Kontsevich-Witten tau function $Z^{\text{KW}}$ given in Theorem~\ref{KW} is defined by the initial condition 
\[U^{\text{KW}}(t_0,0,0,...)=t_0\] 
for $U^{\text{KW}}=\hbar\partial^2_{t_0}\log Z^{\text{KW}}$.  The low genus terms of $\log Z^{\text{KW}}$ are 
\[\log Z^{\text{KW}}(\hbar,t_0,t_1,...)=\hbar^{-1}(\frac{t_0^3}{3!}+\frac{t_0^3t_1}{3!}+\frac{t_0^4t_2}{4!}+...)+\frac{t_1}{24}+...
\]
For each integer $m\geq-1$, define the differential operator
\begin{align}  \label{virop}
\widehat{\cl}_m& =  \frac{\hbar}{2} \hspace{-4mm}\mathop{\sum_{i+j=m-1}} \hspace{-3mm}(2i+1)!!(2j+1)!! \frac{\partial^2}{\partial t_i \partial t_j} +\sum_{i =0}^\infty \frac{(2i+2m+1)!!}{(2i-1)!!} t_i \frac{\partial}{\partial t_{i+m}} \\
&\qquad+ \frac{1}{8} \delta_{m,0}+\frac12\frac{t_0^2}{\hbar}\delta_{m,-1}   \nonumber
\end{align}
where the sum over $i+j=m-1$ is empty when $m=0$ or $-1$ and $\frac{\partial}{\partial t_{-1}}$ is the zero operator.
The Br\'ezin-Gross-Witten and Kontsevich-Witten tau functions satisfy the following equations \cite{DVVLoo,GNeUni,KonInt}.
\[
(2k+1)!!\frac{\partial}{\partial t_k}Z^{BGW}(\hbar,t_0,t_1,t_2,...)=\widehat{\cl}_kZ^{BGW}(\hbar,t_0,t_1,t_2,...),\quad k=0 ,1,2,...
\]
\[ 
(2k+3)!!\frac{\partial}{\partial t_{k+1}}Z^{KW}(\hbar,t_0,t_1,t_2,...)=\widehat{\cl}_kZ^{KW}(\hbar,t_0,t_1,t_2,...),\quad k=-1, 0 ,1,...
\]
These are known as Virasoro constraints when we write them instead as
\begin{equation}  \label{bgwvir}
\cl_mZ^{\text{BGW}}(\hbar,t_0,t_1,t_2,...)=0,\quad m=0,1,2,...
\end{equation}
and
\begin{equation}  \label{kwvir} \cl_m'Z^{\text{KW}}(\hbar,t_0,t_1,t_2,...)=0,\quad m=-1,0,1,...
\end{equation}
for
\begin{equation} \label{eq:virasoro} 
\cl_m = -\tfrac12(2m+1)!!\frac{\partial}{\partial t_m} + \tfrac12\widehat{\cl}_m,\qquad \cl_m' = -\tfrac12(2m+3)!!\frac{\partial}{\partial t_{m+1}} + \tfrac12\widehat{\cl}_m.
\end{equation}

The set of operators $\{\cl_0, \cl_1, \cl_2, \ldots\}$ satisfy the Virasoro commutation relations
\[
[\cl_m, \cl_n] = (m-n) \cl_{m+n}, \quad \text{for } m, n \geq 0.
\]
Similarly $\{\cl_{-1}',\cl_0', \cl_1', \ldots\}$ satisfy $[\cl_m', \cl_n'] = (m-n) \cl_{m+n}', \quad \text{for } m, n \geq -1$.

\subsubsection{Intersection numbers}
Kontsevich proved the conjecture of Witten that the KdV tau function $Z^{\text{KW}}$ stores the intersection numbers of $\psi$ classes in the following generating function:
\[ Z^{\text{KW}}(\hbar,t_0,t_1,...)=\exp\sum_{g,n,\vec{k}}\frac{\hbar^{g-1}}{n!}\int_{\overline{\modm}_{g,n}} \prod_{i=1}^n\psi_i^{k_i}t_{k_i}.
\]
Weil-Petersson volumes satisfy the recursion \eqref{mirzvolrec} and arise as intersection numbers over the moduli space of stable curves
\[V_{g,n}^{WP}(L_1,...,L_n)=\int_{\overline{\modm}_{g,n}}\exp\left\{2\pi^2\kappa_1+\frac12\sum_{i=1}^n L_i^2\psi_i\right\}.\]
Together these imply relations among intersection numbers over the moduli space of stable curves equivalent to Kontsevich's theorem which we state here in its Virasoro form.
\begin{thm}[Kontsevich \cite{KonInt}]  \label{KWthm}
\[\cl'_m\left(\exp\sum_{g,n,\vec{k}}\frac{\hbar^{g-1}}{n!}\int_{\overline{\modm}_{g,n}} \prod_{i=1}^n\psi_i^{k_i}t_{k_i}\right)=0,\ m\geq -1.\]
\end{thm}
We only sketch the proof due to Mirzakhani \cite{MirWei} using Weil-Petersson volumes since we will give the similar proof of the analogous result used to prove Theorem~\ref{main} in detail.
\begin{proof}
The top degree terms $\cv_g(\bf{L})$ of $V^{\text{WP}}_{g,n}(\bf{L})$ satisfy the homogeneous recursion:
\begin{align}  \label{toprecm}
&\frac{\partial}{\partial L_1}\left(L_1\cv_g(L_1,\LL_K) \right)=\sum_{j=2}^n L_j\bigg[ \int_0^{L_1-L_j}dx\cdot x(L_1-x) \cv_g(x, \LL_{K\setminus\{j\}})\\
&\hspace{4.8cm} + \frac12\int^{L_1+L_j}_{L_1-L_j}x(L_1+L_j-x) \cv_g(x, \LL_{K\setminus\{j\}}) \bigg] \nonumber \\ 
&\hspace{-1mm}+\frac12\int_0^{L_1}\hspace{-1mm}\int_0^{L_1-x}\hspace{-5mm}dxdy\cdot xy(L_1-x-y)\bigg[
\cv_{g-1}^{\text{WP}}(x,y,\LL_K)+\hspace{-3mm}\mathop{\sum_{g_1+g_2=g}}_{I\sqcup J=K}\hspace{-2mm}\cv_{g_1}^{\text{WP}}(x,\LL_I) \, \cv_{g_2}^{\text{WP}}(y,\LL_J)\bigg]\nonumber
\end{align}
where $K=\{2,...,n\}$.
We skip the proof of this since it is similar to the proof of Proposition~\ref{toprectheta} below.

Write $\displaystyle\langle\prod_{i=1}^n\tau_{k_i}\rangle:=\hbar^{g-1}\int_{\overline{\modm}_{g,n}} \prod_{i=1}^n\psi_i^{k_i}$
where $g$ is intrinsic on the left hand side via $\displaystyle 3g-3+n=\sum_{i=1}^n k_i$.  Then \eqref{toprecm} implies
\begin{align*}
(2k_1+1)!!\langle\prod_{i=1}^n\tau_{k_i}\rangle
&=\tfrac{\hbar}{2} \hspace{-3mm}\mathop{\sum_{i+j=k_1-2}} \hspace{-3mm}(2i+1)!!(2j+1)!!\Big(\langle\tau_i\tau_j\tau_K\rangle+ \hspace{-2mm}\sum_{I\sqcup J=K}\langle\tau_i\tau_I\rangle\langle\tau_j\tau_J\rangle\Big)\\
&\qquad +\sum_{j=2}^n\frac{(2k_1+2k_j-1)!!}{(2k_j-1)!!}\langle\tau_{k_1+k_j-1}\tau_{K\backslash\{j\}}\rangle
\end{align*}
which is equivalent to:
\[
(2k+1)!!\frac{\partial}{\partial t_k}Z(\hbar,t_0,t_1,t_2,...)=\widehat{\cl}_{k-1}Z(\hbar,t_0,t_1,t_2,...),\quad k=0 ,1,2,...
\]
This coincides with the Virasoro contraints satisfied by $Z^{\text{KW}}(\hbar,t_0,t_1,t_2,...)$ and they have the same initial condition $Z(t_0,0,0,...)=t_0^3/3!$ so coincide.
\end{proof}

\subsection{Recursion relations and Virasoro operators}
We now derive Virasoro operators from the top degree terms of \eqref{volrec} analogous to those produced in the proof of Theorem~\ref{KWthm}.  The Virasoro operators derived from \eqref{volrec} coincide with Virasoro operators that annihilate $Z^{\text{BGW}}$.  Following Mirzakhani's method, we express Virasoro constraints in terms of integral recursion relations satisfied by the top degree terms.  This is equivalent to the recursion \eqref{toprec} below which  first appeared in \cite{DNoTop}.

First we need to prove how the linear transformations defined by the kernels $D(x,y,z)$ and $R(x,y,z)$ in \eqref{volrec} act on polynomials analogous to a result of Mirzakhani.
Define
\[F_{2k+1}(t)=\int_0^\infty x^{2k+1}H(x,t)dx\]
where the kernel $H(x,y)$ defined in \eqref{kerH} is used to define $D(x,y,z)$ and $R(x,y,z)$ via \eqref{kerDR}.
\begin{lemma}  \label{polprop}
$F_{2k+1}(t)$ is a degree $2k+1$ monic polynomial in $t$.
\end{lemma}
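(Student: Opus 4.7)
The plan is to reduce the computation to elementary algebra by exploiting a symmetry of the kernel. Let $f(y) := 1/\cosh(y/4)$, which is even. Then $H(x,t) = (f(x-t) - f(x+t))/(4\pi)$ is odd in $x$, so $x^{2k+1} H(x,t)$ is an even function of $x$, and hence
\begin{equation*}
F_{2k+1}(t) = \frac{1}{8\pi}\int_{-\infty}^{\infty} x^{2k+1}\bigl[f(x-t) - f(x+t)\bigr]\,dx.
\end{equation*}

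Next I would translate the variable of integration by $\pm t$ in each term separately to strip the $t$-dependence off the argument of $f$, obtaining
\begin{equation*}
F_{2k+1}(t) = \frac{1}{8\pi}\int_{-\infty}^{\infty} \bigl[(u+t)^{2k+1} - (u-t)^{2k+1}\bigr] f(u)\,du.
\end{equation*}
Expanding by the binomial theorem and noting that either the binomial cancellation or the evenness of $f$ kills every odd-$j$ contribution, this collapses to
\begin{equation*}
F_{2k+1}(t) = \frac{1}{4\pi}\sum_{\substack{0\le j\le 2k+1\\ j \text{ even}}}\binom{2k+1}{j} I_j\, t^{2k+1-j}, \qquad I_j := \int_{-\infty}^{\infty} u^j f(u)\,du,
\end{equation*}
which is manifestly a polynomial in $t$ of degree at most $2k+1$; all the moments $I_j$ are finite thanks to the exponential decay of $f$.

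Finally, monicity is settled by the $j = 0$ term alone. The substitution $w = u/4$ together with the standard evaluation $\int_{-\infty}^\infty \mathrm{sech}(w)\,dw = \pi$ gives $I_0 = 4\pi$, so the coefficient of $t^{2k+1}$ equals $I_0/(4\pi) = 1$. There is no serious analytic obstacle: everything converges absolutely and the interchanges are routine; the content of the lemma is really the observation that $f(x-t) - f(x+t)$ is odd in $x$, which is what forces the polynomial structure and controls its degree.
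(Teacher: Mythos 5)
Your proof is correct and follows essentially the same route as the paper's: translate the argument of the $\cosh$, expand by the binomial theorem, and read off the leading coefficient from the moment $\int \operatorname{sech}(u/4)\,du$. The one cosmetic difference is that you first symmetrize the integral to all of $\mathbb{R}$ via the oddness of $H(\cdot,t)$, which sidesteps the paper's explicit cancellation of the leftover pieces over $[-t,0]$ and $[0,t]$ after translating; the paper instead packages the top-coefficient evaluation as $a_0=1$ in the Taylor series of $\sec(2\pi z)$, which is the same fact you compute directly.
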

\begin{proof}
\begin{align*}
F_{2k+1}(t)&=\frac{1}{4\pi}\int_0^\infty x^{2k+1}\left(\frac{1}{\cosh((x-t)/4)}-\frac{1}{\cosh((x+t)/4)}\right)dx\\
&=\frac{1}{4\pi}\int^{\infty}_{-t}\frac{(x+t)^{2k+1}}{\cosh{x/4}}dx-\frac{1}{4\pi}\int^{\infty}_t\frac{(x-t)^{2k+1}}{\cosh{x/4}}dx\\
&=\frac{1}{4\pi}\int^{\infty}_0\frac{(x+t)^{2k+1}-(x-t)^{2k+1}}{\cosh x/4}dx+\frac{1}{4\pi}\int^{0}_{-t}\frac{(x+t)^{2k+1}}{\cosh{x/4}}dx\\
&\qquad+\frac{1}{4\pi}\int^t_0\frac{(x-t)^{2k+1}}{\cosh{x/4}}dx\\
&=\frac{1}{4\pi}\int^{\infty}_0\frac{(x+t)^{2k+1}-(x-t)^{2k+1}}{\cosh x/4}dx\\
&=\frac{1}{2\pi}\sum_{i=0}^kt^{2i+1}\binom{2k+1}{2i+1}\int^{\infty}_0\frac{x^{2k-2i}}{\cosh x/4}dx\\
&=\sum_{i=0}^kt^{2i+1}\binom{2k+1}{2i+1}a_{k-i}\\
&=t^{2k+1}+O(t^{2k})
\end{align*}
where $a_n$ is defined by 
$\displaystyle \frac{1}{\cos(2\pi x)}=\sum_{n=0}^{\infty}a_n\frac{x^{2n}}{(2n)!}.$  In particular $a_0=1$ giving the final equality above.
\end{proof}
Analogous to \eqref{dint}, by the change of coordinates $x=u+v$, $y=u-v$, we have the following identity:
\[\int_0^\infty\int_0^\infty x^{2i+1}y^{2j+1}H(x+y,t)dxdy=\frac{(2i+1)!(2j+1)!}{(2i+2j+3)!}F_{2i+2j+3}(t).
\]
Since $D(x,y,z)=H(y+z,x)$ and $R(x,y,z)=\frac12 H(z,x+y)+\frac12 H(z,x-y)$ we have
\begin{equation}  \label{Dpol} 
\hspace{-1mm}\int_0^\infty\hspace{-2mm}\int_0^\infty\hspace{-2mm} x^{2i+1}y^{2j+1}D(L_1,x,y)dxdy=\hspace{-.5mm}\frac{(2i+1)!(2j+1)!}{(2i+2j+3)!}L_1^{2i+2j+3}+O(L_1^{2i+2j+2})
\end{equation}
and
\begin{equation}  \label{Rpol} 
\int_0^\infty x^{2k+1}R(L_1,L_j,x)dx=\frac12(L_1+L_j)^{2k+1}+\frac12(L_1-L_j)^{2k+1}+O(L^{2k})
\end{equation}
where the right hand sides of \eqref{Dpol} and \eqref{Rpol} are polynomial and $O(L^{2k})$ means the top degree terms are homogeneous of degree $2k$ in $L_1$ and $L_j$.   We see that the recursion \eqref{volrec} (and \eqref{volrecWP}) produces polynomials since the initial condition is a polynomial and it sends polynomials to polynomials.  So, for example,
\[ \int_0^\infty\int_0^\infty yzD(x,y,z)dydz=\frac{x^3}{6}+2\pi^2x
\]
and
\[\int_0^\infty zR(x,y,z)dz=x,\quad\int_0^\infty z^3R(x,y,z)dz=x(x^2+3y^2+12\pi^2).
\]

\begin{proposition}  \label{toprectheta}
The top degree terms $\cv_{g}(\bf{L})$ of any solution to \eqref{volrec} satisfy the homogeneous recursion:
\begin{align}  \label{toprec}
&L_1\cv_g(L_1,\LL_K) =\frac12\sum_{j=2}^n \bigg[ (L_j+L_1) \cv_g(L_j+L_1, \LL_{K\setminus\{j\}})\\
&\hspace{4cm} - (L_j-L_1) \cv_g(L_j-L_1, \LL_{K\setminus\{j\}}) \bigg] \nonumber\\
+\frac12&\int_0^{L_1}dx\cdot x(L_1-x)\bigg[
\cv_{g-1}(x,L_1-x,\LL_K)+\hspace{-3mm}\mathop{\sum_{g_1+g_2=g}}_{I\sqcup J=K}\hspace{-2mm}\cv_{g_1}(x,\LL_I) \, \cv_{g_2}(L_1-x,\LL_J)\bigg]\nonumber
\end{align}
where $K=\{2,...,n\}$.
\end{proposition}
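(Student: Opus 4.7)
The plan is to extract the top-degree parts of both sides of the recursion \eqref{volrec} and identify them with the homogeneous recursion \eqref{toprec}. Since $V^\Theta_{g,n}$ is a polynomial of degree $2g-2$ in the $L_i$ (in fact a polynomial in the squares $L_i^2$), the top-degree part of $L_1 V^\Theta_{g,n}(L_1,\LL_K)$ is $L_1\cv_g(L_1,\LL_K)$, of degree $2g-1$. The task therefore reduces to computing the top-degree contribution of each kernel integral on the right hand side.

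First, I would establish that for any polynomial $P(x)$ in $x^2$,
\[
\text{top degree of } \int_0^\infty x\,P(x)\,R(L_1,L_j,x)\,dx = \tfrac{1}{2}\bigl[(L_1+L_j)P(L_1+L_j) - (L_j-L_1)P(L_j-L_1)\bigr].
\]
By linearity this reduces to $P(x)=x^{2k}$, where it follows from \eqref{Rpol} combined with the parity identity $(L_1-L_j)^{2k+1} = -(L_j-L_1)^{2k+1}$. Applying this to $P$ equal to the top-degree part of $V^\Theta_{g,n-1}(x,\LL_{K\setminus\{j\}})$ reproduces the first line of \eqref{toprec}; sub-leading monomials of $V^\Theta_{g,n-1}$ and the $O(L^{2k})$ error in \eqref{Rpol} produce only strictly lower total degree.

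Second, I would establish the companion identity for the double integral: for any polynomial $Q(x,y)$ in $x^2$ and $y^2$,
\[
\text{top degree of } \int_0^\infty\!\!\int_0^\infty xy\,Q(x,y)\,D(L_1,x,y)\,dxdy = \int_0^{L_1} x(L_1-x)\,Q(x,L_1-x)\,dx.
\]
Again this reduces to monomials $Q(x,y)=x^{2a}y^{2b}$, where \eqref{Dpol} gives the top coefficient $\frac{(2a+1)!(2b+1)!}{(2a+2b+3)!}L_1^{2a+2b+3}$, equal by a Beta-function evaluation to $\int_0^{L_1}x^{2a+1}(L_1-x)^{2b+1}\,dx$. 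Applied to $Q$ equal to the top-degree part of $P_{g,n}(x,y,\LL_K)$, which factors as $\cv_{g-1}(x,y,\LL_K) + \sum\cv_{g_1}(x,\LL_I)\cv_{g_2}(y,\LL_J)$, this yields the second line of \eqref{toprec}.

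There is no genuine analytic difficulty here; the main obstacle is the bookkeeping needed to ensure that only leading-degree monomials of the volume polynomials contribute to the leading-degree output on both sides. This is controlled by a uniform scaling $(L_i,x,y)\mapsto \lambda(L_i,x,y)$ under which every contribution is homogeneous, so leading degree on the output arises only from leading degree on the input. With this in hand, the two kernel identities above translate \eqref{volrec} term by term into \eqref{toprec}.
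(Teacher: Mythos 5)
Your proposal is correct and follows essentially the same argument as the paper: extract top-degree parts, reduce by linearity to monomials, use the polynomial expansions \eqref{Rpol} and \eqref{Dpol} of the kernel integrals together with the Beta-type identity $\int_0^L x^m(L-x)^n\,dx = \tfrac{m!\,n!}{(m+n+1)!}L^{m+n+1}$, and observe that degree is preserved so only leading terms feed into leading terms. The parity remark $(L_1-L_j)^{2k+1}=-(L_j-L_1)^{2k+1}$ that you flag explicitly is used tacitly in the paper as well.
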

\begin{proof}
From the properties \eqref{Dpol} and \eqref{Rpol}, the top degree terms $\cv_g(L_1,...,L_n)$ of a solution to \eqref{volrec} only depend on the top degree terms $\cv_{g'}(L_1,...,L_{n'})$ of the solution for $2g'-n'<2g-n$.   Moreover, 
\begin{align*}
\int_0^\infty xR(L_1,L_j,x)\cv_{g}&(x,L_{K\backslash\{j\}})dx=\frac12(L_j+L_1) \cv_g(L_j+L_1, \LL_{K\setminus\{j\}})\\
 &- \frac12(L_j-L_1) \cv_g(L_j-L_1, \LL_{K\setminus\{j\}})+\text{ lower order terms.}
\end{align*}
By \eqref{Dpol}, the double integral in \eqref{volrecWP} is a linear operator with input monomials $x^{2i+1}y^{2j+1}$ of $P_{g,n+1}(x,y,L_K)$ and output $\frac{(2i+1)!(2j+1)!}{(2i+2j+3)!}L_1^{2i+2j+3}$.  This linear operator can be realised via the following integral for input $x^my^n$:
\begin{equation}  \label{boxint}
\int_0^Lx^m(L-x)^ndx=\frac{m!n!}{(m+n+1)!}L^{m+n+1}
\end{equation}
which is immediate when $n=0$ and proven by induction for $n>0$ via differentiation of both sides by $L$.  Hence
\begin{align*}
\frac12\int_0^\infty\int_0^\infty \hspace{-2mm}xyD(L_1,x,y)P_{g,n+1}(x,y,L_K)dxdy=&\\
\int_0^{L_1}\hspace{-2mm}dx\cdot x(L_1-x)P_{g,n+1}(x,&L_1-x,\LL_K)
+\text{ lower order terms}
\end{align*}
and the proposition is proven.
\end{proof}
The polynomial $\cv_{g}(\bf{L})$ is homogeneous of degree $g-1$.  Note that \eqref{toprec} indeed produces a degree $g-1$ polynomial inductively starting from the initial condition $\cv_{1}(L_1)=$ constant.

\begin{cor}  \label{kdvtheta}
The recursion \eqref{toprec} satisfied by $\cv_{g}(\bf{L})$ is equivalent to the Virasoro constraints \eqref{bgwvir} applied to the following partition function built out of $\cv_{g}(\bf{L})$
\begin{equation}  \label{partvol}
Z^\cv(\hbar,\{t_m\})=\exp\sum_{g,n}\frac{\hbar^{g-1}}{n!}\cv_{g}(L_1,...,L_n)|_{\{L_i^{2m}=2^mm!t_m\}}.
\end{equation}
The initial condition $\cv_1(L)=\frac18$ implies that $Z(\hbar,\{t_m\})=Z^{\text{BGW}}(\hbar,\{t_m\})$, the Br\'ezin-Gross-Witten tau function of the KdV hierarchy.
\end{cor}
\begin{proof}
Define the coefficient of the monomial $\prod_{i=1}^nL_i^{2m_i}$ in $\cv_g(L_1,...,L_n)$ by
\[c_g(m_1,...,m_n):=\Big[\prod_{i=1}^nL_i^{2m_i}\Big]\cv_g(L_1,...,L_n)\] 
and for a set of positive integers $I=\{ i_1,...,i_k\}$ write $c(m_I):=c(m_{i_1},...,m_{i_k})$.
Since $\cv_g(L_1,...,L_n)$ is a degree $g-1$ symmetric homogeneous polynomial, the coefficient $c_g(m_1,...,m_n)$ is symmetric in the $m_i$  and it vanishes when $\sum_{i=1}^n m_i\neq g-1$.

Take $(2m_1+1)!$ times the coefficient of $\displaystyle L_1\prod_{i=1}^nL_i^{2m_i}$ in \eqref{toprec} to get:
\begin{align} \label{kdvrec}
(2m_1+1)!&c_g(m_1,m_K)
=\sum_{j=2}^n\frac{(2m_1+2m_j+1)!}{(2m_j)!}c_g(m_1+m_j,m_{S\setminus\{j\}})\\
+\tfrac12\hspace{-3mm}\mathop{\sum_{i+j=m_1-1}}\hspace{-3mm}&(2i+1)!(2j+1)!\left(c_{g-1}(i,j,m_K)+\hspace{-2mm}\sum_{I\sqcup J=K}c_{g_1}(i,m_I)c_{g_2}(j,m_J)\right)
\nonumber
\end{align}
where $K=\{2,...,n\}$.
The first term on the right hand side takes the coefficient of $L_1^{2m_1+1}L_j^{2m_j}$ in
\[\tfrac12\Big( (L_j+L_1)^{2k+1}-(L_j-L_1)^{2k+1}\Big)=L_1\sum_m\binom{2k+1}{2m+1}L_1^{2m}L_j^{2(k-m)}
\]
and the second first term on the right hand side uses \eqref{boxint} with $m=2i+1$, $n=2j+1$ and $m+n+1=2m_1+1$.

Define $C_g(m_1,...,m_n):=c_g(m_1,...,m_n)\prod_{i=1}^n2^{m_i}m_i!$ and put
\begin{align*}
F_{g,n}(\{t_m\})&:=\cv_{g}(L_1,...,L_n)|_{\{L_i^{2m}=2^mm!t_m\}}\\
&=\sum_{m\in\bz_+^n}c_g(m_1,...,m_n)\prod_{i=1}^n2^{m_i}m_i!t_{m_i}\\
&=\sum_{m\in\bz_+^n}C_g(m_1,...,m_n)\prod_{i=1}^nt_{m_i}
\end{align*}
so the partition function defined in \eqref{partvol} is $Z^\cv(\hbar,\{t_m\})=\exp\sum_{g,n}\frac{\hbar^{g-1}}{n!}F_{g,n}$ and
\[\frac{\partial^n}{\partial t_{m_1}...\partial t_{m_n}}\log Z^\cv(\hbar,\{t_m\})=\hbar^{g-1}C_g(m_1,...,m_n).
\]
The recursion \eqref{kdvrec} in terms of $C_g(m_1,...,m_n)$ becomes
\begin{align} \label{kdvrec1}
(2m_1+1)!!C_g(m_1,m_K)&
=\sum_{j=2}^n\frac{(2m_1+2m_j+1)!!}{(2m_j-1)!!}C_g(m_1+m_j,m_{S\setminus\{j\}})\\
+\tfrac12\hspace{-4mm}\mathop{\sum_{i+j=m_1-1}}\hspace{-3mm}(2i+1)!!(2j+1&)!!\left(C_{g-1}(i,j,m_K)+\hspace{-2mm}\sum_{I\sqcup J=K}C_{g_1}(i,m_I)C_{g_2}(j,m_J)\right).
\nonumber
\end{align}
 and \eqref{kdvrec1} for $k_1=0 ,1,2,...$ is equivalent to
\[ (2k+1)!!\frac{\partial}{\partial t_k}Z^\cv(\hbar,\{t_m\})=\widehat{\cl}_kZ^\cv(\hbar,\{t_m\}),\quad k=0 ,1,2,...
\]
where $\widehat{\cl}_k$ is defined in \eqref{virop}.  This coincides with the Virasoro constraints satisfied by $Z^{\text{BGW}}(\hbar,\{t_m\})$.  Furthermore, the initial condition $\cv_1(L)=\frac18$ 
is equivalent to the initial condition
\[\log Z^\cv(\hbar,t_0,0,0,...)=-\frac18\log(1-t_0)\] 
via $\cl_0Z^\cv(\hbar,t_0,0,0,...)=0$.  Hence $\partial^2_{t_0}\log Z^\cv(\hbar,t_0,0,0,...)=\frac{1}{8(1-t_0)^2}$ and 
\[Z^\cv(\hbar,t_0,t_1,t_2,...)=Z^{\text{BGW}}(\hbar,t_0,t_1,t_2,...).\]

\end{proof}
\begin{cor} \label{toproven}
Define $\cv_{g}(L_1,...,L_n)$ via the recursion \eqref{toprec} and the initial condition $\cv_{g}(L_1)=\frac18$.  Then
\[V^{\Theta}_{g,n}(L_1,...,L_n)=\cv_{g}(L_1,...,L_n)+\text{ lower order terms}.
\]
Equivalently, the top degree terms of $V^{\Theta}_{g,n}(L_1,...,L_n)$  satisfy the top degree part of the recursion \eqref{volrec}.
\end{cor}
\begin{proof}  
The equality \eqref{taufn}, proven via algebro-geometric methods in \cite{CGGRel}, together with Corollary~\ref{kdvtheta} shows that
\[Z^{\Theta}(\hbar,t_0,t_1,...)=Z^{\text{BGW}}(\hbar,t_0,t_1,...)=Z^\cv(\hbar,t_0,t_1,t_2,...).
\]
The polynomial storing the top degree terms of $V^{\Theta}_{g,n}(L_1,...,L_n)$ is obtained via
\[\int_{\overline{\modm}_{g,n}}\hspace{-2mm}\Theta_{g,n}\exp\left\{\frac12\sum_{i=1}^n L_i^2\psi_i\right\}\]
and the collection of these polynomials produces $Z^{\Theta}(\hbar,t_0,t_1,...)$ via the same construction as \eqref{partvol}.  Hence
\[\cv_{g}(L_1,...,L_n)=\int_{\overline{\modm}_{g,n}}\hspace{-2mm}\Theta_{g,n}\exp\left\{\frac12\sum_{i=1}^n L_i^2\psi_i\right\}.\]
\end{proof}

In the remainder of the paper, we will show that the top degree part of the recursion \eqref{volrec} implies the full recursion.  We will describe here why this is to be expected, via the analogous story in the non-super case.  The Weil-Petersson volumes $V^{WP}_{g,n}(L_1,...,L_n)$ are stored in a partition function, denoted $Z_{\kappa_1}(\hbar,\vec{t},s)$ in \ref{transl}, and the top degree terms of $V^{WP}_{g,n}(L_1,...,L_n)$ correspond to $Z^{\text{KW}}(\hbar,\vec{t})=Z_{\kappa_1}(\hbar,\vec{t},s)|_{s=0}$.  It was proven by Manin and Zograf \cite{MZoInv} that $Z_{\kappa_1}(\hbar,\vec{t},s)$ is a translation via \eqref{KWtran} of $Z^{\text{KW}}(\hbar,\vec{t})$, which satisfies Virasoro constraints, and hence inherits its own Virasoro constraints, which give another way to express Mirzakhani's recursion.  In other words, the top degree part of the recursion implies the full recursion.  

The same structure occurs in the super case---the partition function $Z^\Theta_{\kappa_1}(\hbar,\vec{t},s)$, defined in \eqref{BGWtran} and equivalent to the collection of polynomials $V^\Theta_{g,n}(L_1,...,L_n)$, is obtained by translation of $Z^\Theta(\hbar,\vec{t})$, given in \eqref{BGWtran}, which induces Virasoro constraints satisfied by $Z^\Theta_{\kappa_1}(\hbar,\vec{t},s)$.  This is a special case of Theorem~\ref{higherWPtrans}.  The Virasoro constraints satisfied by $Z^\Theta_{\kappa_1}(\hbar,\vec{t},s)$ are equivalent to recursion relations satisfied by $V^\Theta_{g,n}(L_1,...,L_n)$ and restrict, via $s=0$, to the Virasoro constraints satisfied by $Z^\Theta(\hbar,\vec{t})$.  The implementation of this idea to prove the recursion \eqref{volrec} is achieved via topological recursion in the next section.  

\subsubsection{Translation}  \label{transl}
The partition function 
\[Z_{\kappa_1}(\hbar,\vec{t},s)=\exp\left(\sum_{g,n}\frac{\hbar^{g-1}}{n!}\sum_{\vec{k}\in\bn^n}\int_{\overline{\modm}_{g,n}}\exp(s\kappa_1)\prod_{i=1}^n\psi_i^{k_i}t_{k_i}\right)
\]
is built out of the Weil-Petersson volumes
\[
Z_{\kappa_1}(\hbar,\vec{t},2\pi^2)=\exp\sum_{g,n}\frac{\hbar^{g-1}}{n!}V_{g,n}(L_1,...,L_n)|_{\{L_i^{2k}=2^kk!t_k\}}
\]
and was proven by Manin and Zograf \cite{MZoInv} to be related to the Kontsevich-Witten tau function via translation
\begin{equation}  \label{KWtran}
Z_{\kappa_1}(\hbar,\vec{t},s)=Z^{\text{KW}}(\hbar,t_0,t_1,t_2+s,t_3-s^2/2,...,t_k+(-1)^{k}\frac{s^{k-1}}{(k-1)!},...).
\end{equation}
Similarly, the Weil-Petersson super-volumes build a partition function
\[
Z_{\kappa_1}^\Theta(\hbar,\vec{t},2\pi^2)=\exp\sum_{g,n}\frac{\hbar^{g-1}}{n!}V_{g,n}^\Theta(L_1,...,L_n)|_{\{L_i^{2k}=2^kk!t_k\}}
\]
which is a translation of the Br\'ezin-Gross-Witten tau function.  We have
\begin{align}  \label{BGWtran}
Z^\Theta_{\kappa_1}(\hbar,\vec{t},s)&=\exp\left(\sum_{g,n}\frac{\hbar^{g-1}}{n!}\sum_{\vec{k}\in\bn^n}\int_{\overline{\modm}_{g,n}}\hspace{-2mm}\Theta_{g,n}\exp(s\kappa_1)\prod_{i=1}^n\psi_i^{k_i}t_{k_i}\right)\\
&=Z^{\text{BGW}}(\hbar,t_0,t_1+s,t_2-s^2/2,...,t_k+(-1)^{k+1}\frac{s^{k}}{k!},...).
\nonumber
\end{align} 
which is proven as a special case of a more general result involving all $\kappa$ classes in Theorem~\ref{higherWPtrans} below.  Note that the translation in \eqref{BGWtran} is shifts the indices by one compared to the translation in \eqref{KWtran}.

\subsubsection{Higher Weil-Petersson volumes}  \label{higherWP}
Define the generating function 
\[Z_{\kappa}(\hbar,\vec{t},\vec{s}):=\exp\sum_{g,n,\vec{m}}\frac{\hbar^{g-1}}{n!}\sum_{\vec{k}\in\bn^n}\int_{\overline{\modm}_{g,n}}\prod_{i=1}^n\psi_i^{k_i}t_{k_i}\prod_{j=1}^\infty\kappa_j^{m_j}\frac{s_j^{m_j}}{m_j!}.
\]
for integrals involving all $\kappa$ classes, known as higher Weil-Petersson volumes.
Define the weighted homogeneous polynomials $p_j$ of degree $j$ by
\[ 1-\exp\left(-\sum_{i=1}^\infty s_iz^i\right)=\sum_{j=1}^\infty p_j(s_1,...,s_j)z^j.\]
\begin{thm}[\cite{MZoInv}]  \label{MZ}
\[Z_{\kappa}(\hbar,\vec{t},\vec{s})=Z^{\text{KW}}(\hbar,t_0,t_1,t_2+p_1(\vec{s}),...,t_j+p_{j-1}(\vec{s}),....)
\] 
\end{thm}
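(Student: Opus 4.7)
The plan is to follow the strategy of Manin--Zograf and Kaufmann--Manin--Zograf, iteratively converting $\kappa$-classes into $\psi$-classes on moduli spaces with additional marked points via the forgetful morphism $\pi: \overline{\modm}_{g,n+1} \to \overline{\modm}_{g,n}$. The three essential ingredients are: (i) $\kappa_j = \pi_*\psi_{n+1}^{j+1}$; (ii) the comparison $\psi_i = \pi^*\psi_i + D_{i,n+1}$ for $i \le n$ together with the vanishing $\psi_{n+1} \cdot D_{i,n+1} = 0$; and (iii) the Arbarello--Cornalba relation $\pi^*\kappa_j = \kappa_j - \psi_{n+1}^j$ on $\overline{\modm}_{g,n+1}$. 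Combined via the projection formula, (i) and (ii) yield the single-step reduction
\[
\int_{\overline{\modm}_{g,n}}\kappa_j\prod_i\psi_i^{k_i}\cdot\alpha \;=\; \int_{\overline{\modm}_{g,n+1}}\psi_{n+1}^{j+1}\prod_i\psi_i^{k_i}\cdot\pi^*\alpha,
\]
trading one $\kappa_j$ insertion for a $\psi^{j+1}$ insertion at an additional marked point, with no boundary corrections surviving because the divisors $D_{i,n+1}$ are killed by $\psi_{n+1}$.

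Next I would iterate this on a product $\prod_{k=1}^M \kappa_{j_k}$: after each single-step reduction, relation (iii) is invoked to push the remaining $\kappa$-classes through $\pi^*$, generating correction terms of the form $-\psi_{n+1}^{j_k}$. Organising the resulting expansion yields a Kaufmann--Manin--Zograf-type identity
\[
\prod_{k=1}^M\kappa_{j_k} \;=\; \sum_{\sigma \,\vdash\, [M]} (-1)^{M-|\sigma|}\,(\pi_{|\sigma|})_*\prod_{c \in \sigma}\psi^{J_c + 1},
\]
where the sum runs over set partitions $\sigma$ of $\{1,\dots,M\}$, $J_c = \sum_{k \in c} j_k$, and $\pi_{|\sigma|}: \overline{\modm}_{g,n+|\sigma|} \to \overline{\modm}_{g,n}$. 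Substituting this into the expansion of $\exp(\sum_j s_j\kappa_j)$, reorganising the resulting double sum over $(m_j)$ and set partitions, and collecting the per-extra-point contribution, I expect to find that each additional marked point contributes exactly $\sum_{j \ge 1} p_j(\vec s)\,\psi^{j+1}$: the alternating signs from (iii) combined with the exponential structure collapse the naive single-point series $\sum_j s_j z^{j+1}$ into the translated series $\sum_j p_j(\vec s) z^{j+1}$ via the identity $1 - \exp(-\sum_i s_i z^i) = \sum_j p_j z^j$.

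Finally, inserting $\sum_{j \ge 1} p_j(\vec s)\,\psi^{j+1}$ at each additional marked point inside $Z^{\text{KW}}$ is precisely the effect of the time translation $t_j \mapsto t_j + p_{j-1}(\vec s)$ for $j \ge 2$, since $\partial/\partial t_j$ inserts $\psi^j$ at a new marked point, and $p_0 = 0$ keeps $t_0$ and $t_1$ unshifted; the case $\vec s = 0$ trivially recovers $Z^{\text{KW}}$ as the initial condition. The hardest step will be the sign combinatorics of the iteration: verifying that the correction terms generated by (iii) organise precisely into the displayed partition sum, and that upon exponentiation they collapse to the polynomials $p_j$. I would address this by passing to the free energy $\log Z$, where the combinatorics becomes a connected-diagram statement and the identity $\sum_i s_i z^i = -\log(1 - \sum_j p_j z^j)$ dual to the definition of $p_j$ reduces to a single-extra-point calculation that can be verified directly.
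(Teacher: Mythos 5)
Your proof is correct and takes essentially the same route the paper cites from Manin--Zograf and Kaufmann--Manin--Zagier: convert $\kappa$'s to $\psi$'s at extra marked points via $\kappa_j=\pi_*\psi_{n+1}^{j+1}$, iterate with $\pi^*\kappa_j=\kappa_j-\psi_{n+1}^j$ to obtain the Kaufmann--Manin--Zagier push-forward identity (your set-partition formula is an equivalent rewriting of the paper's ordered-vector-partition form~\eqref{KMZ1}), and collapse via the defining series for $p_j$. The paper cites rather than re-derives this theorem, and re-runs the identical mechanism---shifted by one $\psi$-power---in the proof of Theorem~\ref{higherWPtrans}.
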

The KdV hierarchy is invariant under translations, so an immediate consequence of Theorem~\ref{MZ} is that $Z_\kappa$ is a tau function of the KdV hierarchy in the $t_i$ variables, and the same is true of $Z_{\kappa}^\Theta$ defined analogously by
\[Z_{\kappa}^\Theta(\hbar,\vec{t},\vec{s}):=\exp\sum_{g,n,\vec{m}}\frac{1}{n!}\sum_{\vec{k}\in\bn^n}\int_{\overline{\modm}_{g,n}}\hspace{-2mm}\Theta_{g,n}\cdot\prod_{i=1}^n\psi_i^{k_i}t_{k_i}\prod_{j=1}^\infty\kappa_j^{m_j}\frac{s_j^{m_j}}{m_j!}.
\]
\begin{thm}  \label{higherWPtrans}
\[Z_{\kappa}^\Theta(\hbar,\vec{t},\vec{s})=Z^{\text{BGW}}(\hbar,t_0,t_1+p_1(\vec{s}),...,t_j+p_{j}(\vec{s}),....)\]
\end{thm}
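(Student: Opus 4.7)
The plan is to deduce Theorem~\ref{higherWPtrans} by running the standard proof of Theorem~\ref{MZ} with the pullback relation \eqref{forget}, $\Theta_{g,n+1}=\psi_{n+1}\cdot\pi^*\Theta_{g,n}$, inserted into the argument. The key point is that \eqref{forget} absorbs exactly one power of $\psi$ at every point introduced by an iterated forgetful-map pushforward, and this is precisely the mechanism that converts the translation $t_j\mapsto t_j+p_{j-1}(\vec{s})$ of Theorem~\ref{MZ} into the translation $t_j\mapsto t_j+p_j(\vec{s})$ of Theorem~\ref{higherWPtrans}.

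I would begin by recalling the Arbarello--Cornalba / Kaufmann-Manin-Zagier identity
\begin{equation}\label{ACKMZplan}
\exp\!\left(\sum_{a\geq 1} s_a \kappa_a\right) \;=\; \sum_{m\geq 0}\frac{1}{m!}(\pi_m)_*\prod_{i=1}^m F\!\left(\psi_{n+i};\vec{s}\right) \;\in\; H^*(\overline{\modm}_{g,n}),
\end{equation}
which is obtained by iterating the pullback formula $\pi^*\kappa_a=\kappa_a-\psi_{n+1}^a$ together with the standard pushforward identity $\pi_{m*}\prod_{i=1}^m\psi_{n+i}^{a_i+1}=\sum_{\sigma\in\Pi_m}\prod_{B\in\sigma}\kappa_{\sum_{i\in B}a_i}$. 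Here $\pi_m:\overline{\modm}_{g,n+m}\to\overline{\modm}_{g,n}$ forgets the last $m$ points and
\[
F(z;\vec{s})=z\!\left(1-\exp\!\Big(-\sum_{i\geq 1}s_iz^i\Big)\right)=\sum_{a\geq 1}p_a(\vec{s})\,z^{a+1}
\]
is a power series vanishing to order two at $z=0$, reflecting that $\kappa_a=\pi_*\psi^{a+1}_{n+1}$ has minimal $\psi$-power $\psi^2$. Substituting \eqref{ACKMZplan} into $Z_\kappa$ and identifying each $\psi_{n+i}^{a+1}$-insertion as a $t_{a+1}$-derivative of $Z^{\text{KW}}$ recovers Theorem~\ref{MZ}, with $p_a(\vec{s})$ appearing as the shift of $t_{a+1}$.

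For the $\Theta$-twisted version, multiply \eqref{ACKMZplan} by $\Theta_{g,n}$ and apply the projection formula. Iterating \eqref{forget} gives $\Theta_{g,n+m}=\pi_m^*\Theta_{g,n}\cdot\prod_{i=1}^m\psi_{n+i}$ (using $\psi_{n+i}\cdot D_{n+i,n+j}=0$ as in the proof of Proposition~\ref{th:unique} to ensure unambiguous $\psi$-factors), hence
\[
\Theta_{g,n}\cdot(\pi_m)_*\!\prod_{i=1}^m F(\psi_{n+i};\vec{s})=(\pi_m)_*\!\left(\Theta_{g,n+m}\cdot\prod_{i=1}^m\frac{F(\psi_{n+i};\vec{s})}{\psi_{n+i}}\right),
\]
which is meaningful since $F(z;\vec{s})/z=\sum_{a\geq 1}p_a(\vec{s})\,z^a$ is still a power series. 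Thus each $\psi_{n+i}^{a+1}$-insertion appearing in the KW case is replaced in the $\Theta$-twisted case by $\psi_{n+i}^a$, so in the partition-function language the coefficient $p_a(\vec{s})$ now shifts $t_a$ rather than $t_{a+1}$. Summing over $g,n,\vec{k},\vec{m}$ and invoking Corollary~\ref{kdvtheta} ($Z^\Theta=Z^{\text{BGW}}$) produces exactly the translation $t_a\mapsto t_a+p_a(\vec{s})$ for every $a\geq 1$, which is Theorem~\ref{higherWPtrans}.

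The main obstacle is establishing \eqref{ACKMZplan} with the precise power series $F$; the combinatorial identification of the coefficient of $z^{a+1}$ with the $p_a(\vec{s})$ defined in the paper requires a careful generating-function manipulation of the set-partition formula above. An alternative that avoids \eqref{ACKMZplan} entirely is induction on the total weight of $\vec{s}$: the case $\vec{s}=0$ is Corollary~\ref{kdvtheta}, and each $\partial_{s_j}$-derivative of the proposed identity reduces to the insertion of a single $\kappa_j$ class, which via $\kappa_j=\pi_*\psi_{n+1}^{j+1}$ and \eqref{forget} matches the effect of a single $t_j$-derivative of $Z^{\text{BGW}}$ at the shifted argument $\vec{t}+\vec{p}(\vec{s})$.
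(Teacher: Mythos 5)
Your proposal is correct and takes essentially the same route as the paper: both rest on the Kaufmann--Manin--Zagier pushforward expansion of $\kappa$-monomials and on the observation that \eqref{forget}, $\Theta_{g,n+1}=\psi_{n+1}\cdot\pi^*\Theta_{g,n}$, absorbs exactly one $\psi$-power at each forgotten point, turning the shift $t_{a+1}\mapsto t_{a+1}+p_a(\vec s)$ of Theorem~\ref{MZ} into $t_a\mapsto t_a+p_a(\vec s)$. The only difference is cosmetic: you phrase KMZ as the generating-function identity $\exp(\sum s_a\kappa_a)=\sum_m\tfrac{1}{m!}(\pi_m)_*\prod F(\psi_{n+i};\vec s)$, whereas the paper works directly from the explicit partition-sum form \eqref{KMZ1} and substitutes $\psi_j^{|\mu^{(j)}|+1}\to\psi_j^{|\mu^{(j)}|}$; these are equivalent packagings of the same combinatorics.
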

\begin{proof}
When $\vec{s}=(s_1,s_2,...)=(0,0,...)$, the equality of the theorem coincides with  $Z^{\Theta}(\hbar,t_0,t_1,...)=Z^{\text{BGW}}(\hbar,t_0,t_1,...)$ which is proven in \cite{CGGRel}.  The proof of the general $\vec{s}\neq0$ case will follow from showing that it is obtained by translation of the $\vec{s}=0$ case.

The class $\Theta_{g,n}\in H^*(\overline{\modm}_{g,n},\bq)$ pulls back under the forgetful map by
\[\Theta_{g,n+1}=\psi_{n+1}\cdot\pi^*\Theta_{g,n}\]
which gives push-forward relations
\[\pi_*(\Theta_{g,n+1}\psi_{n+1}^{m})=\pi_*(\psi_{n+1}^{m+1}\cdot\pi^*\Theta_{g,n})=\Theta_{g,n}\kappa_m.
\]
This shifts the indices by one compared to the usual pushforward relation $\pi_*(\psi_{n+1}^{m+1})=\kappa_m$.

We will first prove the case $s_i=0$ for $i>1$, which is \eqref{BGWtran}.  The proof in \cite{MZoInv} of \eqref{KWtran} uses the following push-forward relation from \cite{KMZHig} for $\kappa_1^m$ involving a sum over ordered partitions of $m$. 
\begin{equation}  \label{KMZ}
\frac{\kappa_1^{m}}{m!}\prod_{j=1}^n\psi_j^{k_j}=\pi_*\left(\sum_{\mu\vdash m}\frac{(-1)^{m+\ell(\mu)}}{\ell(\mu)!}\prod_{j=n+1}^{n+\ell(\mu)}\frac{\psi_j^{\mu_j+1}}{\mu_j!}\prod_{j=1}^n\psi_j^{k_j}\right)
\end{equation} 
where $\mu\vdash m$ is an ordered partition of $m$ of length $\ell(\mu)$ and $\pi_*:\overline{\modm}_{g,n+\ell(\mu)}\to\overline{\modm}_{g,n}$.  The factor $\displaystyle\prod_{j=1}^n\psi_j^{k_j}$ in \eqref{KMZ} essentially does not participate since it can be replaced by its pull-back in the right hand side of \eqref{KMZ}, using $\displaystyle\psi_{n+1}\cdot\prod_{j=1}^n\psi_j^{k_j}=\psi_{n+1}\cdot\pi^*\prod_{j=1}^n\psi_j^{k_j}$,  and then brought outside of the push-forward.

Integrate \eqref{KMZ} to get
\[
\int_{\overline{\modm}_{g,n}}\frac{\kappa_1^{m}}{m!}\prod_{j=1}^n\psi_j^{k_j}=\sum_{\mu\vdash m}\frac{(-1)^{m+\ell(\mu)}}{\ell(\mu)!}\int_{\overline{\modm}_{g,n+\ell(\mu)}}\prod_{j=n+1}^{n+\ell(\mu)}\frac{\psi_j^{\mu_j+1}}{\mu_j!}\prod_{j=1}^n\psi_j^{k_j}
\]
which is easily seen to be equivalent to the translation \eqref{KWtran} on generating functions.  Notice that  $\mu_j+1\geq 2$ hence the first variable that is translated is $t_2$.

When $\Theta_{g,n}$ is present, there is a shift in the indices by one of the usual pushforward relations, hence $\psi_j^{\mu_j+1}$ in the right hand side of \eqref{KMZ} is replaced by $\psi_j^{\mu_j}$
\[
\Theta_{g,n}\frac{\kappa_1^{m}}{m!}\prod_{j=1}^n\psi_j^{k_j}=\pi_*\left(\Theta_{g,n+\ell(\mu)}\sum_{\mu\vdash m}\frac{(-1)^{m+\ell(\mu)}}{\ell(\mu)!}\prod_{j=n+1}^{n+\ell(\mu)}\frac{\psi_j^{\mu_j}}{\mu_j!}\prod_{j=1}^n\psi_j^{k_j}\right)
\]
which leads to the translation \eqref{BGWtran} on generating functions.  Notice now that  $\mu_j\geq 1$ and the first variable that is translated is $t_1$.  This also explains the shift in the indices by one between the translations \eqref{KWtran} and \eqref{BGWtran}.

We have proven that via translation, one can remove the term $\exp(2\pi^2\kappa_1)$ from $Z_{\kappa_1}^\Theta$, leaving $Z^\Theta$ which coincides with the Br\'ezin-Gross-Witten tau function $Z^{\text{BGW}}$.  Thus $Z_{\kappa_1}^\Theta$ is indeed a translation of $Z^{\text{BGW}}$.\\

The proof of the general case, when all $s_i$ are present, is similar, albeit more technical.  The following relation is proven in \cite{KMZHig}.
\begin{equation}  \label{KMZ1}
\frac{\kappa_1^{m_1}...\kappa_{N}^{m_N}}{m_1!...m_N!}\prod_{j=1}^n\psi_j^{k_j}=\pi_*\left(\sum_{k=1}^{|m|}\frac{(-1)^{|\bf{m}|+k}}{k!}\hspace{0mm}\sum_{ \mu\vdash_k{\bf m}}\prod_{j=n+1}^{n+k}\frac{\psi_j^{|\mu^{(j)}|+1}}{\mu^{(j)}!}\prod_{j=1}^n\psi_j^{k_j}\right)
\end{equation} 
where $\pi_*:\overline{\modm}_{g,n+N}\to\overline{\modm}_{g,n}$, ${\bf m}=(m_1,...,m_N)\in\bz^N$, and $\mu\vdash_k{\bf m}$ is a partition into $k$ parts, i.e. $\mu^{(1)}+...+\mu^{(k)}={\bf m}$, $\mu^{(j)}\neq 0$, $\mu^{(j)}\in \bz^N$, $\displaystyle|\mu^{(j)}|=\sum_i\mu^{(j)}_i$, $\mu^{(j)}!=\displaystyle\prod_i\mu^{(j)}_i!$.
As in the special case above, on the level of generating functions \eqref{KMZ1} leads to the translation in Theorem~\ref{MZ}

Again, when $\Theta_{g,n}$ is present, there is a shift in the indices by one in the pushforward relations, hence $\psi_j^{|\mu^{(j)}|+1}$ in the right hand side of \eqref{KMZ} is replaced by $\psi_j^{|\mu^{(j)}|}$
\[\Theta_{g,n}\frac{\kappa_1^{m_1}...\kappa_{N}^{m_N}}{m_1!...m_N!}\prod_{j=1}^n\psi_j^{k_j}=\pi_*\left(\Theta_{g,n+N}\sum_{k=1}^{|m|}\frac{(-1)^{|\bf{m}|+k}}{k!}\hspace{0mm}\sum_{ \mu\vdash_k{\bf m}}\prod_{j=n+1}^{n+k}\frac{\psi_j^{|\mu^{(j)}|}}{\mu^{(j)}!}\prod_{j=1}^n\psi_j^{k_j}\right)
\]
which has the effect of a shift of the indices by one compared to the translation in Theorem~\ref{MZ}.  By the proof of the case $\vec{s}=0$, we see that $Z_{\kappa}^\Theta(\hbar,\vec{t},\vec{s})$ is translation of the Br\'ezin-Gross-Witten tau function $Z^{\text{BGW}}$ given in the statement of the theorem.
\end{proof}
\begin{corollary}   \label{virec}
The polynomials $V^\Theta_{g,n}(L_1,...,L_n)$ satisfy a recursion that uniquely determines them from $V^\Theta_{1,1}(L)=\tfrac18$.
\end{corollary}
\begin{proof}
The partition function $Z^\Theta_{\kappa_1}(\hbar,\vec{t},s)$ is equivalent to the collection of polynomials $V^\Theta_{g,n}(L_1,...,L_n)$ via $
Z_{\kappa_1}(\hbar,\vec{t},2\pi^2)=\exp\left(\sum\frac{\hbar^{g-1}}{n!}V_{g,n}(L_1,...,L_n)|_{\{L_i^{2k}=2^kk!t_k\}}\right)
$.  Furthermore, $Z^\Theta_{\kappa_1}(\hbar,\vec{t},s)$ satisfies Virasoro constraints induced from the Virasoro constraints \eqref{bgwvir} satisfied by $Z^{\text{BGW}}(\hbar,\vec{t})$ due to their relation via translation \eqref{BGWtran}
proven in Theorem~\ref{higherWPtrans}.  The structure of the Virasoro operators shows that the constraints uniquely determine $Z^\Theta_{\kappa_1}(\hbar,\vec{t},s)$ from $\log Z^\Theta_{\kappa_1}(\hbar,t_0,0,0,...)=-\frac18\log(1-t_0)$.  Hence this induces recursion relations between the polynomials $V^\Theta_{g,n}(L_1,...,L_n)$ that uniquely determines them from $V^\Theta_{1,1}(L)=\tfrac18$.
\end{proof}
The recursion from Corollary~\ref{virec} is not yet explicit, and will turn out to coincide with the recursion \eqref{volrec},  using results from Section~\ref{sec:TR}, but more is needed to show this.   The top degree part of the recursion of Corollary~\ref{virec} uses only the $s=0$ specialisation of \eqref{BGWtran}, which is $Z^{\Theta}(\hbar,t_0,t_1,...)=Z^{\text{BGW}}(\hbar,t_0,t_1,...)$ hence it coincides with the top degree part of the recursion \eqref{volrec} by Corollary~\ref{kdvtheta} which is consistent with Corollary~\ref{toproven}.  A full proof of the recursion \eqref{volrec} and Theorem~\ref{main} will use Theorem~\ref{higherWPtrans} together with an efficient method to encode translation of partition functions, and Virasoro constraints achieved via topological recursion.

\section{Topological recursion}  \label{sec:TR}

Topological recursion produces a collection of correlators $\omega_{g,n}(p_1, \ldots, p_n)$, for $p_i\in C$, from a \emph{spectral curve} $(C,B,x,y)$ consisting of a compact Riemann surface $C$, a symmetric bidifferential $B$ defined on $C\times C$, and meromorphic functions $x,y:C\to\bc$.  It arose out of loop equations satisfied by matrix models and was developed by Chekhov, Eynard and Orantin \cite{CEyHer,EOrInv}.  A technical requirement is that the zeros of $dx$ are simple and disjoint from the zeros of $dy$~\cite{EOrInv}.  In many cases the bidifferential $B$ is taken to be the fundamental normalised differential of the second kind on $C$, \cite{FayThe}, and given by the Cauchy kernel $B= \frac{d z_1 d z_2}{(z_1-z_2)^2}$ when $C$ is rational with global rational parameter $z$.

The correlators $\omega_{g,n}(p_1,...,p_n)$ are a collection of symmetric tensor products of meromorphic 1-forms defined on $C^n$ where $p_i\in C$, for integers $g\geq 0$ and $n \geq 1$. 
They are defined recursively from 
$\omega_{g',n'}(p_1,...,p_{n'})$ for $(g',n')$ satisfying $2g'-2+n'<2g-2+n$.  The recursion can be represented pictorially via different ways of decomposing a genus $g$ surface with $n$ labeled boundary components into a pair of pants containing the first boundary component and simpler surfaces. 

For $2g-2+n>0$ and $L = \{2, \ldots, n\}$, define
\begin{align}  \label{EOrec}
\omega_{g,n}(p_1,p_L)=&\sum_{\alpha}\Res_{p=\alpha}K(p_1,p) \bigg[\omega_{g-1,n+1}(p,\hat{p},p_L)\\
&\hspace{3cm}+ \mathop{\sum_{g_1+g_2=g}}_{I\sqcup J=L}^\circ \omega_{g_1,|I|+1}(p,p_I) \, \omega_{g_2,|J|+1}(\hat{p},p_J) \bigg]
\nonumber
\end{align}
where the outer summation is over the zeros $\alpha$ of $dx$ and the $\circ$ over the inner summation means that we exclude terms that involve $\omega_1^0$.  The point $\hat{p}\in C$ is defined to be the unique point $\hat{p}\neq p$ close to $\alpha$ such that $x(\hat{p})=x(p)$.  It is unique 
since each zero $\alpha$ of $dx$ is assumed to be simple, and \eqref{EOrec} needs only consider $p\in C$ close to $\alpha$.   The recursion takes as input the unstable cases
\[
\omega_{0,1}=-y(p_1)\,dx(p_1) \qquad \text{and} \qquad \omega_{0,2}=B(p_1,p_2).
\]
\begin{center}
\includegraphics[scale=0.4]{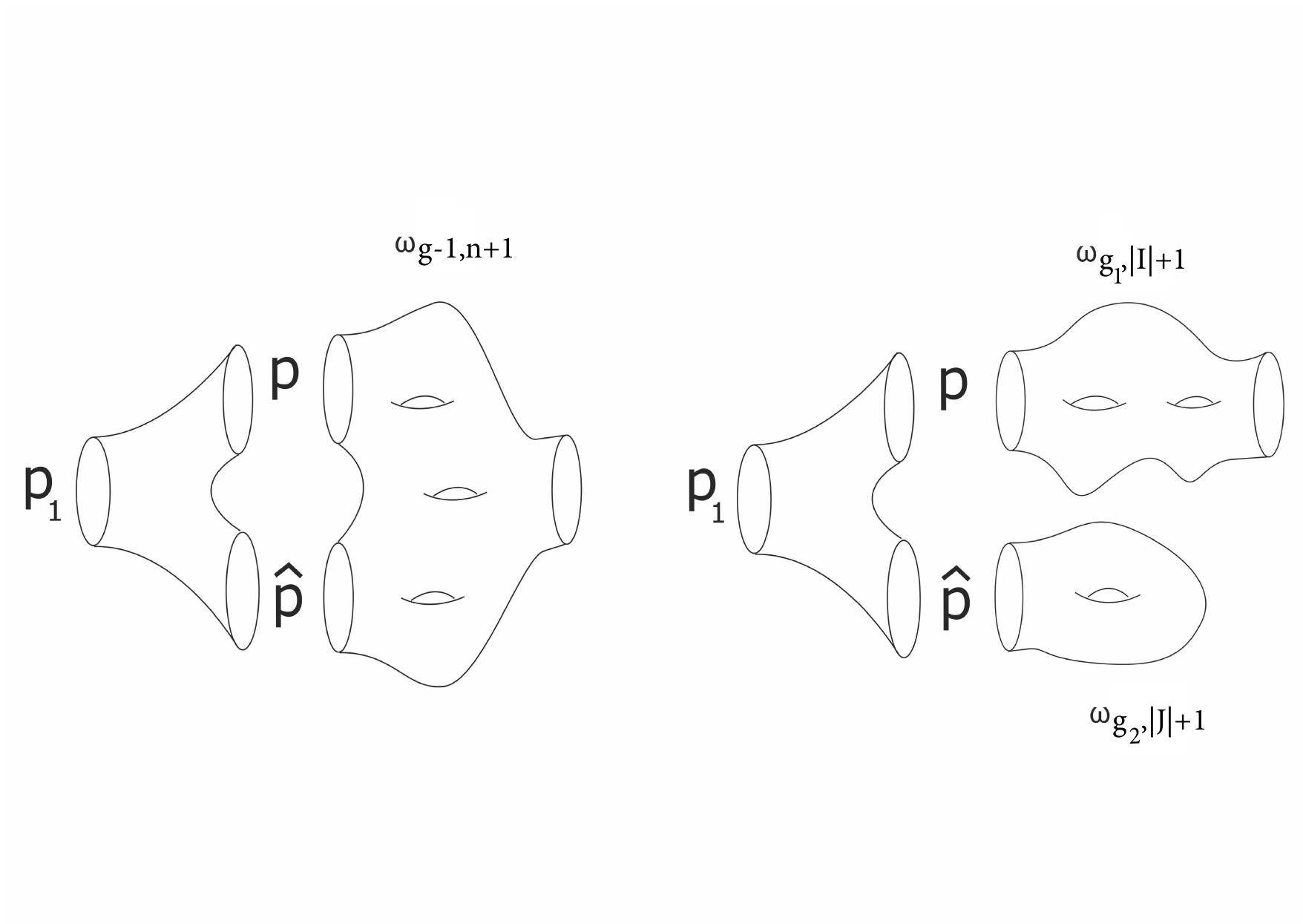}
\end{center}
The kernel $K$ is defined by 
\[
K(p_1,p)=\frac{-\int^p_{\hat{p}}\omega_2^0(p_1,p')}{2[y(p)-y(\hat{p})] \, dx(p)}
\] 
which is well-defined in a neighbourhood of each zero of $dx$. Note that the quotient of a differential by the differential $dx(p)$ is a meromorphic function.  For $2g-2+n>0$, the correlator $\omega_{g,n}$ is symmetric, with poles only at the zeros of $dx$ and vanishing residues.

The poles of the correlator $\omega_{g,n}$ occur at the zeros of $dx$.  A zero $\alpha$ of $dx$ is {\em regular}, respectively {\em irregular}, if $y$ is regular, respectively has a simple pole, at $\alpha$.  A spectral curve is regular if all zeros of $dx$ are regular and irregular otherwise.  The order of the pole in each variable of $\omega_{g,n}$ at a regular, respectively irregular, zero of $dx$ is $6g-4+2n$, respectively $2g$, \cite{DNoTopI,EOrInv}.

Two cases of interest in this paper  
use $x=\frac12z^2$, $B$ is the Cauchy kernel and $y=\frac{\sin(2\pi z)}{2\pi}$, respectively $y=\frac{\cos(2\pi z)}{z}$.  The recursion \eqref{EOrec} allows for functions $y$ that are not algebraic as in these two examples.  Moreover, the recursive definition of $\omega_{g,n}(p_1, \ldots, p_n)$ uses only local information of $x$, $y$ and $B$ around zeros of $dx$.   In particular, $y$ and $B$ need to be only defined in a neighbourhood of the zeros of $dx$ and topological recursion generalises to {\em local} curves in which $C$ is an open subset of a compact Riemann surface \cite{EynInv}. 

\subsubsection{}
In many examples $\omega_{g,n}(p_1,p_2,...,p_n)$ gives the coefficients in the large $N$ expansion of expected values of multiresolvents in a matrix model 
\[\left\langle{\rm Tr}\left(\frac{1}{x(p_1)-A}\right)...{\rm Tr}\left(\frac{1}{x(p_n)-A}\right)\right\rangle_c\] 
where $N$ is the size of the matrix and $g$ indexes the order in the $1/N$ expansion.  The subscript $c$ means cumulant, or the connected part in a graphical expansion.  In such cases, topological recursion follows from the loop equations satisfied by the resolvents.  Saad, Shenker and Stanford \cite{SSSJTG} introduced a matrix model corresponding to the spectral curve $x=\frac12z^2$, $y=\frac{\sin(2\pi z)}{2\pi}$.  Stanford and Witten \cite{SWiJTG} used these ideas to produce the spectral curve $x=\frac12z^2$, $y=\frac{\cos(2\pi z)}{z}$. 

\subsubsection{} Define $\Phi(p)$ up to an additive constant by $d\Phi(p)=y(p)dx(p)$.
For $2g-2+n>0$, the correlators $\omega_{g,n}$ satisfy the dilaton equation~\cite{EOrInv}
\begin{equation} \label{dilaton}
\sum_{\alpha}\Res_{p=\alpha}\Phi(p)\, \omega_{g,n+1}(p,p_1, \ldots ,p_n)=(2-2g-n) \,\omega_{g,n}(p_1, \ldots, p_n),
\end{equation}
where the summation is over the zeros $\alpha$ of $dx$.  The relation \eqref{dilaton} is invariant under $\Phi\mapsto\Phi+c$ where $c$ is a constant, since the poles of $\omega_{g,n+1}(p,p_1, \ldots ,p_n)$ are residueless. The dilaton equation enables the definition of the so-called {\em symplectic invariants}
\[
\omega_{g,0}=\sum_{\alpha}\Res_{p=\alpha}\Phi(p)\,\omega_{g,1}(p).
\]

\subsubsection{}  \label{sec:TRpart}
The correlators $\omega_{g,n}$ are normalised differentials of the second kind in each variable---they have zero $\ca$-periods, and poles only at the zeros $\cp_i$ of $dx$ of zero residue.  Their principal parts are skew-invariant under the local involution $p\mapsto\hat{p}$.   The correlators $\omega_{g,n}$ are polynomials in a basis $V^i_k(p)$ of normalised differentials of the second kind, which have poles only at the zeros of $dx$ with skew-invariant principal part, constructed from $x$ and $B$ as follows.
\begin{definition}\label{evaluationform}
For a Riemann surface equipped with a meromorphic function $(\Sigma,x)$ we define evaluation of any meromorphic differential $\omega$ at a simple zero $\cp$ of $dx$ by
\[
\omega(\cp)^2:=\Res_{p=\cp}\frac{\omega(p)\otimes\omega(p)}{dx(p)}\in\bc
\]
and we choose a square root of $\omega(\cp)^2$ to remove the $\pm1$ ambiguity.
\end{definition}
 
\begin{definition}\label{auxdif}
For a Riemann surface $C$ equipped with a meromorphic function $x:C\to\bc$ and bidifferential $B(p_1,p_2)$ define the auxiliary differentials on $C$ as follows.  For each zero $\cp_i$ of $dx$,  define
\begin{equation}  \label{Vdiff}
\xi^i_0(p)=B(\cp_i,p),\quad \xi^i_{k+1}(p)=-d\left(\frac{\xi^i_k(p)}{dx(p)}\right),\ i=1,...,N,\quad k=0,1,2,...
\end{equation}
where evaluation $B(\cp_i,p)$ at  $\cp_i$ is given in Definition~\ref{evaluationform}.
\end{definition}

From any spectral curve $S$, one can define a partition function $Z^S$ by assembling the polynomials built out of the correlators $\omega_{g,n}$ \cite{DOSSIde,EynInv}.
\begin{definition}  \label{TRpart}
\[Z^S(\hbar,\{u^{\alpha}_k\}):=\left.\exp\sum_{g,n}\frac{\hbar^{g-1}}{n!}\omega^S_{g,n}\right|_{\xi^{\alpha}_k(p_i)=u^{\alpha}_k}.
\]
\end{definition}

\begin{thm} [\cite{DOSSIde}] \label{DOSS}
Given any semisimple CohFT $\Omega$ with flat unit, there exists a local spectral curve $S$ whose topological recursion partition function coincides with the partition function of the CohFT:
\[ Z^S(\hbar,\{u^{\alpha}_k\})=Z_{\Omega}(\hbar,\{t^{\alpha}_k\})
\]
for $\{u^{\alpha}_k\}$ linearly related to $\{t^{\alpha}_k\}$.
\end{thm}
The following converse to Theorem~\ref{DOSS} allows for CohFTs without unit, and in particular a CohFT is not required to have flat unit.
\begin{thm}[\cite{CNoTop}]  \label{CNo}
Consider a spectral curve $S=(\Sigma,B,x,y)$ with possibly irregular zeros of $dx$.  There exist a CohFT $\Omega$, possibly without unit,  such that 
\[ Z^S(\hbar,\{u^{\alpha}_k\})=Z_{\Omega}(\hbar,\{t^{\alpha}_k\}).
\]
\end{thm}
Theorem~\ref{CNo} is a consequence of the following more technical result from \cite{CNoTop}.  Given a spectral curve $S=(\Sigma,B,x,y)$ with $m$ irregular zeros of $dx$ at which $y$ has simple poles, and $D-m$ regular zeros, there exist operators $\hat{R}$, $\hat{T}$ and $\hat{\Delta}$ determined explicitly by $(\Sigma,B,x,y)$ such that the partition function $Z^S$ built from the topological recursion correlators $\omega_{g,n}$ satisfies the following factorisation formula:
\begin{equation}  \label{eq:fact}
Z^S\hspace{-1mm}=\hspace{-1mm}\hat{R}\hat{T}\hat{\Delta}\left[\prod_{j=1}^mZ^{\text{BGW}}(\hbar,\{v^{k,j}\})\hspace{-1mm}\prod_{j=m+1}^D\hspace{-2mm}Z^{\text{KW}}(\hbar,\{v^{k,j}\})\right]
\end{equation}
where  $\{v^{k,j}\}$ are explicit linear combinations of $\{t^{\alpha}_k\}$.
The operators $\hat{R}$, $\hat{T}$ and $\hat{\Delta}$ can be used to construct a CohFT with partition function given by the right hand side of \eqref{eq:fact}. The equality 
\[
Z^{\Theta}(\hbar,t_0,t_1,...)=Z^{\text{BGW}}(\hbar,t_0,t_1,...)
\]
proven in \cite{CGGRel} allows us to replace factors of $Z^{\text{BGW}}$ in \eqref{eq:fact} by factors of $Z^\Theta$.   In particular, this will allow us to produce a spectral curve which stores the polynomials $V_{g,n}^\Theta(L_1,...,L_n)$ in its topological recursion correlators $\omega_{g,n}$.  To explain this, we will first describe the spectral curve which stores the polynomials $V^{WP}_{g,n}(L_1,...,L_n)$.

The CohFT (without flat unit) $\Omega_{g,n}=\exp(2\pi^2\kappa_1)$ has partition function 
\begin{align*}
Z_{\Omega}(\hbar,\{t_k\})&=\exp\sum_{g,n,\vec{k}}\frac{\hbar^{g-1}}{n!}\int_{\overline{\modm}_{g,n}}\exp(2\pi^2\kappa_1)\cdot\prod_{j=1}^n\psi_j^{k_j}\prod t_{k_j}\\
&=\exp\sum_{g,n}\frac{\hbar^{g-1}}{n!}V_{g,n}(L_1,...,L_n)|_{\{L_i^{2k}=2^kk!t_k\}}.
\end{align*}
Its relation to topological recursion, given in the following theorem, was proven by Eynard and Orantin.  It is also a  consequence of Theorem~\ref{CNo}.   
\begin{thm}[\cite{EOrWei}]   \label{eorwei}
Topological recursion applied to the spectral curve
\[ S_{EO}=\left(\bc,x=\frac12z^2,y=\frac{\sin(2\pi z)}{2\pi},B=\frac{dzdz'}{(z-z')^2}\right)
\]
has partition function
\[
Z_{S_{EO}}(\hbar,\{t_k\})=\exp\sum_{g,n}\frac{\hbar^{g-1}}{n!}V^{WP}_{g,n}(L_1,...,L_n)|_{\{L_i^{2k}=2^kk!t_k\}}.
\]
\end{thm}
\begin{remark}  \label{partcor}
The partition function $Z_{S_{EO}}(\hbar,\{t_k\})$ in Theorem~\ref{eorwei} uses $\xi_k=(2k-1)!!\frac{dz}{z^{2k}}$ defined in \eqref{Vdiff} to get
\[Z_{S_{EO}}(\hbar,\{t_k\})=\left.\exp\sum_{g,n}\frac{\hbar^{g-1}}{n!}\omega^S_{g,n}\right|_{\xi_k(z_i)=t_k}\hspace{-7mm}=\exp\sum_{g,n}\frac{\hbar^{g-1}}{n!}V^\Theta_{g,n}(L_1,...,L_n)|_{\{L_i^{2k}=2^kk!t_k\}}.
\]
Hence the expression for $Z_{S_{EO}}(\hbar,\{t_k\})$ in Theorem~\ref{eorwei} is equivalent to the following expression for correlators
\[\omega_{g,n}=\frac{\partial}{\partial z_1}...\frac{\partial}{\partial z_n}\cl\{V^{WP}_{g,n}(L_1,...,L_n)\}dz_1...dz_n.\]
\end{remark}

\subsection{The spectral curve}
In this section we prove Theorems~\ref{main} and \ref{spectral}.
The following theorem is a restatement of Theorem~\ref{spectral} in terms of the partition function $Z_S$ which collects all of the correlators $\omega_{g,n}$.
\begin{thm}  \label{TRTheta}
Topological recursion applied to the spectral curve
\[ S=\left(\bc,x=\frac12z^2,y=\frac{\cos(2\pi z)}{z},B=\frac{dzdz'}{(z-z')^2}\right)\]
has partition function
\[
Z_{S}(\hbar,\{t_k\})=\exp\sum_{g,n}\frac{\hbar^{g-1}}{n!}V^\Theta_{g,n}(L_1,...,L_n)|_{\{L_i^{2k}=2^kk!t_k\}}.
\]
\end{thm}
\begin{proof}
We use the following result from \cite{NorGro}.  Given any regular spectral curve $S=(\Sigma,x,y,B)$ form the irregular spectral curve $S'=(\Sigma,x,dy/dx,B)$.  It is irregular because $dy/dx$ necessarily has poles at the zeros of $dx$.   The factorisation of $Z^S$ given by \eqref{eq:fact}
\[Z^S=\hat{R}\hat{T}\hat{\Delta}Z^{\text{KW}}(\hbar,\{v^{k,m+1}\})... Z^{\text{KW}}(\hbar,\{v^{k,D}\})\]
is related to the factorisation of $Z^{S'}$ by:
\[Z^{S'}=\hat{R}\hat{T}_0\hat{\Delta}Z^{\text{BGW}}(\hbar,\{v^{k,m+1}\})... Z^{\text{BGW}}(\hbar,\{v^{k,D}\})\]
where $T_0(z)=T(z)/z$ is the shift of the indices by one between the translations, explained in Theorem~\ref{higherWPtrans}.  Moreover, due to \eqref{taufn}, if the partition function comes from a CohFT, i.e. $Z^S=Z_\Omega$, then $Z^{S'}=Z_{\Omega^\Theta}$.  This relation is simplified when $dx$ has a single zero, since $R=I$ and it essentially reduces to the shift of the indices by one between the translations, which is clearly visible in \eqref{KWtran} and \eqref{BGWtran}.

Apply this to $S=S_{EO}$ which transforms to $S'$ by 
\[x=\frac12z^2,\ y=\frac{\sin(2\pi z)}{2\pi}\quad\leadsto\quad x=\frac12z^2,\ \frac{dy}{dx}=\frac{\cos(2\pi z)}{z}.
\]
By Theorem~\ref{eorwei}, 
\[Z^{S_{EO}}=\exp\left(\sum_{g,n}\frac{\hbar^{g-1}}{n!}\sum_{\vec{k}\in\bn^n}\int_{\overline{\modm}_{g,n}}\exp(2\pi\kappa_1)\prod_{i=1}^n\psi_i^{k_i}t_{k_i}\right)\]
hence
\begin{align*}
Z^{S'}&=\exp\left(\sum_{g,n}\frac{\hbar^{g-1}}{n!}\sum_{\vec{k}\in\bn^n}\int_{\overline{\modm}_{g,n}}\hspace{-2mm}\Theta_{g,n}\exp(2\pi\kappa_1)\prod_{i=1}^n\psi_i^{k_i}t_{k_i}\right)\\
&=\exp\sum_{g,n}\frac{\hbar^{g-1}}{n!}V^\Theta_{g,n}(L_1,...,L_n)|_{\{L_i^{2k}=2^kk!t_k\}}.
\end{align*}  
\end{proof}
The correlators $\omega_{g,n}$ of the spectral curve $S=S'_{EO}$ are polynomials in the same auxiliary differentials $\xi_k=(2k-1)!!\frac{dz}{z^{2k}}$ as for $S_{EO}$, hence Remark~\ref{partcor} again applies to show that the expression for $Z_{S}(\hbar,\{t_k\})$ in Theorem~\ref{TRTheta} is equivalent to the expression for correlators given in Theorem~\ref{spectral}:
\[\omega_{g,n}=\frac{\partial}{\partial z_1}...\frac{\partial}{\partial z_n}\cl\{V^\Theta_{g,n}(L_1,...,L_n)\}dz_1...dz_n\]
Theorem~\ref{spectral} enables us finally to prove Theorem~\ref{main}, using the recursion between the polynomials $V^\Theta_{g,n}(L_1,...,L_n)$ produced via topological recursion satisfied by $\omega_{g,n}$.

In preparation, we require the following property of the principal part of a rational function.  The principal part of a rational function $r(z)$ at a point $\alpha\in\bc$, denoted by $[r(z)]_{\alpha}$, is the negative part of the Laurent series of $r(z)$ at $\alpha$.  It has the integral expression
\[
[r(z)]_{z=\alpha}=\Res_{w=\alpha}\frac{r(w)dw}{z-w}
\]
since the right hand side is analytic for $z\in\bc\backslash\{\alpha\}$ and 
\[
r(z)=-\Res_{w=z}\frac{r(w)dw}{z-w}=\frac{1}{2\pi i}\int_{\gamma_1-\gamma_2}\frac{r(w)dw}{z-w}=[r(z)]_{z=\alpha}-\frac{1}{2\pi i}\int_{\gamma_2}\frac{r(w)dw}{z-w}
\]
\begin{center} 
\includegraphics[scale=0.2]{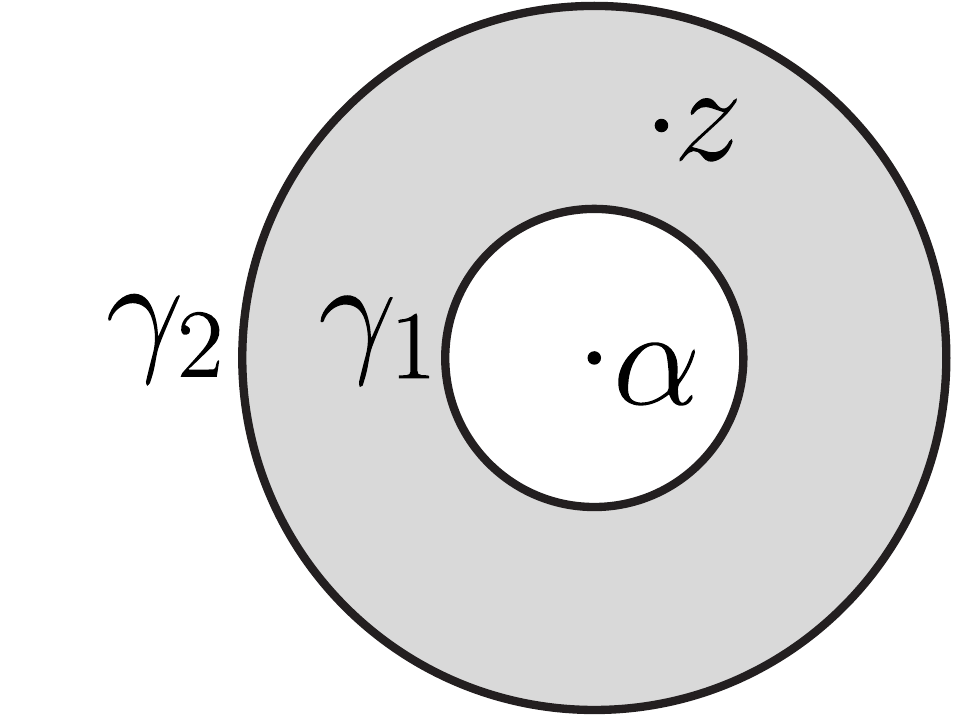}
\end{center}
so that $r(z)-[r(z)]_{z=\alpha}$ is analytic in the region enclosed by $\gamma_2$ in the diagram.   For $\alpha=0$, the even and odd parts of the principal part under $z\mapsto-z$ are denoted by $[r(z)]_{z=0}^+$, respectively $[r(z)]_{z=0}^-$.

In the following theorem, we use $T_{g,n}(L_1,...,L_n)$ to denote symmetric polynomials which will turn out to coincide with $V^\Theta_{g,n}(L_1,...,L_n)$.
\begin{thm}   \label{thetaspec}
The Laplace transform of the recursion \eqref{volrec} satisfied by symmetric polynomials $T_{g,n}(L_1,...,L_n)$ with $T_{1,1}\equiv\frac18$ is equivalent to topological recursion applied to the spectral curve
\[ S=\left(\bc,x=\frac12z^2,y=\frac{\cos(2\pi z)}{z}
,B=\frac{dzdz'}{(z-z')^2}\right)
\]
with correlators
\[\omega_{g,n}=\frac{\partial}{\partial z_1}...\frac{\partial}{\partial z_n}\cl\{T_{g,n}(L_1,...,L_n)\}dz_1...dz_n.\]
\end{thm}
\begin{proof}
The proof is analogous to the proof of Theorem 7.6 by Eynard and Orantin in \cite{EOrWei}.  It is rather technical so we will give the key idea here.  Topological recursion applied to the spectral curve $S$  is related to the recursion \eqref{volrec} by the Laplace transform, and in particular there is a one-to-one correspondence between terms in each of the two recursions.  Lemmas~\ref{lapkerD} and \ref{lapkerR} are the main new ideas in the proof, enabling the calculation of the Laplace transform of the recursion \eqref{volrec}, while the last part of the proof uses techniques which have arisen previously to relate topological recursion to a variety of recursive structures in geometry.

The Laplace transform of a polynomial $P(x_1,...,x_n)$ which is defined by 
\[\cl\{P\}(z_1,...,z_n)=\int_0^\infty...\int_0^\infty e^{-(z_1x_1+...+z_nx_n)}P(x_1,...,x_n)dx_1...dx_n\]
for $Re(z_i)>0$, is a polynomial in $z_i^{-1}$ hence it extends to a meromorphic function on $\bc^n$ with poles along the divisors $z_i=0$. 

The recursion \eqref{volrec} involves the following two linear transformations
\[P(x,y)\mapsto\int_0^\infty\hspace{-2mm}\int_0^\infty\hspace{-1mm}D(z,x,y)P(x,y)dxdy,\quad P(z)\mapsto\int_0^\infty \hspace{-1mm}R(x,y,z)P(z)dz\]
from the spaces of odd (in each variable) polynomials in one and two variables to the spaces of polynomials in two and one variable.  These linear transformations induce linear transformations of the Laplace transforms.   Lemmas~\ref{lapkerD} and \ref{lapkerR} below calculate the Laplace transform of these linear transformations.  
\begin{lemma}  \label{lapkerD}
For $P(x,y)$ an odd polynomial in $x$ and $y$:
\[
\cl\left\{\int_0^\infty\hspace{-2mm}\int_0^\infty dxdy D(L,x,y)P(x,y)\right\}=\left[\frac{1}{\cos(2\pi z)}\cl\{P\}(z,z)\right]_{z=0}
\]
\end{lemma}
\begin{proof}
By linearity we may choose $P=\frac{x^{2i+1}y^{2j+1}}{(2i+1)!(2j+1)!}$ which has Laplace transform $\cl\{P\}(z_1,z_2)=\frac{1}{z_1^{2i+2}z_2^{2j+2}}$.  From Lemma~\ref{polprop} we have
\[
F_{2k+1}(t)=\int_0^\infty x^{2k+1}H(x,t)dx=\sum_{i=0}^kt^{2i+1}\binom{2k+1}{2i+1}a_{k-i}\\
\]
where $a_n$ is defined by
$\displaystyle \frac{1}{\cos(2\pi z)}=\sum_{n=0}^{\infty}a_n\frac{z^{2n}}{(2n)!}.$  
Then $D(x,y,z)=H(y+z,x)$ and a change of coordinates gives:
\begin{align*}
\int_0^\infty\hspace{-2mm}\int_0^\infty\hspace{-2mm} \frac{x^{2i+1}y^{2j+1}}{(2i+1)!(2j+1)!}D(L,x,y)dxdy&=\frac{F_{2i+2j+3}(L)}{(2i+2j+3)!}\\
&=\sum_{m=0}^{i+j+1}\frac{L^{2m+1}}{(2m+1)!}\frac{a_{i+j+1-m}}{(2i+2j+2-2m)!}.
\end{align*}
Hence its Laplace transform is
\[
\cl\left\{\int_0^\infty\hspace{-2mm}\int_0^\infty\hspace{-2mm} \frac{x^{2i+1}y^{2j+1}}{(2i+1)!(2j+1)!}D(L,x,y)dxdy\right\}=\sum_{m=0}^{i+j+1}\frac{1}{z^{2m+2}}\frac{a_{i+j+1-m}}{(2i+2j+2-2m)!}
\]
which coincides with the even principal part of 
\[\frac{1}{\cos(2\pi z)}\cl\{P\}(z,z)\sim \sum_{n=0}^{\infty}a_n\frac{z^{2n}}{(2n)!}\frac{1}{z^{2i+2j+4}}
\] 
where $\sim$ means the Laurent series at $z=0$.  Note that the principal part is even so we can replace $\left[\frac{1}{\cos(2\pi z)}\cl\{P\}(z,z)\right]_{z=0}$ by $\left[\frac{1}{\cos(2\pi z)}\cl\{P\}(z,z)\right]^+_{z=0}$ in the statement.
\end{proof}
\begin{lemma}  \label{lapkerR}
For $P(x)$ an odd polynomial:
\[
\cl\left\{\int_0^\infty\hspace{-2mm} dx R(L_1,L_2,x)P(x)\right\}=\left[\frac{1}{\cos(2\pi z_1)}\frac{\cl\{P\}(z_1)}{(z_2-z_1)}\right]_{z_1=0}^+
\]
\end{lemma}
\begin{proof}
Recall that $R(x,y,z)=\frac12 H(z,x+y)+\frac12 H(z,x-y)$ and choose $P=x^{2k+1}$.  Hence
\begin{align*}
\int_0^\infty dxR(L_1,L_2,x)x^{2k+1}&=\frac12F_{2k+1}(L_1+L_2)+\frac12F_{2k+1}(L_1-L_2)\\
&=\sum_{\epsilon=\pm1}\frac12\sum_{m=0}^k(L_1+\epsilon L_2)^{2m+1}\binom{2k+1}{2m+1}a_{k-m}\\
&=(2k+1)!\sum_{m=0}^k\hspace{-1mm}\mathop{\sum_{j\text{\ even}}}_{i+j=2m+1}\hspace{-2mm}\frac{L_1^iL_2^j}{i!j!}\frac{a_{k-m}}{(2k-2m)!}.
\end{align*}
Hence its Laplace transform is:
\[
\cl\left\{\int_0^\infty \hspace{-2mm}dxR(L_1,L_2,x)x^{2k+1}\right\}=(2k+1)!\sum_{m=0}^k\hspace{-1mm}\mathop{\sum_{j\text{\ even}}}_{i+j=2m+1}\hspace{-1mm}\frac{1}{z_1^{i+1}z_2^{j+1}}\frac{a_{k-m}}{(2k-2m)!}
\]
which coincides with the even principal part in $z_1$ of 
\[\frac{1}{\cos(2\pi z_1)}\frac{\cl\{x^{2k+1}\}(z_1)}{(z_2-z_1)}\sim \sum_{n=0}^{\infty}a_n\frac{z_1^{2n}}{(2n)!}\sum_{j=0}^\infty\frac{z_1^j}{z_2^{j+1}}\frac{(2k+1)!}{z_1^{2k+2}}
\]
where $\sim$ means the Laurent series at $z_1=0$ for fixed $z_2$, hence $|z_1|<|z_2|$.
 
 \end{proof}
\noindent Continuing with the proof of Theorem~\ref{thetaspec}, apply Lemmas~\ref{lapkerD} and \ref{lapkerR} to the recursion \eqref{volrec}.
\begin{align}  \label{laprec} 
\cl\left\{L_1T_{g,n}(L_1,L_K)\right\}=&\frac12\cl\left\{\int_0^\infty\hspace{-2mm}\int_0^\infty \hspace{-2mm}xyD(L_1,x,y)P_{g,n+1}(x,y,L_K)dxdy\right.\\
&+
\sum_{j=2}^n\int_0^\infty \hspace{-2mm}xR(L_1,L_j,x)T_{g,n-1}(x,L_{K\setminus\{j\}})dx\Big\}\nonumber\\
=&\frac12\Big[\frac{1}{\cos(2\pi z_1)}\Big(\cl\{xyT_{g-1,n+1}\}(z_1,z_1,z_K)\nonumber\\
+&\hspace{-2mm}\mathop{\sum_{g_1+g_2=g}}_{I \sqcup J = K}\hspace{-1mm}\cl\{xT_{g_1,|I|+1}\}(z_1,z_I)\cl\{yT_{g_2,|J|+1}\}(z_1,z_J)\Big)\Big]_{z_1=0}^+   \nonumber\\
&+\sum_{j=2}^n\left[\frac{1}{\cos(2\pi z_1)}\frac{\cl\{xT_{g,n-1}\}(z_1,z_{K\setminus\{ j\}})}{z_j-z_1}\right]_{z_1=0}^+  \nonumber.
\end{align}
The principal part of the term involving $D$ coincides with its even principal part, as explained in the note at the end of the proof of Lemma~\ref{lapkerD}, so we have written it as the even part.

Define 
\[\Omega_{g,n}=(-1)^n\frac{\partial}{\partial z_1}...\frac{\partial}{\partial z_n}\cl\{T_{g,n}(L_1,...,L_n)\}dz_1...dz_n.\] 
We will prove that $\Omega_{g,n}$ and the correlators $\omega_{g,n}$ satisfy the same recursion relations and initial values, and in particular conclude that $\Omega_{g,n}=\omega_{g,n}$.)  

Take $(-1)^{n-1}\frac{\partial}{\partial z_2}...\frac{\partial}{\partial z_n}\Big[$\eqref{laprec}$\Big]dz_1...dz_n$, noting that $-\frac{\partial}{\partial z_1}$ is already present since $\cl\{L_1P(L_1)\}=-\frac{\partial}{\partial z_1}\cl\{P\}(z_1)$,
 to get
\begin{align} 
\Omega_{g,n}(z_1,z_K)=&\frac12\left[\frac{1}{\cos(2\pi z_1)dz_1}\Omega_{g-1,n+1}(z_1,z_1,z_K)\right]_{z_1=0}^-\\
+&\frac12\Big[\frac{1}{\cos(2\pi z_1)dz_1}\mathop{\sum_{g_1+g_2=g}}_{I \sqcup J = K}\hspace{-1mm}\Omega_{g_1,|I|+1}(z_1,z_I)\Omega_{g_2,|J|+1}(z_1,z_J)\Big]_{z_1=0}^-
\nonumber\\
&+\sum_{j=2}^n\left[\frac{1}{\cos(2\pi z_1)}\frac{\Omega_{g,n-1}(z_1,z_{K\setminus\{ j\}})}{(z_j-z_1)^2}\right]_{z_1=0}^-.
\nonumber
\end{align}
The even part of the principal part becomes the odd part $[\cdot]^+\to [\cdot]^-$ due to the factor of $dz_1$.  The factors $xy$, $x$ and $y$  on the right hand side of \eqref{laprec} supply derivatives such as $\cl\{xyT_{g-1,n+1}\}(z_1,z_1,z_K)=\frac{\partial^2}{\partial w\partial z}\cl\{T_{g-1,n+1}\}(w\hspace{-1mm}=\hspace{-1mm}z_1,z\hspace{-1mm}=\hspace{-1mm}z_1,z_K)$. 

Topological recursion for the spectral curve $S$ is 
\begin{align*}
\omega_{g,n}(z_1,z_K)&=\Res_{z=0}K(z_1,z) \cf(\{\omega_{g',n'}(z,z_K)\})dzdzdz_K\\
&=-\frac12\Res_{z=0}\left(\frac{dz_1}{z_1-z}-\frac{dz_1}{z_1+z}\right)\frac{1}{2\cos(2\pi z)} \cf(\{\omega_{g',n'}(z,z_K)\})dzdz_K\\
&=-\frac12\left[\frac{1}{\cos(2\pi z_1)} \cf(\{\omega_{g',n'}(z_1,z_K)\})dz_1dz_K\right]_{z_1=0}^-
\end{align*}
where $\cf(z_1,z_K)$ is a rational function given explicitly in \eqref{EOrec} by
\begin{align*}
\cf(z_1,z_K)dz_1^2dz_K=&\omega_{g-1,n+1}(z_1,-z_1,p_L)+ \hspace{-2mm}\mathop{\sum_{g_1+g_2=g}}_{I\sqcup J=L}^{\rm stable} \omega_{g_1,|I|+1}(z_1,z_I) \, \omega_{g_2,|J|+1}(-z_1,z_J) \\
+\sum_{j=2}^n&\big(\omega_{0,2}(z_1,z_j) \, \omega_{g,n-1}(-z_1,z_{K\setminus\{ j\}})+\omega_{0,2}(-z_1,z_j) \, \omega_{g,n-1}(z_1,z_{K\setminus\{ j\}})\big)\\
=&-\omega_{g-1,n+1}(z_1,z_1,p_L)- \hspace{-2mm}\mathop{\sum_{g_1+g_2=g}}_{I\sqcup J=L}^{\rm stable} \omega_{g_1,|I|+1}(z_1,z_I) \, \omega_{g_2,|J|+1}(z_1,z_J) \\
&-\sum_{j=2}^n\big(\omega_{0,2}(z_1,z_j)-\omega_{0,2}(-z_1,z_j)\big)\omega_{g,n-1}(z_1,z_{K\setminus\{ j\}})
\end{align*}
where we have used skew-symmetry of $\omega_{g,n}$ under $z_i\mapsto-z_i$, except for $\omega_{0,2}$.  Hence
\begin{align*} 
\omega_{g,n}(z_1,z_K)=&\frac12\left[\frac{1}{\cos(2\pi z_1)dz_1}\omega_{g-1,n+1}(z_1,z_1,z_K)\right]_{z_1=0}^-\\
&+\frac12\Big[\frac{1}{\cos(2\pi z_1)dz_1}\mathop{\sum_{g_1+g_2=g}}_{I \sqcup J = K}^{\rm stable}\hspace{-1mm}\omega_{g_1,|I|+1}(z_1,z_I)\omega_{g_2,|J|+1}(z_1,z_J)\Big]_{z_1=0}^-\\
&+\sum_{j=2}^n\int_0^\infty\left[\frac{1}{\cos(2\pi z_1)}\frac{\omega_{g,n-1}(z_1,z_{K\setminus\{ j\}})}{(z_j-z_1)^2}\right]_{z_1=0}^-.
\end{align*}
where we have used  $[\omega_{0,2}(-z_1,z_j)\eta(z_1)]_{z_1=0}^-=-[\omega_{0,2}(z_1,z_j)\eta(z_1)]_{z_1=0}^-$ for $\eta(z_1)$ odd.

The rational differentials $\Omega_{g,n}$ and $\omega_{g,n}$ are uniquely determined by their respective recursions and the initial value
\[\Omega_{1,1}(z_1)=-\frac{\partial}{\partial z_1}\cl\{T_{1,1}(L_1)\}dz_1=-\frac{\partial}{\partial z_1}\cl\{\tfrac18\}dz_1=\frac{dz}{8z^2}=\omega_{1,1}(z_1)\]
which both coincide, hence $\Omega_{g,n}=\omega_{g,n}$ as required.
\end{proof}
\begin{corollary}
Theorem~\ref{main} holds, i.e. $V^{\Theta}_{g,n}$ is uniquely determined by $V^{\Theta}_{1,1}(L_1)=\frac18$ and the recursion
\eqref{volrec}.
\end{corollary}
\begin{proof}
The proof is immediate from Theorem~\ref{spectral} and Theorem~\ref{thetaspec}. 
\end{proof}

\begin{remark}
Rewrite the expression for $F^M_{2k+1}(t)=\int_0^\infty x^{2k+1}H^M(x,t)dx$ due to Mirzakhani as:
\[\frac{F^M_{2k+1}(t)}{(2k+1)!}=\sum_{i=0}^{k+1}\zeta(2i)(2^{2i+1}-4)\frac{t^{2k+2-2i}}{(2k+2-2i)!}=\sum_{i=0}^{k+1}b_i\frac{t^{2k+2-2i}}{(2k+2-2i)!}.
\]
where $b_n$ is defined by $\displaystyle\frac{2\pi}{\sin(2\pi z)}=\sum_{n=0}^\infty b_nz^{2n-1}$.
Using this, one can replace $D(x,y,z)$ and $R(x,y,z)$ by $\tfrac{\partial}{\partial x}D^M(x,y,z)$ and $\tfrac{\partial}{\partial x}R^M(x,y,z)$ and replace $\frac{1}{\cos(2\pi z)}$ with $\frac{2\pi}{\sin(2\pi z)}$ in the statements of Lemmas~\ref{lapkerD} and \ref{lapkerR}.  The proofs of these statements appear in the appendix of \cite{EOrWei}, using a different approach.  The viewpoint here shows that the spectral curve $x=\frac12z^2$, $y=\frac{\sin(2\pi z)}{2\pi}$ studied by Eynard and Orantin in \cite{EOrWei} is implicit in Mirzakhani's work.
\end{remark}

Theorem~\ref{thetaspec} and the general property \eqref{dilaton} of topological recursion satisfied by any spectral curve produces another proof of the equation \eqref{dilatonvol} 
\[V^\Theta_{g,n+1}(2\pi i, L_1,...,L_n)=(2g-2+n)V^\Theta_{g,n}(L_1,...,L_n)
\]
which was proven in \ref{cone} using pull-back properties of the cohomology classes $\Theta_{g,n}$.

\subsection{Calculations}  \label{sec:kerprop}

We demonstrate here how to use the recursion \eqref{volrec} and equivalently the recursion \eqref{volrecWP}.  It is clear from its definition \eqref{voltheta} that the function $V^\Theta_{g,n}(L_1,...,L_n)$ is a degree $2g-2$ polynomial in $L_i$ (and degree $g-1$ polynomial in $L_i^2$).  A consequence of Lemma~\ref{polprop} and a change of coordinates shows that this polynomial behaviour also follows from the recursion \eqref{volrec} and elegant properties of the kernels $D(x,y,z)$ and $R(x,y,z)$.

The recursion \eqref{volrec} leads to the following small genus calculations.
The 1-point genus one volume can be calculated using an integral closely related to \eqref{volrec}.
\begin{equation}  \label{vol11} 
2LV^\Theta_{1,1}(L)=\int_0^\infty xD(L,x,x)dx=\int_0^\infty xH(2x,L)dx=\frac14F_1(L)=\frac14 L
\end{equation}
Using \eqref{volrec} we calculate:
\begin{align*}
V^\Theta_{1,n}(L_1,...,L_n)&=\frac{(n-1)!}{8}\\
V^\Theta_{2,n}(L_1,...,L_n)&=\frac{3(n+1)!}{128}\left((n+2)\pi^2+\frac{1}{4}\sum_{i=1}^n L_i^2\right)\\
V^\Theta_{3,n}(L_1,...,L_n)&=\frac{(n+3)!}{2^{16}\cdot 5}\Big(16(n+4)(42n+185)\pi^4+336(n+4)\pi^2\sum_{i=1}^n L_i^2\\
&\hspace{2cm}+25\sum_{i=1}^n L_i^4+84\sum_{i\neq j}^n L_i^2L_j^2\Big).
\end{align*}

\begin{remark}
For a cusped surface corresponding to $L_1=0$, replace the recursion \eqref{volrec} by the limit $L_1\to 0$ of $1/L_1\times$ \eqref{volrec} which replaces the kernels by the limits:
\[ \lim_{x\to 0}\frac{1}{x}D(x,y,z)=\frac{1}{8\pi}\frac{\sinh\frac{y+z}{4}} {\cosh^2\frac{y+z}{4}}
\]
\[ \lim_{x\to 0}\frac{1}{x}R(x,y,z)=\frac{1}{16\pi}\left(-\frac{\sinh\frac{y-z}{4}} {\cosh^2\frac{y-z}{4}}+\frac{\sinh\frac{y+z}{4}} {\cosh^2\frac{y+z}{4}}\right).\]
\end{remark}

\subsubsection{Hyperbolic cone angles}   \label{cone}
One can relax the hyperbolic condition on a representation $\rho:\pi_1\Sigma\to SL(2,\br)$ and allow the image of boundary classes to be elliptic.  The trace of an elliptic element is $\tr h=2\cos(\phi/2)\in(-2,2)$, hence such a boundary class corresponds to a cone of angle $\phi$.  A hyperbolic element with trace $\tr g=2\cosh(L/2))$ corresponds to a closed geodesic of length $L$.  Since $2\cos(\phi/2)=2\cosh(i\phi/2)$, one can interpret a point with cone angle in terms of an imaginary length boundary component, and some formulae generalise by replacing positive real parameters with imaginary parameters.  Explicitly, a cone angle $\phi$ appears by substituting the length $i\phi$ in the volume polynomial.  Mirzakhani's recursion uses a generalised McShane formula \cite{McSRem} on hyperbolic surfaces, which was adapted in \cite{TWZGen} to allow a cone angle $\phi$ that ends up appearing as a length $i\phi$ in such a formula, and hence in the volume polynomial.   The importance of hyperbolic monodromy $g$ is that it gives invertibility of $g-I$ used, for example, in the calculation of the cohomology groups $H^k_{dR}$ of the representation.  Perhaps this condition is required only on the interior and not on the boundary classes.  Regardless of the mechanism of the proofs when cone angles are present, one can evaluate the volume polynomials at imaginary values, and find good behaviour.
\begin{thm}
\begin{equation} \label{eq:dilaton}
V^\Theta_{g,n+1}(2\pi i, L_1,...,L_n)=(2g-2+n)V^\Theta_{g,n}(L_1,...,L_n)
 \end{equation}
\end{thm}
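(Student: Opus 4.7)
The plan is to prove the identity directly from the intersection-number definition \eqref{voltheta}, using the substitution $\tfrac12(2\pi i)^2 = -2\pi^2$ to convert the $L_{n+1}$-deformation into a $\psi_{n+1}$ correction that cancels exactly against the $\kappa_1$-term after pulling back under the forgetful map $\pi:\overline{\modm}_{g,n+1}\to\overline{\modm}_{g,n}$. The key ingredients are the pullback identities $\pi^*\kappa_1 = \kappa_1-\psi_{n+1}$ and $\pi^*\psi_i = \psi_i - D_{i,n+1}$ for $i\le n$, together with the vanishing $\psi_{n+1}\cdot D_{i,n+1}=0$ (since $\psi_{n+1}$ restricts to zero on the rational tail component), the pullback relation $\Theta_{g,n+1} = \psi_{n+1}\cdot \pi^*\Theta_{g,n}$ from \eqref{forget}, and finally $\pi_*\psi_{n+1}=\kappa_0 = 2g-2+n$.

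First I would write
\[
V^\Theta_{g,n+1}(2\pi i,L_1,\ldots,L_n) = \int_{\overline{\modm}_{g,n+1}}\Theta_{g,n+1}\exp\Big(2\pi^2\kappa_1 - 2\pi^2\psi_{n+1} + \tfrac12\sum_{i=1}^n L_i^2\psi_i\Big),
\]
and substitute $\Theta_{g,n+1}=\psi_{n+1}\cdot\pi^*\Theta_{g,n}$. Second, I would use $\kappa_1 = \pi^*\kappa_1 + \psi_{n+1}$ to observe that $2\pi^2\kappa_1 - 2\pi^2\psi_{n+1} = 2\pi^2\pi^*\kappa_1$. Third, since $\psi_{n+1}\cdot D_{i,n+1}=0$, the overall factor $\psi_{n+1}$ annihilates every correction term coming from $\psi_i = \pi^*\psi_i + D_{i,n+1}$ in the Taylor expansion of the $\psi_i$-exponential, so $\psi_{n+1}\cdot \exp\bigl(\tfrac12\sum L_i^2\psi_i\bigr) = \psi_{n+1}\cdot\exp\bigl(\tfrac12\sum L_i^2\pi^*\psi_i\bigr)$. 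Combining these observations, the integrand equals
\[
\psi_{n+1}\cdot\pi^*\!\left[\Theta_{g,n}\exp\Big(2\pi^2\kappa_1 + \tfrac12\sum_{i=1}^n L_i^2\psi_i\Big)\right].
\]

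Finally I would apply the projection formula and the standard identity $\pi_*\psi_{n+1}=2g-2+n$ to conclude
\[
V^\Theta_{g,n+1}(2\pi i,L_1,\ldots,L_n) = (2g-2+n)\int_{\overline{\modm}_{g,n}}\Theta_{g,n}\exp\Big(2\pi^2\kappa_1 + \tfrac12\sum_{i=1}^n L_i^2\psi_i\Big) = (2g-2+n)V^\Theta_{g,n}(L_1,\ldots,L_n).
\]

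The proof is essentially a cohomological bookkeeping exercise and I do not expect any serious obstacles. The only subtle point is the vanishing $\psi_{n+1}\cdot D_{i,n+1}=0$, which could be justified either by direct restriction to the boundary divisor $D_{i,n+1}\cong\overline{\modm}_{g,n}$ (where $\psi_{n+1}$ pulls back from the cotangent at a point on a $3$-pointed $\bp^1$ tail, hence is trivial), or by quoting the standard comparison between $\psi$ classes on $\overline{\modm}_{g,n+1}$ and their pullbacks. The special role of the Neveu-Schwarz condition lurking behind \eqref{forget} is essential: it is the shift in the pushforward caused by the extra factor of $\psi_{n+1}$ in $\Theta_{g,n+1}=\psi_{n+1}\pi^*\Theta_{g,n}$ that produces the coefficient $2g-2+n$ rather than the usual dilaton coefficient $2g-2+n$ shifted by one, as in the analogous Weil-Petersson identity.
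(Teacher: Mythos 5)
Your proposal is correct and is essentially the paper's proof, just organized globally with exponentials rather than extracting the coefficient of each monomial $L^{2\alpha}$ separately; the chain of identities you use ($\Theta_{g,n+1}=\psi_{n+1}\cdot\pi^*\Theta_{g,n}$, $\kappa_1-\psi_{n+1}=\pi^*\kappa_1$, $\psi_{n+1}\cdot\psi_i=\psi_{n+1}\cdot\pi^*\psi_i$ via $\psi_{n+1}D_{i,n+1}=0$, the projection formula, and $\pi_*\psi_{n+1}=2g-2+n$) is exactly the one the paper uses. The only minor correction is to your closing remark: in the Weil--Petersson case the analogue does not give a shifted coefficient but rather requires differentiating with respect to $L_{n+1}$ before evaluating at $2\pi i$, precisely because there is no extra $\psi_{n+1}$ factor supplied by $\Theta$.
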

\begin{proof}
Using
\[V^{\Theta}_{g,n}(L_1,...,L_n)=\int_{\overline{\modm}_{g,n}}\hspace{-2mm}\Theta_{g,n}\cdot\exp\left\{2\pi^2\kappa_1+\frac12\sum_{i=1}^n L_i^2\psi_i\right\}\]
the coefficient of $L_1^{2\alpha_1}...L_n^{2\alpha_n}$ in $V^\Theta_{g,n+1}(2\pi i, L_1,...,L_n)$ is
\begin{align*}
\sum_{j=0}^m\frac{(2\pi i)^{2j}2^{-|\alpha|-j}}{\alpha!j!(m-j)!}&\int_{\overline{\modm}_{g,n+1}}\hspace{-5mm}\Theta_{g,n+1}\psi^{\alpha}\psi_{n+1}^j(2\pi^2\kappa_1)^{m-j}\\
&=
\int_{\overline{\modm}_{g,n+1}}\hspace{-5mm}\Theta_{g,n+1}\frac{\psi^{\alpha}}{\alpha!}\frac{2^{-|\alpha|}}{m!}\sum_{j=0}^m\binom{m}{j}(-1)^{j}(2\pi^2\psi_{n+1})^{j}(2\pi^2\kappa_1)^{m-j}\\
&=\int_{\overline{\modm}_{g,n+1}}\hspace{-5mm}\Theta_{g,n+1}\frac{\psi^{\alpha}}{\alpha!}\frac{2^{-|\alpha|}}{m!}(2\pi^2\kappa_1-2\pi^2\psi_{n+1})^{m}\\
&=\int_{\overline{\modm}_{g,n+1}}\hspace{-5mm}\Theta_{g,n+1}\frac{\psi^{\alpha}}{\alpha!}\frac{2^{-|\alpha|}}{m!}(2\pi^2\pi^*\kappa_1)^{m}\\
&=\int_{\overline{\modm}_{g,n+1}}\hspace{-5mm}\psi_{n+1}2^{-|\alpha|}\pi^*\big(\Theta_{g,n}\frac{\psi^{\alpha}}{\alpha!}\frac{(2\pi^2\kappa_1)^{m}}{m!}\big)\\
&=(2g-2+n)2^{-|\alpha|}\int_{\overline{\modm}_{g,n}}\hspace{-2mm}\Theta_{g,n}\frac{\psi^{\alpha}}{\alpha!}\frac{(2\pi^2\kappa_1)^{m}}{m!}
\end{align*}
which is exactly $2g-2+n$ times the coefficient of $L_1^{2\alpha_1}...L_n^{2\alpha_n}$ in $V^\Theta_{g,n}$.

\end{proof}

For $g>1$, the integrals 
\[V^\Theta_{g,0}=\int_{\overline{\modm}_{g}}\Theta_{g}\cdot\exp\left\{2\pi^2\kappa_1\right\}
\]
which give the super volumes
\[ V^{SW}_{g,0}=2^{1-g} V^\Theta_{g,0}
\]
do not arise out of the recursion \eqref{volrec}.  Nevertheless, setting $n=0$ in \eqref{eq:dilaton} allows one to calculate these integrals from $V^\Theta_{g,1}(L)$ which do arise out of the recursion \eqref{volrec}
\[V^\Theta_{g,1}(2\pi i)=(2g-2)V^\Theta_{g,0}.\]

Analogous results were proven in \cite{DNoWei} for the Weil-Petersson volumes.  
\begin{thm}[\cite{DNoWei}] \label{th:string}
For ${\bf L}=(L_1,...,L_n)$
\[
 V^{WP}_{g,n+1}({\bf L},2\pi i)=\sum_{k=1}^n\int_0^{L_k}L_kV^{WP}_{g,n}({\bf L})dL_k
\]
and
\[
\frac{\partial V^{WP}_{g,n+1}}{\partial L_{n+1}}({\bf L},2\pi i)=2\pi i(2g-2+n)V^{WP}_{g,n}({\bf L}).
\]
\end{thm}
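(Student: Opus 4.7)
The plan is to derive both identities from the tautological representation
\[
V^{WP}_{g,n}(\mathbf{L})=\int_{\overline{\modm}_{g,n}}\exp\Bigl\{2\pi^2\kappa_1+\tfrac12\sum_{i=1}^n L_i^2\psi_i\Bigr\}
\]
together with the standard comparison relations for the forgetful morphism $\pi\colon\overline{\modm}_{g,n+1}\to\overline{\modm}_{g,n}$: $\kappa_1=\pi^*\kappa_1+\psi_{n+1}$, $\psi_i=\pi^*\psi_i+D_{i,n+1}$ for $i\le n$, and $\psi_{n+1}\cdot D_{i,n+1}=0=\psi_i\cdot D_{i,n+1}$. These imply inductively that $\psi_{n+1}\cdot\psi_i^k=\psi_{n+1}\cdot(\pi^*\psi_i)^k$, so the push-forward $\pi_*\psi_{n+1}=2g-2+n$ handles the dilaton side and the cohomological string relation $\pi_*\prod_{i=1}^n\psi_i^{\alpha_i}=\sum_{k:\alpha_k\ge1}\prod_i\psi_i^{\alpha_i-\delta_{ik}}$ handles the string side.

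For the dilaton identity, I would differentiate under the integral sign:
\[
\frac{\partial V^{WP}_{g,n+1}}{\partial L_{n+1}}(\mathbf{L},L_{n+1})=\int_{\overline{\modm}_{g,n+1}}L_{n+1}\psi_{n+1}\exp\Bigl\{2\pi^2\kappa_1+\tfrac12\sum_{i=1}^{n+1}L_i^2\psi_i\Bigr\}.
\]
Substituting $L_{n+1}=2\pi i$ turns $\tfrac12L_{n+1}^2\psi_{n+1}$ into $-2\pi^2\psi_{n+1}$, which combines with $2\pi^2\kappa_1$ into $2\pi^2\pi^*\kappa_1$; the prefactor $\psi_{n+1}$ then allows every $\psi_i$ ($i\le n$) to be replaced by $\pi^*\psi_i$. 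The integrand becomes $2\pi i\cdot\psi_{n+1}\cdot\pi^*\exp\{2\pi^2\kappa_1+\tfrac12\sum L_i^2\psi_i\}$, and the projection formula together with $\pi_*\psi_{n+1}=2g-2+n$ yields the claimed equality.

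For the string identity I would match coefficients of $\prod L_i^{2\alpha_i}$ on the two sides. Setting $M=3g-2+n-|\alpha|$, the binomial collapse
\[
\sum_{j+m=M}\binom{M}{j}(-2\pi^2\psi_{n+1})^j(2\pi^2\kappa_1)^m=(2\pi^2\pi^*\kappa_1)^M
\]
recasts the coefficient on the left as
\[
\frac{(2\pi^2)^M}{2^{|\alpha|}\alpha!\,M!}\int_{\overline{\modm}_{g,n+1}}(\pi^*\kappa_1)^M\prod_{i=1}^n\psi_i^{\alpha_i}=\frac{(2\pi^2)^M}{2^{|\alpha|}\alpha!\,M!}\int_{\overline{\modm}_{g,n}}\kappa_1^M\,\pi_*\!\prod_i\psi_i^{\alpha_i}
\]
by the projection formula. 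On the right, the polynomial integration $\int_0^{L_k}x^{2a+1}dx=L_k^{2a+2}/(2a+2)$ together with the arithmetic identity $2\alpha_k(\alpha-e_k)!=\alpha!$ produces the same prefactor $(2\pi^2)^M/(2^{|\alpha|}\alpha!\,M!)$ times $\sum_{k:\alpha_k\ge1}\int\kappa_1^M\prod_i\psi_i^{\alpha_i-\delta_{ik}}$, and the cohomological string equation identifies this with the left-hand expression. The edge case $|\alpha|=0$ is consistent since $\pi_*(1)=0$ makes both sides vanish.

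The only nontrivial ingredient is the cohomological string equation itself, which does not follow from a naive substitution $\psi_i\mapsto\pi^*\psi_i$ in the absence of an absorbing $\psi_{n+1}$ factor and must instead be proved by inductive expansion of $\psi_i=\pi^*\psi_i+D_{i,n+1}$ controlled by $\psi_i\cdot D_{i,n+1}=0$. I would quote this as a standard fact; everything else is binomial bookkeeping entirely parallel to the $V^\Theta$ dilaton computation already given.
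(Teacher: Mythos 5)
Your proof is correct and is essentially the method used in the paper itself: although the paper only cites \cite{DNoWei} for this statement, your argument is exactly the coefficient-matching and binomial-collapse computation ($\sum_j\binom{M}{j}(-2\pi^2\psi_{n+1})^j(2\pi^2\kappa_1)^{M-j}=(2\pi^2\pi^*\kappa_1)^M$ followed by push-forward along the forgetful map) that the paper carries out for the analogous identity \eqref{eq:dilaton} satisfied by $V^{\Theta}$, transported to $V^{WP}$. The one extra ingredient on the string side, the cohomological string equation $\pi_*\prod_i\psi_i^{\alpha_i}=\sum_{k:\alpha_k\ge1}\prod_i\psi_i^{\alpha_i-\delta_{ik}}$ (needed precisely because no $\psi_{n+1}$ factor is present to absorb the boundary corrections $D_{i,n+1}$), is correctly isolated and may legitimately be quoted as standard, which is also how \cite{DNoWei} proceeds.
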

It is interesting that \eqref{eq:dilaton} does not require a derivative whereas the analogous result in Theorem~\ref{th:string} involves a derivative.  This feature resembles the relations between the kernels for recursions between super volumes $D(x,y,z)=H(y+z,x)$, and between Weil-Petersson volumes $\frac{\partial}{\partial x}D^M(x,y,z)=H^M(y+z,x)$, and similarly for $R(x,y,z)$ and $R^M(x,y,z)$, where the Weil-Petersson volumes again require a derivative.

\subsubsection{} For a given genus $g$, $V^\Theta_{g,g-1}(L_1, ..., L_{g-1})$ determines all the polynomials $V^\Theta_{g,n}(L_1, ..., L_n)$ as follows.  When $n<g-1$ use \eqref{eq:dilaton} to produce $V^\Theta_{g,n}(L_1, ..., L_n)$ from $V^\Theta_{g,g-1}(L_1, ..., L_{g-1})$.   When $n\geq g$, $V^\Theta_{g,n}(L_1, ..., L_n)$, which is a degree $g - 1$ symmetric polynomial in $L^2_1,...,L^2_n$, is uniquely determined by evaluation at $L_n = 2\pi i$, and this is determined by $V_{g,n-1}(L_1,...,L_{n-1})$ via \eqref{dilaton}. This follows from the elementary fact that a symmetric polynomial $f(x_1,...,x_n)$ of degree less than $n$ is uniquely determined by evaluation of one variable at any $a\in\bc$, $f(x_1,...,x_{n=1},a)$. To see this, suppose otherwise. Any symmetric $g(x_1,...,x_n)$ of degree less than $n$ that evaluates at $a$ as $f$ does, satisfies
\begin{align*}
f(x_1,...,x_{n-1},a)&=g(x_1,...,x_{n-1},a) = (x_n -a)P(x_1,...,x_n) \\
&= Q(x_1,...,x_n)\prod_{j=1}^n(x_j -a)
\end{align*}
but the degree is less than $n$ so the difference is identically 0.

\section{Conclusion}

In this paper, we gave an algebraic-geometric proof of a recursion formula for the volumes of moduli spaces of super hyperbolic surfaces, originally derived using supergeometric methods by Stanford and Witten.  This was achieved by relating the volumes of moduli spaces of super Riemann surfaces to integrals over the Deligne-Mumford moduli space of stable Riemann surfaces $\overline{\mathcal M}_{g,n}$.  We applied a Givental type factorisation of a partition function storing these integrals, via topological recursion, which showed that the recursion between volumes is equivalent to the statement that a generating function for the intersection numbers of a natural family of cohomology classes $\Theta_{g,n}$ with tautological classes on $\overline{\mathcal M}_{g,n}$ is a KdV tau function. This approach was directly analogous to Mirzakhani's proof of the Kontsevich-Witten theorem, which established the KdV property of the generating function for intersection numbers of tautological classes via volumes of moduli spaces of hyperbolic surfaces.

It would be desirable to develop a fully supergeometric proof of these results, filling gaps in the original arguments of Stanford and Witten. Such a proof would help clarify the geometric origin of the recursion and should shed light on the recently observed similar recursive behaviour of the more general volumes that allow Ramond punctures.

\end{document}